\newtheorem*{thmA}{Theorem A}
\newtheorem*{thmB}{Theorem B}
\numberwithin{equation}{section}
\newcommand{\R}{\ensuremath{\mathbb{R}}}
\newcommand{\N}{\ensuremath{\mathbb{N}}}
\newcommand{\Z}{\ensuremath{\mathbb{Z}}}
\newtheorem{theorem}{Theorem}[section]
\newtheorem{corollary}[theorem]{Corollary}
\newtheorem{proposition}[theorem]{Proposition}
\newtheorem{lemma}[theorem]{Lemma}
\newtheorem{claim}[theorem]{Claim}
\theoremstyle{definition} 
\newtheorem{defn}[theorem]{Definition}
\newtheorem{remark}[theorem]{Remark} 
\newtheorem{Notation}[theorem]{Notational Convention}
\newtheorem{Construction}{Construction}[section]
\newcommand{\pr}{{\mathrm{pr}}}
\newcommand{\Th}{{\mathsf T}\!{\mathsf h}}
\newcommand{\MT}{{\mathsf M}{\mathsf T}}
\newcommand{\mb}[1]{\mathbf{#1}}
\newcommand{\Cob}{{\mathbf{Cob}}}
\DeclareMathOperator*{\hofibre}{hofibre}
\DeclareMathOperator*{\colim}{colim\ }
\DeclareMathOperator*{\dist}{dist}
\DeclareMathOperator*{\inn}{in}
\DeclareMathOperator*{\Int}{Int}
\DeclareMathOperator*{\Diff}{Diff}
\DeclareMathOperator*{\Emb}{Emb}
\DeclareMathOperator*{\BDiff}{BDiff}
\DeclareMathOperator*{\Sub}{Sub}
\DeclareMathOperator*{\Maps}{Maps}
\DeclareMathOperator*{\op}{\text{op}}
\DeclareMathOperator*{\Id}{\text{Id}}
\DeclareMathOperator*{\Bun}{\mathrm{Bun}}
\DeclareMathOperator*{\Ob}{\text{Ob}}
\DeclareMathOperator*{\Mor}{\mathrm{Mor}}
\DeclareMathOperator*{\Ker}{Ker}
\DeclareMathOperator*{\Image}{Im}
\DeclareMathOperator*{\loc}{loc}
\DeclareMathOperator*{\hocolim}{hocolim}
\DeclareMathOperator*{\std}{std}
\DeclareMathOperator*{\locc}{loc}
\DeclareMathOperator*{\stb}{stb}
\DeclareMathOperator*{\cyl}{cyl}
\DeclareMathOperator*{\Morse}{Morse}
\DeclareMathOperator*{\Surg}{Surg}
\DeclareMathOperator*{\sdl}{sdl}
\DeclareMathOperator*{\Trc}{Trc}
\DeclareMathOperator*{\rg}{rg}
\DeclareMathOperator*{\tr}{tr}
\DeclareMathOperator*{\prm}{pr}
\DeclareMathOperator*{\mf}{mf}
\DeclareMathOperator*{\indexx}{index}
\DeclareMathOperator*{\Mfds}{Mfds}
\author{Nathan Perlmutter}
\address{Stanford University Department of Mathematics, Building 380, Stanford, California,  94305, USA}
\email{nperlmut@stanford.edu}
\title[Cobordism Categories and Parametrized Morse Theory]{Cobordism Categories and Parametrized Morse Theory}
\begin{document}
\maketitle

\begin{abstract}
Fix a tangential structure $\theta: B \longrightarrow BO(d+1)$ and an integer $k < d/2$.
In this paper we determine the homotopy type of a cobordism category $\Cob^{\mf, k}_{\theta}$, where morphisms are given by $\theta$-cobordisms $W: P \rightsquigarrow Q$ equipped with a choice a Morse function $h_{W}: W \longrightarrow [0, 1]$, with the property that all critical points $c \in W$ of $h_{W}$ satisfy: $k < \text{index}(c) < d-k+1$.
In particular, we prove that there is a weak homotopy equivalence $B\Cob^{\mf, k}_{\theta} \simeq\Omega^{\infty}\mb{hW}^{k}_{\theta}$, where $\mb{hW}^{k}_{\theta}$ is a certain Thom spectrum associated to the space of Morse jets on $\R^{d+1}$.

In the special case that $k  = -1$, the equivalence $B\Cob^{\mf, -1}_{\theta} \simeq\Omega^{\infty}\mb{hW}^{-1}_{\theta}$ follows from the work of 
Madsen and Weiss in \cite{MW 07}, used in their celebrated proof of the \textit{Mumford conjecture}. 
Following the methods of Madsen and Weiss we use the weak equivalence  $B\Cob^{\mf, k}_{\theta} \simeq\Omega^{\infty}\mb{hW}^{k}_{\theta}$ to give an alternative proof the ``high-dimensional Madsen-Weiss theorem'' of Galatius and Randal-Williams from \cite{GRW 14}, which identifies the homology of the moduli spaces, $\BDiff((S^{n}\times S^{n})^{\# g}, D^{2n})$, in the limit $g \to \infty$.
\end{abstract}

\setcounter{tocdepth}{1}
\tableofcontents
\vspace*{-5mm}

\section{Introduction} \label{section: introduction}
To state our results, we begin by defining a topological category whose objects are given by closed manifolds embedded in high-dimensional Euclidean space, and whose morphisms are given by embedded cobordisms equipped with a Morse function. 
Fix an integer $d \geq 0$ and a tangential structure $\theta: B \longrightarrow BO(d+1)$. 
\begin{defn} \label{defn: first morse cob cat}
Objects of the (non-unital) topological category $\Cob^{\mf}_{\theta}$ are given by closed $d$-dimensional submanifolds $M \subset \R^{\infty}$, equipped with a $\theta$-structure $\hat{\ell}_{M}: TM\oplus\epsilon^{1} \longrightarrow \theta^{*}\gamma^{d+1}$. 
The morphisms $M_{0} \rightsquigarrow M_{1}$ are given by pairs $(t, W)$ where $t \in (0, \infty)$ and $W \subset [0, t]\times\R^{\infty}$ is a compact submanifold equipped with a $\theta$-structure $\hat{\ell}_{W}: TW \longrightarrow \theta^{*}\gamma^{d+1}$, subject to the following conditions:
\begin{enumerate} \itemsep.2cm
\item[(i)] the height function, $W \hookrightarrow [0, t]\times\R^{\infty} \stackrel{\text{proj}} \longrightarrow [0, t]$, is a Morse function; 
\item[(ii)] $W\cap(\{0, t\}\times\R^{\infty}) = (M_{0}\times\{0\})\sqcup (M_{1}\times\{t\})$ as a $\theta$-manifold,
and furthermore the intersection is orthogonal.
\end{enumerate}
We will always denote the height function from (i) by $h_{W}: W \longrightarrow [0, t].$
By condition (ii) $W$ is a cobordism between $M_{0}$ and $M_{1}$. 
Composition in this category is defined in the usual way by concatenation of cobordisms. 
\end{defn}
The category $\Cob^{\mf}_{\theta}$ is topologized so that for any two objects $M_{0}, M_{1}$, there is a homotopy equivalence 
\begin{equation} \label{equation: homotopy type of the morphism space}
\Cob^{\mf}_{\theta}(M_{0}, M_{1}) \; \simeq \; \coprod_{[W]}\left(\Bun(TW, \theta^{*}\gamma^{d+1}; \hat{\ell}_{M_{0}}\sqcup\hat{\ell}_{M_{1}})\times\Morse(W; M_{0}, M_{1})\right)//\Diff(W, \partial W),
\end{equation}
where the disjoint union is taken over all diffeomorphism classes of compact manifolds $W$, whose boundary is equipped with an identification, $\partial W = M_{0}\sqcup M_{1}$. 
The space $\Morse(W; M_{0}, M_{1})$ is the space of all Morse functions $f: W \longrightarrow [0, 1]$ having $0$ and $1$ as regular values, with $f^{-1}(0) = M_{0}$ and $f^{-1}(1) = M_{1}$. 
The space $\Bun(TW, \theta^{*}\gamma^{d+1}; \hat{\ell}_{M_{0}}\sqcup\hat{\ell}_{M_{1}})$ consists of all $\theta$-structures on $W$ that agree with $\hat{\ell}_{M_{0}}\sqcup\hat{\ell}_{M_{1}}$ when restricted to the boundary.

Using the work of Madsen and Weiss from \cite{MW 07}, the homotopy type of the classifying space $B\Cob^{\mf}_{\theta}$ can be identified with the infinite loopspace of a familiar Thom spectrum. 
Let $G^{\mf}_{\theta}(\R^{\infty})$ denote the space of tuples $(V, \hat{\ell}, l, \sigma)$ where:
\begin{itemize} \itemsep.1cm
\item $V \subset \R^{\infty}$ is a $(d+1)$-dimensional vector subspace,
\item $\hat{\ell}$ is a $\theta$-orientation on $V$,
\item $l: V \longrightarrow \R$ is a linear functional,
\item $\sigma: V\otimes V \longrightarrow \R$ is a symmetric bilinear form,
\end{itemize}
with the property that $(l, \sigma)$ satisfies the \textit{Morse condition}: if $l = 0$, then $\sigma$ is non-degenerate.
Let $U_{d+1, \infty} \longrightarrow G^{\mf}_{\theta}(\R^{\infty})$ denote the canonical $(d+1)$-dimensional vector bundle and let $\mb{hW}_{\theta}$ denote the Thom spectrum associated to the $-(d+1)$-dimensional virtual bundle, $-U_{d+1, \infty} \longrightarrow G^{\mf}_{\theta}(\R^{\infty})$.
The theorem stated below follows from the work of Madsen and Weiss in \cite{MW 07}.
\begin{thmA}[Madsen-Weiss, 2002] 
For any tangential structure $\theta: B \longrightarrow BO(d)$, there is a weak homotopy equivalence, 
$B\Cob^{\mf}_{\theta} \simeq \Omega^{\infty-1}\mb{hW}_{\theta}.$
\end{thmA}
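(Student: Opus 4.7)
The plan is to follow the sheaf-theoretic methodology of Madsen--Weiss \cite{MW 07}: interpret $B\Cob^{\mf}_{\theta}$ as the representing space of a sheaf of \emph{Morse submanifolds} of Euclidean space, and construct a Pontryagin--Thom scanning map to $\Omega^{\infty-1}\mb{hW}_{\theta}$ which is then shown to be a weak equivalence.

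Concretely, for each $n$ I would define a sheaf $\bpsi^{\mf}_{\theta}$ on the category of open subsets of $\R^{n}$ whose sections over $U$ are the closed $(d+1)$-dimensional $\theta$-submanifolds $W \subset U \times \R$ such that the projection $h_{W}: W \to \R$ is Morse, with appropriate topology on the set of sections. Following the Madsen--Weiss blueprint, the colimit $|\bpsi^{\mf}_{\theta}|$ as $n \to \infty$ is identified with $B\Cob^{\mf}_{\theta}$ up to weak equivalence. The key input is that the set of regular values of a Morse height function on a compact cobordism is open and dense, so that concatenation of morphisms in $\Cob^{\mf}_{\theta}$ corresponds to gluing sections of $\bpsi^{\mf}_{\theta}$ along regular level sets, and the resulting bisimplicial model of $B\Cob^{\mf}_{\theta}$ matches the semi-simplicial resolution of $|\bpsi^{\mf}_{\theta}|$ by level-set data.

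The scanning map is constructed pointwise. Given a section $W \in \bpsi^{\mf}_{\theta}(U)$ and a point $x \in W$, the data $(T_{x}W,\, \hat{\ell}_{W}|_{x},\, dh_{W}|_{x},\, \mathrm{Hess}(h_{W})|_{x})$ defines a point of $G^{\mf}_{\theta}(\R^{\infty})$, and the Morse condition on $W$ translates exactly to the Morse condition on $(l, \sigma)$. A Pontryagin--Thom collapse of a tubular neighborhood of $W$ in $U \times \R$ then assembles these local data into a map
\[
|\bpsi^{\mf}_{\theta}| \longrightarrow \Omega^{\infty-1}\mb{hW}_{\theta}.
\]
Composing with the identification above yields the candidate equivalence $B\Cob^{\mf}_{\theta} \to \Omega^{\infty-1}\mb{hW}_{\theta}$.

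The hardest step is proving this scanning map is a weak homotopy equivalence. This is essentially an h-principle / microflexibility argument: every family of \emph{formal} Morse jets must be shown to extend, up to homotopy, to a family of globally realized Morse manifolds. I would proceed by comparing $\bpsi^{\mf}_{\theta}$ with the analogous sheaf of formal Morse jets on $\R^{n} \times \R$, and invoking the standard ``filling in'' argument of Madsen--Weiss near each critical point. The main obstacle is controlling critical points in $1$- and $2$-parameter families, where critical points may coalesce or migrate across level sets; addressing this requires a stratification of the space of Morse functions drawn essentially from the parametrized Morse theory of Cerf, and a careful verification that the bifurcations arising in such families do not obstruct the integrability of formal Morse jets to genuine Morse manifolds. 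Once the scanning map is shown to be a weak equivalence, Theorem A follows.
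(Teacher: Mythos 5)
Your outline reproduces the correct Madsen--Weiss blueprint at the top level: replace $B\Cob^{\mf}_{\theta}$ by a representing space for a sheaf of open Morse submanifolds (using regular level sets to mediate between the sheaf and concatenation of morphisms, i.e.\ the GMTW-style comparison), build the Pontryagin--Thom scanning map from the $2$-jet of the height function, and show that the scanning map is a weak equivalence by an h-principle. This matches what the paper itself does for Theorem A, except that the paper treats the entire ``scanning map is a weak equivalence'' step as a citation to \cite{MW 07} (Theorem~\ref{theorem: madsen-weiss + vassiliev}) and only supplies the cobordism-category $\leftrightarrow$ long-manifold comparison of Proposition~\ref{proposition: cobcat to long manifolds}.

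The one substantive error in your description is the attribution of the hard step. You characterize it as ``a stratification of the space of Morse functions drawn essentially from the parametrized Morse theory of Cerf, and a careful verification that the bifurcations arising in such families do not obstruct integrability.'' That is not how Madsen and Weiss argue: the key technical input is Vassiliev's first main theorem \cite{Va 94}, a global integration theorem for sections of jet bundles avoiding a closed discriminant of sufficiently high codimension, and one of the principal virtues of the Vassiliev route is precisely that it \emph{bypasses} the Cerf-theoretic bifurcation analysis you describe. This distinction is not pedantic here: the paper's whole point in Section~\ref{subsection: localization sequence} is that no version of Vassiliev's h-principle is available once one restricts the allowed Morse indices, which is exactly why the proof of Theorem~B must depart from the proof of Theorem~A. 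If Theorem~A had been proved by a Cerf-style stratification one could hope to restrict indices directly; so misidentifying the h-principle as Cerf-theoretic would give a false impression of how Theorem~B is obtained. Beyond this, your writeup only gestures at the h-principle step without proving it, but since the paper treats that step as known, this is acceptable as a proof of Theorem~A provided you cite \cite{MW 07} and \cite{Va 94} for it.
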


\begin{remark}
In \cite{MW 07}, Madsen and Weiss prove a weak homotopy equivalence $|\mathscr{W}_{\theta}| \simeq \Omega^{\infty-1}\mb{hW}_{\theta}$, where $|\mathscr{W}_{\theta}|$ is the representing space of a sheaf $\mathscr{W}_{\theta}$.
The cobordism category $\Cob^{\mf}_{\theta}$ is never officially uttered in their paper, but by importing an extra argument from \cite{GMTW 08} one can prove the weak equivalence $B\Cob^{\mf}_{\theta} \simeq |\mathscr{W}_{\theta}|$; this is explained in Sections \ref{subsection: localization sequence} and \ref{subsection: families of morse functions}.
\end{remark}

In this paper we prove a generalization of Theorem A for subcategories of $\Cob^{\mf}_{\theta}$ consisting of cobordisms $(t, W): M \rightsquigarrow N$ whose associated Morse function $h_{W}: W \longrightarrow [0, t]$ is subject to certain constraints. 
\begin{defn} \label{defn: subcategory k}
Let $k \geq -1$ be an integer.
The objects of the subcategory $\Cob^{\mf, k}_{\theta} \subset \Cob^{\mf}_{\theta}$ are given by those $M$ for which the 
map $\ell_{M}: M \longrightarrow B$ (which underlies the $\theta$-structure $\hat{\ell}_{M}$) is $k$-connected. 
The morphisms of $\Cob^{\mf, k}_{\theta}$ are given by those $(t, W): P \rightsquigarrow Q$ such that all critical points $c \in W$ of the associated Morse function $h_{W}: W \longrightarrow [0, t]$ satisfy: $k < \text{index}(c) < d-k+1.$
It is immediate from the definition that $\Cob^{\mf, -1}_{\theta} \cong \Cob^{\mf}_{\theta}$.
\end{defn}
Our main theorem identifies the homotopy type of the classifying space $B\Cob_{\theta}^{\mf, k}$ in the case that $k < d/2$.
We need to construct a generalization of the spectrum $\mb{hW}_{\theta}$. 
For $k \in \Z_{\geq -1}$, we define $G^{\mf}_{\theta}(\R^{\infty})^{k} \subset G^{\mf}_{\theta}(\R^{\infty})$ to be the subspace consisting of those $(V, \hat{\ell}, l, \sigma)$ subject to the following condition: if $l = 0$, then $\sigma$ is non-degenerate with $k < \text{index}(\sigma) < d-k+1$.
As before, $\mb{hW}^{k}_{\theta}$ is defined to be the Thom spectrum associated to the virtual bundle, $-U_{d+1, \infty} \longrightarrow G^{\mf}_{\theta}(\R^{\infty})^{k}$.
The main theorem of this paper is stated below.
\begin{thmB} 
Let $-1 \leq k < d/2$.
Suppose that the tangential structure $\theta: B \longrightarrow BO(d+1)$ is chosen so that $B$ satisfies Wall's finiteness condition $F(k)$ (see \cite{W 65}).
Then there is a weak homotopy equivalence, 
$
B\Cob^{\mf, k}_{\theta} \simeq \Omega^{\infty-1}\mb{hW}^{k}_{\theta}.
$
\end{thmB}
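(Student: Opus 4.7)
My plan is to follow the two-step structure underlying the Madsen--Weiss proof of Theorem A, suitably modified to accommodate the index and connectivity constraints. I would introduce an auxiliary sheaf $\mathscr{W}^k_\theta$ on smooth manifolds whose sections over $X$ are smooth families $(W_x, h_{W_x})_{x \in X}$ of $\theta$-cobordisms equipped with a proper Morse height function all of whose critical points $c$ satisfy $k < \indexx(c) < d-k+1$, and then prove two independent weak equivalences
\[
B\Cob^{\mf, k}_\theta \;\simeq\; |\mathscr{W}^k_\theta| \;\simeq\; \Omega^{\infty-1}\mb{hW}^k_\theta,
\]
each obtained by adapting the corresponding ingredient of the $k = -1$ case from \cite{MW 07}.

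For the right-hand equivalence, I would define a scanning map by fiberwise Pontryagin--Thom collapse, sending a point $p$ in the total space of a family to its $2$-jet data $(T_pW, \hat{\ell}(p), dh_W(p), \mathrm{Hess}(h_W)(p))$. The constraint $k < \indexx(c) < d-k+1$ is exactly the condition that cuts $G^{\mf}_\theta(\R^\infty)^k$ out of $G^{\mf}_\theta(\R^\infty)$, so the target Thom spectrum is indeed $\mb{hW}^k_\theta$. That the scanning map is a weak equivalence then follows from the Gromov microflexibility/$h$-principle argument of \cite{MW 07}, which never uses $k = -1$ in an essential way, only that the relevant Morse jet space is open and locally defined.

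The categorical equivalence on the left is where the real work lies and where Wall's condition $F(k)$ and the bound $k < d/2$ enter. Starting from the identification $B\Cob^{\mf}_\theta \simeq |\mathscr{W}_\theta|$ recalled after Theorem A, I would show that passing to the subcategory and subsheaf cut out by Definition \ref{defn: subcategory k} still produces weakly equivalent classifying spaces. The morphism-side constraint is handled by parametrized Smale cancellation: a family of Morse functions with a critical point of forbidden index can, after enlarging the cobordism, be modified to eliminate it, because the attaching spheres of the offending handles have dimensions at most $d/2$ and can be geometrically cancelled via the Whitney trick. The object-side $k$-connectivity constraint on $\ell_M$ is imposed by a cofinality argument, connecting any object to a $k$-connected one by an allowable morphism built out of handles of index $\leq k+1$, which requires $F(k)$ on $B$ to realize the necessary attaching maps $\theta$-compatibly. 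I would implement both steps through a semi-simplicial resolution whose $p$-simplices carry explicit cancellation and surgery data, verifying that the augmentation is a weak equivalence by disjunction/induction techniques in the style of \cite{GRW 14}. The principal obstacle lies precisely in carrying out these moves coherently in families --- critical points can collide, cancellation data may fail to be isotopically unique, and the constrained jet structure must be maintained throughout --- and it is here that the hypothesis $k < d/2$ is indispensable.
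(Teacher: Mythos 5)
Your proposed decomposition reverses the actual difficulty, and the step you treat as routine is precisely the one that fails. You claim the scanning map $|\mathscr{W}^k_\theta| \to \Omega^{\infty-1}\mb{hW}^k_\theta$ is a weak equivalence "by the Gromov microflexibility/$h$-principle argument of \cite{MW 07}, which never uses $k=-1$ in an essential way, only that the relevant Morse jet space is open and locally defined." This is not correct. The $h$-principle used in \cite{MW 07} is Vassiliev's, and it relies on the forbidden jet set having codimension strictly greater than $\dim W = d+1$. For the unconstrained Morse condition the bad set is $\{l=0,\ \det\sigma = 0\}$, of codimension $d+2$, which is fine. But imposing the index constraint adds the locus $\{l=0,\ \sigma\ \text{nondegenerate},\ \indexx(\sigma)\notin(k,d-k+1)\}$, whose codimension is only $d+1$. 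This is exactly the threshold at which the $h$-principle breaks: a generic proper smooth function on a $(d+1)$-manifold \emph{does} have Morse singularities of every index, so the space of functions with the index constraint is not dense in the space of all smooth functions, and no perturbation or microflexibility argument can rescue that. Openness of $G^{\mf}_\theta(\R^\infty)^k$ in the jet space --- which is true --- is not the relevant hypothesis. The paper states this explicitly in \S\ref{subsection: localization sequence}: "there does not exist any version of Vassiliev's $h$-principle for such spaces of proper Morse functions $f: W \to \R$ whose critical points $c$ satisfy $k < \indexx(c) < d-k+1$," and for this very reason it must proceed differently.

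The actual argument is an induction on $k$. The base case $k=-1$ is Theorem A, where the $h$-principle \emph{does} apply. The inductive step is carried across the localization fibre sequence
\[
\mathcal{D}^{\mf,k}_{\theta} \hookrightarrow \mathcal{D}^{\mf,k-1}_{\theta} \xrightarrow{\ \mb{L}_k\ } \mathcal{D}^{\mf,\{k,d-k+1\}}_{\theta,\loc}
\]
of Theorem \ref{theorem: localization fibre sequence}, together with the corresponding cofibre sequence of spectra $\mb{hW}^k_\theta \to \mb{hW}^{k-1}_\theta \to \mb{hW}^{\{k,d-k+1\}}_{\theta,\loc}$ and the (easy, Barratt--Priddy--Quillen) fact that the scanning map on the local piece is always an equivalence. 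Comparing the two fibre sequences via the diagram (\ref{equation: homotopy commutative scanning diagram}) and a five-lemma argument lifts the equivalence in degree $k-1$ to one in degree $k$. The categorical identification $B\Cob^{\mf,k}_\theta \simeq \mathcal{D}^{\mf,k}_\theta$, which you treat as the difficult half, is in fact Proposition \ref{proposition: cobcat to long manifolds} and is a standard Galatius--Randal-Williams argument. The parametrized surgery, semi-simplicial resolutions, and Wall's $F(k)$ hypothesis that you would invest in that identification are instead what drive the proof of Theorem \ref{theorem: localization fibre sequence} (Sections \ref{section: parametrized surgery}--\ref{section: higher index handles}); $F(k)$ enters via Kreck's surgery theorem in the fibrewise surgery basis construction (Theorem \ref{theorem: fibrewise surgery lemma}), not via any cofinality argument on the object side. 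Without the localization sequence and the induction your plan has no mechanism to establish the scanning equivalence for $k\geq 0$, and so it does not prove the theorem.
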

We note that the case of Theorem B for $k = -1$ is covered by Theorem A.

\begin{remark} In \cite{MW 07}, the weak homotopy equivalence 
$B\Cob^{\mf}_{\theta} \simeq \Omega^{\infty-1}\mb{hW}_{\theta}$ from Theorem A constitutes a major step in Madsen and Weiss' proof of the \textit{Mumford Conjecture}, which identifies the homology of the moduli space $\BDiff((S^{1}\times S^{1})^{\#g}, D^{2})$ in the limit as $g \to \infty$.
Proceeding in a way similar to Madsen and Weiss, we will use the weak equivalences $B\Cob^{\mf, k}_{\theta} \simeq \Omega^{\infty-1}\mb{hW}^{k}_{\theta}$ from Theorem B to recover the ``High-dimensional Madsen-Weiss theorem'' of Galatius and Randal-Williams from \cite{GRW 14}, which identifies the homology of $\BDiff((S^{n}\times S^{n})^{\#g}, D^{2n})$ in the limit as $g \to \infty$ for all $n \geq 2$. 
We give an outline of how this is done later in the introduction in Subsection \ref{subsection: homology fibrations and stable moduli spaces} after discussing another application of Theorem B.
\end{remark}


\subsection{Outline of the proof of Theorem B} \label{subsection: localization sequence}
The main technical ingredient in Madsen and Weiss' proof of Theorem A is Vassiliev's h-principle \cite{Va 94} applied to certain spaces of proper Morse functions.
We refer the reader to \cite[Section 4]{MW 07} for the precise formulation of how the $h$-principle is used and for the precise definition of the function spaces that they apply it to.
Now, there does not exist any version of Vassiliev's h-principle for such spaces of proper Morse functions $f: W \longrightarrow \R$ 
whose critical points $c \in W$ satisfy the condition $k < \text{index}(c) < d-k+1$. 
For this reason, our proof of Theorem B must follow a different strategy than the proof of Theorem A in \cite{MW 07}.

Our first step is to replace the classifying space $B\Cob^{\mf, k}_{\theta}$ with a weakly equivalent space of manifolds that is more flexible and easier to work with. 
\begin{defn} \label{defn: space of long manifolds}
For $k \in \Z_{\geq -1}$, 
the space $\mathcal{D}^{\mf, k}_{\theta}$ consists of $(d+1)$-dimensional submanifolds $W \subset \R\times\R^{\infty}$, equipped with a $\theta$-structure $\hat{\ell}_{W}: TW \longrightarrow \theta^{*}\gamma^{d+1}$, subject to the following conditions:
\begin{enumerate} \itemsep.2cm
\item[(i)] the map $W \hookrightarrow \R\times\R^{\infty} \stackrel{\text{proj}} \longrightarrow \R$ is a proper Morse function 
for which all critical points $c \in W$ satisfy:  $k < \text{index}(c) < d-k+1$;
\item[(ii)] the map $\ell_{W}: W \longrightarrow B$ is $k$-connected.
\end{enumerate}
The space of manifolds $\mathcal{D}^{\mf, k}_{\theta}$ is topologized by the same procedure used in \cite[Section 2]{GRW 09}.
In the case that $k = -1$, the space $\mathcal{D}^{\mf, -1}_{\theta}$ 
is a model for the representing space of the sheaf $\mathscr{W}_{\theta}$ considered by Madsen and Weiss in \cite{MW 07}. 
\end{defn}
Using an argument from \cite[Section 4]{GMTW 08}, it follows that for all $k$ there is a weak homotopy equivalence,
$B\Cob^{\mf, k}_{\theta} \simeq \mathcal{D}^{\mf, k}_{\theta}.$
Using a version of the Pontryagin-Thom construction from \cite{MW 07} we construct a weak map 
\begin{equation} \label{equation: pontryagin thom construction}
\mathcal{P}^{k}_{\theta}: \mathcal{D}^{\mf, k}_{\theta} \longrightarrow \Omega^{\infty-1}\mb{hW}^{k}_{\theta}.
\end{equation}
In the special case that $k = -1$, Madsen and Weiss prove that this map gives the weak homotopy equivalence $\mathcal{D}^{\mf, -1}_{\theta} \simeq \Omega^{\infty-1}\mb{hW}^{-1}_{\theta}$. 
Our strategy is to prove Theorem B by induction on the integer $k$, using Madsen and Weiss' result for $k = -1$ as the base case of the induction.
In order to get the induction argument running, we need to introduce one more tool.

We let $G^{\mf}_{\theta}(\R^{\infty})_{\loc} \subset G^{\mf}_{\theta}(\R^{\infty})$ denote the subspace consisting of those $(V, \hat{\ell}, l, \sigma)$ for which $l = 0$. 
For $k \in \Z_{\geq -1}$, 
we define
$G^{\mf}_{\theta}(\R^{\infty})_{\loc}^{\{k, d-k+1\}} \subset G^{\mf}_{\theta}(\R^{\infty})_{\loc}$
to be the subspace 
consisting of those $(V, \hat{\ell}, \sigma)$ for which $\text{index}(\sigma) \in \{k, d-k+1\}$.
\begin{defn} \label{defn: W-loc space}
For $k \in \Z_{\geq -1}$, the space $\mathcal{D}^{\mf, \{k, d-k+1\}}_{\theta, \loc}$ is defined to consist of pairs $(\bar{x}, \phi)$ where: 
\begin{enumerate} \itemsep.2cm
\item[(i)] $\bar{x} \subset \R\times\R^{\infty}$ is a $0$-dimensional submanifold with the property that $\bar{x}\cap\left((-\delta, \delta)\times\R^{\infty}\right)$ is finite for all $\delta \in (0, \infty)$;
\item[(ii)] $\phi: \bar{x} \longrightarrow G^{\mf}_{\theta}(\R^{\infty})_{\loc}^{\{k, d-k+1\}}$ is a map.
\end{enumerate}
There is a map 
\begin{equation} \label{equation: localization map}
\mb{L}_{k}: \mathcal{D}^{\mf, k-1}_{\theta} \; \longrightarrow \; \mathcal{D}^{\mf, \{k, d-k+1\}}_{\theta, \loc}
\end{equation}
defined by sending $W \in \mathcal{D}^{\mf, k}_{\theta}$ to the pair $(\bar{x}, \phi)$ where $\bar{x}$ is the set of critical points of degrees $k$ and $d-k+1$ for the height function $W \hookrightarrow \R\times\R^{\infty} \stackrel{\text{proj}} \longrightarrow \R$, and $\phi$ is the map which sends each $x \in \bar{x}$ to the tuple 
$\left(T_{x}W, \; \hat{\ell}_{W}|_{x}, \; d^{2}h_{W}|_{x}\right),$
where $d^{2}h_{W}|_{x}$ is the Hessian of $h_{W}$ at the critical point $x$. 
\end{defn}

Notice that for any $k$, the subspace $\mathcal{D}^{\mf, k}_{\theta}  \subset \mathcal{D}^{\mf, k-1}_{\theta}$ is contained in the fibre $\mb{L}_{k}^{-1}(\emptyset) \subset \mathcal{D}^{\mf, k-1}_{\theta}$ of the map $\mb{L}_{k}$ over the empty element $\emptyset \in \mathcal{D}^{\mf, \{k, d-k+1\}}_{\theta, \loc}$.
As a result, the inclusion $\mathcal{D}^{\mf, k}_{\theta}  \hookrightarrow \mathcal{D}^{\mf, k-1}_{\theta}$ induces a map 
\begin{equation} \label{equation: inclusion into homotopy fibre}
\mathcal{D}^{\mf, k}_{\theta} \longrightarrow \hofibre\left(\mathcal{D}^{\mf, k-1}_{\theta} \stackrel{\mb{L}_{k}} \longrightarrow \mathcal{D}^{\mf, \{k, d-k+1\}}_{\theta, \loc}\right).
\end{equation}
The main technical theorem that we prove in this paper is stated below.
\begin{theorem} \label{theorem: localization fibre sequence}
Suppose that $k < d/2$. 
Suppose that $B$ satisfies Wall's finiteness condition $F(k)$. 
Then the map from (\ref{equation: inclusion into homotopy fibre}) is a homotopy equivalence, and thus the following sequence of maps
$$
\xymatrix{
\mathcal{D}^{\mf, k}_{\theta} \ar@{^{(}->}[r] & \mathcal{D}^{\mf, k-1}_{\theta} \ar[rr]^{\mb{L}_{k}} && \mathcal{D}^{\mf, \{k, d-k+1\}}_{\theta, \loc}
}
$$
is a homotopy fibre sequence.
\end{theorem}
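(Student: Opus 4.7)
The plan is to prove Theorem 1.1 by showing that $\mb{L}_k$ is (a model for) the projection of a homotopy fibre sequence with fibre weakly equivalent to $\mathcal{D}^{\mf, k}_\theta$. The proof splits into two independent tasks: identifying the strict fibre $\mb{L}_k^{-1}(\emptyset)$ up to weak equivalence, and establishing that $\mb{L}_k$ is a quasi-fibration over its image.

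\emph{Fibre identification.} The strict fibre $\mb{L}_k^{-1}(\emptyset)$ consists of those $W \in \mathcal{D}^{\mf, k-1}_\theta$ whose height function has critical indices in $\{k+1,\ldots,d-k\}$, where $\ell_W$ is only required to be $(k-1)$-connected, whereas membership in $\mathcal{D}^{\mf, k}_\theta$ requires $\ell_W$ to be $k$-connected. To promote the inclusion $\mathcal{D}^{\mf, k}_\theta \hookrightarrow \mb{L}_k^{-1}(\emptyset)$ to a weak equivalence, I would represent classes in $\pi_{k+1}$ of the mapping cone of $\ell_W$ by embedded $k$-spheres in regular level sets (general position, using $k < d/2$), and perform parametrized $k$-surgery along such spheres, implemented as birth--death moves involving only critical points of indices $k+1$ and $k+2$, so that the family remains inside $\mb{L}_k^{-1}(\emptyset)$ throughout. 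Wall's condition $F(k)$ ensures the obstruction to carrying out this surgery is finitely generated per component of $\mathcal{D}^{\mf, k-1}_\theta$, allowing the argument to be made parametrically via a semi-simplicial resolution of the fibre.

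\emph{Quasi-fibration.} Given any $(\bar{x},\phi) \in \mathcal{D}^{\mf, \{k, d-k+1\}}_{\theta, \loc}$, the pointwise datum $\phi(x)$ for $x \in \bar{x}$ specifies the germ of a Morse singularity, which extends canonically to an embedded handle attachment on the collar of any regular level set in a manifold realizing $(\bar{x},\phi)$. As $(\bar{x},\phi)$ varies, this extension varies continuously, giving local sections of $\mb{L}_k$. Gluing such local sections by a partition-of-unity argument shows that $\mb{L}_k$ is locally a quasi-fibration, which together with the fibre identification upgrades the map (\ref{equation: inclusion into homotopy fibre}) to a weak equivalence.

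The main obstacle is a parametrized handle cancellation theorem underpinning both steps: given a family of Morse functions with index-$k$ and index-$(d-k+1)$ critical points whose track in $\mathcal{D}^{\mf, \{k, d-k+1\}}_{\theta, \loc}$ contracts to $\emptyset$, one must construct a family homotopy in $\mathcal{D}^{\mf, k-1}_\theta$ annihilating those critical points. By Cerf theory, this amounts to pairing each index-$k$ critical point with a cancelling index-$(k+1)$ one whose descending sphere meets the ascending $(d-k)$-sphere transversely in a single point. Realizing such pairings in families requires the Whitney trick to remove excess intersections of $k$- and $(d-k)$-spheres in a $d$-dimensional level set, valid precisely when $k < d/2$ puts us in the stable range, and condition $F(k)$ supplies the algebraic flexibility in $\pi_k(\ell_W)$ needed to produce the cancelling handles geometrically. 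Combining these ingredients with the local-to-global quasi-fibration argument yields the claimed homotopy fibre sequence.
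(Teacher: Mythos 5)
Your overall architecture — weakly identify $\mb{L}_k^{-1}(\emptyset)$ with $\mathcal{D}^{\mf,k}_\theta$ and then show $\mb{L}_k$ is a quasi-fibration — matches the top-level plan of the paper, and the first half of your sketch is close to what actually happens. The fibre-identification step is carried out in the paper via Theorem \ref{theorem: fibrewise surgery}, a fibrewise surgery-below-the-middle-dimension argument using Wall's $F(k)$ condition (through Kreck's surgery result) and a category of multiple surgeries whose classifying space is shown to be contractible (Proposition \ref{proposition: contractibility of the surgery category}); this is exactly the kind of semi-simplicial parametrized-surgery machinery you gesture at, so no objection there, apart from the fact that the paper first passes to a homotopy colimit model $\hocolim_{\mb{t}\in\mathcal{K}^{k}}\mathcal{W}^{k}_{P,\mb{t}}$ (Theorem \ref{theorem: homotopy colimit decomposition}) to make the surgery argument implementable.

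The quasi-fibration half of your proposal is where the genuine gap lies. Your ``local sections plus partition of unity'' sketch would, at best, make $\mb{L}_k$ into a fibration over each open stratum of $\mathcal{D}^{\mf,\{k,d-k+1\}}_{\theta,\loc}$ where the number of critical points is locally constant. But the entire content of the quasi-fibration statement is what happens when a critical point is created or absorbed: a point in the base space $\mathcal{D}^{\mf,\{k,d-k+1\}}_{\theta,\loc}$ records only germ data, and the strict fibres over points on adjacent strata are by no means homeomorphic, so one must \emph{prove} that the comparison maps between these fibres are weak equivalences. That is the content of Theorem \ref{proposition: homotopy equivalence of transition maps}, which via Proposition \ref{proposition: fibre identification} becomes a \emph{stability} statement about cobordism maps $\mb{S}_{W}\colon\hocolim_{\mb{t}}\mathcal{W}^{k}_{P,\mb{t}}\to\hocolim_{\mb{t}}\mathcal{W}^{k}_{Q,\mb{t}}$ for certain elementary cobordisms $W$ (primitive degree-$k$ handles, trivial degree-$(d-k)$ handles, highly connected traces). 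Notice that the paper's strategy is dual to yours: rather than a parametrized handle \emph{cancellation} theorem (annihilating critical points in families, essentially pseudoisotopy-level input which you are not going to get for free), the paper proves that handle \emph{attachment} to the boundary is a weak equivalence, using the self-cobordism $H_{k,d-k}(P)$ and a natural-transformation argument (Proposition \ref{proposition: H-k induces iso}), then bootstrapping through traces, dual surgeries and semi-simplicial resolutions (Sections 7--9). Also, your invocation of the Whitney trick is off: $k<d/2$ is used for general-position arguments (getting embedded $k$-discs, disjointing cores of handles, the transversality step in Lemma \ref{lemma: relative existence of fibrewise surgery semi-basis}), not to put $k$-spheres and $(d-k)$-spheres in a $2n$-manifold into a ``stable range'' for Whitney moves. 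As written, your proposal cites the key technical obstacle but offers no route around it, so it does not constitute a proof of the statement.
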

Let us now sketch how to use the fibre sequence from Theorem \ref{theorem: localization fibre sequence} to prove Theorem B. 
Let $\mb{hW}^{\{k, d-k+1\}}_{\theta, \loc}$ denote the suspension spectrum $\Sigma^{\infty}\left(G^{\mf}_{\theta}(\R^{\infty})^{\{k, d-k+1\}}_{\loc, \; +}\right)$.
For all $k$ there is a \textit{scanning map}
\begin{equation} \label{equation: scanning map}
\mathcal{P}^{\{k, d-k+1\}}_{\loc}: \mathcal{D}^{\mf, \{k, d-k+1\}}_{\theta, \loc} \; \longrightarrow \; \Omega^{\infty-1}\mb{hW}^{\{k, d-k+1\}}_{\theta, \loc},
\end{equation}
and it follows from the \textit{Barrat-Priddy-Quillen} theorem (or \cite[Theorem 3.12]{GRW 09}) that this scanning map is a weak homotopy equivalence for all $k$. 
Following \cite[Section 3]{MW 07} there is a cofibre sequence of spectra, 
$
\xymatrix{
\mb{hW}^{k}_{\theta} \ar[r] & \mb{hW}^{k-1}_{\theta} \ar[r] & \mb{hW}^{\{k, d-k+1\}}_{\theta, \loc}.
}
$
Furthermore, the maps $\mathcal{P}^{\{k, d-k+1\}}_{\loc}$ and $\mathcal{P}^{k}$ can be modeled in such a way so that the following diagram is homotopy commutative,
\begin{equation} \label{equation: homotopy commutative scanning diagram}
\xymatrix{
\mathcal{D}^{\mf, k}_{\theta} \ar@{^{(}->}[r] \ar[d] & \mathcal{D}^{\mf, k-1}_{\theta} \ar[rr]^{\mb{L}_{k}} \ar[d] && \mathcal{D}^{\mf, \{k, d-k+1\}}_{\loc} \ar[d]_{\simeq} \\
\Omega^{\infty-1}\mb{hW}^{k}_{\theta} \ar[r] & \Omega^{\infty-1}\mb{hW}^{k-1}_{\theta} \ar[rr] && \Omega^{\infty-1}\mb{hW}^{\{k, d-k+1\}}_{\theta, \loc}.
}
\end{equation}
With this homotopy commutative diagram established, we may use Theorem \ref{theorem: localization fibre sequence} to prove Theorem B by induction on $k$. 
Indeed, in the case that $k = 0$, the vertical map $\mathcal{D}^{\mf, -1}_{\theta} \longrightarrow \Omega^{\infty-1}\mb{hW}^{-1}_{\theta}$ is a weak homotopy equivalence as proven by Madsen and Weiss. 
Since the right-vertical map is always a weak homotopy equivalence for all $k$ and since both rows are fibre sequences, it follows that the left-vertical map is a weak homotopy equivalence as well. 
Continuing by induction for all $k < d/2$ yields Theorem B. 

\subsection{Homology fibrations and stabilization} \label{subsection: homology fibrations and stable moduli spaces}
We now describe how to use Theorem B to recover the high-dimensional analogue of the Madsen-Weiss theorem proven by Galatius and Randal-Williams in \cite{GRW 14}.
We turn our attention to the case where $d = 2n$ and $k = n$. 
In this case the height function $h_{W}: W \longrightarrow \R$ associated to an element $W \in \mathcal{D}^{\mf, n}_{\theta}$ is a submersion. 
With $d = 2n$ and $k = n$, the sequence from the statement of Theorem \ref{theorem: localization fibre sequence} fails to be a fibre sequence. 
However, it turns out that we can recover a homological fibre sequence upon applying a certain `stabilization process' to the spaces $\mathcal{D}^{\mf, k}_{\theta}$.
In order to describe this stabilization process, we will need to specify a certain subspace of $\mathcal{D}^{\mf, k}_{\theta}$.

For what follows, fix a $d$-dimensional compact submanifold 
$L \subset [0, \infty)\times\R^{\infty-1}$ 
that agrees with $[0, \infty)\times\partial L$ near $\{0\}\times\R^{\infty-1}$. 
\begin{defn} \label{defn: long manifolds with standard strip}
Fix a $\theta$-structure $\hat{\ell}_{L}: TL\oplus\epsilon^{1} \longrightarrow \theta^{*}\gamma^{d+1}$.
For all $k \in \Z_{\geq -1}$, 
the subspace $\mathcal{D}^{\mf, k}_{\theta, L} \subset \mathcal{D}^{\mf, k}_{\theta}$ consists of those $W$ for which, 
$$W\cap\left(\R\times[0, \infty)\times\R^{\infty-1}\right) = \R\times L,$$ 
as a $\theta$-manifold.
\end{defn}
It can be shown whenever $k < d/2$, for all such manifolds $L$, the inclusion induces a weak homotopy equivalence,
$
\mathcal{D}^{\mf, k}_{\theta, L}  \simeq \mathcal{D}^{\mf, k}_{\theta}. 
$
Let $d = 2n$ and suppose that $L \neq \emptyset$. 
Let $P$ denote the boundary $\partial L$ and let $\hat{\ell}_{P}$ denote the restriction of $\hat{\ell}_{L}$ to $P = \partial L$.
\begin{defn} \label{defn: self cobordism stable}
Consider the manifold $S^{n}\times S^{n}$. 
Fix a $\theta$-structure $\hat{\ell}_{0}: T(S^{n}\times S^{n})\oplus\epsilon^{1} \longrightarrow \theta^{*}\gamma^{d+1}$ 
with the property that the underlying map $\ell_{0}: S^{n}\times S^{n} \longrightarrow B$ is null-homotopic. 
Choose an embedding $\phi: S^{n}\times S^{n} \longrightarrow (0, 1)\times\R^{\infty-1}$ with image disjoint from the manifold $(0, 1)\times P$. 
Let 
$$H(P) \subset [0, 1]\times\R^{\infty-1}$$ 
be the submanifold obtained by forming the connected sum of $[0, 1]\times P$ with $\phi(S^{n}\times S^{n})$ along an embedded arc connecting $[0, 1]\times P$ with $\phi(S^{n}\times S^{n})$.
Fix a $\theta$-structure $\hat{\ell}_{H(P)}$ on $H(P)$ that agrees with $\hat{\ell}_{P}$ on $\{0\}\times P$ and $\{1\}\times P$, and that agrees with $\hat{\ell}_{0}$ on the connected sum factor $S^{n}\times S^{n}$. 
We consider $H(P)$ to be a self-cobordism of the $\theta$-manifold $P$. 
\end{defn}
Let $\widehat{H}(P) \subset \R\times [0, \infty)\times\R^{\infty-1}$ be the submanifold defined by the union 
$$
\widehat{H}(P) \; = \; \R\times H(P)\bigcup_{\R\times(\{1\}\times P)}\R\times(L + e_{1})
$$
where $(L + e_{1}) \subset [1, \infty)\times\R^{\infty-1}$ denotes the manifold obtained by translating $L \subset [0, \infty)\times\R^{\infty-1}$ by one unit in the first coordinate.
Using $\widehat{H}(P)$, we define a map 
\begin{equation} \label{equation: stabilization by S-n S-n}
\--\cup\widehat{H}(P): \mathcal{D}^{\mf}_{\theta, L} \; \longrightarrow \; \mathcal{D}^{\mf}_{\theta, L}
\end{equation}
by sending $W \in \mathcal{D}^{\mf}_{\theta, L}$ to the union, 
$
\left(W\setminus(\R\times L)\right)\bigcup\widehat{H}(P),
$
and then rescaling the second coordinate of the ambient space.
Since $S^{n}\times S^{n}$ is $(n-1)$-connected, it follows that (\ref{equation: stabilization by S-n S-n}) restricts to a map 
$\mathcal{D}^{\mf, k}_{\theta, L} \; \longrightarrow \; \mathcal{D}^{\mf, k}_{\theta, L}$ for all $k \leq n = d/2$. 
The map $\--\cup \widehat{H}(P)$ may be iterated, and for each $k \leq d/2$ we define
$$
\mathcal{D}^{\mf, k, \stb}_{\theta, L} := \hocolim\left(\mathcal{D}^{\mf, k}_{\theta, L} \stackrel{\--\cup\widehat{H}(P)} \longrightarrow \mathcal{D}^{\mf, k}_{\theta, L}\stackrel{\--\cup\widehat{H}(P)} \longrightarrow \cdots \right).
$$
Now, for $k < d/2$ we prove that the map $\--\cup\widehat{H}(P): \mathcal{D}^{\mf, k}_{\theta, L} \longrightarrow \mathcal{D}^{\mf, k}_{\theta, L}$ is a weak homotopy equivalence, and thus 
we obtain the weak homotopy equivalence 
$
\mathcal{D}^{\mf, k, \stb}_{\theta, L} \simeq \mathcal{D}^{\mf, k}_{\theta, L}
$
when $k < d/2$. 
For $k = n = d/2$ however, this weak homotopy equivalence does not hold. 

Our main theorem relates $\mathcal{D}^{\mf, n, \stb}_{\theta, L}$ to the homotopy fibre of $\mb{L}_{n}: \mathcal{D}^{\mf, n-1, \stb}_{\theta, L} \longrightarrow \mathcal{D}^{\mf, \{n, n+1\}}_{\theta, \loc}$.
As before, the inclusion $\mathcal{D}^{\mf, n, \stb}_{\theta, L} \hookrightarrow \mathcal{D}^{\mf, n-1, \stb}_{\theta, L}$ factors through the fibre $\mb{L}_{n}^{-1}(\emptyset)$, and thus induces a map 
\begin{equation} \label{equation: map to fibre n}
\mathcal{D}^{\mf, n, \stb}_{\theta, L} \longrightarrow \hofibre\left(\mathcal{D}^{\mf, n-1, \stb}_{\theta, L} \stackrel{\mb{L}_{n}} \longrightarrow \mathcal{D}^{\mf, \{n, n+1\}}_{\theta, \loc}\right).
\end{equation}
Using the homological stability theorem from \cite{GRW 16} we prove the following theorem. 
\begin{theorem} \label{theorem: homology fibration after stabilization}
Let $d = 2n$ and suppose that $B$ satisfies Wall's finiteness condition $F(n)$. 
Then the map (\ref{equation: map to fibre n}) is an acyclic map.
\end{theorem}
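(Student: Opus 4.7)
The plan is to prove Theorem \ref{theorem: homology fibration after stabilization} by combining a parametric Morse-theoretic description of the homotopy fibre of $\mb{L}_{n}$ with the homological stability theorem of Galatius and Randal-Williams \cite{GRW 16} for the moduli spaces $\BDiff((S^{n}\times S^{n})^{\#g}, D^{2n})$. Since $n-1 < d/2$, the stabilization map $\mathcal{D}^{\mf, n-1}_{\theta, L} \to \mathcal{D}^{\mf, n-1, \stb}_{\theta, L}$ is already a weak homotopy equivalence, so the target in (\ref{equation: map to fibre n}) can be replaced by the fibre of the unstabilized map $\mb{L}_{n}$. The content of the theorem is that, although the unstabilized inclusion $\mathcal{D}^{\mf, n}_{\theta, L} \to \hofibre(\mb{L}_{n})$ is far from a weak equivalence, stabilizing the source by $-\cup\widehat{H}(P)$ exactly fills in the missing $(S^{n}\times S^{n})$-connected-sum factors.

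The first major step is to construct an explicit model for $\hofibre(\mb{L}_{n})$ via parametric Morse theory. Following the Pontryagin--Thom construction used in the proof of Theorem \ref{theorem: localization fibre sequence}, every point in the fibre over a configuration of $g$ critical points of index $n$ together with $g$ critical points of index $n+1$ admits a representation by a $\theta$-cobordism obtained from an element $W_{0} \in \mathcal{D}^{\mf, n}_{\theta, L}$ by performing $g$ elementary surgeries of index $n$ and $g$ of index $n+1$. Using the finiteness condition $F(n)$ to control the underlying $\theta$-structure, and applying standard handle-trading in the middle dimension, such a sequence of surgeries can be put into a normal form realizing a boundary connect-sum $W_{0}\,\natural^{g}(S^{n}\times S^{n})$. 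Consequently the fibre decomposes, up to the space of prescribed critical data, as a coproduct indexed by $g$ of classifying spaces of diffeomorphisms of the manifolds $W_{0}\,\natural^{g}(S^{n}\times S^{n})$.

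Under this identification, the stabilization map $-\cup\widehat{H}(P)$ on the source $\mathcal{D}^{\mf, n}_{\theta, L}$ corresponds to $W_{0} \mapsto W_{0}\,\natural\,(S^{n}\times S^{n})$, i.e.\ to incrementing $g$ by one. The comparison map at the $g$-th stabilization stage is therefore modeled by the natural inclusion $\BDiff(W_{0}\,\natural^{g}(S^{n}\times S^{n}), \partial L) \hookrightarrow \BDiff(W_{0}\,\natural^{g+1}(S^{n}\times S^{n}), \partial L)$ inside the fibre. By \cite{GRW 16}, these inclusions induce isomorphisms on $H_{i}$ in a range tending to infinity with $g$. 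Passing to the homotopy colimit on the source, and using that $H_{*}$ commutes with filtered colimits, we conclude that (\ref{equation: map to fibre n}) is a homology isomorphism in every degree, hence acyclic.

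The principal obstacle is the parametric Morse-theoretic identification in the second step: one must establish, in families over the configuration space of critical data, that the fibre of $\mb{L}_{n}$ is faithfully modeled by a bundle of classifying spaces of self-surgery cobordisms, and that this identification is natural with respect to $-\cup\widehat{H}(P)$. The essential tools are middle-dimensional handle cancellation and the Whitney trick (which first fail precisely at $k = n$, explaining why this is the borderline case), together with the finiteness condition $F(n)$, which ensures that the underlying $\theta$-structure remains $n$-connected after all surgeries. Once this identification is in place, the Galatius--Randal-Williams stability theorem delivers the conclusion.
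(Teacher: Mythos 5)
Your high-level plan — identify fibres of $\mb{L}_{n}$ with moduli spaces of manifolds and feed the homological stability theorem of Galatius and Randal-Williams into a homology-fibration argument — is the right shape, and you correctly single out \cite{GRW 16} as the decisive external input. But the central step of your argument, the parametric Morse-theoretic identification of $\hofibre(\mb{L}_{n})$, is where the proof actually has to be carried out, and as proposed it does not go through.

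The specific claim that "standard handle-trading in the middle dimension" puts a composite of $g$ index-$n$ and $g$ index-$(n{+}1)$ surgeries on $W_{0}$ into the normal form $W_{0}\,\natural^{g}(S^{n}\times S^{n})$ is not correct: the diffeomorphism type of the surgered manifold is determined not by the number of critical points but by the attaching data (framings and intersection pattern of the attaching spheres), and the Whitney trick needed to simplify this data is exactly what fails at $k=n$ — as you yourself remark in your final paragraph. If such a normal form held at the unstabilized level, the unstabilized sequence in Theorem \ref{theorem: localization fibre sequence} would already be a fibre sequence for $k=n$, which the paper explicitly states is false. The whole point of stabilizing by $\widehat{H}(P)$ is that homological stability \emph{repairs} the failure of middle-dimensional handle-trading; using a connect-sum normal form as an input is therefore circular. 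You acknowledge this as the "principal obstacle," but the proposal does not supply the missing step.

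The paper's actual proof sidesteps any direct Morse-theoretic description of $\hofibre(\mb{L}_{n})$. It runs the problem through the homotopy colimit decomposition $\mathcal{D}^{\mf, k-1}_{\theta, P} \simeq \hocolim_{\mb{t}}\mathcal{W}^{k-1}_{P, \mb{t}}$ of Theorem \ref{theorem: homotopy colimit decomposition}, identifies the Serre-fibration fibres $\widehat{\mathcal{W}}^{n-1,c,\stb}_{P,\mb{s}}$ with stabilized moduli spaces $\mathcal{W}^{n,\stb}_{P\sqcup S_{\mb{s}},\emptyset}$ via the elementary "cut out the chosen embedding" homeomorphism (Proposition \ref{proposition: cut out the embedding homeo}), which requires no handle-trading and only a choice of basepoint surgery data, and then applies the homological stability theorem at that level to show that the $\mathcal{K}^{\{n,n+1\}}$-indexed diagram of fibres has abelian homology equivalences as its transition maps (Theorem \ref{theorem: stable stability}). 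The formal acyclic-homology-fibration criterion (Proposition \ref{proposition: homotopy colimit homology fibre sequence}, the homology analogue of Proposition \ref{proposition: homotopy colimit fibre sequence}) then delivers acyclicity without ever constructing an explicit model for the homotopy fibre. To make your argument work you would need either to adopt a criterion of this kind, or to establish the fibrewise comparison in families by some other route; the connect-sum normal form cannot serve as that route.
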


By invoking the same argument given Section \ref{subsection: localization sequence} to derive Theorem B from Theorem \ref{theorem: localization fibre sequence}, we obtain the following corollary which is the analogue of Theorem B in the case that $d = 2n$ and $k = n$. 
\begin{corollary} \label{corollary: homological corollary for k = n}
Let $d = 2n$ and $k = n$. 
Suppose that $B$ satisfies Wall's finiteness condition $F(n)$. 
Then there is an acyclic map, 
$
\mathcal{D}^{\mf, n, \stb}_{\theta, L} \; \longrightarrow \; \Omega^{\infty-1}\mb{hW}^{n}_{\theta}.
$
\end{corollary}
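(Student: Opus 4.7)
The plan is to mirror exactly the inductive argument from Subsection \ref{subsection: localization sequence} that deduces Theorem B from Theorem \ref{theorem: localization fibre sequence}, substituting Theorem \ref{theorem: homology fibration after stabilization} for Theorem \ref{theorem: localization fibre sequence} and checking that all the inputs needed at level $k = n-1$ are already established. First I observe that $n - 1 < d/2$, so Theorem B applies and produces a weak equivalence $\mathcal{D}^{\mf, n-1}_{\theta} \simeq \Omega^{\infty-1}\mb{hW}^{n-1}_{\theta}$. Combining this with the weak equivalence $\mathcal{D}^{\mf, n-1}_{\theta, L} \simeq \mathcal{D}^{\mf, n-1}_{\theta}$ (which holds for $n-1 < d/2$) and the fact recorded immediately before the statement of Theorem \ref{theorem: homology fibration after stabilization} that the stabilization map $\--\cup\widehat{H}(P)$ is already a weak equivalence at level $k < d/2$, I obtain a weak equivalence
$$
\mathcal{D}^{\mf, n-1, \stb}_{\theta, L} \; \simeq \; \Omega^{\infty-1}\mb{hW}^{n-1}_{\theta}.
$$

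Next I assemble the stabilized analogue of the diagram (\ref{equation: homotopy commutative scanning diagram}):
$$
\xymatrix{
\mathcal{D}^{\mf, n, \stb}_{\theta, L} \ar[r] \ar[d] & \mathcal{D}^{\mf, n-1, \stb}_{\theta, L} \ar[rr]^{\mb{L}_{n}} \ar[d]^{\simeq} && \mathcal{D}^{\mf, \{n, n+1\}}_{\theta, \loc} \ar[d]^{\simeq} \\
\Omega^{\infty-1}\mb{hW}^{n}_{\theta} \ar[r] & \Omega^{\infty-1}\mb{hW}^{n-1}_{\theta} \ar[rr] && \Omega^{\infty-1}\mb{hW}^{\{n, n+1\}}_{\theta, \loc}.
}
$$
Homotopy commutativity of the two squares is inherited from (\ref{equation: homotopy commutative scanning diagram}): the scanning maps $\mathcal{P}^{k}_{\theta}$ and $\mathcal{P}^{\{n, n+1\}}_{\loc}$ are defined pointwise in terms of local data around the manifold, so they pass through the colimit defining $\mathcal{D}^{\mf, n-1, \stb}_{\theta, L}$ in a manner compatible with the localization map $\mb{L}_{n}$. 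The rightmost vertical arrow is the weak equivalence (\ref{equation: scanning map}); the middle vertical arrow is the weak equivalence just constructed; and the bottom row is the fibre sequence obtained by applying $\Omega^{\infty-1}$ to the cofibre sequence of Thom spectra $\mb{hW}^{n}_{\theta} \to \mb{hW}^{n-1}_{\theta} \to \mb{hW}^{\{n, n+1\}}_{\theta, \loc}$ recalled in Subsection \ref{subsection: localization sequence}.

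Finally, Theorem \ref{theorem: homology fibration after stabilization} asserts that the canonical map from $\mathcal{D}^{\mf, n, \stb}_{\theta, L}$ into $\hofibre(\mb{L}_{n})$ is acyclic. The five lemma applied to the long exact sequences of the two rows --- or equivalently the fact that when the middle and right vertical maps in a comparison of fibre sequences are weak equivalences, the induced map on homotopy fibres is a weak equivalence --- shows that $\hofibre(\mb{L}_{n}) \longrightarrow \Omega^{\infty-1}\mb{hW}^{n}_{\theta}$ is a weak equivalence. Composing with the acyclic map provided by Theorem \ref{theorem: homology fibration after stabilization} yields an acyclic map $\mathcal{D}^{\mf, n, \stb}_{\theta, L} \longrightarrow \Omega^{\infty-1}\mb{hW}^{n}_{\theta}$, which is the statement of the corollary. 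The main technical content is entirely absorbed into Theorem \ref{theorem: homology fibration after stabilization}; the only subtlety remaining here is to choose models for $\mathcal{P}^{n}_{\theta}$ and $\mathcal{P}^{\{n, n+1\}}_{\loc}$ that make the right square of the diagram strictly (or verifiably homotopy) commutative under the stabilization by $\widehat{H}(P)$, which proceeds exactly as in the construction of (\ref{equation: homotopy commutative scanning diagram}).
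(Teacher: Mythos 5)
Your proof is correct and follows the same route the paper intends: the paper's one-line justification ``by invoking the same argument given in Section 2.5'' is precisely the comparison-of-fibre-sequences argument you spell out, where Theorem 1.3 supplies acyclicity of the map into $\hofibre(\mb{L}_n)$, the middle and right vertical scanning equivalences supply $\hofibre(\mb{L}_n)\simeq\Omega^{\infty-1}\mb{hW}^{n}_{\theta}$, and the composite is acyclic. One small remark: in the final paragraph you could drop the reference to ``the five lemma applied to the long exact sequences of the two rows,'' since the top row is not a fibre sequence when $k=n$ --- the second formulation you give (a homotopy commutative square with two vertical weak equivalences induces a weak equivalence on homotopy fibres of the horizontals) is the one that actually applies.
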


The high-dimensional version of the Madsen-Weiss theorem, proven by Galatius and Randal-Williams in \cite{GRW 14}, can be recovered from Corollary \ref{corollary: homological corollary for k = n} by simply reinterpreting the spaces involved in the statement of the above corollary.
In particular, the spectrum $\mb{hW}^{n}_{\theta}$ is homotopy equivalent to the \textit{Madsen-Tillmann spectrum} $\Sigma^{-1}\MT\theta_{d} \simeq \mb{hW}^{n}_{\theta}$ and the space $\mathcal{D}^{\mf, n, \stb}_{\theta, L}$ is homotopy equivalent the \textit{Stable moduli space of $\theta$-manifolds} studied in \cite{GRW 14} and \cite{GRW 16}. 
We will show how to recover their result from the above corollary in Section \ref{section: stable moduli spaces}. 
We remark that our method of proof specializes to Madsen and Weiss' proof of the \textit{Mumford Conjecture} \cite{MW 07} in the $n = 1$ case. 

\subsection{Organization} \label{subsection: organization}
In Section \ref{section: sheaves and families of morse functions} we cover the basic definitions and state the main theorems for us to prove. 
In Section \ref{section: homotopy colimit decompositions} we define an alternative model for the spaces $\mathcal{D}^{\mf, k}_{\theta}$ which will facilitate the proofs of the main technical theorems of the paper. 
The proof that this newly defined space is weakly equivalent to $\mathcal{D}^{\mf, k}_{\theta}$ follows an argument from \cite{MW 07} almost verbatim, and thus we relegate its proof to the appendix.  
Sections \ref{section: parametrized surgery} through \ref{section: higher index handles} involve all of the technical work of the paper. 
The propositions proven in those sections are all stated in terms of the definitions given in Section \ref{section: homotopy colimit decompositions} and thus the agenda for those sections is set and described in Section \ref{section: homotopy colimit decompositions}. 
In Section \ref{section: stable moduli spaces} we show how to use our results to recover the theorem of Galatius and Randal-Williams from \cite{GRW 14}.
Appendix \ref{section: proof of theorem homotopy colimit theorem} is devoted to giving the proof of a technical result stated in Section \ref{section: homotopy colimit decompositions}.

\subsection{Acknowledgments} \label{subsection: Acknowledgments}
The author thanks Johannes Ebert for a careful reading of an earlier version of this paper and for providing a number of helpful critical remarks. 
The author also thanks Boris Botvinnik, Soren Galatius, and Fabian Hebestreit for may helpful conversations. 
The author was supported by an NSF Post-Doctoral Fellowship, DMS-1502657, at Stanford University.

\section{Spaces of Manifolds and the Pontryagin-Thom Construction} \label{section: sheaves and families of morse functions}
\subsection{Spaces of manifolds} \label{subsection: spaces of manifolds}
In this section we define some spaces to be used throughout the paper. 
We first recall the definition of a \textit{tangential structure}.
\begin{defn} \label{defn: tangential structure}
A tangential structure is a map $\theta: B \longrightarrow BO(m)$. 
A $\theta$-structure on a $l$-dimensional manifold $M$ (with $l \leq m$) is defined to be a bundle map $\hat{\ell}_{M}: TM\oplus\epsilon^{m-l} \longrightarrow \theta^{*}\gamma^{m}$, i.e.\ a fibrewise linear isomorphism.
The pair $(M, \hat{\ell}_{M})$ is called an $l$-dimensional $\theta$-manifold. 
\end{defn}

For what follows, fix an integer $d \in \Z_{\geq 0}$ and a tangential structure $\theta: B \longrightarrow BO(d+1)$.
Below we define a space of $(d+1)$-dimensional $\theta$-manifolds. 
\begin{defn} \label{defn: space of manifolds with boundary}
Let $N \in \Z_{\geq 0}$. 
Fix  a $(d-1)$-dimensional, closed, submanifold $P \subset \R^{\infty-1}$ and a $\theta$-structure $\hat{\ell}_{P}$ on $P$.
The space $\mathcal{D}_{\theta, P}$ consists of pairs $(W, \hat{\ell}_{W})$ where 
$W \subset \R\times(-\infty, 0]\times\R^{\infty-1}$
is a $(d+1)$-dimensional submanifold (not necessarily compact), and $\hat{\ell}_{W}$ is a $\theta$-structure on $W$, subject to the following conditions:
\begin{enumerate} \itemsep.2cm
\item[(i)] $\partial W = \R\times P = W\cap\left(\R\times\{0\}\times\R^{\infty-1}\right)$; 
\item[(ii)] $W$ agrees with $\R\times(-\infty, 0]\times P$ near $\R\times\{0\}\times\R^{\infty-1}$ as a $\theta$-manifold, where the space $\R\times(-\infty, 0]\times P$ is equipped with the $\theta$-structure induced by $\hat{\ell}_{P}$.
\item[(iii)] the map, 
$
\xymatrix{W \ar@{^{(}->}[r] & \R\times(-\infty, 0]\times\R^{\infty-1} \ar[rr]^{\ \ \ \ \ \ \text{proj.}} && \R,}
$
is a proper map.
We note that this condition implies that for any $\delta \in (0, \infty)$, the space $W\cap\left([-\delta, \delta]\times(-\infty, 0]\times\R^{\infty-1}\right)$ is compact.
\end{enumerate}
The space $\mathcal{D}_{\theta, P}$ is topologized by the same process that was employed in \cite[Section 2]{GRW 09}; we do not repeat their construction here. 
\end{defn}

We will also have to work with spaces of compact $d$-dimensional manifolds with prescribed boundary.
\begin{defn} \label{defn: space of manifolds (absolute)}
Let $N \in \Z_{\geq 0}$. 
Fix a $(d-1)$-dimensional closed submanifold $P \subset \R^{\infty-1}$, and a $\theta$-structure $\hat{\ell}_{P}$ on $P$. 
The space $\mathcal{N}_{\theta, P}$ consists of pairs $(M, \hat{\ell}_{M})$ where $M \subset (-\infty, 0]\times\R^{\infty-1}$ is a $d$-dimensional compact submanifold, and $\hat{\ell}: TM\oplus\epsilon^{1} \longrightarrow \theta^{*}\gamma^{d+1}$ is a $\theta$-structure, subject to the following conditions:
\begin{enumerate} \itemsep.2cm
\item[(i)] $\partial M = M\cap\left(\{0\}\times\R^{\infty-1}\right) = P$, 
\item[(ii)] $M$ agrees with $(-\infty, 0]\times P$ near $\{0\}\times\R^{\infty-1}$ as a $\theta$-manifold, where $(-\infty, 0]\times P$ is equipped with the $\theta$-structure induced by $\hat{\ell}_{P}$.
\end{enumerate}
The space $\mathcal{N}_{\theta, P}$ is topologized in the usual way by identifying,
\begin{equation} \label{equation: quotient construction for moduli}
\mathcal{N}_{\theta, P} \; \cong \; \coprod_{[M]}\left(\Emb(M, (-\infty, 0]\times\R^{\infty-1}; i_{P})\times\Bun(TM\oplus\epsilon^{1}, \theta^{*}\gamma^{d+1}; \hat{\ell}_{P})\right)/\Diff(M, P),
\end{equation}
where the union ranges over all diffeomorphism classes of compact manifolds $M$, equipped with an identification $\partial M = P$.
The space $\Emb(M, (-\infty, 0]\times\R^{\infty-1}; i_{P})$ consists of all neat embeddings $M \longrightarrow (-\infty, 0]\times\R^{\infty-1}$ that agree with the inclusion $i_{P}: P \hookrightarrow \R^{\infty-1}$ on $\partial M = P$. 
The space
$\Bun(TM\oplus\epsilon^{1}, \theta^{*}\gamma^{d+1}; \hat{\ell}_{P})$ consists of all $\theta$-structures on $M$ that agree with $\hat{\ell}_{P}$ on $\partial M$. 
The action of $\Diff(M, P)$ is given by the formula $((\phi, \hat{\ell}), f) \mapsto (\phi\circ f, f^{*}\hat{\ell})$. 
\end{defn}

Both of the spaces $\mathcal{D}_{\theta, P}$ and $\mathcal{N}_{\theta, P}$ depend on a choice of closed $(d-1)$-dimensional $\theta$-manifold $(P, \hat{\ell}_{P})$.
Latter on it will be important to let these spaces vary with the manifold $(P, \hat{\ell}_{P})$. 
It will be useful to specify the space of manifolds that $(P, \hat{\ell}_{P})$ is drawn from. 
\begin{defn} \label{defn: space of closed theta manifolds}
We define $\mathcal{M}_{\theta}$ to be the space consisting of pairs $(P, \hat{\ell}_{P})$ where $P \subset \R^{\infty-1}$ is a closed $(d-1)$-dimensional submanifold, and $\hat{\ell}_{P}$ is a $\theta$-structure on $P$. 
The space $\mathcal{M}_{\theta}$ is topologized as a quotient space similar to (\ref{equation: quotient construction for moduli}). 
\end{defn}

\begin{Notation} \label{Notation: brief notation for theta manifolds}
From here on out, for elements of the spaces $\mathcal{M}_{\theta}$, $\mathcal{N}_{\theta, P}$, and $\mathcal{D}_{\theta, P}$ we will use the notation $W := (W, \hat{\ell}_{W})$. 
Since the $\theta$-structure has $W$ as a subscript there is no real loss of information with this notational convention. 
For such an element, 
we will denote by $\ell_{W}: W \longrightarrow B$ the underlying map associated to the $\theta$-structure $\hat{\ell}_{W}$.

At the beginning of this section, we fixed an integer $d$ and tangential structure $\theta: B \longrightarrow BO(d+1)$. 
We will continue to use this convention throughout the whole paper.
The space $\mathcal{D}_{\theta, P}$ will always consist of $(d+1)$-dimensional manifolds, $\mathcal{N}_{\theta, P}$ will always consist of $d$-dimensional manifolds, and $\mathcal{M}_{\theta}$ will always consist of $(d-1)$-dimensional manifolds.
\end{Notation}

\subsection{Spaces of manifolds equipped with morse functions} \label{subsection: families of morse functions}
In this section we define the main spaces of interest.
Fix an integer $d \in \Z_{\geq 0}$ and a tangential structure $\theta: B \longrightarrow BO(d+1)$. 
We must fix some more notation first. 
\begin{Notation} \label{Notation: height function}
Let $P \in \mathcal{M}_{\theta}$.
For any element $W \in \mathcal{D}_{\theta, P}$, we will let 
$
h_{W}: W \longrightarrow \R
$
denote the function,
$\xymatrix{
W \ar@{^{(}->}[r] & \R\times(-\infty, 0]\times\R^{\infty-1} \ar[r]^{ \ \ \ \ \ \ \ \ \ \ \ \ \text{proj}} & \R.
}$
We refer to this function as the \textit{height function}.
For any subset $K \subseteq \R$, we write 
$
W|_{K} = W\cap h_{W}^{-1}(K)
$
if it is a manifold. 
If $\hat{\ell}$ is a $\theta$-structure on $W$ and $W|_{K}$ a submanifold of $W,$ then we 
 write $\hat{\ell}|_{K}$ for the restriction of $\hat{\ell}$ to $W|_{K}$.
\end{Notation}

\begin{defn} \label{defn: sheaf of fibrewise morse functions}
Let $P \in \mathcal{M}_{\theta}$. 
We define $\mathcal{D}^{\mf}_{\theta, P} \subset \mathcal{D}_{\theta, P}$ to be the subspace consisting of those $W$ for which the height function $h_{W}: W \longrightarrow \R$ is a Morse function. 
For each integer $k$, the subspace $\mathcal{D}^{\mf, k}_{\theta, P} \subset \mathcal{D}^{\mf}_{\theta, P}$ consists of those $W$ subject to the following further conditions: 
\begin{enumerate} \itemsep.2cm
\item[(i)] For all critical points $c \in W$ of $h_{W}: W \longrightarrow \R$, 
the index of $c$ satisfies the inequality $k < \text{index}(c) < d-k+1$. 
\item[(ii)] The map $\ell_{W}: W \longrightarrow B$ is $k$-connected. 
\end{enumerate}
\end{defn}

Using the above definition we define cobordism categories. 
\begin{defn} \label{defn: morse cobordism category}
Let $P \in \mathcal{M}_{\theta}$.
The (non-unital) topological category $\Cob^{\mf}_{\theta, P}$ has $\mathcal{N}_{\theta, P}$ for its space of objects. 
The morphism space is the following subspace of $\R\times\mathcal{D}^{\mf}_{\theta, P}$: A pair $(t, W)$ is a morphism if there exists an $\varepsilon > 0$ such that 
$$W|_{(-\infty, \varepsilon)}  =  (-\infty, \varepsilon)\times W|_{0} \quad \text{and} \quad W|_{(t - \varepsilon, \infty)}   =  (t-\varepsilon, \infty)\times W|_{t}$$
as $\theta$-manifolds.
The source of such a morphism is $W|_0$ and the target is $W|_t$, equipped with their respective restrictions of the $\theta$-structure $\hat{\ell}_{W}$ on $W$.
Composition is defined in the usual way by concatenation of cobordisms. 

Let $k$ be an integer.
The subcategory $\Cob^{\mf, k}_{\theta, P} \subset \Cob^{\mf}_{\theta, P}$ has for its objects those $M \in \Ob\Cob^{\mf}_{\theta, P}$ for which the map $\ell_{M}: M \longrightarrow B$ is $k$-connected. 
It has for its morphisms those $(t, W) \in \Mor\Cob^{\mf}_{\theta, P}$ such that $W$ is contained in the subspace $\mathcal{D}^{\mf, k}_{\theta, P} \subset \mathcal{D}^{\mf}_{\theta, P}$.
\end{defn}

This paper is concerned with the homotopy type of the classifying space $B\Cob^{\mf, k}_{\theta, P}$ for different values of $k$. 
The following proposition is proven in the same way as \cite[Theorems 3.9 and 3.10]{GRW 09}.
We omit the proof. 
\begin{proposition} \label{proposition: cobcat to long manifolds}
For all integers $k$ and $P \in \mathcal{M}_{\theta}$, there is a weak homotopy equivalence, 
$$
B\Cob^{\mf, k}_{\theta, P} \simeq \mathcal{D}^{\mf, k}_{\theta, P}.
$$
\end{proposition}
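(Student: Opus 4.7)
The plan is to follow the template of Galatius and Randal-Williams \cite[Theorems 3.9, 3.10]{GRW 09} (see also \cite[Section 4]{GMTW 08}), mediating between $B\Cob^{\mf, k}_{\theta, P}$ and $\mathcal{D}^{\mf, k}_{\theta, P}$ via an intermediate semi-simplicial space. Let $\mathcal{D}^{\mf, k, \bullet}_{\theta, P}$ denote the semi-simplicial space whose $p$-simplices are pairs $(\vec{t}, W)$, with $\vec{t} = (t_0 < \cdots < t_p) \in \R^{p+1}$ and $W \in \mathcal{D}^{\mf, k}_{\theta, P}$, subject to the conditions: (a) there exists $\varepsilon > 0$ such that $W|_{(t_i - \varepsilon, t_i + \varepsilon)} = (t_i - \varepsilon, t_i + \varepsilon) \times W|_{t_i}$ as $\theta$-manifolds for every $i$; and (b) each slice $W|_{t_i}$ lies in $\Ob \Cob^{\mf, k}_{\theta, P}$, i.e.\ the map $\ell_{W|_{t_i}} \colon W|_{t_i} \to B$ is $k$-connected. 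Face maps are given by deletion of entries from $\vec{t}$.

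I would then construct two natural comparison maps
\begin{equation*}
B\Cob^{\mf, k}_{\theta, P} \; \xleftarrow{\; \nu \;} \; |\mathcal{D}^{\mf, k, \bullet}_{\theta, P}| \; \xrightarrow{\; \pi \;} \; \mathcal{D}^{\mf, k}_{\theta, P},
\end{equation*}
where $\pi$ forgets $\vec{t}$ and $\nu$ sends a $p$-simplex $(\vec{t}, W)$ to the composable chain of cobordisms $W|_{[t_{i-1}, t_i]}$ (each translated so that its height range begins at $0$), viewed as a $p$-simplex of $N_\bullet \Cob^{\mf, k}_{\theta, P}$. I would show each map is a weak equivalence.

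For $\pi$ I would apply the realization lemma. The homotopy fiber over a given $W \in \mathcal{D}^{\mf, k}_{\theta, P}$ is the realization of the semi-simplicial set of admissible height tuples $(t_0 < \cdots < t_p)$ for $W$; admissibility is an open dense condition in $\R^{p+1}$, and the standard convex-interpolation argument then shows this realization is contractible. For $\nu$ I would argue level-wise: a $p$-simplex of $N_p \Cob^{\mf, k}_{\theta, P}$ amounts to a $\theta$-manifold supported on $[t_0, t_p]$ satisfying the Morse and connectivity conditions, and lifting it to a $p$-simplex of $\mathcal{D}^{\mf, k, p}_{\theta, P}$ requires extending such a manifold to all of $\R$ by attaching a half-infinite cylinder below $t_0$ and an element of the corresponding upper collar space above $t_p$. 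Both extension spaces are contractible by a scanning argument as in \cite[Proposition 3.11]{GRW 09}, so $\nu$ is a level-wise, hence geometric-realization, weak equivalence.

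The main technical obstacle is the density statement needed for $\pi$: the set of heights $t \in \R$ for which $W|_t$ is $k$-connected (and $W$ is cylindrical near $t$) must be open and dense. Cylindricality can always be arranged locally near regular values by a $\theta$-structure-preserving isotopy. The $k$-connectivity of slices is the delicate point, since although $\ell_W$ is $k$-connected by hypothesis, the slices $\ell_{W|_t}$ need not inherit this property automatically. The saving observation is that every critical point of $h_W$ has index strictly greater than $k$, so passing a critical value effects a surgery of index at least $k$ on the slice, which controls the classifying map's connectivity through degree $k$. A handle-trading analysis combined with comparison of $\pi_j(W|_t)$ and $\pi_j(W)$ via the long exact sequence of the pair $(W, W|_t)$ should yield the density. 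Once this is secured, the remaining verifications are formal applications of the GRW template.
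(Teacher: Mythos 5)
Your plan follows the same GRW template that the paper itself cites for this proposition (the paper invokes \cite[Theorems 3.9, 3.10]{GRW 09} and omits the proof), so the overall route — intermediate semi-simplicial space $\mathcal{D}^{\mf,k,\bullet}_{\theta,P}$ with comparison maps $\nu$ and $\pi$ — is the intended one. The one place where you are working harder than necessary is what you call the ``delicate point.'' Condition (b) in your definition of $p$-simplices is in fact automatic: if $W\in\mathcal{D}^{\mf,k}_{\theta,P}$ and $t$ is any regular value of $h_W$, then $W|_t\hookrightarrow W$ is $k$-connected, hence so is $\ell_{W|_t}=\ell_W\circ(\text{incl})$. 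This is immediate from the index constraint: going up from $t$, one attaches handles of index $\ell\in\{k+1,\dots,d-k\}$; going down from $t$ (i.e.\ reading $-h_W$), one attaches handles of co-index $d+1-\ell\in\{k+1,\dots,d-k\}$. In both directions all handle indices are $\geq k+1$, so the pair $(W|_{[-n,n]}, W|_t)$ is $k$-connected for all $n$, and a direct limit finishes the claim. No handle-trading, long exact sequences, or genericity argument is needed; the only genuine density issue is the usual one of finding heights at which $W$ is (after an isotopy) cylindrical, which is handled exactly as in \cite{GRW 09}. With this observation your condition (b) is redundant and the verification that $\pi$ is a weak equivalence reduces to the literal \cite{GRW 09} argument. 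The extension-space argument for $\nu$ is also fine once one notes (by the same $k$-connectivity-of-the-inclusion observation, now applied to $M_0\hookrightarrow W$) that extending by half-infinite cylinders preserves membership in $\mathcal{D}^{\mf,k}_{\theta,P}$; the contractibility of the extension space is then the standard collar-retraction argument of \cite[Prop.\ 3.11]{GRW 09}, which I'd describe as such rather than as ``scanning.''
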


\subsection{The Pontryagin-Thom construction} \label{subsection: pontryagin thom}
Fix an integer $d \in \Z_{\geq 0}$ and a tangential structure $\theta: B \longrightarrow BO(d+1)$.
We will keep these choices fixed for the rest of the paper. 
Our main theorem concerns the homotopy type of the space $\mathcal{D}^{\mf, k}_{\theta, P}$ for $k < d/2$.

\begin{defn} \label{defn: morse Grassmannian and spectrum}
For integers $k$ and $m$, let $G^{\mf}_{\theta}(\R^{m})^{k}$ denote the space of tuples $(V, l, \sigma)$ where:
\begin{enumerate} \itemsep.2cm
\item[(i)] $V \subset \R^{m}$ is a $(d+1)$-dimensional linear subspace equipped with a $\theta$-orientation $\hat{\ell}_{V}$; 
\item[(ii)] $l: V \longrightarrow \R$ is a linear functional and $\sigma: V\otimes V \longrightarrow \R$ is a symmetric bilinear form subject to the following condition: if $l = 0$, then $\sigma$ is non-degenerate with $k < \text{index}(\sigma) < d-k+1$. 
\end{enumerate}
We use the spaces $G^{\mf}_{\theta}(\R^{m})^{k}$ to define a Thom spectrum.
For each $N \in \N$ we let 
$$U_{d+1, N} \longrightarrow G^{\mf}_{\theta}(\R^{d+1+N})^{k}$$ 
denote the canonical $(d+1)$-dimensional vector bundle, and we let $U^{\perp}_{d+1, N} \longrightarrow G^{\mf}_{\theta}(\R^{d+1+N})^{k}$ denote the orthogonal complement bundle, which has $N$-dimensional fibres. 
For each $N$, the bundle $U^{\perp}_{d+1, N+1}$ pulls back to the bundle $\epsilon^{1}\oplus U^{\perp}_{d+1, N}$ by the inclusion $G^{\mf}_{\theta}(\R^{d+1+N})^{k} \hookrightarrow G^{\mf}_{\theta}(\R^{d+1+N+1})^{k}$, and thus this induces a map of Thom spaces 
\begin{equation} \label{equation: spectrum structure maps}
S^{1}\wedge\Th(U^{\perp}_{d+1, N}) \cong \Th(\epsilon^{1}\oplus U^{\perp}_{d+1, N}) \; \longrightarrow \; \Th(U^{\perp}_{d+1, N+1}).
\end{equation}
We define $\mb{hW}^{k}_{\theta}$ to be the spectrum whose $(d+1+N)$th space is given by the Thom space $\Th(U^{\perp}_{d+1, N})$, and with structure maps given by (\ref{equation: spectrum structure maps}).

We let $G^{\mf}_{\theta}(\R^{d+1+\infty})^{k}$ denote the colimit of the spaces $G^{\mf}_{\theta}(\R^{d+1+N})^{k}$ taken as $N \to \infty$ and let $U_{d+1, \infty} \longrightarrow G^{\mf}_{\theta}(\R^{d+1+\infty})^{k}$ denote the canonical $(d+1)$-dimensional vector bundle. 
It follows easily that $\mb{hW}^{k}_{\theta}$ is homotopy equivalent to the Thom spectrum associated to the virtual bundle $-U_{d+1, \infty} \longrightarrow G^{\mf}_{\theta}(\R^{d+1+\infty})^{k}$, and thus the above definition agrees with the definition of $\mb{hW}^{k}_{\theta}$ given in the introduction. 
\end{defn}

We now construct a zig-zag of maps between the spaces $\mathcal{D}^{\mf, k}_{\theta, P}$ and $\Omega^{\infty-1}\mb{hW}^{k}_{\theta}$.
We will need to use a slight modification of the space $\mathcal{D}^{\mf, k}_{\theta, P}$.
\begin{defn} \label{defn: vert tube nbh sheaves}
Fix an integer $N \in \N$. 
We define $\widehat{\mathcal{D}}^{\mf, k, N}_{\theta, P} \subset \mathcal{D}^{\mf, k}_{\theta, P}$ to be the 
subspace consisting of those $W$ that satisfy the following conditions:
\begin{enumerate} \itemsep.2cm
\item[(i)] $W$ is contained in the subspace $\R\times(-\infty, 0]\times\R^{d+ N-1}$;
\item[(ii)] the exponential map $\exp: TW \longrightarrow \R\times\R^{d+N}$ has injectivity radius greater than or equal to $1$ with respect to the Euclidean metric on $\R\times\R^{d+N}$.
\end{enumerate}
\end{defn}
It follows by a standard argument using the \textit{tubular neighborhood theorem} that the natural map induced by the inclusions,
$
\displaystyle{\colim_{N\to\infty}}\widehat{\mathcal{D}}^{\mf, k, N}_{\theta, P} \stackrel{\simeq} \longrightarrow \mathcal{D}^{\mf, k}_{\theta, P},
$
is a weak homotopy equivalence. 
\begin{Construction} \label{Construction: pontryagin thom construction}
For each $N \in \N$ we construct a map 
\begin{equation} \label{equation: preliminary pontryagin thom map}
\widehat{\mathcal{P}}^{k, N}_{\theta}: \widehat{\mathcal{D}}^{\mf, k, N}_{\theta, P} \longrightarrow \Omega^{d+N}\Th(U^{\perp}_{d+1, N}).
\end{equation}
Let $W \in \widehat{\mathcal{D}}^{\mf, k, N}_{\theta, P}$. 
The $2$-jet of the morse function $h_{W}: W \longrightarrow \R$ equips the tangent space $TW$ with the data $(l, \sigma)$, where $l$ is a linear functional (the differential of $h_{W})$ and $\sigma$ is a symmetric bilinear form (the quadratic differential of $h_{W}$). 
Since $h_{W}$ is a Morse function it follows that $(l, \sigma)$ satisfies the Morse condition. 
Furthermore, we have $k < \text{index}(\sigma) < d-k+1$. 
It follows that the Gauss map for the vertical tangent bundle $TW$ yields a map 
$
\tau: W \longrightarrow G_{\theta}^{\mf}(\R^{d+1+N})^{k}.
$
The inclusion $W \subset \R\times(-\infty, 0]\times\R^{d+N-1}$ induces the bundle trivialization
$$
TW\oplus\nu_{W} \cong \epsilon^{d+1+N}
$$
where $\nu_{W}$ is the normal bundle of $W$. 
From this bundle trivialization it follows that the map $\tau$ is covered by a bundle map 
$
\hat{\tau}^{\perp}: \nu_{W} \longrightarrow U^{\perp}_{d+1, N}.
$
Let $U \subset \R\times\R^{d+N}$ be the geodesic tubular neighborhood of $W$ of radius $1$ (this exists by Definition \ref{defn: vert tube nbh sheaves}). 
This tubular neighborhood $U$ determines a collapse map 
$c_{U}: \R\times S^{d+N} \longrightarrow \Th(\nu_{W}),$
and composing with $\Th(\hat{\tau}^{\perp}_{\pi})$ we obtain,  
$
\R\times S^{d+N} \longrightarrow \Th(U^{\perp}_{d+1, N}).
$
Precomposing this map with the inclusion $\{0\}\times S^{d+N} \hookrightarrow \R\times S^{d+N}$ and then forming the adjoint yields an element,  
$
\widehat{\mathcal{P}}^{k, N}_{\theta}(W) \in \Omega^{d+N}\Th(U^{\perp}_{d+1, N}).
$
This is our definition of the map $\widehat{\mathcal{P}}^{k, N}_{\theta}$ from (\ref{equation: preliminary pontryagin thom map}).
We denote by 
\begin{equation} \label{equation: limiting pontryagin thom}
\widehat{\mathcal{P}}^{k}_{\theta}: \displaystyle{\colim_{N\to\infty}}\widehat{\mathcal{D}}^{\mf, k, N}_{\theta, P}  \longrightarrow \Omega^{\infty-1}\mb{hW}^{k}_{\theta}
\end{equation}
the map induced by the maps $\widehat{\mathcal{P}}^{k, N}_{\theta}$ in the limit $N \to \infty$.
\end{Construction}

The following theorem was proven by Madsen and Weiss in \cite{MW 07}.
\begin{theorem}[Madsen-Weiss 2007] \label{theorem: madsen-weiss + vassiliev}
Let $P \in \mathcal{M}_{\theta}$.
Suppose that $P$ is null-bordant as a $\theta$-manifold.
Then the map, $\widehat{\mathcal{P}}^{-1}_{\theta}: \displaystyle{\colim_{N\to\infty}}\widehat{\mathcal{D}}^{\mf, -1, N}_{\theta, P}  \longrightarrow \Omega^{\infty-1}\mb{hW}^{-1}_{\theta},$
is a weak homotopy equivalence.
\end{theorem}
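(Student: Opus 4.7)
The plan is to follow Madsen and Weiss' original strategy from \cite{MW 07}, which proceeds in three main stages: a sheaf-theoretic reformulation, an application of Vassiliev's $h$-principle for proper Morse functions, and a Pontryagin-Thom identification of the resulting section space with $\Omega^{\infty-1}\mb{hW}^{-1}_{\theta}$.

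First I would replace the space $\colim_{N}\widehat{\mathcal{D}}^{\mf,-1,N}_{\theta,P}$ with a representing space of a sheaf $\mathscr{W}_{\theta,P}$ of families of $(d+1)$-dimensional $\theta$-manifolds equipped with proper Morse height functions extending the cylindrical structure on $\R\times(-\infty,0]\times P$. The weak equivalence with the colimit of the $\widehat{\mathcal{D}}^{\mf,-1,N}_{\theta,P}$ is a standard scanning/tubular-neighborhood argument; the hypothesis that $P$ is null-bordant is what guarantees that the sheaf has nonempty sections over a point, so that the representing space sits over the correct connected component of the target infinite loop space.

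Next, I would invoke the Gromov-style $h$-principle due to Vassiliev \cite{Va 94} for spaces of proper smooth functions with only Morse singularities. Concretely, one considers the jet prolongation map that sends a $\theta$-manifold $W$ equipped with a proper Morse function $h_{W}$ to the section of a jet bundle whose fibre over a point $w \in W$ records the tuple $(T_{w}W,\hat{\ell}_{W}|_{w}, dh_{W}|_{w}, d^{2}h_{W}|_{w})$ subject to the Morse condition. Vassiliev's theorem identifies, up to weak equivalence, the space of such $(W,h_{W})$ with the space of formal sections of this bundle; equivalently, one applies the Madsen-Weiss rendition of this $h$-principle as formulated in \cite[Section 4]{MW 07}. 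This is the main technical obstacle of the proof, and the only place where the hypothesis $k=-1$ (no index restrictions on critical points) is essential, since Vassiliev's argument requires the full space of Morse germs.

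Finally, I would carry out the Pontryagin-Thom identification. The space of formal sections, suitably scanned, is a space of compactly-supported maps from $\R$ into a spectrum whose zeroth space is $\Omega^{\infty}\mb{hW}^{-1}_{\theta}$; more precisely, the Gauss map construction of Construction \ref{Construction: pontryagin thom construction} globalizes over the family and identifies the section space with $\Omega^{\infty-1}\mb{hW}^{-1}_{\theta}$, using the group completion contributed by the $\R$-direction together with a Barratt-Priddy-Quillen-type argument. Checking that the map produced by this chain of identifications is homotopic to $\widehat{\mathcal{P}}^{-1}_{\theta}$ is a naturality check: both maps are built from the same jet data and the same collapse of tubular neighborhoods, differing only in the order in which one scans and passes to the limit $N\to\infty$. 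The null-bordism hypothesis ensures the identification lands in the correct path component and that the sheaf has the requisite sections to make the $h$-principle applicable.
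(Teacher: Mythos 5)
The paper does not reprove this statement: it is cited directly from Madsen--Weiss \cite{MW 07}, and your proposal is an accurate high-level summary of their argument (sheaf reformulation, Vassiliev's $h$-principle for proper Morse functions, then a Pontryagin--Thom/scanning identification with $\Omega^{\infty-1}\mb{hW}^{-1}_{\theta}$). Your reading of the null-bordism hypothesis as what keeps the source nonempty (via a regular level set, which is a compact $\theta$-nullbordism of $P$) is also the correct interpretation, so your proposal is consistent with the paper's reliance on \cite{MW 07}.
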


We remark that Theorem A stated in the introduction is obtained by combining the above theorem with the weak homotopy equivalence $\mathcal{D}_{\theta, P}^{\mf, -1} \simeq \Cob^{\mf, -1}_{\theta, P}$ from Proposition \ref{proposition: cobcat to long manifolds}.
The next theorem below generalizes the above result to the case where $k < d/2$. 
\begin{theorem} \label{theorem: main theorem}
Let $k < d/2$ and suppose that $\theta: B \longrightarrow BO(d+1)$ is such that $B$ satisfies Wall's finiteness condition $F(k)$ (see \cite{W 65}).
Let $P \in \mathcal{M}_{\theta}$. 
Suppose that $P$ is null-bordant as a $\theta$-manifold (this includes the case $P = \emptyset$).
Then the map,  
$\widehat{\mathcal{P}}^{k}_{\theta}: \displaystyle{\colim_{N\to\infty}}\widehat{\mathcal{D}} ^{\mf, k, N}_{\theta, P}\longrightarrow \Omega^{\infty-1}\mb{hW}^{k}_{\theta},$
is a weak homotopy equivalence. 
\end{theorem}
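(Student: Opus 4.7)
The plan is to establish Theorem \ref{theorem: main theorem} by induction on the integer $k \geq -1$, with the base case $k = -1$ provided directly by Madsen and Weiss' Theorem \ref{theorem: madsen-weiss + vassiliev}. For the inductive step, assume that $\widehat{\mathcal{P}}^{k-1}_{\theta}$ is a weak equivalence (for all null-bordant $P \in \mathcal{M}_\theta$ and all $\theta$ with $B$ satisfying $F(k-1)$), and argue that $\widehat{\mathcal{P}}^{k}_{\theta}$ is a weak equivalence whenever $k < d/2$ and $B$ satisfies $F(k)$ (which in particular implies $F(k-1)$, so the inductive hypothesis applies).

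The engine of the inductive step is Theorem \ref{theorem: localization fibre sequence}, which identifies the inclusion $\mathcal{D}^{\mf, k}_{\theta} \hookrightarrow \mathcal{D}^{\mf, k-1}_{\theta}$ with the homotopy fibre of the localization map $\mb{L}_k$ over the basepoint $\emptyset$. On the spectrum side, the construction of Section 3 of \cite{MW 07} provides a cofibre sequence of Thom spectra
\[
\mb{hW}^{k}_{\theta} \longrightarrow \mb{hW}^{k-1}_{\theta} \longrightarrow \mb{hW}^{\{k, d-k+1\}}_{\theta, \loc},
\]
obtained by filtering $G^{\mf}_{\theta}(\R^\infty)$ by the stratum where $l = 0$ and $\indexx(\sigma) \in \{k, d-k+1\}$. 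Applying $\Omega^{\infty-1}$ yields a fibre sequence of infinite loop spaces. The next step is to model the scanning maps so that the diagram (\ref{equation: homotopy commutative scanning diagram}) is homotopy commutative; this is a routine check once one chooses compatible tubular neighborhoods for the Pontryagin-Thom construction on each stratum.

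With that diagram in hand, the inductive conclusion is a five-lemma style argument on fibre sequences. The rightmost vertical map, the local scanning map $\mathcal{P}^{\{k, d-k+1\}}_{\loc}$, is a weak equivalence for all $k$ by a Barratt-Priddy-Quillen type argument in the form of \cite[Theorem 3.12]{GRW 09}, since $\mathcal{D}^{\mf, \{k, d-k+1\}}_{\theta, \loc}$ is essentially a configuration space on $\R$ with labels in $G^{\mf}_{\theta}(\R^\infty)^{\{k, d-k+1\}}_{\loc}$. The middle vertical map is a weak equivalence by the inductive hypothesis. The top row is a fibre sequence by Theorem \ref{theorem: localization fibre sequence}, the bottom row is a fibre sequence by construction, and hence the left vertical map $\widehat{\mathcal{P}}^{k}_{\theta}$ is a weak equivalence.

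The main obstacle is therefore Theorem \ref{theorem: localization fibre sequence} itself, whose proof requires parametrized surgery on families of Morse functions below the middle dimension, and is precisely where Wall's finiteness hypothesis $F(k)$ on $B$ and the strict inequality $k < d/2$ are used to realize enough connectivity for the relevant surgeries. A secondary technical point is ensuring that the null-bordance hypothesis on $P$ propagates correctly through the argument: the boundary-less model $\mathcal{D}^{\mf, k}_{\theta}$ of the introduction and the relative model $\mathcal{D}^{\mf, k}_{\theta, P}$ of Section \ref{section: sheaves and families of morse functions} must be compared, and null-bordance of $P$ is what allows one to attach a fixed null-bordism and reduce to the basepoint component where Theorem \ref{theorem: madsen-weiss + vassiliev} furnishes the base case.
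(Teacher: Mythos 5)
Your proposal is correct and follows essentially the same route as the paper: induction on $k$ with the Madsen--Weiss theorem as the $k=-1$ base case, the localization fibre sequence of Theorem~\ref{theorem: localization sequence} as the engine, the cofibre sequence $\mb{hW}^{k}_{\theta}\to\mb{hW}^{k-1}_{\theta}\to\mb{hW}^{\{k,d-k+1\}}_{\theta,\loc}$ on the spectrum side, the Barratt--Priddy--Quillen identification of the local scanning map, and a five-lemma comparison of fibre sequences via the homotopy commutative diagram~(\ref{equation: diagram of pontryagin thom}). This is exactly how Section~\ref{subsection: proof of theorem B} of the paper proceeds.
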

Combining Theorem \ref{theorem: main theorem} with Proposition \ref{proposition: cobcat to long manifolds} implies Theorem B from the introduction.
Theorem \ref{theorem: main theorem} is the main result that we are after and its proof is the subject of the rest of the paper. 

\subsection{The Localization Sequence} \label{section: the localization sequence}
We proceed to construct the localization sequence that was discussed in Section \ref{subsection: localization sequence}. 
We begin with a preliminary definition.
\begin{defn}
For each integer $m$, we let $G_{\theta}^{\mf}(\R^{m})_{\loc} \subset G_{\theta}^{\mf}(\R^{m})$ denote the subspace consisting of those tuples $(V, l, \sigma)$ with the property that $l = 0$.
Such elements of $G_{\theta}^{\mf}(\R^{m})_{\loc}$ will be denoted by the pair $(V, \sigma)$ since the $l$ term is redundant. 
The space $G_{\theta}^{\mf}(\R^{\infty})_{\loc}$ is defined to be the direct limit of the spaces $G_{\theta}^{\mf}(\R^{m})_{\loc}$ taken as $m \to \infty$.
Let $k$ be an integer. 
The subspace $G_{\theta}^{\mf}(\R^{m})_{\loc}^{\{k, d-k+1\}} \subset G_{\theta}^{\mf}(\R^{m})_{\loc}$ consists of those $(V, \sigma)$ such that $\text{index}(\sigma) \in \{k, d-k+1\}$. 
As before,
$G_{\theta}^{\mf}(\R^{\infty})_{\loc}^{\{k, d-k+1\}}$ is defined to be the limiting space. 
\end{defn}

\begin{defn} \label{defn: W-loc}
We define $\mathcal{D}^{\mf}_{\theta, \loc}$ to be the space of tuples $(\bar{x}; (V, \sigma))$ where $\bar{x} \subset \R\times\R^{\infty}$ is a zero-dimensional submanifold (not necessarily compact), and $(V, \sigma): \bar{x} \longrightarrow G_{\theta}^{\mf}(\R\times\R^{\infty})_{\loc}$ is a map, subject to the following condition: for all $\delta > 0$, the set $\bar{x}\cap\left((-\delta, \delta)\times\R^{\infty})\right)$ is a finite set (or in other words is a compact $0$-manifold). 
We will need to define certain subspaces of $\mathcal{D}^{\mf}_{\theta, \loc}$.
For an integer $k$, we define $\mathcal{D}^{\mf, \{k, d-k+1\}}_{\theta, \loc}$ to be the space consisting of those $(\bar{x}; (V, \sigma))$ for which, 
$$(V, \sigma)(x) = (V(x), \sigma(x)) \in G_{\theta}^{\mf}(\R\times\R^{\infty})^{\{k, d-k+1\}}_{\loc} \quad \text{for all $x \in \bar{x}$.}$$
The spaces $\mathcal{D}^{\mf, \{k, d-k+1\}}_{\theta, \loc} \subset \mathcal{D}^{\mf}_{\theta, \loc}$ are topologized using the same process carried out in \cite[Section 2.1]{GRW 09}.
\end{defn} 
We have a map 
$\mb{L}: \mathcal{D}^{\mf}_{\theta, P} \longrightarrow \mathcal{D}^{\mf}_{\theta, \loc}$
defined by sending $W \in \mathcal{D}^{\mf}_{\theta, P}$ to the pair $\left(\bar{x}; (V, \sigma) \right)$ where:
\begin{itemize} \itemsep.2cm
\item $\bar{x} \subset W$ is the set of critical points of the height function $h_{W}: W \longrightarrow \R$; 
\item for each $x \in \bar{x}$, $V(x)$ is the tangent space $T_{x}W$ and $\sigma(x): T_{x}W\otimes T_{x}W \longrightarrow \R$ is the 
Hessian associated to the height function $h_{W}$. 
\end{itemize}
For each $k$ we have the map 
\begin{equation} \label{equation: product map}
\mb{L}_{k}: \mathcal{D}_{\theta, \locc}^{\mf, k-1} \longrightarrow \mathcal{D}^{\mf, \{k, d-k+1\}}_{\theta, \locc}
\end{equation}
defined by composing the map $\mb{L}$ with the projection $\mathcal{D}^{\mf, k-1}_{\theta, \loc} \longrightarrow \mathcal{D}^{\mf, \{k, d-k+1\}}_{\theta, \loc}$ that sends $(\bar{x}; (V, \sigma))$ to the element
$$\left(\bar{x}|_{\{k, d-k+1\}}; \; (V, \sigma)|_{\bar{x}|_{\{k, d-k+1\}}}\right) \in \mathcal{D}^{\mf, \{k, d-k+1\}}_{\theta, \locc},$$ 
where $\bar{x}|_{\{k, d-k+1\}} \subset \bar{x}$ is the subset consisting of all $x \in \bar{x}$ with $\text{index}(\sigma(x)) \in \{k, d-k+1\}$.
The theorem below is a restatement of Theorem \ref{theorem: localization fibre sequence} from the introduction. 
The proof of this theorem is carried out over the course of the whole paper.
\begin{theorem} \label{theorem: localization sequence}
Let $0 < k < d/2$ and let $P \in \mathcal{M}_{\theta, d-1}$ be null-bordant as a $\theta$-manifold (including the case that $P = \emptyset)$.
Suppose that $B$ satisfies Wall's finiteness condition $F(k)$. 
Then the sequence 
\begin{equation} \label{equation: localization sequence}
\xymatrix{
\mathcal{D}^{\mf, k}_{\theta, P} \ar[r] & \mathcal{D}^{\mf, k-1}_{\theta, P} \ar[rr]^{\mb{L}_{k} \ \ \ \ } && \mathcal{W}^{\mf, \{k, d-k+1\}}_{\theta, \locc}
}
\end{equation}
is a homotopy fibre sequence.
\end{theorem}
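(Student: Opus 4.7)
The plan is to realize the map $\mb{L}_{k}$ as a quasi-fibration whose fibre over $\emptyset$ is weakly equivalent to $\mathcal{D}^{\mf,k}_{\theta,P}$, using a homotopy-colimit decomposition of the total space (as announced in Section 3) together with a parametrized surgery argument on the fibres. The inclusion $\mathcal{D}^{\mf,k}_{\theta,P} \hookrightarrow \mb{L}_{k}^{-1}(\emptyset)$ will then be the fibre-comparison map, and we need to see it is a weak equivalence.

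First, I would use the homotopy-colimit model for $\mathcal{D}^{\mf,k-1}_{\theta,P}$ to analyze the fibres of $\mb{L}_{k}$. For $\xi = (\bar x, \phi) \in \mathcal{D}^{\mf,\{k,d-k+1\}}_{\theta,\loc}$, the fibre $\mb{L}_{k}^{-1}(\xi)$ consists of $W \in \mathcal{D}^{\mf,k-1}_{\theta,P}$ whose index-$k$ and index-$(d-k+1)$ critical points agree with $(\bar x,\phi)$ as $\theta$-framed data. By the parametrized Morse lemma and tubular-neighborhood straightening around each point of $\bar x$, any such $W$ should be identified with the result of gluing standard $k$- and $(d-k+1)$-handle models at the points of $\bar x$ onto a piece $W'$ carrying only middle-range critical points. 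This would yield $\mb{L}_{k}^{-1}(\xi) \simeq \mathcal{D}^{\mf,k}_{\theta,P'}$ uniformly in $\xi$, where $P'$ is obtained from $P$ by the surgeries prescribed by $(\bar x,\phi)$; in particular $\mb{L}_{k}^{-1}(\emptyset) \simeq \mathcal{D}^{\mf,k}_{\theta,P}$.

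Next, I would establish the quasi-fibration property by constructing explicit path lifts. Paths in the base that merely move $\bar x$ inside $\R\times\R^{\infty}$ lift to ambient isotopies of the family of $W$'s via the isotopy extension theorem applied in disjoint Morse charts. The delicate case is a base path along which a critical point migrates out to infinity in the $\R$-coordinate. On the total-space side this must be lifted to a creation-then-escape of a cancelling pair: for an index-$k$ (resp.\ index-$(d-k+1)$) critical point being sent to $+\infty$, introduce an auxiliary index-$(k+1)$ (resp.\ index-$(d-k)$) critical point, arrange the two as a standard cancelling pair inside a cylindrical region, and then slide the pair together out to infinity along the height direction; the reverse move handles creation.

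The principal obstacle, and what I expect to absorb most of the technical work of Sections 4 through 6, is performing these handle creations and cancellations continuously in families. Cancelling an index-$k$ critical point of $W$ requires a framed embedded disc $D^{k+1}\hookrightarrow W$, meeting the descending disc of the $k$-handle transversally in one point, along which the auxiliary $(k+1)$-handle is attached. For a family parametrized by $S^{n}$ one needs the space of such framed discs in a specified homotopy class to be $n$-connected, for every $n$. This is precisely where the two hypotheses of the theorem are used. Wall's finiteness condition $F(k)$ on $B$ controls the $(k-1)$-connected reference map $\ell_{W}: W \to B$, ensuring that the attaching-sphere homotopy classes to be realized come from a finite stock with compatible $\theta$-structure; the inequality $k < d/2$ places $k+1 \leq d/2$ in Whitney's stable range, so that the spaces of embeddings $D^{k+1} \hookrightarrow W$ realizing a prescribed homotopy class are highly connected by general position. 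Feeding these two inputs into a parametrized handle cancellation produces the required lifts in arbitrarily large families, which yields the quasi-fibration. Combined with the identification $\mb{L}_{k}^{-1}(\emptyset) \simeq \mathcal{D}^{\mf,k}_{\theta,P}$ — for which the automatic upgrade from $(k-1)$- to $k$-connectivity of $\ell_{W}$, once all handles of index $\leq k$ and $\geq d-k+1$ have been excluded, again uses $k<d/2$ — this establishes the asserted homotopy fibre sequence.
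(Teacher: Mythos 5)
Your proposal reproduces the broad architecture that one would expect (homotopy-colimit decomposition, reduction to a parametrized handle-cancellation problem, controlling the cancellation via connectivity estimates), and the uses you assign to the hypotheses $k<d/2$ and $F(k)$ are morally on target. But there is a concrete gap at what is in fact the hardest step, and the point where it appears would sink the argument as written.

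You claim an identification $\mb{L}_{k}^{-1}(\emptyset) \simeq \mathcal{D}^{\mf,k}_{\theta,P}$, justified by ``the automatic upgrade from $(k-1)$- to $k$-connectivity of $\ell_{W}$, once all handles of index $\leq k$ and $\geq d-k+1$ have been excluded.'' This upgrade is not automatic. The point-set fibre $\mb{L}_{k}^{-1}(\emptyset)$ is the set of $W\in\mathcal{D}^{\mf,k-1}_{\theta,P}$ for which $h_{W}$ has no critical points of index $k$ or $d-k+1$; the membership condition in $\mathcal{D}^{\mf,k-1}_{\theta,P}$ only requires $\ell_{W}\colon W\to B$ to be $(k-1)$-connected, and excluding extremal critical points of $h_{W}$ imposes no constraint on the connectivity of the reference map. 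A cylinder $\R\times M$ with $\ell_{M}$ only $(k-1)$-connected over $B$ lies in $\mb{L}_{k}^{-1}(\emptyset)$ but not in $\mathcal{D}^{\mf,k}_{\theta,P}$, regardless of $k$ and $d$. Passing from the $(k-1)$-connectivity condition to the $k$-connectivity condition is precisely the parametrized surgery theorem (Theorem \ref{theorem: fibrewise surgery} in the paper), and this is where $F(k)$ is actually used, via Kreck's result that a finite disjoint family of $\theta$-compatible surgery data exists on each fibre, together with a paracompactness argument to glue them over a family. Your proposed proof assumes that this happens for free, so it never discharges the main burden of the theorem.

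Beyond that, the path-lifting strategy you sketch for the quasi-fibration is a genuinely different route from the paper's. The paper never builds explicit path lifts: it decomposes $\mathcal{D}^{\mf,k-1}_{\theta,P}$ and the local target as $\hocolim_{\mb{t}\in\mathcal{K}^{k-1}}$ of spaces $\mathcal{W}^{k-1,c}_{P,\mb{t}}$ with the extra connectivity constraint on the complement of the surgery data, observes that the resulting levelwise maps to $\mathcal{W}^{\{k,d-k+1\}}_{\loc,\mb{t}}$ are honest Serre fibrations whose fibres are identified with spaces $\mathcal{W}^{k}_{P',\mb{t}}$ for surgered boundary manifolds $P'$, and then invokes the hocolim-fibration criterion (Proposition \ref{proposition: homotopy colimit fibre sequence}): it suffices that every transition morphism in $\mathcal{K}^{\{k,d-k+1\}}$ induces a weak equivalence on fibres. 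That reduces the whole theorem to showing that gluing certain elementary cobordisms (traces of primitive $k$-surgeries, trivial $(d-k)$-surgeries, and $(d-k)$-connected cobordisms) induces weak equivalences on the homotopy colimits --- the content of Sections 7--9. Your cancelling-pair heuristic is the right intuition behind those elementary cases, and the Whitney-range use of $k<d/2$ lands in the right place (it is what makes level (ii) and (iii) of the augmented flag complex in Section 8 go through). But as stated your argument has no analogue of the intermediate space $\mathcal{W}^{k-1,c}_{P,\mb{t}}$, and without it the fibre you would need to compare to $\mathcal{D}^{\mf,k}_{\theta,P}$ is a larger, unanalyzed space; filling that hole is exactly Theorem \ref{theorem: fibrewise surgery}.
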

We will refer to the homotopy fibre sequence in (\ref{equation: localization sequence}) as the \textit{localization sequence in degree $k$}.
Below we show how to derive Theorem \ref{theorem: main theorem} (and hence Theorem B) using the localization sequence. 

\subsection{Theorem B from Theorem \ref{theorem: localization sequence}} \label{subsection: proof of theorem B}
As discussed in Section \ref{subsection: localization sequence}, we will prove Theorem \ref{theorem: main theorem} (and Theorem B) using Theorem \ref{theorem: localization sequence}. 
An outline for the proof of Theorem B was provided in the introduction. 
In this section we fill in the details.

We need to construct an infinite loopspace to map $\mathcal{D}_{\theta, \loc}^{\mf, \{k, d-k+1\}}$ to.
For each $N \in \N$, let $U^{\loc}_{d+1, N} \longrightarrow G^{\mf}_{\theta}(\R^{d+1+N})^{\{k, d-k+1\}}_{\loc}$ denote the restriction of the canonical bundle 
$$U_{d+1, N} \longrightarrow G^{\mf}_{\theta}(\R^{d+1+N})$$ 
to the subspace $G^{\mf}_{\theta}(\R^{d+1+N})^{\{k, d-k+1\}}_{\loc}$.
We let $U^{\loc, \perp}_{d+1, N} \longrightarrow G^{\mf}_{\theta}(\R^{d+1+N})^{\{k, d-k+1\}}_{\loc}$ denote the restriction of the orthogonal complement $U^{\perp}_{d+1, N}$. 
We define $\mb{hW}^{\{k, d-k+1\}}_{\theta, \loc}$ to be the spectrum whose $(d+1+N)$th space is given by the Thom space, 
$
\Th(U^{\loc}_{d+1, N}\oplus U^{\loc, \perp}_{d+1, N}). 
$
For each $N$, the $N$-th structure map is the map between Thom spaces induced by the bundle map
$$
U^{\loc}_{d+1, N}\oplus U^{\loc, \perp}_{d+1, N}\oplus\epsilon^{1} \longrightarrow U^{\loc}_{d+1, N+1}\oplus U^{\loc, \perp}_{d+1, N+1}
$$
that covers the inclusion, 
$$G^{\mf}_{\theta}(\R^{d+1+N})^{\{k, d-k+1\}}_{\loc} \hookrightarrow G^{\mf}_{\theta}(\R^{d+1+N+1})_{\loc}.$$
It is easy to see that $\mb{hW}^{\{k, d-k+1\}}_{\theta, \loc}$ is homotopy equivalent to the suspension spectrum, 
$$\Sigma^{\infty}\left(G^{\mf}_{\theta}(\R^{\infty})^{\{k, d-k+1\}}_{\loc, +}\right),$$
and thus it coincides with the spectrum discussed in the introduction.
For any $k$, we have a sequence of spectrum maps 
\begin{equation} \label{equation: cofibre sequence of thom spectra}
\xymatrix{
\mb{hW}^{k}_{\theta} \ar[r] & \mb{hW}^{k-1}_{\theta} \ar[r] & \mb{hW}^{\{k, d-k+1\}}_{\theta, \loc},
}
\end{equation}
where the first map is induced by the inclusion $G^{\mf}_{\theta}(\R^{d+1+N})^{k-1} \hookrightarrow G^{\mf}_{\theta}(\R^{d+1+N})^{k}$ (or rather the bundle map that covers it) and the second map is induced by the inclusion of Thom spaces 
$$
\Th\left(U^{\loc, \perp}_{d+1, N}\right) \; \hookrightarrow \; \Th\left(U^{\loc}_{d+1, N}\oplus U^{\loc, \perp}_{d+1, N}\right).
$$
By replicating the same arguments from \cite[Section 2]{MW 07} we obtain the following proposition:
\begin{proposition} \label{proposition: cofibre of the natural inclusion map}
The sequence of maps, 
$\mb{hW}^{k}_{\theta} \longrightarrow \mb{hW}^{k-1}_{\theta} \longrightarrow \mb{hW}^{\{k, d-k+1\}}_{\theta, \locc},$
is a cofibre sequence of spectra.
\end{proposition}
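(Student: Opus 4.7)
The plan is to establish the cofiber sequence one spectrum level at a time via a Pontryagin--Thom collapse argument, and then pass to the colimit. Fix $N \geq 0$ and consider the inclusion
$$G^{\mf}_{\theta}(\R^{d+1+N})^{k} \; \hookrightarrow \; G^{\mf}_{\theta}(\R^{d+1+N})^{k-1}.$$
I would first verify that this is an \emph{open} inclusion with closed complement exactly $G^{\mf}_{\theta}(\R^{d+1+N})^{\{k, d-k+1\}}_{\loc}$. This follows because the locus $\{l = 0\}$ is closed, non-degenerate symmetric forms of a fixed index form an open subset among all symmetric forms, and any limit of tuples $(V_n, 0, \sigma_n)$ with $\sigma_n$ non-degenerate of index in $\{k, d-k+1\}$ is forced (by the $G^{k-1}$-condition at $l = 0$) to be non-degenerate, hence of one of the same two extremal indices by local constancy.

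Next, I would identify the normal bundle of the inclusion $C := G^{\{k, d-k+1\}}_{\loc} \hookrightarrow G^{k-1}$. Near a point $(V, 0, \sigma) \in C$, local coordinates on $G^{k-1}$ are perturbations of the triple with $l \in V^*$ and $\sigma \in \mathrm{Sym}^2(V^*)$; since $\sigma$ is non-degenerate, nearby $\sigma$ remain non-degenerate of the same index, so $C$ is locally cut out by the single equation $l = 0$. Thus the normal bundle of $C$ in $G^{k-1}$ is (the fiberwise dual of) the tautological bundle, i.e.\ canonically $U_{d+1, N}|_C$ after choosing a metric, and this identification supplies the requisite tubular neighborhood.

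With these two inputs in hand, the cofiber sequence at the $(d+1+N)$-th level follows from the standard Pontryagin--Thom construction: for an open inclusion $U \hookrightarrow X$ with closed complement $C$ admitting a tubular neighborhood of normal bundle $\nu$, and a vector bundle $E \to X$, one has a cofiber sequence of based spaces
$$\mathrm{Th}(E|_U) \; \to \; \mathrm{Th}(E) \; \to \; \mathrm{Th}((E\oplus\nu)|_C).$$
Applying this with $E = U^{\perp}_{d+1, N}$ and using the canonical trivialization $U^{\perp}_{d+1, N} \oplus U_{d+1, N} \cong \epsilon^{d+1+N}$, the third term becomes $\mathrm{Th}(\epsilon^{d+1+N}|_C) \cong \Sigma^{d+1+N}(C_+)$, which is precisely the $(d+1+N)$-th space of $\mb{hW}^{\{k, d-k+1\}}_{\theta, \loc}$. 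Compatibility with the spectrum structure maps (\ref{equation: spectrum structure maps}) as $N$ varies is straightforward since tubular neighborhoods and the normal bundle identification are natural under the embedding $\R^{d+1+N} \hookrightarrow \R^{d+1+N+1}$, so passing to the colimit yields the desired cofiber sequence of spectra.

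The step I expect to give the most trouble is verifying that the Pontryagin--Thom collapse map produced here agrees, up to canonical homotopy, with the concrete map described in the statement of the proposition, namely the one induced by the zero-section inclusion $\mathrm{Th}(U^{\loc, \perp}_{d+1, N}) \hookrightarrow \mathrm{Th}(U^{\loc}_{d+1, N}\oplus U^{\loc, \perp}_{d+1, N})$. Reconciling the two descriptions requires carefully unwinding the normal bundle identification $\nu \cong U_{d+1, N}|_C$ together with the trivialization $U^{\loc}_{d+1, N} \oplus U^{\loc, \perp}_{d+1, N} \cong \epsilon^{d+1+N}$; the corresponding computation in the classical $k = -1$ case is carried out in \cite[Section 2]{MW 07}, and the argument should carry over verbatim to the general-$k$ setting considered here.
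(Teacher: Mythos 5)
Your proposal is correct and follows essentially the same route as the paper's proof sketch: identify $G^{\{k,d-k+1\}}_{\loc}$ as the closed complement of the open inclusion $G^{k}\hookrightarrow G^{k-1}$, identify its normal bundle with (the dual of) the restricted canonical bundle $U^{\loc}_{d+1,N}$, and apply the standard open/closed Pontryagin--Thom cofiber sequence of Thom spaces with $E=U^{\perp}_{d+1,N}$, then pass to the colimit over $N$. You are in fact slightly more explicit than the paper about the openness/closedness verification and the tubular neighborhood, both of which the paper simply asserts.
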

\begin{proof}[Proof sketch]
The complement of $G^{\mf}_{\theta}(\R^{d+1+N})^{\{k, d-k+1\}}_{\loc}$ in $G^{\mf}_{\theta}(\R^{d+1+N})^{k-1}$ is given by the subspace $G^{\mf}_{\theta}(\R^{d+1+N})^{k}$.
The normal bundle of $G^{\mf}_{\theta}(\R^{d+1+N})^{\{k, d-k+1\}}_{\loc}$ in $G^{\mf}_{\theta}(\R^{d+1+N})^{k-1}$
is isomorphic to the dual of the canonical bundle,
$U^{\loc}_{d+1, N} \longrightarrow G^{\mf}_{\theta}(\R^{d+1+N})^{\{k, d-k+1\}}_{\loc}.$
From this observation we obtain a cofibre sequence of spaces,
$G^{\mf}_{\theta}(\R^{d+1+N})^{k-1} \longrightarrow G^{\mf}_{\theta}(\R^{d+1+N})^{k} \longrightarrow \Th(U^{\loc}_{d+1, N}),$
which in turn leads to the cofibre sequence
$$
\Th(U^{\perp}_{d+1, N}|_{G^{\mf}_{\theta}(\R^{d+1+N})^{k-1}}) \; \longrightarrow \; \Th(U^{\perp}_{d+1, N}|_{G^{\mf}_{\theta}(\R^{d+1+N})^{k}}) \; \longrightarrow \; \Th\left(U^{\loc}_{d+1, N}\oplus U^{\loc, \perp}_{d+1, N}\right).
$$
The proposition follows by observing that these spaces are the $(d+1+N)$th spaces of the spectra $\mb{hW}^{k}_{\theta}$, $\mb{hW}^{k-1}_{\theta}$, and $\mb{hW}^{\{k, d-k+1\}}_{\theta, \loc}$ respectively.
\end{proof}

We want to map the space $\mathcal{D}^{\mf, \{k, d-k+1\}}_{\theta, \loc}$ to the infinite loopspace $\Omega^{\infty-1}\mb{hW}^{\{k, d-k+1\}}_{\theta, \locc}$.
As in the previous section we won't be able to define the map ``on the nose''. 
We will need to work with a slightly different model of $\mathcal{D}^{\mf, \{k, d-k+1\}}_{\theta, \loc}$.
\begin{defn} \label{defn: sheaf version of W-loc}
Fix $N \in \N$. 
We define $\widehat{\mathcal{D}}^{\mf, \{k, d-k+1\}, N}_{\theta, \loc} \subset \mathcal{D}^{\mf, \{k, d-k+1\}}_{\theta, \loc}$ to be the subspace consisting of those $(\bar{x}; (V, \sigma))$ for which:
\begin{enumerate} \itemsep.2cm
\item[(i)] $\bar{x}$ is contained in the subspace $\R\times\R^{d+N}$; 
\item[(ii)] $\dist(x, y) > 2$ for all pairs of distinct points $x, y \in \bar{x}$. 
\end{enumerate}
\end{defn}
It follows by a standard argument that the natural map induced by inclusion 
$$
\colim_{N\to\infty}\widehat{\mathcal{D}}^{\mf, \{k, d-k+1\}, N}_{\theta, \loc} \stackrel{\simeq} \longrightarrow  \mathcal{D}^{\mf, \{k, d-k+1\}}_{\theta, \loc}
$$
is a weak homotopy equivalence. 
We proceed to define a map 
\begin{equation} \label{equation: pontryagin thom local}
\widehat{\mathcal{P}}^{k}_{\theta, \loc}: \colim_{N\to\infty}\widehat{\mathcal{D}}^{\mf, \{k, d-k+1\}, N}_{\theta, \loc} \longrightarrow \Omega^{\infty-1}\mb{hW}^{\{k, d-k+1\}}_{\theta, \loc}.
\end{equation}
We do this in the construction below. 
\begin{Construction} \label{Construction: pontryagin thom construction loc}
To define (\ref{equation: pontryagin thom local}) we will construct for each $N \in \N$ a map 
\begin{equation} \label{equation: transformation to mapping space}
\widehat{\mathcal{D}}^{\mf, \{k, d-k+1\}, N}_{\theta, \loc} \; \longrightarrow \; \Omega^{d+N}\Th(U^{\loc}_{d+1, N}\oplus U^{\loc, \perp}_{d+1, N})
\end{equation}
Let $(\bar{x}; (V, \sigma)) \in \widehat{\mathcal{D}}^{\mf, \{k, d-k+1\}, N}_{\theta, \loc}$. 
By condition (ii) of Definition \ref{defn: sheaf version of W-loc} there is a geodesic tubular neighborhood $U \subset \R\times\R^{d+N}$ of $\bar{x}$ of radius $1$. 
Since the normal bundle $\nu_{\bar{x}}$ of $\bar{x}$ is naturally isomorphic to the trivial bundle $\epsilon^{d+N+1}$ ($\bar{x}$ is a discrete set of points),
the tubular neighborhood $U$ determines a collapse map 
$c: \R\times S^{d+N} \longrightarrow S^{d+N}\times\bar{x}.$
Composing this collapse map with $\Id_{S^{d+N}}\times(V, \sigma)$ we obtain
$$
\xymatrix{
\R\times S^{d+N} \ar[r]^{ c} & S^{d+N}\times\bar{x} \ar[rrr]^{\Id_{S^{d+N}}\times(V, \sigma) \ \ \ \ \ \ \ \ \ \ } &&& S^{d+N}\times G_{\theta}^{\mf}(\R\times\R^{d+N})^{\{k, d-k+1\}}_{\loc, \; +}.
}
$$
Precomposing this map with the inclusion $\{0\}\times S^{d+N} \hookrightarrow \R\times S^{d+N}$ and forming the adjunction yields the desired element of 
$\Omega^{d+N}\Th(U^{\loc}_{d+1, N}\oplus U^{\loc, \perp}_{d+1, N})$.
This construction determines the map (\ref{equation: transformation to mapping space}). 
Letting $N \to \infty$, yields the map 
$\widehat{\mathcal{P}}^{k}_{\theta, \loc}$ of (\ref{equation: pontryagin thom local}). 
\end{Construction}

The following proposition is a special case of the \textit{Barrat-Priddy-Quillen theorem} for configurations spaces consisting of points with labels in the space $G_{\theta}^{\mf}(\R\times\R^{\infty})_{\loc}^{\{k, d-k+1\}}$.
This proposition can also be viewed as a special case of \cite[Theorem 3.12]{GRW 09} applied to spaces of zero-dimensional manifolds with maps to a background space.
\begin{proposition}  \label{proposition: local pontryagin thom equivalence}
For all $k$, the map (\ref{equation: pontryagin thom local}) induces the weak homotopy equivalence,
$$
\xymatrix{
  \displaystyle{\colim_{N\to\infty}}\widehat{\mathcal{D}}^{\mf, \{k, d-k+1\}, N}_{\theta, \loc}  \ar[rr]^{\simeq} && \Omega^{\infty-1}\mb{hW}^{\{k, d-k+1\}}_{\theta, \loc}.
 }
$$ 
\end{proposition}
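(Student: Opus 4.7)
The plan is to recognise the map $\widehat{\mathcal{P}}^{k}_{\theta,\loc}$ as the scanning map for a space of labelled $0$-dimensional submanifolds, with labels in the space $X := G^{\mf}_{\theta}(\R^{\infty})^{\{k,d-k+1\}}_{\loc}$, and then to quote (a special case of) Theorem 3.12 of \cite{GRW 09}; classically this is the Barratt--Priddy--Quillen theorem in its configuration-space form.

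First I would reinterpret $\widehat{\mathcal{D}}^{\mf,\{k,d-k+1\},N}_{\theta,\loc}$ as follows. An element $(\bar{x},(V,\sigma))$ is a locally finite, zero-dimensional submanifold $\bar{x}\subset\R\times\R^{d+N}$ together with a continuous label map $(V,\sigma)\colon \bar{x}\to X$, where the minimum-distance condition of Definition 2.13 plays the role of the tubular-neighbourhood condition in Definition 2.7. Forgetting the label, this is precisely a space of $0$-dimensional neat submanifolds of the long strip $\R\times\R^{d+N}$; adding the label fits the setup into the framework of $\theta$-structured submanifold spaces used in Section 3 of \cite{GRW 09}. Taking the colimit $N\to\infty$ one obtains the full space of zero-dimensional $X$-labelled submanifolds of $\R\times\R^{\infty}$.

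Next I would identify the target spectrum. Since $U^{\loc}_{d+1,N}\oplus U^{\loc,\perp}_{d+1,N}$ is canonically the trivial bundle $\epsilon^{d+1+N}$ restricted to $G^{\mf}_{\theta}(\R^{d+1+N})^{\{k,d-k+1\}}_{\loc}$, the level-$(d+1+N)$ Thom space is naturally $\Sigma^{d+1+N}\bigl(G^{\mf}_{\theta}(\R^{d+1+N})^{\{k,d-k+1\}}_{\loc,+}\bigr)$, so $\mb{hW}^{\{k,d-k+1\}}_{\theta,\loc}$ is canonically homotopy equivalent to the suspension spectrum $\Sigma^{\infty}X_{+}$, as already indicated after Definition 2.16. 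Under this identification, the construction carried out in Construction 2.14, namely the collapse map $\R\times S^{d+N}\to S^{d+N}\wedge\bar{x}_{+}$ determined by the radius-$1$ tubular neighbourhood of $\bar{x}$ followed by the smash with the label $(V,\sigma)$, is precisely the classical electric-field scanning map for labelled configurations of points.

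The only real step is thus to verify that $\widehat{\mathcal{P}}^{k}_{\theta,\loc}$ matches the scanning map of \cite[Section 3]{GRW 09} up to natural homotopy compatible with $N\to\infty$. This is a routine naturality check: the canonical trivialisation of the normal bundle of a zero-dimensional submanifold, the tubular neighbourhood collapse, and evaluation of the label all commute with the bundle maps $\epsilon^{1}\oplus U^{\loc}_{d+1,N}\oplus U^{\loc,\perp}_{d+1,N}\to U^{\loc}_{d+1,N+1}\oplus U^{\loc,\perp}_{d+1,N+1}$ defining the structure maps of $\mb{hW}^{\{k,d-k+1\}}_{\theta,\loc}$, and they are compatible with the stabilisation maps on $\widehat{\mathcal{D}}^{\mf,\{k,d-k+1\},N}_{\theta,\loc}$. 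The hardest (and only non-formal) ingredient is the scanning theorem itself, which is available precisely as \cite[Theorem 3.12]{GRW 09} applied to the zero-dimensional case with target space $X$. Combining the identification above with that theorem yields the asserted weak equivalence.
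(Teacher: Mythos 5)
Your proposal is exactly the argument the paper gives: the paper also identifies the map (2.13)/(2.16) as the scanning map for configurations of points with labels in $G^{\mf}_{\theta}(\R^{\infty})^{\{k,d-k+1\}}_{\loc}$ and then cites the Barratt--Priddy--Quillen theorem, equivalently \cite[Theorem~3.12]{GRW 09} for zero-dimensional manifolds with a background space. The additional naturality check you spell out is correct and just makes explicit what the paper leaves implicit.
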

We emphasize that unlike Theorem \ref{theorem: main theorem}, the map (\ref{equation: pontryagin thom local}) is a weak homotopy equivalence for \textbf{all} choices of $k$.
With the above constructions in place, we are now in a position to put everything together and prove Theorem \ref{theorem: main theorem} using Theorem \ref{theorem: localization sequence}. 
\begin{proof}[Proof of Theorem \ref{theorem: main theorem}, assuming Theorem \ref{theorem: localization sequence}]
Let $k < d/2$.
The maps defined in Constructions \ref{Construction: pontryagin thom construction} and \ref{Construction: pontryagin thom construction loc} fit together into the commutative diagram
\begin{equation} \label{equation: diagram of pontryagin thom}
\xymatrix{
\mathcal{D}^{\mf, k}_{\theta, P} \ar[rr] && \mathcal{D}^{\mf, k-1}_{\theta, P}  \ar[rr] && 
\mathcal{D}^{\mf, \{k, d-k+1\}}_{\theta, \loc} \\
\displaystyle{\colim_{N\to\infty}}\widehat{\mathcal{D}}^{\mf, k, N}_{\theta, P} \ar[u]_{\simeq} \ar[rr] \ar[d] && \displaystyle{\colim_{N\to\infty}}\widehat{\mathcal{D}}^{\mf, k-1, N}_{\theta, P} \ar[u]_{\simeq} \ar[rr] \ar[d] && \displaystyle{\colim_{N\to\infty}}\widehat{\mathcal{D}}^{\mf, \{k, d-k+1\}, N}_{\theta, \loc} \ar[u]_{\simeq} \ar[d]^{\simeq} \\
\Omega^{\infty-1}\mb{hW}^{k}_{\theta} \ar[rr] && \Omega^{\infty-1}\mb{hW}^{k-1}_{\theta} \ar[rr] && \Omega^{\infty-1}\mb{hW}^{\{k, d-k+1\}}_{\theta, \locc}.
}
\end{equation}
The maps of the bottom row are induced by the sequence (\ref{proposition: cofibre of the natural inclusion map}), the maps of the top row are induced by the localization sequence in the statement of Theorem \ref{theorem: localization sequence}, and the maps of the middle row are defined analogously to the maps on the top row.
Commutativity is easily checked by tracing through the constructions. 
By Proposition \ref{proposition: cofibre of the natural inclusion map} the bottom row is a homotopy fibre sequence. 
With this commutative diagram established, the proof of Theorem \ref{theorem: main theorem} follows by the same induction argument given in Section \ref{subsection: localization sequence} (page 9) used in our outline of the proof of Theorem B, where the base of the induction is given by Theorem \ref{theorem: madsen-weiss + vassiliev} (or Theorem A).  
\end{proof}
With the above argument out of the way, we can now devote all of our resources to proving Theorem \ref{theorem: localization sequence}. 

\section{Homotopy Colimit Decompositions} \label{section: homotopy colimit decompositions}
\subsection{A homotopy colimit decomposition} \label{subsection: homotopy commit of sheaves}
We now proceed as in \cite[Section 5]{MW 07} to express the homotopy type of $\mathcal{D}^{\mf, k}_{\theta, P}$ as a homotopy colimit of spaces of compact $d$-dimensional manifolds equipped with surgery data.
We begin by recalling a definition from \cite{MW 07}.
For what follows, fix once and for all an infinite set $\Omega$. 
\begin{defn} \label{defn: surgery category}
An object of the category $\mathcal{K}$ is a finite subset $\mb{s} \subset \Omega$ equipped with a map 
$$\delta: \mb{t} \longrightarrow \{0, 1, 2, \dots, d+1\}.$$ 
A morphism from $\mb{s}$ to $\mb{t}$ is a pair $(j, \varepsilon)$ where $j$ is an injective map over $\{0, \dots, d+1\}$ from $\mb{s}$ to $\mb{t}$, and $\varepsilon$ is a function $\mb{t}\setminus j(\mb{s}) \longrightarrow \{-1, +1\}$. 
The composition of two morphisms $(j_{1}, \varepsilon_{1}): \mb{s} \longrightarrow \mb{t}$ and $(j_{2}, \varepsilon_{2}): \mb{t} \longrightarrow \mb{u}$ is $(j_{2}j_{1}, \varepsilon_{3})$, where $\varepsilon_{3}$ agrees with $\varepsilon_{2}$ outside $j_{2}(\mb{t})$ and $\varepsilon_{1}\circ j^{-1}_{2}$ on $j_{2}(\mb{t}\setminus j_{1}(\mb{s}))$. 
\end{defn}

We will define a space-valued contravariant functor on the category $\mathcal{K}$.
Before doing this we will need to fix some notational conventions.
\begin{Notation} \label{Notation: non-degenerate bilinear form}
Let $(V, \sigma) \in G^{\mf}_{\theta}(\R^{\infty})_{\loc}$. 
The standard Euclidean inner product $\langle \--, \--\rangle$ on the ambient space $\R^{\infty}$ induces an inner product $\langle \--, \--\rangle_{V}$ on the $(d+1)$-dimensional subspace $V$. 
Let $T_{V}: V \longrightarrow V^{*}$ denote the linear isomorphism, $v \mapsto \langle v, \--\rangle_{V}$.
Using $T_{V}$, the bilinear form $\sigma$ determines a self-adjoint linear operator 
$\bar{\sigma}: V \longrightarrow V$
defined by, 
$$\bar{\sigma}(v) \; = \; T^{-1}_{V}(\sigma(v, \-- )).$$
The form $\sigma$ is non-degenerate if and only if $\bar{\sigma}$ is an isomorphism. 
Since $\bar{\sigma}$ is self-adjoint, the vector space $V$ decomposes as $V = V^{+}\oplus V^{-}$ where $V^{\pm}$ are the negative and positive eigenspaces of $\bar{\sigma}$. 
Notice that $\sigma$ is negative definite on the subspace $V^{-}$ and 
$\indexx(\sigma) = \dim(V^{-}).$
\end{Notation}

\begin{Notation} \label{Notation: preliminaries}
Let $\mb{t} \in \mathcal{K}$. 
We will need to consider the mapping space, 
$$(G^{\mf}_{\theta}(\R^{\infty})_{\loc})^{\mb{t}} = \Maps(\mb{t}, \; G^{\mf}_{\theta}(\R^{\infty})_{\loc}).$$ 
A typical element of this mapping space will be denoted $(V, \sigma)$ with 
$$(V, \sigma)(i)  = (V(i), \sigma(i)) \in G^{\mf}_{\theta}(\R^{\infty})_{\loc}$$ 
for $i \in \mb{t}$.
Let 
$(V, \sigma) \in (G^{\mf}_{\theta}(\R^{\infty})_{\loc})^{\mb{t}}.$
For each $i \in \mb{t}$ we may form the unit spheres and disks $S(V^{\pm}(i))$ and $D(V^{\pm}(i))$ defined with respect to the inner product $\langle \--, \--\rangle_{V}$ induced from the inner product in the ambient space. 
We will denote,
$$
D(V^{+})\times_{\mb{t}}S(V^{-}) := \bigsqcup_{i \in \mb{t}}D(V^{+}(i))\times S(V^{-}(i)).
$$
The $\theta$-orientations on the vector spaces $V(i)$ induce a $\theta$-structure on $D(V^{+})\times_{\mb{t}}S(V^{-})$ which we will denote by 
$\hat{\ell}_{\mb{t}}: T(D(V^{+})\times_{\mb{t}}S(V^{-})) \longrightarrow \theta^{*}\gamma^{d+1}.$
We may think of $D(V^{+})\times_{\mb{t}}S(V^{-})$ as being a fibre bundle over $\mb{t}$ equipped with a fibrewise $\theta$-structure. 
\end{Notation}

Fix once and for all an integer $d \in \Z_{\geq 0}$ and a tangential structure $\theta: B \longrightarrow BO(d+1)$. 
We will keep this structure fixed for the rest of the paper. 
To prevent notational overcrowding, the spaces defined below will not contain $\theta$ in their notation.

\begin{defn} \label{defn: colimit decomp of W}
Fix $P \in \mathcal{M}_{\theta}$ and let $\mb{t} \in \mathcal{K}$. 
The set $\mathcal{W}_{P, \mb{t}}$ consists of tuples $(M, (V, \sigma), e)$  where:
\begin{enumerate} \itemsep.3cm
\item[(i)] $M$ is an element of $\mathcal{N}_{\theta, P}$ (where $\mathcal{N}_{\theta, P}$ was defined in Definition \ref{defn: space of manifolds (absolute)});
\item[(ii)] $(V, \sigma)$ is an element of $(G^{\mf}_{\theta}(\R^{\infty})_{\loc})^{\mb{t}}$ that satisfies,
$\delta(i) = \text{index}(\sigma(i))$ for all $i \in \mb{t}$, where recall that $\delta: \mb{t} \longrightarrow \{0, \dots, d+1\}$ is the labeling function. 
\item[(iii)] $e: D(V^{+})\times_{\mb{t}}D(V^{-}) \longrightarrow (-\infty, 0)\times\R^{\infty-1}$ is an embedding 
subject to the following further conditions:
\begin{enumerate} \itemsep.2cm
\item[(a)] $e^{-1}(M) = D(V^{+})\times_{\mb{t}}S(V^{-})$;
\item[(b)] the induced $\theta$-structure $\hat{\ell}_{\mb{t}}$ on $D(V^{+})\times_{\mb{t}}S(V^{-})$ agrees with $\hat{\ell}_{M}$.
\end{enumerate}
\end{enumerate}
\end{defn}

We also have a local version of the above definition. 
\begin{defn} \label{defn: W-loc T}
Let $\mb{t} \in \mathcal{K}$. 
The space $\mathcal{W}_{\locc, \mb{t}}$ consists of pairs $((V, \sigma), e)$ where:
\begin{enumerate} \itemsep.2cm
\item[(i)] $(V, \sigma)$ is an element of $(G^{\mf}_{\theta}(\R^{\infty})_{\loc})^{\mb{t}}$ that satisfies
$\delta(i) = \text{index}(\sigma(i))$
for all $i \in \mb{t}$.
\item[(ii)] $e: D(V^{+})\times_{\mb{t}}D(V^{-}) \longrightarrow (-\infty, 0)\times\R^{\infty-1}$ is a smooth embedding.
\end{enumerate}
\end{defn}

We need to describe how to make the correspondences 
$\mb{t} \mapsto \mathcal{W}_{P, \mb{t}}$ and $\mb{t} \mapsto \mathcal{W}_{\loc, \mb{t}}$
into contravariant functors on $\mathcal{K}$.
The case with $\mathcal{W}_{\loc, (\--)}$ is easy; if $(j, \varepsilon): \mb{s} \longrightarrow \mb{t}$ is a morphism, the map 
$
(j, \varepsilon)^{*}: \mathcal{W}_{\loc, \mb{t}} \longrightarrow \mathcal{W}_{\loc, \mb{s}}
$
is given by sending an element $(V, e) \in \mathcal{W}_{\loc, \mb{t}}$ to the element in $\mathcal{W}_{\loc, \mb{s}}$ obtained by precomposing $V$ with $j: \mb{s} \hookrightarrow \mb{t}$, and then restricting the embedding $e$. 
Notice that in this definition the function $\varepsilon$ played no role. 

Describing the functor $\mb{t} \mapsto \mathcal{W}_{P, \mb{t}}$ will take more work. 
Let $(j, \varepsilon): \mb{s} \longrightarrow \mb{t}$ be a morphism in $\mathcal{K}$. 
If $j$ is bijective, there is an obvious identification $\mathcal{W}_{P, \mb{t}} \cong \mathcal{W}_{P, \mb{s}}$ and this is the induced map. 
We may assume that $j$ is an inclusion $\mb{s} \hookrightarrow \mb{t}$. 
We may then reduce to the case where $\mb{t}\setminus \mb{s}$ has exactly one element, $a$. 
This case has two subcases: $\varepsilon(a) = +1$ and $\varepsilon(a) = -1$. 

\begin{defn}  \label{defn: pullback +1}
Let $(j, \varepsilon): \mb{s} \longrightarrow \mb{t}$ be a morphism in $\mathcal{K}$ where $j$ is an inclusion and $\mb{t}\setminus\mb{s} = \{a\}$ with $\varepsilon(a) = +1$. 
We describe the induced map 
$
(j, \varepsilon)^{*}: \mathcal{W}_{P, \mb{t}} \longrightarrow \mathcal{W}_{P, \mb{s}}.
$
Let $(M, (V, \sigma), e)$ be an element of $\mathcal{W}_{P, \mb{t}}$.
Map this to an element of $\mathcal{W}_{P, \mb{s}}$ by pulling $M$ and $V$ back along the inclusion $\mb{s}\times X \hookrightarrow \mb{t}\times X$, and restricting $e$ accordingly.
\end{defn}

\begin{defn} \label{defn: pullback -1} 
Let $(j, \varepsilon): \mb{s} \longrightarrow \mb{t}$ be a morphism in $\mathcal{K}$ where $j$ is an inclusion and $\mb{t}\setminus\mb{s} = \{a\}$ with $\varepsilon(a) = -1$. 
The induced map 
$
\mathcal{W}_{P, \mb{t}} \longrightarrow \mathcal{W}_{P, \mb{s}}.
$
is defined as follows. 
Let $(M, (V, \sigma), e)$ be an element of $\mathcal{W}_{P, \mb{t}}$.
Map this to the element $(\widetilde{M}, (\widetilde{V}, \widetilde{\sigma}), \widetilde{e})$ in $\mathcal{W}_{P, \mb{s}}$ where: 
\begin{enumerate} \itemsep.2cm
\item[(i)] $\widetilde{M}$ is the element of $\mathcal{N}_{\theta, P}$ given by, 
$$
\widetilde{M} \; = \; (M\setminus e(S(V^{-})\times_{a}D(V^{+}))\bigcup e(D(V^{-})\times_{a}S(V^{+})),
$$
where $S(V^{-})\times_{a}D(V^{+}) \subset S(V^{-})\times_{\mb{t}}D(V^{+})$ is the component of $S(V^{-})\times_{\mb{t}}D(V^{+})$ that corresponds to $a \in \mb{t}$.
\item[(ii)] $(\widetilde{V}, \widetilde{\sigma})$ is the restriction of $(V, \sigma)$ to $\mb{s}$; 
\item[(iii)] $\widetilde{e}$ is obtained from $e$ by restriction. 
\end{enumerate}
The definitions above make the assignment $\mb{t} \mapsto \mathcal{W}_{P, \mb{t}}$ into a contravariant functor on $\mathcal{K}$.
\end{defn}

We will need to work with a ``restricted index'' version of $\mathcal{W}_{P, \mb{t}}$.
For each integer $k$, let $\mathcal{K}^{k} \subset \mathcal{K}$ denote the full subcategory consisting of those $\mb{t} \in \mathcal{K}$ whose reference map $\delta: \mb{t} \longrightarrow \{0, \dots, d+1\}$ has its image in the subset $\{k+1, \dots, d-k\} \subset \{0, \dots, d+1\}$. 
Similarly, we let $\mathcal{K}^{\{k, d-k+1\}} \subset \mathcal{K}$ be the full subcategory consisting of those objects $\mb{t}$ with $\delta(\mb{t}) \subset \{k, d-k+1\}$.

\begin{defn} \label{defn: restricted index version of D}
Let $k$ be an integer.
For $\mb{t} \in \mathcal{K}^{k}$ we define
$\mathcal{W}^{k}_{P, \mb{t}} \subset \mathcal{W}_{P, \mb{t}}$
to be the subset consisting of those $(M, (V, \sigma), e)$ for which the map 
$\ell_{M}: M \longrightarrow B$
is $k$-connected. 
In this way $\mb{t} \mapsto \mathcal{W}^{k}_{P, \mb{t}}$ defines a functor on $\mathcal{K}^{k}$.
\end{defn}

Let 
$p_{\{k, d-k+1\}}: \mathcal{K}^{k-1} \longrightarrow \mathcal{K}^{\{k, d-k+1\}}$
denote the projection functor, which is defined by sending an object $\mb{t} \in \mathcal{K}^{k-1}$ to the subset $\mb{t}_{\{k, d-k+1\}} \subset \mb{t}$ consisting of all points with label contained in $\{k, d-k+1\}$.
For $\mb{t} \in \mathcal{K}^{k-1}$, we define 
\begin{equation}
\mathcal{W}^{\{k, d-k+1\}}_{\locc, \mb{t}} := \mathcal{W}_{\locc, \; p_{\{k, d-k+1\}}(\mb{t})}.
\end{equation}
In this way we may view $\mathcal{W}^{\{k, d-k+1\}}_{\locc, (\--)}$ as a functor on $\mathcal{K}^{k-1}$.
For all $\mb{t} \in \mathcal{K}^{k-1}$ we have a map 
\begin{equation} \label{equation: localization map hocolim version}
\mathcal{W}^{k-1}_{P, \mb{t}} \longrightarrow \mathcal{W}^{\{k, d-k+1\}}_{\locc, \mb{t}}
\end{equation}
defined by sending $(M, (V, \sigma), e)$ to $((V, \sigma)_{\{k, d-k+1\}}, \;  e_{\{k, d-k+1\}})$, which is the element obtained by restricting $(V, \sigma)$ and $e$ to the components of $\mb{t}$ with labels in $\{k, d-k+1\}$.
This map yields a natural transformation of contravariant functors on $\mathcal{K}^{k-1}$.

\subsection{Homotopy colimits} \label{subsection: homotopy colimits}
We turn our attention to the homotopy colimit,
$\displaystyle{\hocolim_{\mb{t} \in \mathcal{K}^{k}}}\mathcal{W}^{k}_{P, \mb{t}}.$
Before stating the main theorem, let us recall a particular model for the homotopy colimit of a functor (or contravariant functor) from a small category to the category of topological spaces. 
\begin{defn} \label{defn: homotopy colimit}
Let $\mathcal{C}$ be a small category and let $\mathcal{F}$ be a contravariant functor from $\mathcal{C}$ to the category of topological spaces. 
The \textit{transport category} $\mathcal{C}\wr\mathcal{F}$ has as its objects, pairs $(C, x)$ where $C \in \Ob\mathcal{C}$ and $x \in \mathcal{F}(C)$. 
A morphism $(B, x) \longrightarrow (C, y)$ in $\mathcal{C}\wr\mathcal{F}$ is a morphism $f: C \longrightarrow B$ in $\mathcal{C}$ such that $F(f)(x) = y$.
The \textit{homotopy colimit} $\displaystyle{\hocolim_{C \in \mathcal{C}}}\mathcal{F}(C)$ is defined to be the classifying space $B(\mathcal{C}\wr\mathcal{F})$.
\end{defn}

The following theorem is a generalization of the main result from \cite[Section 5]{MW 07}.
We give its proof in Appendix \ref{section: proof of theorem homotopy colimit theorem}. 
The theorem is proven by repeating the same steps from \cite[Section 5]{MW 07} almost verbatim. 
In Appendix \ref{section: proof of theorem homotopy colimit theorem} we provide sketches of the main steps of the proof and refer the reader to the relevant construction in \cite{MW 07} for the details. 
\begin{theorem} \label{theorem: homotopy colimit decomposition}
For all $k$ and $P \in \mathcal{M}_{\theta}$, there exists spaces $\mathcal{L}^{k-1}_{\theta}$ and $\mathcal{L}^{\{k, d-k+1\}}_{\theta, \loc}$ together with a commutative diagram
$$
\xymatrix{
\displaystyle{\hocolim_{\mb{t} \in \mathcal{K}^{k-1}}}\mathcal{W}^{k-1}_{P, \mb{t}} \ar[d]  && \mathcal{L}^{k-1}_{\theta} \ar[ll]_{\simeq} \ar[rr]^{\simeq} \ar[d] && \mathcal{D}^{\mf, k-1}_{\theta, P}  \ar[d] \\
\displaystyle{\hocolim_{\mb{t} \in \mathcal{K}^{k-1}}}\mathcal{W}^{\{k, d-k+1\}}_{\locc, \mb{t}}  && \mathcal{L}^{\{k, d-k+1\}}_{\theta, \locc} \ar[ll]_{\simeq} \ar[rr]^{ \ \ \ \ \simeq} && \mathcal{D}_{\theta, \locc}^{\mf, \{k, d-k+1\}}, 
}
$$
such that all horizontal maps are weak homotopy equivalences. 
\end{theorem}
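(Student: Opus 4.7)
The strategy, following \cite{MW 07} Section 5 closely, is to interpolate between $\mathcal{D}^{\mf, k-1}_{\theta, P}$ and the homotopy colimit by a single auxiliary space $\mathcal{L}^{k-1}_{\theta}$ that parametrizes manifolds $W$ together with a combinatorial labeling of their critical points and a choice of standard Morse coordinates at each one. Concretely, I would define $\mathcal{L}^{k-1}_{\theta}$ to have points of the form $(W, \mb{t}, \lambda, e)$, where $W \in \mathcal{D}^{\mf, k-1}_{\theta, P}$, $\mb{t} \in \mathcal{K}^{k-1}$, $\lambda \colon \mb{t} \to \mathrm{Crit}(h_{W})$ is a bijection onto the critical points of $h_{W}$ compatible with the index labels $\delta$, and $e$ is a system of framed handle charts at each critical point (the tangent space $V(i)=T_{\lambda(i)}W$ with its Hessian $\sigma(i)$, together with an embedded half-disk pair refining Morse's lemma). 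More carefully, one models $\mathcal{L}^{k-1}_{\theta}$ as the classifying space of a topological category whose morphisms encode admissible passages through regular values of $h_{W}$.

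The equivalence $\mathcal{L}^{k-1}_{\theta} \stackrel{\simeq}{\to} \mathcal{D}^{\mf, k-1}_{\theta, P}$ is the forgetful map. Its fibers are contractible by two inputs: (i) Morse's lemma identifies the space of framed handle charts $(V,\sigma,e)$ at a critical point of prescribed index with a contractible space, and (ii) the classifying space of the groupoid of bijective relabelings of $\mathrm{Crit}(h_{W})$ by objects of $\mathcal{K}^{k-1}$ is contractible. A standard argument then upgrades pointwise contractibility to a weak equivalence of total spaces.

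The equivalence $\mathcal{L}^{k-1}_{\theta} \stackrel{\simeq}{\to} \hocolim_{\mb{t} \in \mathcal{K}^{k-1}} \mathcal{W}^{k-1}_{P, \mb{t}}$ is constructed by choosing, for each $(W, \mb{t}, \lambda, e) \in \mathcal{L}^{k-1}_{\theta}$, a regular value $t_{0}$ of $h_{W}$ and sending this datum to the slice $M = W|_{t_{0}} \in \mathcal{N}_{\theta, P}$ equipped with the tuple $((V,\sigma),e)$ coming from the critical points. The $k$-connectivity of $\ell_{W}$ transfers to $\ell_{M}$ by general position since $k < d/2$, so the image lies in $\mathcal{W}^{k-1}_{P, \mb{t}}$. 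Varying $t_{0}$ across a critical point of index $i \in \{k+1,\dots,d-k\}$ corresponds to a morphism $(j,\varepsilon)$ in $\mathcal{K}^{k-1}$ with $\varepsilon=-1$, and the induced change of slice matches Definition \ref{defn: pullback -1}, while $\varepsilon=+1$ morphisms encode forgetting a label without crossing a critical point. These coherences assemble the assignment into a continuous map to $B(\mathcal{K}^{k-1} \wr \mathcal{W}^{k-1}_{P, -})$, which by definition is the homotopy colimit. That this map is a weak equivalence is proven by filtering $\mathcal{L}^{k-1}_{\theta}$ according to decompositions of $\R$ by regular values of $h_{W}$ and identifying the associated semi-simplicial space with the bar construction computing $B(\mathcal{K}^{k-1} \wr \mathcal{W}^{k-1}_{P, -})$.

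The local case is strictly easier: I would define $\mathcal{L}^{\{k, d-k+1\}}_{\theta, \loc}$ by the analogous construction applied to the space $\mathcal{D}^{\mf, \{k, d-k+1\}}_{\theta, \loc}$, keeping only the labeled critical-point data $(\bar{x}; (V,\sigma))$ and dropping the ambient manifold. The two horizontal equivalences are proven by the same pattern, and commutativity of the square is immediate from the construction since on both rows the localization is given by restricting to indices in $\{k, d-k+1\}$. The main obstacle will be the nerve-matching step identifying $\mathcal{L}^{k-1}_{\theta}$ with the bar construction: it requires a delicate passage between the global data of regular-value decompositions of $h_{W}$ and the categorical data of morphisms in $\mathcal{K}^{k-1}$, plus a verification that the $k$-connectivity condition on $\ell_{W}$ is preserved throughout. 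This is precisely where the heavy technical work of \cite{MW 07} Section 5 is imported, which is why the detailed proof is deferred to the appendix.
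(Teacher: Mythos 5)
Your proposal captures the broad strategy of the appendix (interpolate via an auxiliary space of Morse data labelled by $\mathcal K^{k-1}$, prove two forgetful equivalences), but it misses two technical devices that carry most of the weight in the paper's argument, and these are not mere details.

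First, your $\lambda$ is a \emph{bijection} $\mb{t}\to\mathrm{Crit}(h_W)$, so $\mb{t}$ is forced to index the entire (in general infinite) critical set. This kills the $\mathcal K^{k-1}$-functoriality of the construction: a morphism $(j,\varepsilon)\colon\mb{s}\to\mb{t}$ with $j$ a proper inclusion would have to discard critical points, and there is no way to do this while keeping $\lambda$ bijective on what remains. The paper resolves this with an auxiliary function $\delta\colon\bar x\to\{-1,0,+1\}$ on the critical set together with a bijection $\phi\colon\mb{t}\cong\delta^{-1}(0)$ (Definitions \ref{defn: intermediate space 1} and \ref{defn: homotopy colim decomp of L}); the subsets $\delta^{-1}(\pm1)$ absorb the critical points that are ``below'' or ``above'' the window that $\mb{t}$ sees, the boundedness condition on $h_{\bar x}|_{\delta^{-1}(\pm1)}$ keeps the window finite, and the reassignment rule (\ref{equation: functoriality for delta}) is precisely what makes the assignment contravariantly functorial in $(j,\varepsilon)$. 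You gesture at ``a topological category whose morphisms encode admissible passages through regular values,'' but without $\delta$ the object you have written down does not admit the required functorial structure, and the gesture cannot be completed.

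Second, your map $\mathcal L^{k-1}_{\theta}\to\hocolim\mathcal W^{k-1}_{P,\mb{t}}$ is built by ``choosing a regular value $t_0$ of $h_W$,'' which is not a continuous operation: as $W$ varies, any particular $t_0$ can become critical, and there is no natural contractible family of choices. The paper avoids this entirely via the regularization of Section \ref{subsection: regularization}: using the interpolating diffeomorphisms $\psi_x$ and the coupling $\lambda$ (an embedding of the saddle set $\sdl(V,\sigma;\bar x)$ into $W$ \emph{respecting} $h_W$, Definition \ref{defn: couplings}), one deforms $h_W$ on $\Image(\lambda)$ so that $0$ becomes a regular value, with the behaviour at each critical point dictated by $\delta$ and $f^{\pm}_V$. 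The slice at $0$ together with the retained data of $\delta^{-1}(0)$ is then a point of $\mathcal W^{k-1}_{P,\mb{t}}$ (Construction \ref{construction: regularization}), and the homotopy inverse is the explicit long-trace map of Construction \ref{Construction: long trace}. This is the heart of the appendix; your filtration-by-regular-values argument is much vaguer and I do not see how to make it precise without reinventing the coupling/regularization machinery.

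A smaller point: you invoke $k<d/2$ to transfer connectivity from $\ell_W$ to $\ell_M$, but the statement is for all $k$ and no such hypothesis is needed; this connectivity transfer holds because the critical indices lie in $\{k,\dots,d-k+1\}$ by assumption on $W\in\mathcal D^{\mf,k-1}_{\theta,P}$ and on the labels $\delta(i)$, not because of any dimension restriction.
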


\subsection{Increased Connectivity} \label{subsection: increased connectivity}
In view of the above theorem, 
Theorem \ref{theorem: localization sequence} translates to the statement that the sequence, 
$
\displaystyle{\hocolim_{\mb{t} \in \mathcal{K}^{k}}}\mathcal{W}^{k}_{P, \mb{t}}\longrightarrow \displaystyle{\hocolim_{\mb{t} \in \mathcal{K}^{k-1}}}\mathcal{W}^{k-1}_{P, \mb{t}} \; \longrightarrow \displaystyle{\hocolim_{\mb{t} \in \mathcal{K}^{k-1}}}\mathcal{W}^{\{k, d-k+1\}}_{\locc, \mb{t}}, 
$
is a homotopy fibre sequence whenever $k < d/2$.
We will break the proof of this result down into two intermediate theorems. 
To state them we will need to define an intermediate space sitting in between $\mathcal{W}^{k}_{P, \mb{t}}$ and $\mathcal{W}^{k-1}_{P, \mb{t}}$.
\begin{defn} \label{defn: intermediate high connected sheaf}
For each $\mb{t} \in \mathcal{K}^{k}$, we define
$\mathcal{W}^{k, c}_{P, \mb{t}} \subset \mathcal{W}^{k}_{P, \mb{t}}$
to be the subspace consisting of those $(M, (V, \sigma), e)$ such that the $\theta$-structure $\hat{\ell}_{M}$ satisfies the following connectivity condition:
the restricted map, 
$\ell|_{M\setminus\Image(e)}: M\setminus\Image(e) \longrightarrow B$, 
is $(k+1)$-connected.
\end{defn}
Let $(j, \varepsilon): \mb{t} \longrightarrow \mb{s}$ be a morphism in $\mathcal{K}^{k}$.
If $(M, (V, \sigma), e) \in \mathcal{W}^{k, c}_{P, \mb{s}}$, it can be verified that the image 
$
(j, \varepsilon)^{*}(M, (V, \sigma), e) \in \mathcal{W}^{k}_{P, \mb{t}}
$
actually lies in the subset, 
$\mathcal{W}^{k, c}_{P, \mb{t}} \subset \mathcal{W}^{k}_{P, \mb{t}}.$
Indeed, the surgery performed on $(M, (V, \sigma), e)$ as a result of this morphism does not destroy the connectivity condition from Definition \ref{defn: intermediate high connected sheaf}. 
Thus, the correspondence 
$\mb{t} \mapsto \mathcal{W}^{k, c}_{P, \mb{t}}$
defines a functor on $\mathcal{K}^{k}$, and we can form the homotopy colimit, 
$\displaystyle{\hocolim_{\mb{t} \in \mathcal{K}^{k}}}\mathcal{W}^{k, c}_{P, \mb{t}}.$
With this space defined, Theorem \ref{theorem: localization sequence} follows by combining the two theorems stated below.
\begin{theorem} \label{theorem: fibrewise surgery}
Let $k < d/2$.
Suppose that $B$ satisfies Wall's finiteness condition $F(k)$. 
Then the inclusion,
$
\displaystyle{\hocolim_{\mb{t} \in \mathcal{K}^{k}}}\mathcal{W}^{k, c}_{P, \mb{t}} \; \hookrightarrow \; \displaystyle{\hocolim_{\mb{t} \in \mathcal{K}^{k}}}\mathcal{W}^{k}_{P, \mb{t}},
$
is a weak homotopy equivalence.
\end{theorem}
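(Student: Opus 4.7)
The strategy is to apply Quillen's Theorem~A to the inclusion of transport categories $\iota: \mathcal{K}^{k}\wr\mathcal{W}^{k,c}_{P,-} \hookrightarrow \mathcal{K}^{k}\wr\mathcal{W}^{k}_{P,-}$ induced by the subfunctor $\mathcal{W}^{k,c}_{P,-}\subset \mathcal{W}^{k}_{P,-}$. It suffices to prove that for every object $(\mb{s},x)$ of the target category, with $x=(M,(V,\sigma),e)\in\mathcal{W}^{k}_{P,\mb{s}}$, the slice category $\iota\downarrow(\mb{s},x)$ has contractible classifying space. Objects of this slice encode ``upgrades'' of $x$ to an element $y\in\mathcal{W}^{k,c}_{P,\mb{t}}$ over some larger label set $\mb{t}$, together with a morphism $(j,\varepsilon):\mb{s}\to\mb{t}$ in $\mathcal{K}^{k}$ satisfying $(j,\varepsilon)^{*}(y)=x$.

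The main geometric input is a parametrized handle-attachment construction. Given $x$, one adjoins to $\mb{s}$ new points carrying labels in $\{k+1,\ldots,d-k\}$ and extends the embedded data $e$ accordingly, so that the resulting $y$ has $\ell|_{M_{y}\setminus\Image(e_{y})}$ upgraded to a $(k+1)$-connected map. The obstruction to such an upgrade lies in a relative homotopy group involving $\pi_{k+1}$ and $\pi_{k}$ of the restricted $\ell$. By Wall's finiteness condition $F(k)$, this obstruction group is finitely generated as a $\Z[\pi_{1}(B)]$-module; each generator is killed by a single handle whose attaching sphere is embedded in $M_{x}\setminus\Image(e)$ and framed compatibly with the $\theta$-structure. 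The dimension inequality $k<d/2$ provides the slack needed: general position gives embedded representatives of the relevant homotopy classes, the Whitney trick disjoins them, and the space of $\theta$-framings on an attached handle is contractible in the relevant range.

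To promote nonemptiness of the slice to contractibility, one exploits the fact that adjoining further ``trivial'' handles along null-homotopic spheres preserves membership in $\mathcal{W}^{k,c}$. Any two candidate upgrades of $x$ can therefore be interpolated by a third upgrade obtained by taking the handle data of both; the slice is thus a filtered system of contractible pieces. Packaging this structure semi-simplicially yields a resolution with contractible geometric realisation, in the spirit of the parametrized surgery machinery of Galatius and Randal-Williams \cite{GRW 14}. The detailed development of this framework is the purpose of the subsequent sections of the paper on parametrized surgery and higher-index handles.

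The principal obstacle is that every step must be carried out continuously in families, so that the slice is contractible as a topological category rather than merely as a discrete one. Wall's $F(k)$ condition is invoked parametrically to guarantee a uniform bound on the number of handles required over a compact base parameter space---without such uniformity the filtered-colimit argument breaks down, and indeed this is essentially where the hypothesis $k<d/2$ also re-enters, since it is what allows the requisite embeddings to be extended continuously in families. With this parametrized control in place, Quillen~A yields the required weak equivalence of homotopy colimits.
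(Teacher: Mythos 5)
Your proposal identifies the right geometric input (parametrized handle attachment to upgrade connectivity, with Kreck's surgery theorem using $F(k)$, and $k<d/2$ giving general position) but the categorical framework you set up is not the one the paper uses, and as sketched it has genuine holes.

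First, Quillen's Theorem~A does not come for free in this setting: $\mathcal{K}^{k}\wr\mathcal{W}^{k,c}_{P,-}\hookrightarrow\mathcal{K}^{k}\wr\mathcal{W}^{k}_{P,-}$ is an inclusion of \emph{topological} categories (the objects form moduli spaces of embedded manifolds with surgery data), and for such a functor ``all comma categories contractible'' is not by itself sufficient --- one needs additional fibration-like conditions, and even when a Theorem~A-type statement holds, one has to be careful about what ``contractible slice'' means for a topological slice category. The paper sidesteps this entirely: it replaces the single Quillen~A step by an explicit double-homotopy-colimit argument (Construction~\ref{Construction: map of double colimits} and Lemma~\ref{lemma: restrict to k equivalence}), where a homotopy $j_{t}$ interpolates between the ``evaluation at source'' and ``evaluation at target'' maps, and a cleverly chosen section $s$ of $\pi_{2}\circ j_{1}$ together with a two-out-of-three argument establishes the desired equivalence. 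That machinery doesn't appear in your outline and is not obviously replaceable by a slogan.

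Second, a key structural simplification is missing. The paper factors the indexing category as a product $\mathcal{K}^{k-1}=\mathcal{K}^{k-1,\{k\}^{c}}\times\mathcal{K}^{\{k\}}$ and reduces the claim to the map of $\mathcal{K}^{\{k\}}$-indexed homotopy colimits for each fixed $\mb{t}\in\mathcal{K}^{k-1,\{k\}^{c}}$; only the \emph{single-label} piece is made to vary. Your proposal adjoins points ``carrying labels in $\{k+1,\ldots,d-k\}$'', but in fact the new surgery data that upgrades connectivity always has a specific fixed label (degree $k$ in the paper's conventions), and the comma category over the full $\mathcal{K}^{k}$ has an unnecessarily complicated shape that the product factorisation is designed to avoid. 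Without that factorisation, the fibre identification $\widehat{\mathcal{W}}^{k-1}_{P,\mb{s};\mb{t}}\cong\mathcal{W}^{k-1}_{P',\mb{s}}$ used in the proof of Theorem~\ref{theorem: fibrewise surgery} does not materialise.

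Third, the step ``any two candidate upgrades of $x$ can be interpolated by a third by taking the handle data of both'' is precisely where the hard work is hidden. In a family over a compact parameter space, the handles coming from two local choices will generically collide, and making them disjoint continuously over the whole base is the content of the \emph{fibrewise surgery basis} (Definition~\ref{defn: surgery basis fold map}, Theorem~\ref{theorem: fibrewise surgery lemma}). The paper's proof of that theorem is itself a substantial construction: one first produces a sequence of semi-bases with pairwise-transverse cores (Lemma~\ref{lemma: relative existence of fibrewise surgery semi-basis}), then uses an auxiliary \'etale poset $P\to\mathbb{N}\times X$ and the contractibility of $|P_{\bullet}|\to X$ (Lemma~\ref{lemma: contractibility of P}, quoted from \cite{GRW 14}) to extract a genuine basis with pairwise-disjoint handles by choosing a section. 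Your proposal names this obstruction in the final paragraph but treats it as a remark rather than as the main technical input it is. Moreover, the contractibility result proved by this technology is for the category $\mathcal{S}^{k-1}_{M}$ of Definition~\ref{defn: top category of multiple surgeries}, and bridging from that to the relevant homotopy fibre of homotopy colimits is again done via the cocycle-sheaf model $\beta(\mathcal{S}^{k-1}_{M})^{\op}$ of \cite{MW 07} (Definition~\ref{defn: cocycle sheaves}, Proposition~\ref{claim: extension of germs}), not via a comma-category computation.

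In summary: you have the right intuition for why the theorem should be true, and you correctly locate the role of Wall's $F(k)$ (through Kreck's \cite[Proposition 4]{Kr 00}) and of $k<d/2$. But the Quillen~A framing, as stated, is not a working proof: it needs replacement by (or reduction to) the paper's explicit double-colimit/section argument, and the disjointness problem for families of handles needs the fibrewise surgery basis construction, which is the bulk of the actual proof rather than a footnote.
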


To state our next theorem we will need to fix a basepoint in the space 
$\displaystyle{\hocolim_{\mb{t} \in \mathcal{K}^{k-1}}}\mathcal{W}^{\{k, d-k-1\}}_{\locc, \mb{t}}$.
Notice that the space $\mathcal{W}^{\{k, d-k-1\}}_{\locc, \emptyset}$ has exactly one element, namely the one determined by the empty set. 
This element determines an element in the homotopy colimit which we denote by, 
$$\emptyset \; \in \; \displaystyle{\hocolim_{\mb{t} \in \mathcal{K}^{k-1}}}\mathcal{W}^{\{k, d-k-1\}}_{\locc, \mb{t}}.$$

\begin{theorem} \label{theorem: localization sequence (homotopy colimit)}
Let $k < d/2$.
The map, 
$\displaystyle{\hocolim_{\mb{t} \in \mathcal{K}^{k-1}}}\mathcal{W}^{k-1, c}_{P, \mb{t}} \; \longrightarrow \displaystyle{\hocolim_{\mb{t} \in \mathcal{K}^{k-1}}}\mathcal{W}^{\{k, d-k+1\}}_{\locc, \mb{t}}, 
$
is a quasi-fibration. 
The fibre over the point $\emptyset$ is given by the space
$\displaystyle{\hocolim_{\mb{t} \in \mathcal{K}^{k}}}\mathcal{W}^{k}_{P, \mb{t}}.$
\end{theorem}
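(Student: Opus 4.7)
The strategy is to analyze the functor $\Phi$ underlying the map of homotopy colimits, identify the fibre over the basepoint $\emptyset$ via the factorization through the projection to $\mathcal{K}^{\{k,d-k+1\}}$, and verify the quasi-fibration property by a McDuff--Segal style filtration argument. The functor $\Phi:\mathcal{K}^{k-1}\wr\mathcal{W}^{k-1,c}_{P,(-)} \to \mathcal{K}^{k-1}\wr\mathcal{W}^{\{k,d-k+1\}}_{\loc,(-)}$ fixes $\mb{t}$ and sends $(M,(V,\sigma),e)$ to its restriction to the $\{k,d-k+1\}$-labelled components of $\mb{t}$, forgetting $M$ and the surgery data at the other labels.

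First I identify the fibre. The target functor factors through the projection $p_{\{k,d-k+1\}}:\mathcal{K}^{k-1}\to\mathcal{K}^{\{k,d-k+1\}}$, and the basepoint $\emptyset$ picks out the subcomplex $B\mathcal{K}^k$ of the target hocolim, consisting of those cells where all $\{k,d-k+1\}$-label data is empty. The preimage of this subcomplex under $B\Phi$ is $\hocolim_{\mb{t}\in\mathcal{K}^k}\mathcal{W}^{k-1,c}_{P,\mb{t}}$, namely the full sub-transport-category over $\mathcal{K}^k\subset\mathcal{K}^{k-1}$.

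Next I match this preimage to the claimed fibre $\hocolim_{\mathcal{K}^k}\mathcal{W}^{k}_{P,\mb{t}}$. For $\mb{t}\in\mathcal{K}^k$ all labels lie in $\{k+1,\ldots,d-k\}$, so each attaching sphere $e(D(V^+)\times_{\mb{t}}S(V^-))$ has codimension at least $k+1$ in the $d$-manifold $M$; using $k<d/2$, the inclusion $M\setminus\Image(e)\hookrightarrow M$ is $(k+1)$-connected, forcing $\ell_M$ to be $k$-connected whenever $\ell|_{M\setminus\Image(e)}$ is. This produces inclusions $\mathcal{W}^{k,c}_{P,\mb{t}}\subset\mathcal{W}^{k-1,c}_{P,\mb{t}}\subset\mathcal{W}^{k}_{P,\mb{t}}$ on $\mathcal{K}^k$. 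Theorem \ref{theorem: fibrewise surgery} gives $\hocolim_{\mathcal{K}^k}\mathcal{W}^{k,c}_{P,\mb{t}}\simeq\hocolim_{\mathcal{K}^k}\mathcal{W}^{k}_{P,\mb{t}}$; a parallel surgery argument shows the sandwiched middle term shares this hocolim, matching the preimage with the claimed fibre.

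Finally I verify the quasi-fibration property, which is the main obstacle. The plan is to filter the base hocolim by the cardinality of the $\{k,d-k+1\}$-stratum of $\mb{t}$ and show inductively, via a McDuff--Segal style open-cover argument, that the preimage of each filtration piece is weakly fibred over it. The key deformation is the morphism $(j,\varepsilon)\in\mathcal{K}$ with $\varepsilon=-1$ on an added index-$k$ (or $d-k+1$) label: this performs the corresponding Morse surgery on $M$, giving an explicit homotopy between the fibre over a point with non-empty $\{k,d-k+1\}$-stratum and the fibre over the empty stratum. The essential role of the ``$c$''-decoration is that it guarantees $\ell|_{M\setminus\Image(e)}$ has sufficient connectivity for these surgeries to remain inside $\mathcal{W}^{k-1,c}_{P,\mb{t}}$, so that the deformations are coherent. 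Checking compatibility of these surgery deformations with the embedding-space topology, the composition of morphisms in $\mathcal{K}$, and the connectivity bookkeeping forms the technical heart of the argument; I expect it to parallel and extend the quasi-fibration construction of Madsen and Weiss in \cite{MW 07} for the case $k=-1$.
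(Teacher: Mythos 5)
Your skeleton broadly resembles the paper's: identify the fibre over $\emptyset$, then show that the transition maps indexed by the $\{k,d-k+1\}$-stratum are weak equivalences, then conclude by a general statement about homotopy colimits of fibre sequences. Nonetheless there are two problems, one minor and one serious.

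The minor one is the fibre identification. The paper uses the product decomposition $\mathcal{K}^{k-1}\cong\mathcal{K}^{k}\times\mathcal{K}^{\{k,d-k+1\}}$ to write the map as a double homotopy colimit; for each fixed $\mb{s}\in\mathcal{K}^{\{k,d-k+1\}}$ the map $\mathcal{W}^{k-1,c}_{P,\mb{t}\sqcup\mb{s}}\to\mathcal{W}^{\{k,d-k+1\}}_{\loc,\mb{s}}$ is a Serre fibration, and the fibre over $\emptyset$ is identified by the point-set equality $\widehat{\mathcal{W}}^{k-1,c}_{P,\mb{t};\emptyset}=\mathcal{W}^{k}_{P,\mb{t}}$. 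This equality is direct: for $\mb{t}\in\mathcal{K}^{k}$ all surgery spheres have codimension $\geq k+1$, so $M\setminus\Image(e)\hookrightarrow M$ is $k$-connected, and under $k<d/2$ the $c$-condition at degree $k-1$ is equivalent to full $k$-connectivity of $\ell_M$. Your appeal to Theorem \ref{theorem: fibrewise surgery} and a ``sandwich'' argument therefore does more work than necessary, and your framing in terms of the preimage of the subcomplex $B\mathcal{K}^{k}$ conflates a contractible subcomplex with the point $\emptyset$; the quasi-fibration fibre is the preimage of a single point, which does sit over an object of $\mathcal{K}^{k}$ but has to be analysed as such.

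The serious gap is in your verification step. You write that the morphisms $(j,\varepsilon)$ with $\varepsilon=-1$ giving a Morse surgery provide ``an explicit homotopy'' between fibres, and that the role of the $c$-decoration is to keep these surgeries inside $\mathcal{W}^{k-1,c}_{P,\mb{t}}$. The latter is true, but it only shows the natural transformation is well defined. It says nothing about why the induced map on fibres should be a \emph{weak homotopy equivalence}. That is exactly the content of Theorem \ref{proposition: homotopy equivalence of transition maps} in the paper, and proving it occupies all of Sections \ref{section: Cobordism Categories and Handle Attachments} through \ref{section: higher index handles}: the fibres $\widehat{\mathcal{W}}^{k-1,c}_{P,\mb{t};\mb{s}}$ are identified with $\mathcal{W}^{k}_{P\sqcup S_{\mb{s}},\mb{t}}$ (Proposition \ref{proposition: fibre identification}), the transition maps are then re-expressed as concatenation with cobordisms $W:P\sqcup S_{\mb{s}'}\rightsquigarrow P\sqcup S_{\mb{s}}$ in an auxiliary category $\Cob^{k-1}_{\theta,d}$, and showing that such concatenations are equivalences requires decomposing $W$ into primitive and trivial handle attachments, semi-simplicial resolutions, and the cancellation of linked surgeries (Theorem \ref{theorem: morphism induce homotopy equivalence}, Theorem \ref{theorem: arbitrary handles}, Proposition \ref{proposition: arbitrary d-l surgeries}, Proposition \ref{proposition: d-1 surgeries general case}). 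Surgery along a $(k-1)$-sphere or a $(d-k)$-sphere changes the diffeomorphism type of $M$, so there is no a priori reason the moduli spaces should agree; labelling this step ``compatibility of surgery deformations with the embedding-space topology... and connectivity bookkeeping'' severely underestimates it. Without an argument for why the transition maps are equivalences, the proposal has not touched the heart of the theorem.
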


The proof of Theorem \ref{theorem: fibrewise surgery} is carried out in Section \ref{section: parametrized surgery}. 
Theorem \ref{theorem: localization sequence (homotopy colimit)} is proven over the course of Sections \ref{section: A Fibre Sequence} through \ref{section: higher index handles}. 

\subsection{Fibre sequences of homotopy colimits} \label{subsection: fibre sequences of homotopy colimits}
In the following subsection we state a technique for establishing the fibre sequences in Theorem \ref{theorem: localization sequence (homotopy colimit)}.  
The following proposition is well known. 
Its proof can be found in \cite[Corollary B.3]{MW 07}.
\begin{proposition} \label{proposition: homotopy colimit fibre sequence}
Let $\mathcal{C}$ be a small category and let $u: \mathcal{G}_{1} \longrightarrow \mathcal{G}_{2}$ be a natural transformation between functors 
from $\mathcal{C}$ to the category of topological spaces. 
Suppose that for each morphism $f: a \longrightarrow b$ in $\mathcal{C}$, the map 
$$
f_{*}: \hofibre\left(u_{a}: \mathcal{G}_{1}(a) \rightarrow \mathcal{G}_{2}(a)\right) \; \longrightarrow \; \hofibre\left(u_{b}: \mathcal{G}_{1}(b) \rightarrow \mathcal{G}_{2}(b)\right)
$$
is a weak homotopy equivalence. 
Then for any object $c \in  \Ob\mathcal{C}$ the inclusion 
$$
\hofibre\left(u_{c}: \mathcal{G}_{1}(c) \rightarrow \mathcal{G}_{2}(c)\right) \; \hookrightarrow \; \hofibre\left(\hocolim\mathcal{G}_{1} \rightarrow \hocolim\mathcal{G}_{2}\right)
$$
is a weak homotopy equivalence. 
\end{proposition}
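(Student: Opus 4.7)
The plan is to reduce this to the standard fibration theorem for homotopy colimits: if a natural transformation of functors $\mathcal{C} \to \Top$ is pointwise a fibration whose fibrewise structure maps are weak equivalences, then the induced map on homotopy colimits is a quasi-fibration with the expected fibre. The hypothesis on $f_*$ is phrased at the level of homotopy fibres precisely to permit this reduction without assuming $u_a$ is already a fibration.

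First I would replace $u$ by a natural transformation that is pointwise a fibration. Apply the mapping path space construction in each variable: set
$$\widetilde{\mathcal{G}}_1(a) \; = \; \{(x,\gamma) \mid x \in \mathcal{G}_1(a),\ \gamma : [0,1] \to \mathcal{G}_2(a),\ \gamma(0) = u_a(x)\}$$
and $\widetilde{u}_a(x,\gamma) = \gamma(1)$. This is functorial in $a$, each $\widetilde{u}_a$ is a Hurewicz fibration, and the obvious inclusion $\mathcal{G}_1(a) \hookrightarrow \widetilde{\mathcal{G}}_1(a)$ (via constant paths) is a natural homotopy equivalence. Crucially, the strict fibre of $\widetilde{u}_a$ at any point $y \in \mathcal{G}_2(a)$ is canonically identified with $\hofibre_y(u_a)$, so the hypothesis translates into the statement that the structure maps between the strict fibres of $\widetilde{u}$ are weak equivalences.

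Next I would assemble the two ingredients. The natural weak equivalence $\mathcal{G}_1 \to \widetilde{\mathcal{G}}_1$ induces a weak equivalence $\hocolim \mathcal{G}_1 \to \hocolim \widetilde{\mathcal{G}}_1$ by the standard homotopy invariance of $\hocolim$, which in turn identifies $\hofibre(\hocolim \mathcal{G}_1 \to \hocolim \mathcal{G}_2)$ with $\hofibre(\hocolim \widetilde{\mathcal{G}}_1 \to \hocolim \mathcal{G}_2)$. I then apply the fibration lemma for homotopy colimits to $\widetilde{u}$: since each $\widetilde{u}_a$ is a fibration and each structure map between strict fibres is a weak equivalence, the induced map $\hocolim \widetilde{\mathcal{G}}_1 \to \hocolim \mathcal{G}_2$ is a quasi-fibration whose homotopy fibre is weakly equivalent to the strict fibre at any chosen object $c$, namely $\hofibre(u_c)$. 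Tracing through the identifications identifies this fibre with the image of the canonical inclusion in the statement of the proposition, which is therefore a weak equivalence.

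The main obstacle is of course the fibration lemma invoked in the last step, which is a nontrivial assertion about Bousfield--Kan style homotopy colimits. A standard proof filters the bar construction model of $\hocolim$ by simplicial degree and verifies inductively that each stage of the filtration is a quasi-fibration using the Dold--Thom gluing criterion, where the fibrewise weak-equivalence hypothesis feeds into the inductive step. Since the author explicitly appeals to \cite[Corollary B.3]{MW 07}, which gives exactly this statement for the model of $\hocolim$ used throughout the paper (the classifying space of the transport category), in practice I would simply cite that result rather than reproduce the homotopical algebra.
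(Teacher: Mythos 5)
Your proposal is correct and, at bottom, coincides with what the paper does: the paper's ``proof'' of this proposition consists solely of the citation to \cite[Corollary B.3]{MW 07}, and you arrive at citing exactly that result. Your preliminary sketch (naturally replacing $u$ by the mapping path fibration $\widetilde{u}$, noting that the hypothesis becomes a statement about strict fibres, and then invoking the quasi-fibration lemma for homotopy colimits of pointwise fibrations) is an accurate account of how one would derive the statement if the reference were unavailable, and it is a correct supplement to the paper's terse citation.
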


\section{Parametrized Surgery} \label{section: parametrized surgery}
This section is devoted to proving Theorem \ref{theorem: fibrewise surgery} which states that the inclusion 
$$
\displaystyle{\hocolim_{\mb{t} \in \mathcal{K}^{k}}}\mathcal{W}^{k, c}_{P, \mb{t}}\longrightarrow \displaystyle{\hocolim_{\mb{t} \in \mathcal{K}^{k}}}\mathcal{W}^{k}_{P, \mb{t}}
$$
is a weak homotopy equivalence whenever $k < d/2$ and the space $B$ is $F(k)$.

\subsection{The category of multiple surgeries} \label{subsection: the category of multiple surgeries}
To prove the above weak homotopy equivalence we will have to work with a category that is a generalization of \cite[Definition 6.6]{MW 07}.
For what follows, let $\Omega$ be the infinite set used in the definition of $\mathcal{K}$ (Definition \ref{defn: surgery category}). 
Let $M \subset \R^{\infty}$ be a
$d$-dimensional submanifold equipped with a $\theta$-structure $\hat{\ell}_{M}: TM\oplus\epsilon^{1} \longrightarrow \theta^{*}\gamma^{d+1}$. 
Suppose that the map 
$\ell_{M}: M \longrightarrow B$ is $(k-1)$-connected.
\begin{defn} \label{defn: top category of multiple surgeries}
With $M$ and $\hat{\ell}_{M}$ as above, let $\mathcal{S}^{k-1}_{M}$ be the category defined as follows.
An object is a tuple $(\mb{t}, \hat{\ell}, e)$ where $\mb{t} \subset \Omega$ is a finite subset, $\hat{\ell}$ is a $\theta$-structure on $\mb{t}\times D^{k}\times D^{d-k+1}$, and  
$e: \mb{t}\times D^{k}\times D^{d-k+1} \longrightarrow \R^{\infty}$ is a smooth embedding,
subject to the following conditions:
\begin{enumerate} \itemsep.2cm
\item[(i)] $e^{-1}(M) = \mb{t}\times S^{k-1}\times D^{d-k+1}$ and the restriction of $\hat{\ell}_{M}$ to the image of $e$ agrees with the structure $\hat{\ell}$ on $\mb{t}\times D^{k}\times D^{d-k+1}$.
\item[(ii)] Consider the surgered manifold, 
$\widetilde{M} = M\setminus e(\mb{t}\times S^{k-1}\times D^{d-k+1})\bigcup e(\mb{t}\times D^{k}\times S^{d-k}).$
Let $\hat{\ell}_{\widetilde{M}}$ be the $\theta$-structure on $\widetilde{M}$ induced by $\hat{\ell}_{M}$ and $\hat{\ell}$.
We require the map $\ell_{\widetilde{M}}: \widetilde{M} \longrightarrow B$ to be $k$-connected. 
\end{enumerate}
A morphism $(\mb{t}, e, \hat{\ell}) \longrightarrow (\mb{s}, e', \hat{\ell}')$ is defined to be an injective map $j: \mb{t} \hookrightarrow \mb{s}$ 
such that $j^{*}e' = e$ and $j^{*}\hat{\ell}' = \hat{\ell}$.
The object space of this category is topologized as a subspace of, 
$$
\coprod_{\mb{t} \subset \Omega}\left[\Emb(\mb{t}\times D^{k}\times D^{d-k+1}, \; \R^{\infty})\times\Bun(T(\mb{t}\times D^{k}\times D^{d-k+1}), \; \theta^{*}\gamma^{d+1})\right].
$$
The morphism space is topologized in a similar way.
\end{defn}

The main technical ingredient used in the proof of Theorem \ref{theorem: fibrewise surgery} is the following proposition, which we view as a generalization of \cite[Proposition 6.7 (Page 916)]{MW 07}.
\begin{proposition} \label{proposition: contractibility of the surgery category}
Let $k \leq d/2$ and 
suppose that $\theta: B \longrightarrow BO(d+1)$ is such that $B$ satisfies Wall's finiteness condition $F(k)$. 
Let $M \subset \R^{\infty}$ be a $d$-dimensional $\theta$-manifold with the property that $\ell_{M}: M \longrightarrow B$ is $(k-1)$-connected.
Then the classifying space $B\mathcal{S}^{k-1}_{M}$ is weakly contractible. 
\end{proposition}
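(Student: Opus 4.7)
The plan is to mirror the strategy of Madsen--Weiss's Proposition 6.7 in \cite{MW 07}, generalized to handle the higher connectivity constraint. The claim should follow by showing that $\mathcal{S}^{k-1}_{M}$ satisfies enough of a ``filtered / cofinality'' property for a standard nerve-contractibility criterion to apply, where the arguments are split between establishing objects exist and showing that any finite configuration of objects admits an extension.

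First I would check non-emptiness. Since $\ell_{M}$ is $(k-1)$-connected and $B$ satisfies $F(k)$, the relative homotopy group $\pi_{k}(\ell_{M})$ is finitely generated as a $\Z[\pi_{1}B]$-module. Represent a generating set by elements in $\pi_{k}(\ell_{M})$, and then, using the inequality $2k \leq d$ afforded by the hypothesis $k\leq d/2$, apply general position/transversality to realize these classes by \emph{disjoint} embedded $(k-1)$-spheres with $\theta$-framed tubular neighborhoods in $M$. The $\theta$-framing exists because each class lifts over $\theta$ by construction. Performing the corresponding surgeries produces a $k$-connected $\ell_{\widetilde{M}}$, which exhibits an object of $\mathcal{S}^{k-1}_{M}$.

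Second I would prove a parameterized cofinality statement: every map $f:K\to B\mathcal{S}^{k-1}_{M}$ from a finite simplicial complex extends over the cone $CK$. The idea is to build additional surgery data, as in the first step, parametrized over the vertex of the cone, and then interpolate by isotoping the new embeddings so that they are disjoint from the existing ones. Because the ambient is $\R^{\infty}$, spaces of embeddings of compact manifolds into $\R^{\infty}$ with fixed boundary behavior are weakly contractible, so the disjointing isotopies can be chosen continuously in $K$-families; moreover, the hypothesis $k\leq d/2$ (which gives $2(k-1)<d$) ensures that attaching $(k-1)$-spheres in $M$ itself can be made disjoint by general position. Combining these, the newly adjoined surgery data is compatible with $f$ on $K$ and yields the desired extension to $CK$.

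The main obstacle will be executing the second step continuously in families rather than pointwise; one must arrange that the disjointing isotopies, the $\theta$-framings of the new handles, and the choice of $k$-connecting surgery data all vary smoothly over the simplicial parameter space. This is essentially a parameterized transversality/Whitney argument, made tractable by the infinite-dimensional ambient $\R^{\infty}$, but it is where one must be careful in generalizing from the $k=1$ situation of \cite{MW 07}. Once this step is done, a standard argument (identical in form to the one in \cite[Proposition 6.7]{MW 07}) upgrades non-emptiness and cone-extension into weak contractibility of $B\mathcal{S}^{k-1}_{M}$.
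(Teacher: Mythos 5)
Your sketch correctly identifies the overall architecture shared with \cite[Proposition 6.7]{MW 07}: non-emptiness plus a parametrized extension property together give weak contractibility of the classifying space. The serious problem is that the family/parametrized step --- which you explicitly name as ``the main obstacle'' --- is precisely the hard content of the paper's proof, and your sketch only names the difficulty without supplying a mechanism. Two separate issues are collapsed into one in your account.

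First, the per-fiber existence step is not merely ``general position/transversality.'' It is Kreck's theorem \cite[Proposition 4]{Kr 00}, which produces \emph{finitely many} disjoint $\theta$-framed embeddings $\Lambda\times D^{k}\times D^{d-k+1}\hookrightarrow\R^{\infty}$ meeting $M$ in $\Lambda\times S^{k-1}\times D^{d-k+1}$ so that surgery yields a $k$-connected $\ell_{\widetilde{M}}$. The required hypotheses are $k\le d/2$ together with $F(k)$ for \emph{both} $B$ and $M$; you only invoke $F(k)$ for $B$, and while this gap is harmless in the paper's applications (where $M$ is compact or a compact manifold minus finitely many handles), it is glossed over in your write-up.

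Second and more substantively, your step two is not a transversality problem. The paper reformulates what you want as the existence of a \emph{fibrewise surgery basis} (Definition~\ref{defn: surgery basis fold map}, Theorem~\ref{theorem: fibrewise surgery lemma}): a locally finite cover $\{U_{\alpha}\}$ of the parameter space together with fibrewise surgery data over each $U_{\alpha}$ that is \emph{globally pairwise disjoint}, i.e.\ disjoint even across overlapping patches $U_{\alpha}\cap U_{\beta}\ne\emptyset$. Constructing local data over each $U_{\alpha}$ is easy (one applies Kreck and extends to a small neighborhood); the true obstruction is coherently achieving disjointness across the whole cover. Weak contractibility of embedding spaces into $\R^{\infty}$ does \emph{not} deliver this, because the issue is not isotoping a single embedding, it is scheduling infinitely many local choices that meet on overlaps. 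The paper resolves it in two stages: Lemma~\ref{lemma: relative existence of fibrewise surgery semi-basis} inductively produces a sequence of ``surgery semi-bases,'' pairwise disjoint only across the sequence index, and then the proof of Theorem~\ref{theorem: fibrewise surgery lemma} builds an auxiliary etale topological poset $P\to\N\times X$, invokes \cite[Corollary 6.5]{GRW 14} to get a section of the Serre fibration $|P_{\bullet}|\to X$ with contractible fibres, and uses that section to select one semi-basis near each point, converting the family of semi-bases into an honest fibrewise surgery basis. This cover-theoretic device is not suggested or hinted at by your phrase ``parametrized transversality/Whitney argument,'' and without it your step two does not close.

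Finally, you reduce to cone-extension for finite complexes, whereas the paper uses the cocycle-sheaf machinery: $B\mathcal{S}^{k-1}_{M}\simeq|\beta(\mathcal{S}^{k-1}_{M})^{\op}|$, and weak contractibility is obtained via the germ-extension criterion (Proposition~\ref{claim: extension of germs}), which is where the fibrewise surgery basis is actually applied. The two reductions are morally equivalent, so this is not the issue; the gap is entirely the missing Theorem~\ref{theorem: fibrewise surgery lemma}.
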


The proof of the above proposition will require the use of techniques from the following subsection. 
The proof of Proposition \ref{proposition: contractibility of the surgery category} will be completed in Section \ref{subsection: proof of proposition contractible category}. 

\subsection{Bases for fibrewise surgery} \label{subsection: bases for fibrewise surgery} 
We begin by making a definition. 
\begin{defn} \label{notation: bundles}
Let $X$ be a smooth manifold.
Let $M \subset X\times\R^{\infty}$ be a smooth $(\dim(X) + d)$-dimensional submanifold for which the projection $\pi: M \longrightarrow X$ is a submersion (we do not assume that this submersion is proper).
Let $\hat{\ell}: T^{\pi}M\oplus\epsilon^{1} \longrightarrow \theta^{*}\gamma^{d+1}$ be a fibrewise $\theta$-structure with the property that for all $x \in X$, the map $\ell_{x}: M_{x} \longrightarrow B$ is $(k-1)$-connected, where $M_{x} = \pi^{-1}(x)$ denotes the fibre over $x$.  
We will denote by $\Sub_{\theta, k-1}(X)$ the set of all such pairs $(M, \hat{\ell})$. 
\end{defn}

\begin{defn} \label{defn: surgery basis fold map}
Fix once and for all an infinite set $\Omega$.
A \textit{fibrewise surgery basis} (of degree $k$) for an element $(M, \hat{\ell}) \in \Sub_{\theta, k-1}(X)$ is given by the following data:
\begin{itemize} \itemsep.2cm
\item a locally finite covering $\mathcal{U} = \{ U_{\alpha} \; | \; \alpha \in \Gamma\}$ of $X$;
\item a collection of finite subsets 
$\Lambda_{\alpha} \subset \Omega$ 
together with fibrewise embeddings 
\begin{equation} \label{equation: fibrewise surgery data}
\varphi_{\alpha}: U_{\alpha}\times\Lambda_{\alpha}\times D^{k}\times D^{d-k+1} \; \longrightarrow \; X\times\R^{\infty}, \quad \alpha \in \Gamma,
\end{equation}
for which
$\varphi_{\alpha}^{-1}(M) = U_{\alpha}\times\Lambda_{\alpha}\times S^{k-1}\times D^{d-k+1};$

\item for each $\alpha \in \Gamma$, a fibrewise $\theta$-structure,
$\phi_{\alpha}: U_{\alpha}\times\Lambda_{\alpha}\times T(D^{k}\times D^{d-k+1}) \longrightarrow \theta^{*}\gamma^{d+1},$
that extends the fibrewise $\theta$-structure, $\hat{\ell}\circ D\varphi_{\alpha}$.
\end{itemize}
The data described above is required to satisfy the following conditions:
\begin{enumerate} \itemsep.2cm
\item[(i)] For $\alpha \in \Gamma$ and $x \in U_{\alpha}$, let $(\widetilde{M}_{x}(\alpha), \tilde{\ell}_{x}(\alpha))$ denote the $\theta$-manifold obtained from $(M_{x}, \hat{\ell}_{x})$ by performing $\theta$-surgery with respect to the data $(\varphi_{\alpha}, \phi_{\alpha})$.
We require that for each $\alpha \in \Gamma$ and $x \in U_{\alpha}$, the map 
$\tilde{\ell}_{x}(\alpha): \widetilde{M}_{x}(\alpha) \longrightarrow B$ 
is $k$-connected.
\item[(ii)] For all $\alpha \neq \beta$, 
$\varphi_{\alpha}(U_{\alpha}\times\Lambda_{\alpha}\times D^{k}\times D^{d-k+1})\bigcap\varphi_{\beta}(U_{\beta}\times\Lambda_{\beta}\times D^{k}\times D^{d-k+1}) = \emptyset.$
\end{enumerate}
A fibrewise surgery basis defined as above will be denoted by the tuple $(\mathcal{U}, \varphi, \phi, \Gamma)$. 
\end{defn}

The following theorem is the main technical ingredient used in the proof of Proposition \ref{proposition: contractibility of the surgery category}. 
This can be viewed as a generalization of \cite[Lemma 6.8]{MW 07}
\begin{theorem} \label{theorem: fibrewise surgery lemma}
Let $k \leq d/2$ and let $(M, \hat{\ell}) \in \Sub_{\theta, k-1}(X)$. 
Suppose that $B$ and $M_{x}$ satisfy Wall's finiteness condition $F(k)$ for all $x \in X$. 
Then $(M, \hat{\ell})$ admits a fibrewise surgery basis. 
\end{theorem}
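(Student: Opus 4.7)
The plan is to prove the theorem in two stages: first establish the existence of a surgery basis at a single fibre $M_{x_0}$ by adapting Wall's surgery techniques to the $\theta$-category, then extend the surgery data fibrewise over a neighborhood of $x_0$, and finally glue together local data over a locally finite cover of $X$.

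For the pointwise existence at $x_0 \in X$, the map $\ell_{x_0}\colon M_{x_0} \to B$ is $(k-1)$-connected, so by relative Hurewicz the relative homotopy group $\pi_k(\ell_{x_0})$ coincides (up to $\pi_1(B)$-action) with integral relative homology. Wall's finiteness condition $F(k)$ on both $B$ and $M_{x_0}$ ensures that $\pi_k(\ell_{x_0})$ is finitely generated as a $\Z[\pi_1 B]$-module. I would choose representatives $(f_i\colon S^{k-1} \to M_{x_0},\; \tilde f_i\colon D^k \to B)$ for a finite set of generators, where each $\tilde f_i$ is a null-homotopy of $\ell_{x_0}\circ f_i$ in $B$. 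Since $2(k-1) + 1 \leq d$, which follows from $k \leq d/2$, general position deforms the $f_i$ into pairwise disjoint smooth embeddings. The $\theta$-structure on $M_{x_0}$ combined with the null-homotopies $\tilde f_i$ trivializes the stable normal bundles of these embedded spheres compatibly with $\theta$, and since $d - (k-1) \geq k+1$ there is room to promote each stable trivialization to a genuine normal framing. Indexing by a finite set $\Lambda \subset \Omega$ yields an embedding $\varphi\colon \Lambda\times D^k\times D^{d-k+1} \hookrightarrow \R^\infty$ with $\varphi^{-1}(M_{x_0}) = \Lambda\times S^{k-1}\times D^{d-k+1}$ together with a compatible $\theta$-structure $\phi$, and the corresponding $\theta$-surgery kills the chosen generators, making $\tilde\ell_{x_0}\colon \widetilde{M}_{x_0} \to B$ into a $k$-connected map.

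To extend this data to a neighborhood of $x_0$, note that $\pi\colon M \to X$ is a submersion and the image of $\varphi$ is compact, so a fibrewise tubular neighborhood argument (equivalently, fibrewise isotopy extension) produces a neighborhood $V_{x_0}$ of $x_0$ and an extension $\tilde\varphi\colon V_{x_0}\times \Lambda\times D^k\times D^{d-k+1} \hookrightarrow X\times\R^\infty$ whose restriction to $\{x_0\}$ agrees with $\varphi$ and which satisfies $\tilde\varphi^{-1}(M) = V_{x_0}\times \Lambda\times S^{k-1}\times D^{d-k+1}$. The $\theta$-structure $\phi$ extends fibrewise to $\tilde\phi$ over $V_{x_0}$ by standard obstruction theory, after shrinking to a contractible neighborhood if necessary. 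Performing fibrewise surgery along $(\tilde\varphi,\tilde\phi)$ yields a new submersion $\widetilde{M}|_{V_{x_0}} \to V_{x_0}$, and on a possibly smaller neighborhood $U_{x_0}$ this is a smooth fibre bundle; the $k$-connectedness of the classifying map on the fibre over $x_0$ then propagates to all of $U_{x_0}$ by continuity and local triviality.

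Finally, I patch these local surgery bases. Take a locally finite refinement $\{U_\alpha\}_{\alpha \in \Gamma}$ of $\{U_{x_0}\}_{x_0 \in X}$ together with the associated data $(\Lambda_\alpha,\varphi_\alpha,\phi_\alpha)$. Since $\Omega$ is infinite, I may relabel so that the subsets $\Lambda_\alpha \subset \Omega$ are pairwise disjoint. To enforce the disjointness condition (ii) of Definition \ref{defn: surgery basis fold map}, I use that for any $x$ the intersection of $U_\alpha$'s containing $x$ is finite; within the $d$-dimensional fibre $M_x$ one isotopes the attaching spheres into pairwise disjoint position by general position (valid since $2(k-1) < d$), and then shrinks the normal disks so that their thickenings into $\R^\infty$ remain disjoint as well. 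The main obstacle I expect is the pointwise surgery step, specifically guaranteeing that the framed embeddings of $S^{k-1}$ can be upgraded to genuine $\theta$-surgery data whose surgery produces a $k$-connected manifold over $B$: this requires careful bookkeeping of normal framings induced by the null-homotopies $\tilde f_i$ and a precise invocation of $F(k)$ to bound the number of generators. The parametrized extension and the patching are essentially formal once pointwise existence and the openness of $k$-connectedness in smooth families are in hand.
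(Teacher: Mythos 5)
Your pointwise construction (choosing finitely many generators of $\pi_k(\ell_{x_0})$ via Wall's $F(k)$, putting the attaching spheres in general position using $k \leq d/2$, and upgrading to framed $\theta$-surgery data) and the local fibrewise extension over a neighborhood of $x_0$ are both fine; they match what the paper does inside the proof of Lemma \ref{lemma: relative existence of fibrewise surgery semi-basis}, where the key input is \cite[Proposition 4]{Kr 00}.

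The genuine gap is in your final patching step, which you dismiss as ``essentially formal.'' It is not. Condition (ii) of Definition \ref{defn: surgery basis fold map} demands \emph{global} disjointness of the images $\varphi_\alpha(U_\alpha\times\Lambda_\alpha\times D^k\times D^{d-k+1})$ for all $\alpha\neq\beta$, not merely disjointness of attaching spheres fibre-by-fibre. Your suggestion to ``isotope the attaching spheres into pairwise disjoint position within $M_x$'' would have to be made coherently and continuously over all of $X$: the indexing sets of overlapping $U_\alpha$'s change as $x$ moves across $X$, so no fibrewise general-position argument produces a continuous family of such isotopies, and a cover refinement alone does not fix it (any two members of a locally finite cover can still overlap, and within that overlap the corresponding embeddings must be made disjoint in a way compatible with all other overlaps). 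This is precisely why the paper first builds the weaker \emph{semi}-bases of Lemma \ref{lemma: relative existence of fibrewise surgery semi-basis} in a sequence $(\mathcal{U}^n,\varphi^n,\phi^n,\Gamma^n)_{n\in\N}$ arranged to be disjoint \emph{across} different stages $i\neq j$, and then invokes the topological poset $P\subset\N\times\Gamma\times X$ and the Serre fibration $|P_\bullet|\to X$ with contractible fibres (Lemma \ref{lemma: contractibility of P}, imported from \cite[Corollary 6.5]{GRW 14}). A section $\zeta$ of this fibration selects, continuously in $x$, one stage-and-index for each barycentric coordinate, and the resulting cover $\{W_{m,\alpha}\}$ inherits $V_{m,\alpha}\cap V_{m,\beta}=\emptyset$ within each stage automatically. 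That mechanism is the actual content of the proof, and your proposal is missing it.
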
 

For our next step in proving Theorem \ref{theorem: fibrewise surgery lemma}, we formulate and prove a slightly weaker version of the statement. 
We begin by formulating a weakened version of Definition \ref{defn: surgery basis fold map}. 
\begin{defn} \label{defn: fibrewise semi-surgery basis}
Let $(M, \hat{\ell}) \in \Sub_{\theta, k-1}(X)$.
A \textit{fibrewise surgery semi-basis} is a tuple $(\mathcal{U}, \varphi, \phi, \Gamma)$ exactly as in Definition \ref{defn: surgery basis fold map} except now we drop condition (iii). 
In particular, the images of the embeddings $\varphi_{i}$ are allowed to have non-empty intersections. 
\end{defn}

\begin{lemma} \label{lemma: relative existence of fibrewise surgery semi-basis}
Let $k \leq d/2$ and let $(M, \hat{\ell}) \in \Sub_{\theta, k-1}(X)$.
Suppose that $B$ and $M_{x}$ satisfy Wall's finiteness condition $F(k)$ for all $x \in X$. 
Then there exists a sequence of fibrewise surgery semi-bases
$(\mathcal{U}^{n}, \varphi^{n}, \phi^{n}, \Gamma^{n}), \; n \in \N,$
that satisfies the following condition:
If $i > j$, then  
$$
\varphi^{i}_{\alpha}(U^{i}_{\alpha}\times\Lambda^{i}_{\alpha}\times D^{k}\times D^{d-k+1})\bigcap\varphi^{j}_{\beta}(U^{j}_{\beta}\times\Lambda^{j}_{\beta}\times D^{k}\times D^{d-k+1}) = \emptyset$$
whenever $\alpha \in \Gamma^{i}$ and $\beta \in \Gamma^{j}$.
\end{lemma}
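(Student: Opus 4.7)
The plan is to construct the required sequence by induction on $n$, at each stage producing a new semi-basis whose embeddings miss the images of all previous ones. The base case reduces to the standard existence of a single fibrewise surgery semi-basis for $(M,\hat{\ell})$. Locally near any $x_0\in X$, trivialize the submersion $\pi:M\to X$ as $U\times M_{x_0}\to U$. Since $M_{x_0}$ and $B$ satisfy $F(k)$ and $\ell_{x_0}$ is $(k-1)$-connected, a Wall-type surgery-below-the-middle-dimension argument produces a finite set $\Lambda\subset\Omega$ and finitely many embedded framed $(k-1)$-spheres in $M_{x_0}$, with $\theta$-compatible framings extending over $k$-disks, whose simultaneous $k$-surgery makes $\ell$ into a $k$-connected map. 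The embeddings and framings exist because $2(k-1)<d$ when $k\leq d/2$, and because each class to be killed lies in $\ker(\pi_{k-1}(M_{x_0})\to\pi_{k-1}(B))$ and so bounds a $k$-disk in $B$ that pulls back to the required framing data. Extending the data smoothly over a smaller neighborhood of $x_0$ and taking a locally finite refinement of the resulting cover of $X$ produces the initial semi-basis $(\mathcal{U}^1,\varphi^1,\phi^1,\Gamma^1)$.

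For the inductive step, suppose semi-bases $(\mathcal{U}^j,\varphi^j,\phi^j,\Gamma^j)$ for $j=1,\dots,n$ have been constructed, and set
\[
Z_n \;=\; \bigcup_{j\leq n}\bigcup_{\beta\in\Gamma^j}\varphi^j_\beta\bigl(U^j_\beta\times\Lambda^j_\beta\times D^k\times D^{d-k+1}\bigr) \;\subset\; X\times\R^\infty.
\]
I repeat the local existence construction with two added precautions. To keep the in-$M$ part of $\varphi^{n+1}$ disjoint from $Z_n\cap M$, I invoke general position inside each fibre: in $M_x$, of dimension $d$, the set $Z_n\cap M_x$ is a locally finite union of tubular neighborhoods whose cores have dimension $k-1$, and the hypothesis $k\leq d/2$ gives $(k-1)+(k-1)<d$, so a generic embedded $(k-1)$-sphere representing any prescribed class in $\pi_{k-1}(M_x)$ can be chosen disjoint from these cores. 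The $\pi_{k-1}$-class is preserved because removing the codimension-$(d-k+1)$ cores from $M_x$ leaves a subspace whose inclusion into $M_x$ is $(d-k-1)$-connected and so induces an isomorphism on $\pi_{k-1}$ when $k\leq d/2$. Shrinking the normal $D^{d-k+1}$-radius then makes the new tubular neighborhoods miss $Z_n\cap M$ entirely. For the part of $\varphi^{n+1}$ that extends off $M$ into $X\times\R^\infty$, the local finiteness of the previous covers ensures that over any compact subset of $X$ the set $Z_n$ lies inside $X\times\R^N$ for some finite $N$; I direct the normal $D^k$-fibres of $\varphi^{n+1}$ into coordinates of $\R^\infty$ beyond $\R^N$ and take a sufficiently thin radius. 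This yields the semi-basis $(\mathcal{U}^{n+1},\varphi^{n+1},\phi^{n+1},\Gamma^{n+1})$ with the required disjointness from $Z_n$.

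The main obstacle is the base-case local existence, where one must simultaneously arrange finiteness of the surgery data (via $F(k)$), embeddedness of the surgery spheres with trivialized normal bundles, $\theta$-compatible framings of those bundles, and smooth extension from a single fibre to a neighborhood in $X$. By contrast, the inductive disjointness step is comparatively clean once the local construction is available, since both the in-$M$ general-position argument and the off-$M$ infinite-dimensional-room argument are routine. The hypothesis $k\leq d/2$ plays a double role: it is the dimension threshold for embedding and framing $(k-1)$-spheres in the base case, and it is exactly the inequality needed to preserve $\pi_{k-1}$ after deleting $(k-1)$-dimensional cores in the inductive step.
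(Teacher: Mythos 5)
Your proposal follows essentially the same strategy as the paper's proof: invoke Kreck/Wall-type surgery below the middle dimension in a single fibre to get the local existence of finite surgery data, extend over a neighborhood in $X$ by local triviality of $\pi:M\to X$, refine to a locally finite cover of $X$, and then run an induction that inserts each new semi-basis into the complement of all previous ones via general position in the fibre (using $2(k-1)<d$, which follows from $k\le d/2$). The key ideas match.

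The one place your route diverges in execution is how the final disjointness of the full tubes $\varphi^i_\alpha(\cdots\times D^k\times D^{d-k+1})$ is obtained. The paper first only arranges that the \emph{cores} $\varphi^i_\alpha(\cdots\times D^k\times\{0\})$ are pairwise disjoint (the fibrewise part by transversality plus $(k-1)+(k-1)<d$; the off-fibre part trivially, since finitely many $k$-discs in $\R^\infty$ can always be perturbed apart), and then obtains the full disjointness by a single post-hoc precomposition with a self-embedding of $D^k\times D^{d-k+1}$ compressing towards the core. You instead try to arrange disjointness of the full tubes directly at each inductive stage, which forces you to add the ``push the $D^k$-extension into fresh coordinates of $\R^\infty$'' step and to shrink the $D^{d-k+1}$-radius fibrewise. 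Both work, but the paper's cores-then-shrink trick is a bit cleaner and sidesteps the need to track finite-dimensional bounds $\R^N\supset Z_n$ over compacta. One small slip in your version: removing a codimension-$(d-k+1)$ submanifold yields a $(d-k)$-connected inclusion of the complement, not $(d-k-1)$-connected; the conclusion (iso on $\pi_{k-1}$ when $k\le d/2$) is still correct with the right bound, but the off-by-one would matter at the border case $k=d/2$ if left uncorrected.
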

\begin{proof}
We will explicitly construct a family of fibrewise surgery semi-bases $(\mathcal{U}^{n}, \varphi^{n}, \phi^{n}, \Gamma^{n})$ satisfying the slightly weaker condition,
\begin{equation} \label{equation: disjoint core condition}
\varphi^{i}_{\alpha}(U^{i}_{\alpha}\times\Lambda^{i}_{\alpha}\times D^{k}\times\{0\})\bigcap\varphi^{j}_{\beta}(U^{j}_{\beta}\times\Lambda^{j}_{\beta}\times D^{k}\times\{0\}) = \emptyset,
\end{equation}
whenever $i > j$ and $\alpha \in \Gamma^{i}$ and $\beta \in \Gamma^{j}$. 
With such embeddings constructed satisfying this condition, the stronger condition from the statement of the lemma is achieved by precomposing the embeddings $\varphi^{i}_{\alpha}$ with a self-embedding $D^{k}\times D^{d-k+1} \longrightarrow D^{k}\times D^{d-k+1}$ with image sufficiently close to the core $D^{k}\times\{0\}$. 
We construct the family $(\mathcal{U}^{m}, \varphi^{m}, \phi^{m}, \Gamma^{m})$, $m \in \N$ satisfying (\ref{equation: disjoint core condition}) by induction on $m$.
Fix an integer $n$ and suppose that the fibrewise surgery semi-bases $(\mathcal{U}^{i}, \varphi^{i}, \phi^{i}, \Gamma^{i})$ have been constructed for all $i < n$. 
We show how to construct $(\mathcal{U}^{n}, \varphi^{n}, \phi^{n}, \Gamma^{n})$.
Let $x \in X$ and consider the fibre $M_{x}$. 
By assumption, the map $\ell_{x}: M_{x} \longrightarrow B$ is $(k-1)$-connected. 
Since $B$ and $M_{x}$ satisfy Wall's finiteness condition $F(k)$, it follows from \cite[Proposition 4]{Kr 00} that there exists the following data:
\begin{itemize} \itemsep.2cm
\item a finite subset $\Lambda_{x} \subset \Omega$;
\item an embedding 
$
\varphi_{x}: \Lambda_{x}\times D^{k}\times D^{d-k+1} \longrightarrow \{x\}\times\R^{\infty}
$
with 
$
\varphi^{-1}_{x}(M) = \Lambda_{x}\times S^{k-1}\times D^{d-k+1};
$
\item a $\theta$-structure 
$\phi_{x}: T(\Lambda_{x}\times D^{k}\times D^{d-k+1}) \longrightarrow \theta^{*}\gamma^{d+1}$
that extends
$$
\xymatrix{
T(\Lambda_{x}\times S^{k-1}\times D^{d-k+1})\oplus\epsilon^{1} \ar[rr]^{ \ \ \ \ \ D\varphi_{x}} && TM_{x}\oplus\epsilon^{1} \ar[rr]^{\hat{\ell}_{x}} && \theta^{*}\gamma^{d+1};
}
$$
\end{itemize}
with the property that the map  
$\tilde{\ell}_{x}: \widetilde{M}_{x} \longrightarrow B$ 
is $k$-connected, where 
$(\widetilde{M}_{x}, \tilde{\ell}_{x})$ is the $\theta$-manifold obtained from $(M_{x}, \ell_{x})$ by performing surgery with respect to the data $(\varphi, \phi)$. 
Since the collection of embeddings $\varphi^{i}_{\alpha}$ with $(i, \alpha) \in \{0, \dots, n-1\}\times(\coprod^{n-1}_{i=1}\Gamma^{i})$ is finite, 
we may perturb $\varphi_{x}$ so as to make 
$\varphi_{x}(\Lambda_{x}\times S^{k-1}\times\{0\}) \subset M_{x}$
transverse to the submanifolds 
$\varphi^{i}_{\alpha}(\Lambda^{i}_{\alpha}\times S^{k-1}\times\{0\})\cap M_{x}$ 
for all $(i, \alpha) \in \{0, \dots, n-1\}\times(\coprod^{n-1}_{i=1}\Gamma^{i})$.
Since $k \leq d/2$ and $\dim(M) = d$, this transversality condition implies that 
$$
\varphi_{x}(\Lambda_{x}\times S^{k-1}\times\{0\})\bigcap\varphi^{i}_{\alpha}(\Lambda^{i}_{\alpha}\times S^{k-1}\times\{0\}) \; = \; \emptyset, \\
$$
for all $(i, \alpha) \in \{0, \dots, n-1\}\times(\coprod^{n-1}_{i=1}\Gamma^{i})$. 
With the disjointness condition achieved in $M_{x}$, we may perturb 
$\varphi_{x}(\Lambda_{x}\times D^{k}\times\{0\}) \subset \R^{\infty},$
keeping $\varphi_{x}(\Lambda_{x}\times S^{k-1}\times\{0\}) \subset M$ fixed, so as to achieve
$$
\varphi_{x}(\Lambda_{x}\times D^{k}\times\{0\})\bigcap\varphi^{i}_{\alpha}(\Lambda^{i}_{\alpha}\times D^{k}\times\{0\}) \; = \; \emptyset, 
$$
for all $(i, \alpha) \in \{0, \dots, n-1\}\times(\coprod^{n-1}_{i=1}\Gamma^{i})$. 
Since $\pi: M \longrightarrow X$ is a locally trivial fibre bundle there exists: 
\begin{itemize} \itemsep.2cm
\item a neighborhood $U_{x} \subset X$ of $x$,
\item a fibrewise embedding 
$
\bar{\varphi}_{x}: U_{x}\times\Lambda\times D^{k}\times D^{d-k+1} \longrightarrow U_{x}\times\R^{\infty}
$
that extends $\varphi_{x}$, and 
\item a $\theta$-structure 
$
\bar{\phi}_{x}: U_{x}\times T(\Lambda\times D^{k}\times D^{d-k+1}) \longrightarrow \theta^{*}\gamma^{d+1}
$
that extends both $\phi_{x}$ and the fibrewise $\theta$-structure,
$
\xymatrix{
U_{x}\times T(\Lambda\times S^{k-1}\times D^{d-k+1})\oplus\epsilon^{1} \ar[rr]^{ \ \ \ \ \ \ \ \ \ \ \ \ \ D\varphi_{x}} && TM\oplus\epsilon^{1} \ar[r]^{\hat{\ell}_{x}} & \theta^{*}\gamma^{d+1}.
}
$
\end{itemize}
Furthermore, by choosing the neighborhood $U_{x}$ to be sufficiently small (containing the point $x \in X$) we may achieve the condition,
\begin{equation}
\bar{\varphi}_{x}(U_{x}\times\Lambda_{x}\times D^{k}\times\{0\})\bigcap\varphi^{i}_{\alpha}(U^{i}_{\alpha}\times\Lambda^{i}_{\alpha}\times D^{k}\times\{0\}) \; = \; \emptyset, 
\end{equation}
for all $(i, \alpha) \in \{0, \dots, n-1\}\times(\coprod^{n-1}_{i=1}\Gamma^{i})$.
By carrying out the same construction for every point $x \in X$ we obtain: 
\begin{itemize} \itemsep.2cm
\item an open cover $\{U_{x} \; | \; x \in X\}$; 
\item a collection of finite subsets $\Lambda_{x} \subset \Omega$, $x \in X$; 
\item a collection of fibrewise embeddings 
$\bar{\varphi}_{x}: U_{x}\times\Lambda_{x}\times D^{k}\times D^{d-k+1} \longrightarrow X\times\R^{\infty}, \quad x \in X;$ 
\item a collection of $\theta$-structures 
$\bar{\phi}_{x}: U_{x}\times T(\Lambda_{x}\times D^{k}\times D^{d-k+1}) \longrightarrow \theta^{*}\gamma^{d+1}, \quad x \in X;$ 
\end{itemize}
satisfying the same conditions stated above. 
Since $X$ is paracompact, $\{U_{x} \; | \; x \in X\}$ admits a locally-finite subcover. 
Let $\mathcal{U}^{n} = \{U^{n}_{\alpha} \; | \; \alpha \in \Gamma^{n}\}$ be such a locally-finite subcover of $\{U_{x} \; | \; x \in X\}$ and let $\varphi^{n}_{\alpha}$ and $\phi^{n}_{\alpha}$ denote the embedding and $\theta$-structure corresponding to $U_{\alpha}$ for each $\alpha \in \Gamma^{n}$. 
The tuple $(\mathcal{U}^{n}, \varphi^{n}, \phi^{n}, \Gamma^{n})$ then is the $n$-th fibrewise surgery semi-basis satisfying the condition in the statement of the lemma. 
\end{proof}

\subsection{Proof of Theorem \ref{theorem: fibrewise surgery lemma}}
We now show how to use Lemma \ref{lemma: relative existence of fibrewise surgery semi-basis} from the previous subsection to prove Theorem \ref{theorem: fibrewise surgery lemma}. 
Let $(M, \hat{\ell}) \in \Sub_{\theta, k-1}(X)$. 
By Lemma \ref{lemma: relative existence of fibrewise surgery semi-basis}, there exists a sequence $(\mathcal{U}^{n}, \varphi^{n}, \phi^{n}, \Gamma^{n})$, $n \in \N$, of fibrewise surgery semi-bases 
that satisfy the condition from the statement of Lemma \ref{lemma: relative existence of fibrewise surgery semi-basis}: 
If $i > j$, then  
$
\varphi^{i}_{\alpha}(U^{i}_{\alpha}\times\Lambda^{i}_{\alpha}\times D^{k}\times D^{d-k+1})\bigcap\varphi^{j}_{\beta}(U^{j}_{\beta}\times\Lambda^{j}_{\beta}\times D^{k}\times D^{d-k+1}) = \emptyset$
for all $\alpha \in \Gamma^{i}$ and $\beta \in \Gamma^{j}$.
With this sequence of fibrewise surgery semi-bases chosen, we set 
$$
\Gamma = \coprod_{n=1}^{\infty}\Gamma^{n}.
$$
Using the sequence $(\mathcal{U}^{n}, \varphi^{n}, \phi^{n}, \Gamma^{n})$, we define a space, 
\begin{equation} \label{equation: etale space over X 1}
P \; \subset \; \N\times\Gamma\times X,
\end{equation}\
by setting, 
$$
P \; = \; \{(n, \alpha, x) \in \N\times\Gamma\times X \; \; | \; \alpha \in \Gamma^{n} \; \; \text{and} \; \; x \in U^{n}_{\alpha} \; \; \text{for some} \; \; U^{n}_{\alpha} \in \mathcal{U}^{n} \; \}.
$$
Since $\coprod_{\alpha \in \Gamma^{n}}U^{n}_{\alpha} = X$ for all $n \in \N$, it follows that the projection map 
$P \longrightarrow \N\times X$
is a surjective, local homeomorphism, i.e.\ it is an \textit{etale map}. 
Now, 
the space $P$ has the structure of a topological poset, defined by setting 
\begin{equation}
(m, \alpha, x) \; < \; (n, \beta, y)
\end{equation}
if $m < n$ and $x = y$ (no relation imposed on $\alpha$ and $\beta$).
We let $P_{\bullet}$ denote the semi-simplicial nerve of this topological poset and let $|P_{\bullet}|$ denote its geometric realization. 
The projection map $P_{m} \longrightarrow X$ (for each $m$) yields an augmented semi-simplicial space $P_{\bullet} \longrightarrow P_{-1}$ with $P_{-1} = X$. 
We have the following lemma from \cite[Corollary 6.5]{GRW 14}:
\begin{lemma} \label{lemma: contractibility of P}
The map $|P_{\bullet}| \longrightarrow P_{-1} = X$ induced by the augmentation is a Serre-fibration with contractible fibres. 
\end{lemma}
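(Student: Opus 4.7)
My plan is to identify the pointwise fibre of $|P_{\bullet}| \to X$ as an infinite iterated join of non-empty discrete sets (hence contractible), and then invoke the general principle that the geometric realization of a levelwise étale augmented semi-simplicial space over $X$, whose fibres are weakly contractible, is a Serre fibration over $X$.

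For a fixed $x \in X$, let $F_n(x) := \{\alpha \in \Gamma^{n} : x \in U^{n}_{\alpha}\}$. Since $\mathcal{U}^{n} = \{U^{n}_{\alpha}\}_{\alpha \in \Gamma^{n}}$ is a locally finite covering of $X$, each $F_n(x)$ is finite and non-empty. Directly from the poset structure on $P$, the fibre over $x$ is $\bigsqcup_{n \geq 1} (\{n\} \times F_n(x))$ with the relation that $(m, \alpha) < (n, \beta)$ iff $m < n$; no relation is imposed within a level. The nerve of this fibre poset is therefore the iterated join $F_1(x) * F_2(x) * F_3(x) * \cdots$, built as the sequential colimit of the partial joins $F_1(x) * \cdots * F_N(x)$. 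Since joining with a non-empty space raises connectivity by one, the $N$-th partial join is $(N-2)$-connected, so the full realization has vanishing $\pi_k$ in every degree and is contractible as a CW complex.

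For the Serre fibration conclusion, observe that each $P_p$ decomposes as a disjoint union, indexed by chains $m_0 < \cdots < m_p$ and tuples $(\alpha_0, \ldots, \alpha_p) \in \Gamma^{m_0} \times \cdots \times \Gamma^{m_p}$, of the open subsets $U^{m_0}_{\alpha_0} \cap \cdots \cap U^{m_p}_{\alpha_p}$ of $X$; the face maps and the augmentation to $X$ are local homeomorphisms on each summand. Thus $P_{\bullet} \to X$ is a levelwise étale augmented semi-simplicial space. By the recognition principle used identically at \cite[Corollary 6.5]{GRW 14} (compare \cite[proof of Proposition 5.5]{MW 07}), the realization of such a semi-simplicial space is a quasi-fibration over $X$; combined with the contractibility of fibres established above, the augmentation is a Serre fibration with contractible fibres.

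The main obstacle I anticipate is not the fibre computation — that is a routine join argument — but the deduction of the Serre fibration property from levelwise étaleness plus contractibility of fibres. The cleanest route is to cite the GRW/MW reference. A self-contained alternative is to filter $P_{\bullet}$ by truncations $P_{\bullet}^{\leq N}$, use local finiteness of each $\mathcal{U}^{n}$ to trivialize each truncation over a sufficiently small neighborhood of a given $x \in X$, and observe that each additional level adjoins a contractible join factor in a way that is uniform over a neighborhood of $x$; the limit over $N$ then yields the required local triviality up to homotopy, upgrading the quasi-fibration to a genuine Serre fibration via the contractible-fibre hypothesis.
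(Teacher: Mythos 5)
Your proposal is correct and follows essentially the same route as the paper, which simply invokes \cite[Corollary 6.5]{GRW 14} with no further argument. Your fibre computation as the iterated join $F_1(x)*F_2(x)*\cdots$ of non-empty finite sets and your identification of the levelwise \'etale structure on $P_\bullet \to X$ are precisely the ingredients that cited result packages, so your write-up is a faithful and somewhat more explicit unpacking of what the paper leaves implicit.
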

Since the augmentation map $|P_{\bullet}| \longrightarrow X$ is a Serre-fibration with contractible fibres, it follows that it admits a global section $\zeta: X \longrightarrow |P_{\bullet}|$.
We choose such a section $\zeta: X \longrightarrow |P_{\bullet}|$.
We will now show how to use this section $\zeta$ to concoct a fibrewsie surgery basis for $(M, \hat{\ell})$. 
\begin{Construction} \label{construction: canonical cover}
Consider the geometric realization 
$|P_{\bullet}| = \left(\coprod_{i=0}^{\infty}P_{i}\times\Delta^{i}\right)/\sim.$
An element in $|P_{\bullet}|$ can be expressed as a tuple 
$$\left(((n_{0}, \dots, n_{m}), \; (\alpha_{0}, \dots, \alpha_{m}), \; x), \; (t_{0}, \dots, t_{m})\right) \in P_{m}\times\Int(\Delta^{m})$$
for some $m \in \Z_{\geq 0}$.
Since $(t_{0}, \dots, t_{m})$ is contained in the interior $\Int(\Delta^{m})$, it follows that the above coordinate expression is unique (the uniqueness is using the fact that $P_{\bullet}$ is a semi-simplicial space and thus has no degenerate simplices).

We define an open cover of $|P_{\bullet}|$, indexed by $\N$, as follows:
For $m \in \N$, let $V_{m} \subset |P_{\bullet}|$ be the set of all points $z \in |P_{\bullet}|$ such that when written in its unique coordinate expression
$$
z \; = \; \left(((n_{0}, \dots, n_{p}), \; (\alpha_{0}, \dots, \alpha_{p}), \; x), \; (t_{0}, \dots, t_{p})\right) \in P_{p}\times\Int(\Delta^{p}),
$$
it satisfies: $n_{i} = m$ for some $i = 0, \dots, p$.
It follows that the collection of subsets $V_{m} \subset |P_{\bullet}|$, $m \in \N$, is an open cover of $|P_{\bullet}|$. 
For any increasing sequence of positive integers $\bar{m} = (m_{0}, \dots, m_{k})$, we let $V_{\bar{m}} = \cap^{k}_{i=0}V_{i}$.

For $m \in \N$, let $P(m)$ denote the intersection $P\cap\left(\{m\}\times\Gamma^{m}\times X\right)$.
For each $m$ we define 
\begin{equation} \label{equation: projection to m map}
\pi_{m}: V_{m} \longrightarrow P(m)
\end{equation}
by the formula, 
$
\pi_{m}\left(((n_{0}, \dots, n_{p}), \; (\alpha_{0}, \dots, \alpha_{p}), x), \; (t_{0}, \dots, t_{p})\right) \; = \; (m, \alpha_{i}, x),
$
where $i$ is the unique integer such that $n_{i} = m$.
For each $\alpha \in \Gamma^{m}$ we define the space,
$V_{m, \alpha} \; := \; \pi^{-1}_{m}\left(\{m\}\times\{\alpha\}\times X\right).$
\end{Construction}
Recall the section $\zeta: X \longrightarrow |P_{\bullet}|$ of the augmentation map $|P_{\bullet}| \longrightarrow X$, whose existence is guaranteed by Lemma \ref{lemma: contractibility of P}. 
For each $m \in \N$ and $\alpha \in \Gamma^{m}$, we define 
\begin{equation}
W_{m, \alpha} := \zeta^{-1}(V_{m, \alpha}) \; \subset \; X.
\end{equation}
The collection 
$\mathcal{W} := \{W_{m, \alpha} \; | \; m \in \N, \; \alpha \in \Gamma^{m}\}$ 
is an open cover of $X$.
We define fibrewise embeddings
$$
\Psi_{m, \alpha}: W_{m, \alpha}\times\Lambda^{m}_{\alpha}\times D^{k}\times D^{d-k+1} \longrightarrow W_{m, \alpha}\times\R^{\infty}
$$
by setting, 
$$
\Psi_{m, \alpha} \; = \; \varphi^{m}_{\alpha}\circ\pi_{m}\circ\zeta|_{W_{m, \alpha}}.
$$
Similarly define $\theta$-structures 
$$
\hat{\psi}_{m, \alpha}: W_{m, \alpha}\times T(\Lambda^{m}_{\alpha}\times D^{k}\times D^{d-k+1}) \longrightarrow \theta^{*}\gamma^{d+1},
$$
by the formula, 
$$
\hat{\psi}_{m, \alpha} \; = \; \hat{\phi}^{m}_{\alpha}\circ D\pi_{m}\circ D\zeta|_{W_{m, \alpha}}.
$$
It follows that the tuple $\left(\mathcal{W}, \Psi, \psi, \Gamma\right)$ is a fibrewise surgery semi-basis, namely that it satisfies conditions (i) and (ii) of Definition \ref{defn: surgery basis fold map}. 
We now argue that it also satisfies condition (iii) as well, namely that 
$$
\Image(\Psi_{m, \alpha})\cap\Image(\Psi_{n, \beta}) = \emptyset 
$$
whenever $(m, \alpha) \neq (n, \beta)$.
Suppose first $m \neq n$. 
By how the initial family $(\mathcal{U}^{n}, \varphi^{n}, \phi^{n}, \Gamma^{n})$ was chosen, it follows that 
$
\Image(\varphi^{m}_{\alpha})\cap\Image(\varphi^{n}_{\beta}) = \emptyset
$
for any $\alpha$ or $\beta$.
By how $\Psi$ was constructed, it follows then that $\Image(\Psi_{m, \alpha})\cap\Image(\Psi_{n, \beta}) = \emptyset$ whenever $m \neq n$.
For the other case ($n = m$, but $\alpha \neq \beta$), observe that 
$W_{m, \alpha}\cap W_{m, \beta} = \emptyset$ whenever $\alpha \neq \beta$, 
which follows from the fact that 
$$
V_{m, \alpha}\cap V_{m, \beta} = \emptyset
$$
for all $m$, whenever $\alpha \neq \beta$.
Since the $\Psi_{m, \alpha}$ are fibrewise embeddings, it is then automatic that 
$$
\Image(\Psi_{m, \alpha})\cap\Image(\Psi_{m, \beta}) = \emptyset
$$
for all $m$, whenever $\alpha \neq \beta$.
We have proven that $\Image(\Psi_{m, \alpha})\cap\Image(\Psi_{m, \beta}) = \emptyset$ in all cases that $(m, \alpha) \neq (n, \beta)$. 
It follows that $(\mathcal{W}, \Psi, \psi, \Gamma)$ is a fibrewise surgery basis for $(M, \hat{\ell})$.
This completes the proof of Theorem \ref{theorem: fibrewise surgery lemma}. 

With the proof of Theorem \ref{theorem: fibrewise surgery lemma} complete, we are now in a position to prove Proposition \ref{proposition: contractibility of the surgery category}, which we do below in the following subsection.
\subsection{Proof of Proposition \ref{proposition: contractibility of the surgery category}} \label{subsection: proof of proposition contractible category}
For the purposes of proving Proposition \ref{proposition: contractibility of the surgery category} it will be convenient to work with a sheaf model for the topological category $\mathcal{S}^{k-1}_{M}$.
For what follows let $\Mfds$ denote the category whose objects are smooth manifolds without boundary, and whose morphisms are given by smooth maps. 
We refer the reader to \cite[Section 2.4]{MW 07} for background on the concordance theory of sheaves on $\Mfds$.
The sheaf model for $\mathcal{S}^{k-1}_{M}$ is defined below:
\begin{defn} \label{defn: top category of multiple surgeries sheaf model}
Let $M$ be as in Definition \ref{defn: top category of multiple surgeries}. 
For $X \in \Mfds$, the 
category $\mathcal{S}^{k-1}_{M}(X)$ is defined as follows.
An object consists of a finite subset $\mb{t} \subset \Omega$, a $\theta$-structure $\hat{\ell}$ on $\mb{t}\times D^{k}\times D^{d-k+1}$, and a fibrewise embedding 
$e: X\times\mb{t}\times D^{k}\times D^{d-k+1} \longrightarrow X\times\R^{\infty}$
subject to the following conditions:
\begin{enumerate} \itemsep.2cm
\item[(i)] $e^{-1}(X\times M) = X\times\mb{t}\times S^{k-1}\times D^{d-k+1}$.
\item[(ii)] 
On each fibre of the projection to $X$, $\hat{\ell}_{M}$ agrees with the restriction of $\hat{\ell}$ to $M$. 
\item[(iii)] For each $x \in X$, let $\widetilde{M}_{x}$ denote the $\theta$-manifold obtained by 
performing $\theta$-surgery on $M_{x}$ via 
$e_{\mb{t}}|_{\{x\}\times\mb{t}\times S^{k-1}\times D^{d-k+1}},$ 
and the $\theta$-structure $\hat{\ell}$. 
We require that $\ell_{\widetilde{M}_{x}}: \widetilde{M}_{x} \longrightarrow B$ be $k$-connected for all $x \in X$.
\end{enumerate}
A morphism from $(\mb{t}, e, \hat{\ell}) \longrightarrow (\mb{s}, e', \hat{\ell}')$ is an injective map $j: \mb{t} \longrightarrow \mb{s}$ such that $j^{*}e' = e$  and $j^{*}\hat{\ell}' = \hat{\ell}$.
\end{defn}

From the above definition, the assignment $X \mapsto \mathcal{S}_{M}^{k-1}(X)$ defines a \textit{category-valued sheaf} on $\Mfds$.
Clearly the topological category $\mathcal{S}^{k-1}_{M}$ is weak homotopy equivalent to the representing space $|\mathcal{S}^{k-1}_{M}|$ (see \cite[Definition 2.6]{MW 07}).
Proposition \ref{proposition: contractibility of the surgery category} asserts that the classifying space $B\mathcal{S}^{k-1}_{M}$ is weakly contractible. 
In order to prove this we need to work with a sheaf model for the classifying space $B\mathcal{S}^{k-1}_{M}$. 
We invoke a general construction from \cite{MW 07}.
Choose once and for all an uncountably infinite set $J$. 
\begin{defn}[Cocycle sheaves] \label{defn: cocycle sheaves}
Let $\mathcal{F}$ be a sheaf on $\Mfds$ taking values in the category of small categories. 
There is an associated set valued sheaf $\beta\mathcal{F}$.
For a manifold $X$, an element $\beta\mathcal{F}(X)$ is a pair $(\mathcal{U}, \Phi)$ where $\mathcal{U} = \{U_{j} | j \in J\}$ is a locally finite open cover of $X$ indexed by $J$, and $\Phi$ is a certain collection of morphisms defined as follows:
Given a non-empty finite subset $R \subset J$, let $U_{R}$ denote the intersection $\cap_{j \in R}U_{j}$. 
Then $\Phi$ is a collection $\varphi_{RS} \in \Mor\mathcal{F}(U_{S})$ indexed by the pairs $R \subseteq S$ of non-empty finite subsets of $J$, subject to the conditions:
\begin{enumerate} \itemsep.2cm
\item[(i)] $\varphi_{RR} = \Id_{c_{R}}$ for an object $c_{R} \in \Ob\mathcal{F}(U_{R})$;
\item[(ii)] For each non-empty finite subset $R \subseteq S$, $\varphi_{RS}$ is a morphism from $c_{S}$ to $c_{R}|_{U_{S}}$; 
\item[(iii)] For all triples $R \subseteq S \subseteq T$ of finite non-empty subsets of $J$, we have 
$
\varphi_{RT} = (\varphi_{RS}|_{U_{T}})\circ\varphi_{ST}.
$
\end{enumerate}
\end{defn}
By \cite[Theorem 4.1.2]{MW 07}, for any category-valued sheaf $\mathcal{F}$ there is a weak homotopy equivalence 
$$
|\beta\mathcal{F}| \simeq B|\mathcal{F}|,
$$
where the right-hand side is the classifying space of the topological category $|\mathcal{F}|$.
By this theorem from \cite{MW 07}, to prove Proposition \ref{proposition: contractibility of the surgery category} it will suffice to prove that the representing space $|\beta(\mathcal{S}^{k-1}_{M})^{\op}|$ is weakly contractible, where $(\mathcal{S}^{k-1}_{M})^{\op}$ is the sheaf that assigns to each manifold $X$, the \textit{opposite category} to $\mathcal{S}^{k-1}_{M}(X)$.
To do this it will suffice to show that the sheaf
$\beta(\mathcal{S}^{k-1}_{M})^{\op}$ is weakly equivalent to the constant sheaf sending any manifold $X$ to a singleton. 
This reduces to the following proposition (compare with \cite[Proof of Proposition 6.7 (Page 916)]{MW 07}):
\begin{proposition} \label{claim: extension of germs}
Let $X$ be given with a closed subset $A$ and a germ $s \in \displaystyle{\colim_{U}}\beta(\mathcal{S}^{k-1}_{M})^{\op}(U)$, where $U$ ranges over the neighborhoods of $A$ in $X$. 
Then $s$ extends to an element of $\beta(\mathcal{S}^{k-1}_{M})^{\op}(X)$.
\end{proposition}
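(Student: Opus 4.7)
The plan is to unpack the germ $s$ as fibrewise surgery data on a neighborhood of $A$, invoke Theorem \ref{theorem: fibrewise surgery lemma} to produce complementary surgery data on all of $X$ with disjoint index set, and glue the two into a single $\beta$-cocycle on $X$. Unpacking Definitions \ref{defn: cocycle sheaves} and \ref{defn: top category of multiple surgeries sheaf model}, a representative of $s$ on some open neighborhood $U$ of $A$ is a locally finite cover $\{U_j\}_{j \in J^0}$ of $U$ with $J^0 \subset J$, objects $c_R = (\mathbf{t}_R, e_R, \hat{\ell}_R) \in \mathcal{S}^{k-1}_M(U_R)$ for each nonempty finite $R \subset J^0$, and cocycle morphisms $\varphi_{RS}$ for $R \subseteq S$ which, in the opposite category, are injective maps $\mathbf{t}_R \hookrightarrow \mathbf{t}_S$ compatible with the embeddings and $\theta$-structures. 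Thus $s$ is equivalent to a fibrewise surgery basis of the trivial bundle $U \times M \to U$ keyed to the cover $\{U_j\}_{j \in J^0}$.

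Using normality of $X$, choose open sets $A \subset V' \subset \overline{V'} \subset V \subset \overline{V} \subset U$. Apply Theorem \ref{theorem: fibrewise surgery lemma} to the trivial bundle $X \times M \to X$, yielding a fibrewise surgery basis $(\mathcal{W}, \Psi, \psi, \Gamma)$ on $X$. Since $J$ is uncountable, reindex so that $\Gamma = J^1 \subset J \setminus J^0$. Before combining, perturb each $\Psi_\alpha$ (keeping it fixed outside a small neighborhood of $\overline{V} \setminus V'$) so that over this annular region its image is disjoint from the images of the embeddings $e_R$ coming from the germ; such a perturbation exists by general position, since $k \le d/2$ and $\dim M = d$, exactly as in the transversality step in the proof of Lemma \ref{lemma: relative existence of fibrewise surgery semi-basis}.

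Now define the combined cover
\begin{equation*}
\mathcal{U}_X \; := \; \bigl\{ U_j \cap V \;:\; j \in J^0 \bigr\} \;\cup\; \bigl\{ W_\alpha \setminus \overline{V'} \;:\; \alpha \in J^1 \bigr\},
\end{equation*}
which is a locally finite open cover of $X$ indexed by $J^0 \sqcup J^1 \subset J$. For a finite nonempty $R \subset J^0 \sqcup J^1$ split as $R = R_0 \sqcup R_1$ with $R_i \subset J^i$, set $\mathbf{t}_R := \mathbf{t}_{R_0} \sqcup \mathbf{t}_{R_1}$, and take $e_R$ and $\hat{\ell}_R$ to be the disjoint union of the germ datum (on $R_0$) and the auxiliary datum (on $R_1$), both restricted to $U_R$; these are well-defined embeddings because of the disjointness arranged by the perturbation. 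Condition (iii) of Definition \ref{defn: top category of multiple surgeries sheaf model} persists because surgering along additional handles cannot decrease connectivity, so the post-surgery fibre remains $k$-connected. The cocycle morphisms $\varphi_{RS}$ are the evident inclusions $\mathbf{t}_R \hookrightarrow \mathbf{t}_S$, and the cocycle identity is immediate from the disjoint-union structure. The resulting element of $\beta(\mathcal{S}^{k-1}_M)^{\op}(X)$ agrees with $s$ over $V'$, where only the $J^0$-indexed pieces contribute, which extends the given germ. The main obstacle is arranging the disjointness of the embedded handles on the overlap annulus $V \setminus \overline{V'}$ without destroying the fibrewise surgery basis property; this is the one place in the argument where the dimensional hypothesis $k \le d/2$ is essential, and it is handled by the general-position perturbation in the second paragraph.
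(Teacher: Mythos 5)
Your proof is correct in substance, but it takes a genuinely different route from the paper's, and the difference is worth understanding carefully.

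The paper avoids your perturbation step entirely. After writing the germ as fibrewise surgery data on a neighborhood $U$ of $A$, Madsen--Weiss (and this paper) choose an open $X_0$ with $U \cup X_0 = X$ and $\overline{X_0}\cap A = \emptyset$, and then delete the germ's tube images from the product $M\times X_0$: they set $N = (M\times X_0) \setminus (\text{closures of the germ's embedded } S^{k-1}\times D^{d-k+1}\text{-bundles})$, with its restricted fibrewise $\theta$-structure, and verify that $(N,\hat\ell)\in \Sub_{\theta,k-1}(X_0)$ still satisfies the hypotheses of Theorem~\ref{theorem: fibrewise surgery lemma} (the fibres $N_x$ remain $(k-1)$-connected over $B$, and still satisfy Wall's $F(k)$). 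Theorem~\ref{theorem: fibrewise surgery lemma} is then applied to $N\to X_0$, so the resulting fibrewise surgery basis is \emph{automatically} disjoint from the germ's data, since it lives in the complement of the germ's tubes by fiat. The two collections of surgery data are then glued over the locally finite cover $\{Y_j\}$, with the cocycle morphisms being the evident inclusions on each factor, exactly as you propose at the end.

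Your approach instead applies Theorem~\ref{theorem: fibrewise surgery lemma} to the unmodified trivial bundle $X\times M \to X$ (where the hypotheses are immediate, saving the short verification step the paper does) and then repairs the lack of disjointness over the annulus $\overline V\setminus V'$ by a parametrized general-position perturbation of the new basis. This can be made to work, but the perturbation step is more delicate than your sketch suggests: (a) it has to be done fibrewise over the annulus while interpolating smoothly back to the identity outside it; (b) it must be carried out coherently over all $\alpha$ so as not to destroy the pairwise disjointness required by Definition~\ref{defn: surgery basis fold map}; (c) one must note that isotopy-small perturbations preserve condition (i) of the surgery basis; and (d) at the critical dimension $k = d/2$ (which Proposition~\ref{proposition: contractibility of the surgery category} allows) one must, as in Lemma~\ref{lemma: relative existence of fibrewise surgery semi-basis}, perturb the $(k-1)$-dimensional attaching spheres inside $M$ and the $k$-dimensional cores in $\R^\infty$ separately, not the full handle images in $M$, since $2k = d$ kills the naive general-position argument. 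You gesture at all of this by referencing the proof of Lemma~\ref{lemma: relative existence of fibrewise surgery semi-basis}, which is the right place to look; but the paper's removal-of-tubes trick is exactly a way to never have to open that can at this stage. Finally, your one-line justification that ``surgering along additional handles cannot decrease connectivity'' is stated as if it were a generality; it is not, and the actual reason it holds here is the specific shape of the handles ($(k,d-k+1)$ with a $\theta$-nullhomotopy) together with $k \le d/2$, making the surgery a connected sum with $S^k\times S^{d-k}$. The paper's proof leaves this equally implicit, so this is a shared abbreviation rather than a gap specific to your write-up.
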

\begin{proof}[Proof of Proposition \ref{claim: extension of germs}]
We will show that this follows as a consequence of Theorem \ref{theorem: fibrewise surgery lemma}. 
Choose an open neighborhood $U$ of $A$ in $X$ such that the germ $s \in \displaystyle{\colim_{U}}\beta(\mathcal{S}^{k-1}_{M})^{\op}(U)$ can be represented by some $s_{0} \in \beta(\mathcal{S}^{k-1}_{M})^{\op}(U)$.
The information contained in $s_{0}$ includes a locally finite covering of $U$ by open subsets $U_{j}$ for $j \in J$. 
It also includes a choice of object $\psi_{RR} \in \Ob\mathcal{S}^{k-1}_{M}(U_{R})$ for each finite nonempty subset $R$ of $J$. 

Next, choose an open $X_{0} \subset X$ such that $U \cup X_{0} = X$ and the closure of $X_{0}$ in $X$ avoids $A$. 
Let $N$ be the open subset of $M\times X_{0}$ obtained by removing from $M\times X_{0}$ the closures of the embedded sphere bundles determined by the various $\varphi_{RR}|_{U_{R}\cap X_{0}}$. 
Consider the projection $\pi: N \longrightarrow X_{0}$, which is a submersion. 
Let $\hat{\ell}: T^{\pi}N\oplus\epsilon^{1} \longrightarrow \theta^{*}\gamma^{d+1}$ be the fibrewise $\theta$-structure obtained by restricting $\hat{\ell}_{M}$ to each fibre of $\pi$.
By assumption, the map $\ell_{M}: M \longrightarrow B$ is $(k-1)$-connected. 
Since each fibre $N_{x} = \pi^{-1}(x)$ is obtained from $M$ by deleting a finite number of disjoint embedded copies of $S^{k-1}\times D^{d-k+1}$,
it then follows by a general position argument that the map $\ell_{x}: N_{x} \longrightarrow B$ is $(k-1)$-connected as well, for all $x \in X_{0}$.
It follows that the pair $(N, \hat{\ell})$ determines an element of the set $\Sub_{d, k-1}(X_{0})$ (see Section \ref{subsection: bases for fibrewise surgery}). 
Since $\pi_{k}(M)$ is finitely generated by assumption it follows that $\pi_{k}(N_{x})$ is finitely generated for all $x \in X$, since $N_{x}$ is obtained from $M$ by deleting a finite number of copies of $S^{k-1}\times D^{d-k+1}$.
We have shown that the element $(N, \hat{\ell}) \in \Sub_{d, k-1}(X_{0})$ satisfies the hypotheses of Theorem \ref{theorem: fibrewise surgery lemma}. 
We may then apply Theorem \ref{theorem: fibrewise surgery lemma} to obtain a fibrewise surgery basis $(\mathcal{V}, \varphi, \phi, \Gamma)$ for $(N, \hat{\ell})$. 

We now use the fibrewise surgery basis $(\mathcal{V}, \varphi, \phi, \Gamma)$ to construct an element of $\beta(\mathcal{S}^{k-1}_{M})^{\op}(X)$  that extends the element $s \in \beta(\mathcal{S}^{k-1}_{M})^{\op}(U)$.
Let us first fix some notation.  
Let $\mathcal{U} = \{U_{i} \; | \; i \in J\}$ be the covering of $U \subset X$ associated to the element $s \in \beta(\mathcal{S}^{k-1}_{M})^{\op}(U)$ that was specified in first paragraph of the proof.
Let $\mathcal{V} = \{V_{i} \; | \; i \in \Gamma\}$ be the covering associated to the chosen fibrewise surgery basis, $(\mathcal{V}, \varphi, \phi, \Gamma)$. 
Since the indexing set $J$ is uncountably infinite we may assume that $\Gamma$ is a subset of $J$ with the property that $U_{i} = \emptyset$ for all $i \in \Gamma$. 
We let $\mathcal{V}' = \{V'_{i} \; | \; i \in J\}$ denote the covering of $X_{0}$ obtained by setting 
$$V'_{i} = 
\begin{cases}
V_{i} &\quad \text{if $i \in \Gamma$,}\\
\emptyset &\quad \text{if $i \notin \Gamma$.}
\end{cases}$$
Defined in this way, it follows that for any $i \in J$ the set $U_{i}$ is non-empty only if $V'_{i}$ is empty. 
With this notation in place, we proceed to define a new covering of $X$. 
For $j \in J$, define 
$$Y_{j} := \begin{cases}
U_{j} &\quad \text{if $U_{j} \neq \emptyset$,}\\
V'_{j} &\quad \text{if $V'_{j} \neq \emptyset$,}\\
\emptyset &\quad \text{else.}
\end{cases}$$
The set $\{Y_{j} \; | \; j \in J\}$ defines a locally finite covering for $X$. 
For finite $R \subset J$ with nonempty $Y_{R}$, we can write $Y_{R} = U_{S}\cap V'_{T}$ for disjoint subsets $S, T$of $R$ with $S\cup T = R$. 
Let $\varphi_{RR} \in \Ob\mathcal{S}_{M}^{k-1}(Y_{R})$ be the object given by the union of $\psi_{SS}|_{Y_{R}}$ and the embeddings $\varphi_{j}|_{Y_{R}}$ for all $j \in T$ (where recall that $\psi_{RR}$ was the object associated to the element $s$ specified in the first paragraph of the proof).
Since these embeddings $\psi_{SS}|_{Y_{R}}$ and the $\varphi_{j}|_{Y_{R}}$ are disjoint by construction, it follows that $\varphi_{RR}$ determines a well defined element of $\Ob\mathcal{S}_{M}^{k-1}(Y_{R})$.
The covering $\{ Y_{j} \; | \; j \in J \; \}$ together with the collection of objects $\varphi_{RR}$ for finite non-empty subsets $R \subset J$ is an element of $\beta(\mathcal{S}^{k-1}_{M})^{\op}(X)$  that extends $s \in \beta(\mathcal{S}^{k-1}_{M})^{\op}(U)$.
This proves Claim \ref{claim: extension of germs}.
\end{proof}
With Claim \ref{claim: extension of germs} established, the proof of Proposition \ref{proposition: contractibility of the surgery category} is now complete.

\subsection{Implementation of fibrewise surgery} \label{subsection: implementation of fibrewise surgery}
In this section we show how to use the above results (namely Proposition \ref{proposition: contractibility of the surgery category}) to prove Theorem \ref{theorem: homotopy colimit decomposition}.
We follow closely the methods of \cite[Section 6]{MW 07} but with a few modifications.

First, let $\mathcal{K}^{\{k\}} \subset \mathcal{K}$ be the full subcategory on those $\mb{t}$ where all elements in $\mb{t}$ are labelled by the integer $k$. 
Let $\mathcal{K}^{k-1, \{k\}^{c}} \subset \mathcal{K}^{k-1}$ denote the full subcategory consisting of those $\mb{s}$ such that all points in $\mb{s}$ have labels in the set $\{k+1, \dots, d-k+1\}$. 
The category $\mathcal{K}^{k-1}$ factors as the product $\mathcal{K}^{k-1} = \mathcal{K}^{k-1, \{k\}^{c}}\times\mathcal{K}^{\{k\}}$, and thus there is a homeomorphism,
\begin{equation} \label{equation: fubini principle for homotopy colimits}
\hocolim_{\mb{t} \in \mathcal{K}^{k-1, \{k\}^{c}}}\hocolim_{\mb{s} \in \mathcal{K}^{\{k\}}}\mathcal{W}^{k-1}_{P, \mb{s}\sqcup\mb{t}} \; \cong \; \hocolim_{\mb{u} \in \mathcal{K}^{k-1}}\mathcal{W}^{k-1}_{P, \mb{u}}.
\end{equation}
By the homotopy invariance of homotopy colimits, to prove Theorem \ref{theorem: homotopy colimit decomposition} it will suffice to show that for any $\mb{t} \in \mathcal{K}^{k-1, \{k\}^{c}}$, the map
\begin{equation} \label{equation: split limit fixed t}
\hocolim_{\mb{s} \in \mathcal{K}^{\{k\}}}\mathcal{W}^{k-1, c}_{P, \mb{s}\sqcup\mb{t}} \; \longrightarrow \; \hocolim_{\mb{s} \in \mathcal{K}^{\{k\}}}\mathcal{W}^{k-1}_{P, \mb{s}\sqcup\mb{t}}
\end{equation}
is a weak homotopy equivalence.
Taking the homotopy colimit of these maps with $\mb{t}$ ranging over $\mathcal{K}^{k-1, \{k\}^{c}}$ will then imply Theorem \ref{theorem: homotopy colimit decomposition}.
We will need to introduce some new constructions.
\begin{defn} \label{defn: special diagrams}
We call a diagram, 
$\xymatrix{
\mb{s} \ar[rr]^{(j_{1}, \varepsilon_{1})} && \mb{t} && \mb{u} \ar[ll]_{(j_{2}, \varepsilon_{2})}, 
}
$ 
in $\mathcal{K}^{\{k\}}$ \textit{special} if 
\begin{enumerate} \itemsep.2cm
\item[(i)] $j_{2}(\mb{u})$ contains $j_{1}(\mb{s})$, and
\item[(ii)] $\varepsilon_{1} = +1$, $\varepsilon_{2} = -1$. 
\end{enumerate}
In this situation we define the space $\mathcal{W}^{k-1, c}_{P, \mb{u}\rightarrow\mb{t}}$ by means of the pullback diagram 
$$
\xymatrix{
\mathcal{W}^{k-1, c}_{P, \mb{u} \rightarrow \mb{t}} \ar[d] \ar[rr] && \mathcal{W}^{k-1, c}_{P, \mb{u}} \ar[d]^{\text{inclusion}} \\
\mathcal{W}^{k-1}_{P, \mb{t}} \ar[rr]^{(k_{2}, \varepsilon_{2})^{*}} && \mathcal{W}^{k-1}_{P, \mb{u}}. 
}
$$
The special diagrams $\mb{s} \longrightarrow \mb{t} \longleftarrow \mb{u}$ with a fixed $\mb{s}$ are objects of a category $\mathcal{D}^{\{k\}}_{\mb{s}}$ where the morphisms are commutative diagrams in $\mathcal{K}^{\{k\}}$ of the form, 
$$
\xymatrix{
\mb{s} \ar[rr] \ar[d]^{=} && \mb{t} \ar[d] && \mb{u} \ar[ll] \ar[d] \\
\mb{s} \ar[rr] && \mb{t}' && \mb{u}' \ar[ll]
}
$$
with \textit{special} rows. 
\end{defn}
The rule taking a special diagram $\mb{s} \longrightarrow \mb{t} \longleftarrow \mb{u}$ to $\mathcal{W}^{k-1, c}_{P, \mb{u}\rightarrow \mb{t}}$ defines a contravariant functor on $\mathcal{D}^{\{k\}}_{\mb{s}}$ (see the discussion in \cite[Page 918]{MW 07}).
There is also a natural transformation from that functor on $\mathcal{D}^{\{k\}}_{\mb{s}}$ to the constant functor with value $\mathcal{W}^{k-1}_{P, \mb{s}}$, determined by the composition 
\begin{equation} \label{equation: forgetful natural transformation}
\mathcal{W}^{k-1, c}_{P, \mb{u}\rightarrow \mb{t}} \longrightarrow \mathcal{W}^{k-1}_{P, \mb{t}}\longrightarrow \mathcal{W}^{k-1}_{P, \mb{s}},
\end{equation} 
for $\mb{s} \longrightarrow \mb{t} \longleftarrow \mb{u}$ in $\mathcal{D}^{\{k\}}_{\mb{s}}$.

The following lemma is the same as \cite[Lemma 6.10]{MW 07}. 
We provide a sketch of the proof.
\begin{lemma} \label{lemma: homotopy colimit over special diagrams}
For any object $\mb{s} \in \mathcal{K}^{\{k\}}$, the natural transformation (\ref{equation: forgetful natural transformation}) induces a weak homotopy equivalence, 
$
\displaystyle{\hocolim_{(\mb{s}\rightarrow \mb{t} \leftarrow \mb{u}) \in \mathcal{D}^{\{k\}}_{\mb{s}}}}\mathcal{W}^{k-1, c}_{P, \mb{u}\rightarrow\mb{t}} \; \stackrel{\simeq} \longrightarrow \mathcal{W}^{k-1}_{P, \mb{s}}.
$
\end{lemma}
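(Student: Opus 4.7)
The strategy is to combine Proposition~\ref{proposition: homotopy colimit fibre sequence} with Proposition~\ref{proposition: contractibility of the surgery category}. Write $F$ for the contravariant functor on $\mathcal{D}^{\{k\}}_{\mb{s}}$ given by $(\mb{s}\to\mb{t}\leftarrow\mb{u}) \mapsto \mathcal{W}^{k-1,c}_{P,\mb{u}\to\mb{t}}$, and $G$ for the constant functor at $\mathcal{W}^{k-1}_{P,\mb{s}}$; (\ref{equation: forgetful natural transformation}) is a natural transformation $F\Rightarrow G$. Its map $\hocolim F \to \hocolim G$, composed with the canonical projection $\hocolim G \to \mathcal{W}^{k-1}_{P,\mb{s}}$, is the map of the lemma. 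It therefore suffices to show that its homotopy fibre over every $x_0 = (M,(V,\sigma),e) \in \mathcal{W}^{k-1}_{P,\mb{s}}$ is weakly contractible.

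The first step is to verify the hypothesis of Proposition~\ref{proposition: homotopy colimit fibre sequence}: that every morphism of $\mathcal{D}^{\{k\}}_{\mb{s}}$ induces a weak equivalence between the pointwise homotopy fibres of $F\Rightarrow G$. A morphism $(\mb{s}\to\mb{t}\leftarrow\mb{u}) \to (\mb{s}\to\mb{t}'\leftarrow\mb{u}')$ enlarges $\mb{t}$ and $\mb{u}$ by disjoint sets of points labelled either $+1$ (new index-$k$ handle data inserted into the picture) or $-1$ (new handle data whose surgery is immediately performed back, cancelling its effect). Over a fixed $x_0$, the map on fibres adjoins this extra handle-plus-cancellation data; since the space of cancellation data attached to any fixed handle embedding is contractible, this map on fibres is a weak equivalence.

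With that hypothesis verified, the homotopy fibre of $\hocolim F \to \mathcal{W}^{k-1}_{P,\mb{s}}$ at $x_0$ is weakly equivalent to the homotopy colimit over $\mathcal{D}^{\{k\}}_{\mb{s}}$ of the pointwise fibres. The key step is to identify this homotopy colimit with the classifying space $B\mathcal{S}^{k-1}_{M'}$, where $M' := M \setminus e(D(V^+)\times_{\mb{s}} \Int D(V^-))$ is the complement of the interiors of the handle discs already encoded in $x_0$. Unpacking definitions, a point in the fibre over $x_0$ at a special diagram $(\mb{s}\to\mb{t}\leftarrow\mb{u})$ is an extension of $(V,\sigma,e)$ by further index-$k$ surgery data indexed by $\mb{t}\setminus j_1(\mb{s})$, together with the choice of subset $\mb{t}\setminus j_2(\mb{u})$ on which surgery is actually performed, such that the resulting manifold has $k$-connected reference map away from the remaining handles, which is precisely the data of an object of $\mathcal{S}^{k-1}_{M'}$ together with a distinguished subset of ``cancelled'' points. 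The assumption $k<d/2$ ensures that $M'$ inherits $(k-1)$-connectedness of its reference map from $M$ (removal of handle cores of codimension $\geq k+1$), and $B$ satisfies $F(k)$ by hypothesis, so Proposition~\ref{proposition: contractibility of the surgery category} yields $B\mathcal{S}^{k-1}_{M'}\simeq *$, completing the proof.

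The main obstacle is this combinatorial identification in the third step: one must verify that the transport category attached to the $F$-fibres over $x_0$ actually has classifying space weakly equivalent to $B\mathcal{S}^{k-1}_{M'}$, compatibly with the natural topologies on both sides. The special-diagram formalism of Definition~\ref{defn: special diagrams} was designed precisely to encode the objects-and-morphisms data of such a surgery category (injections $\mb{u}\setminus\mb{s}\hookrightarrow\mb{t}\setminus\mb{s}$ of finite sets of handles indexed by $\mathcal{K}^{\{k\}}$), and the matching of connectivity conditions between $\mathcal{W}^{k-1,c}$ and $\mathcal{S}^{k-1}_{M'}$ is the key bookkeeping.
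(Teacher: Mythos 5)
There is a genuine logical gap in the middle of your argument.

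Your Step 4 claims that every morphism of $\mathcal{D}^{\{k\}}_{\mb{s}}$ induces a weak equivalence between the pointwise homotopy fibres of $F \Rightarrow G$ over $x_0$, so that Proposition~\ref{proposition: homotopy colimit fibre sequence} applies. This is not justified, and in fact it should fail: at the initial special diagram $\mb{s} \to \mb{s} \leftarrow \mb{s}$ the value of $F$ is simply $\mathcal{W}^{k-1,c}_{P,\mb{s}}$, whose homotopy fibre over $x_0$ under the inclusion $\mathcal{W}^{k-1,c}_{P,\mb{s}} \hookrightarrow \mathcal{W}^{k-1}_{P,\mb{s}}$ is tiny (essentially empty or a path space, depending on whether $x_0$ lies in the subspace), whereas over a non-trivial special diagram the fibre is the large space of auxiliary index-$k$ surgery data you correctly describe. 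The transition maps forget data and do not identify these. If the hypothesis of Proposition~\ref{proposition: homotopy colimit fibre sequence} actually held, there would be no need for the surgery-category machinery at all.

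Moreover, even granting the hypothesis, Proposition~\ref{proposition: homotopy colimit fibre sequence} does not give the conclusion you draw in Step~5. Its conclusion is that the inclusion of a \emph{single} fibre $\hofibre(u_c)$ into $\hofibre(\hocolim \mathcal{G}_1 \to \hocolim \mathcal{G}_2)$ is an equivalence, not that the global homotopy fibre is the homotopy colimit of all the pointwise fibres. The passage you actually want here, namely
\[
\hofibre_{x_0}\left(\hocolim F \to \mathcal{W}^{k-1}_{P,\mb{s}}\right) \;\simeq\; \hocolim_{(\mb{s}\to\mb{t}\leftarrow\mb{u})}\hofibre_{x_0}\left(\mathcal{W}^{k-1, c}_{P, \mb{u} \rightarrow \mb{t}}\to\mathcal{W}^{k-1}_{P, \mb{s}}\right),
\]
is a general commutation of homotopy fibres with homotopy colimits of a functor into the (constant) base (this is the role of \cite[Lemma~6.11]{MW 07} in the paper's proof), and it needs no hypothesis on the transition maps. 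Replacing your Steps~4--5 by a citation of that general fact closes the gap; your subsequent identification of the resulting homotopy colimit of fibres with $B\mathcal{S}^{k-1}_{M\setminus\Image(e)}$, and your invocation of Proposition~\ref{proposition: contractibility of the surgery category} to finish, are the correct strategy and match the paper. (Minor: the complement you should form is $M\setminus e\!\left(D(V^{+})\times_{\mb{s}}S(V^{-})\right)$, the part of $\Image(e)$ that actually lies in $M$; your formula removes a set disjoint from $M$.)
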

\begin{proof}[Proof sketch]
Let $(M, e) := (M, (V, \sigma), e)$ be an element of $\mathcal{W}^{k-1}_{P, \mb{s}}$.
It will suffice to prove that the homotopy fibre of the map 
$$\hocolim_{(\mb{s}\rightarrow \mb{t} \leftarrow \mb{u}) \in \mathcal{D}^{\{k\}}_{\mb{s}}}\mathcal{W}^{k-1, c}_{P, \mb{u} \rightarrow \mb{t}} \; \longrightarrow \;\mathcal{W}^{k-1}_{P, \mb{s}}$$
over $(M, e)$ is weakly contractible. 
By \cite[Lemma 6.11]{MW 07}, this homotopy fibre is weakly homotopy equivalent to the homotopy colimit
$$
\hocolim_{(\mb{s}\rightarrow \mb{t} \leftarrow \mb{u})\in \mathcal{D}^{\{k\}}_{\mb{s}}}\left(\hofibre_{(M, e)}\left[\mathcal{W}^{k-1, c}_{P, \mb{u} \rightarrow \mb{t}} \rightarrow \mathcal{W}^{k-1}_{P, \mb{s}}\right]\right).
$$
By the same argument given in the proof of \cite[Lemma 6.10]{MW 07}, this homotopy colimit can be identified with the classifying space $B\mathcal{S}^{k-1}_{M\setminus\Image(e)}$.
More precisely, this homotopy colimit is actually identified with the classifying space of the edgewise subdivision of $\mathcal{S}^{k-1}_{M\setminus\Image(e)}$, but this classifying space is homotopy equivalent to $B\mathcal{S}^{k-1}_{M\setminus\Image(e)}$ none-the-less. 
The manifold $M\setminus\Image(e)$ satisfies the hypotheses of Proposition \ref{proposition: contractibility of the surgery category} and thus $B\mathcal{S}^{k-1}_{M\setminus\Image(e)}$ is weakly contractible. 
This concludes the proof of the lemma. 
\end{proof}

\begin{defn}
Let $\mathcal{D}^{\{k\}}$ be the category of \textit{all} special diagrams $\mb{s} \rightarrow \mb{t} \leftarrow \mb{u}$ in $\mathcal{K}^{\{k\}}$.
A morphism in $\mathcal{D}^{\{k\}}$ is a commutative diagram 
$$
\xymatrix{
\mb{s} \ar[rr] \ar[d] && \mb{t} \ar[d] && \mb{u} \ar[d] \ar[ll] \\
\mb{s}' \ar[rr] && \mb{t}' && \mb{u}' \ar[ll]
}
$$
in $\mathcal{K}^{\{k\}}$ with special rows. 
The rule taking an object $\mb{s} \rightarrow \mb{t} \leftarrow \mb{u}$ of $\mathcal{D}^{\{k\}}$ to $\mathcal{W}^{k-1, c}_{P, \mb{u} \rightarrow \mb{t}}$ defines a contravariant functor.
We may then form the homotopy colimit,
$
\displaystyle{\hocolim_{(\mb{s}\rightarrow \mb{t} \leftarrow \mb{u})\in \mathcal{D}^{\{k\}}}}\mathcal{W}^{k-1, c}_{P, \mb{u} \rightarrow \mb{t}}.
$
\end{defn}

Each morphism $(j, \varepsilon): \mb{s}' \longrightarrow \mb{s}$ in $\mathcal{K}^{\{k\}}$ induces a functor 
$(j, \varepsilon)^{*}:\mathcal{D}^{\{k\}}_{\mb{s}} \longrightarrow \mathcal{D}^{\{k\}}_{\mb{s}'},$
which in turn induces a map,
$$
\hocolim_{(\mb{s}'\rightarrow \mb{t}' \leftarrow \mb{u}')\in \mathcal{D}^{\{k\}}_{\mb{s}}}\mathcal{W}^{k-1, c}_{P, \mb{u}' \rightarrow \mb{t}'} \; \longrightarrow \; \hocolim_{(\mb{s}\rightarrow \mb{t} \leftarrow \mb{u})\in \mathcal{D}^{\{k\}}_{\mb{s}'}}\mathcal{W}^{k-1, c}_{P, \mb{u} \rightarrow \mb{t}}.
$$
\begin{Construction} \label{Construction: map of double colimits}
We will construct a map 
\begin{equation} \label{equation: comparing colimits}
\hocolim_{\mb{s} \in \mathcal{K}^{\{k\}}}\hocolim_{(\mb{s}\rightarrow \mb{t} \leftarrow \mb{u})\in \mathcal{D}^{\{k\}}_{\mb{s}}}\mathcal{W}^{k-1, c}_{P, \mb{u} \rightarrow \mb{t}} \; \; \longrightarrow \; \; \displaystyle{\hocolim_{(\mb{s}\rightarrow \mb{t} \leftarrow \mb{u})\in \mathcal{D}^{\{k\}}}}\mathcal{W}^{k-1, c}_{P, \mb{u} \rightarrow \mb{t}}.
\end{equation}
In order to define this map it will be useful to construct a particular model for the double homotopy colimit appearing on the left-hand side. 
We build this model as follows.
Let $\mb{D}_{\bullet, \bullet}$ be the bi-simplicial space where $D_{p,q}$ consists of tuples
$$
\left(\mb{s}_{0} \rightarrow \cdots \rightarrow \mb{s}_{p}; \; (\mb{s}_{p} \rightarrow \mb{t}_{0} \leftarrow \mb{u}_{0}) \rightarrow \cdots \rightarrow (\mb{s}_{p} \rightarrow \mb{t}_{q} \leftarrow \mb{u}_{q}) ; \; (M, (V, \sigma), e)\right), 
$$ 
where 
\begin{itemize} \itemsep.2cm
\item $\mb{s}_{0} \rightarrow \cdots \rightarrow \mb{s}_{p}$ is a sequence of morphisms in $\mathcal{K}^{\{k\}}$; 
\item $(\mb{s}_{p} \rightarrow \mb{t}_{0} \leftarrow \mb{u}_{0}) \rightarrow \cdots \rightarrow (\mb{s}_{p} \rightarrow \mb{t}_{q} \leftarrow \mb{u}_{q})$ is a sequence of morphisms in $\mathcal{D}^{\{k\}}_{\mb{s}_{p}}$;
\item $(M, (V, \sigma), e)$ is an element of $\mathcal{W}^{k-1, c}_{P, \mb{u}_{q} \rightarrow \mb{t}_{q}}$.
\end{itemize}
The face and degeneracy maps are defined in the obvious way. 
By unpacking the definition of homotopy colimit (Definition \ref{defn: homotopy colimit}), it can be seen that the geometric realization  $|\mb{D}_{\bullet, \bullet}|$
agrees with the double homotopy colimit, 
$$\hocolim_{\mb{s} \in \mathcal{K}^{\{k\}}}\left[\hocolim_{(\mb{s}\rightarrow \mb{t} \leftarrow \mb{u})\in \mathcal{D}^{\{k\}}_{\mb{s}}}\mathcal{W}^{k-1, c}_{P, \mb{u} \rightarrow \mb{t}}\right].$$
 Let $\widetilde{D}_{\bullet}$ be the simplicial space with $p$-simplices given by tuples
 $$
 \left((\mb{s}_{0} \rightarrow \mb{t}_{0} \leftarrow \mb{u}_{0}) \rightarrow \cdots \rightarrow (\mb{s}_{p} \rightarrow \mb{t}_{p} \leftarrow \mb{u}_{p}); \; (M, (V, \sigma), e)\right)
 $$
 where 
 \begin{itemize} \itemsep.2cm
 \item $(\mb{s}_{0} \rightarrow \mb{t}_{0} \leftarrow \mb{u}_{0}) \rightarrow \cdots \rightarrow (\mb{s}_{p} \rightarrow \mb{t}_{p} \leftarrow \mb{u}_{p})$ is a sequence of morphisms in $\mathcal{D}^{\{k\}}$;
 \item $(M, (V, \sigma), e)$ is an element of $\mathcal{W}^{k-1, c}_{P, \mb{u}_{p} \rightarrow \mb{t}_{p}}$. \end{itemize}
 It follows from the definitions that 
 $ \displaystyle{\hocolim_{(\mb{s}\rightarrow \mb{t} \leftarrow \mb{u})\in \mathcal{D}^{\{k\}}}}\mathcal{W}^{k-1, c}_{P, \mb{u} \rightarrow \mb{t}}$ agrees with the geometric realization $|\widetilde{\mb{D}}_{\bullet}|$. 

Now, any $(p, q)$-simplex 
$$\left(\mb{s}_{0} \rightarrow \cdots \rightarrow \mb{s}_{p}; \; (\mb{s}_{p} \rightarrow \mb{t}_{0} \leftarrow \mb{u}_{0}) \rightarrow \cdots \rightarrow (\mb{s}_{p} \rightarrow \mb{t}_{q} \leftarrow \mb{u}_{q}) ; \; (M, (V, \sigma), e)\right) \in \mb{D}_{p,q}$$
determines a $(p+q+1)$-simplex in $\widetilde{\mb{D}}_{p+q+1}$, namely the element
$$
\left((\mb{s}_{0} \rightarrow \mb{t}_{0} \leftarrow \mb{u}_{0}) \rightarrow \cdots \rightarrow  (\mb{s}_{p} \rightarrow \mb{t}_{0} \leftarrow \mb{u}_{0}) \stackrel{\Id} \rightarrow (\mb{s}_{p} \rightarrow \mb{t}_{0} \leftarrow \mb{u}_{0}) \rightarrow \cdots \rightarrow (\mb{s}_{p} \rightarrow \mb{t}_{q} \leftarrow \mb{u}_{q}) ; \; (M, (V, \sigma), e)\right),
$$
where the maps $\mb{s}_{i} \rightarrow \mb{t}_{0}$ for $i \leq p$ are defined by precomposing $\mb{s}_{p} \rightarrow \mb{t}_{0}$ with $\mb{s}_{i} \rightarrow \mb{s}_{p}$.
Notice that in the above formula the middle map 
$(\mb{s}_{p} \rightarrow \mb{t}_{0} \leftarrow \mb{u}_{0}) \stackrel{\Id} \longrightarrow (\mb{s}_{p} \rightarrow \mb{t}_{0} \leftarrow \mb{u}_{0})$ 
is the identity morphism. 
For each $(p, q)$ this association defines a map,
$
F_{p, q}: \mb{D}_{p,q} \longrightarrow \widetilde{\mb{D}}_{p+q+1}.
$
Using these maps we construct a homotopy
$$
[0, 1]\times\mb{D}_{p, q}\times\Delta^{p}\times\Delta^{q} \longrightarrow \widetilde{\mb{D}}_{p+q+1}\times\Delta^{p+q+1}
$$
by the formula
$$
\left(r, (\bar{x}; \; t; \; s)\right) \; \mapsto \; (F_{p, q}(\bar{x}); \; (1-r) t, \; r s),
$$
where $((1-r)t, \; rs)$ is considered to be an element of $\Delta^{p+q+1}$.
These maps for all $(p, q)$ geometrically realize to yield a homotopy 
\begin{equation}
j_{t}: |\mb{D}_{\bullet, \bullet}| \longrightarrow |\widetilde{\mb{D}}_{\bullet}|, \quad t \in [0, 1].
\end{equation}
The promised map (\ref{equation: comparing colimits}) is defined to be $j_{0}$.
\end{Construction} 
The proof of the following lemma is similar to \cite[Proof of Theorem 6.5 (page 920)]{MW 07}.
\begin{lemma} \label{lemma: restrict to k equivalence}
The inclusion, 
$\displaystyle{\hocolim_{\mb{s} \in \mathcal{K}^{\{k\}}}}\mathcal{W}^{k-1, c}_{P, \mb{s}} \; \longrightarrow \; \displaystyle{\hocolim_{\mb{s} \in \mathcal{K}^{\{k\}}}}\mathcal{W}^{k-1}_{P, \mb{s}},$
is a weak homotopy equivalence.
\end{lemma}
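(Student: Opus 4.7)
The plan is to follow the strategy of \cite{MW 07} (specifically the proof of their Theorem 6.5) and exhibit the inclusion as a weak equivalence via a zigzag passing through the single hocolim $\hocolim_{\mathcal{D}^{\{k\}}}\mathcal{W}^{k-1,c}_{P,\mb{u}\to\mb{t}}$ constructed in Construction \ref{Construction: map of double colimits}. First, I take the hocolim over $\mb{s} \in \mathcal{K}^{\{k\}}$ of the objectwise weak equivalences supplied by Lemma \ref{lemma: homotopy colimit over special diagrams} to obtain
$$\hocolim_{\mb{s}\in\mathcal{K}^{\{k\}}}\hocolim_{(\mb{s}\to\mb{t}\leftarrow\mb{u})\in\mathcal{D}^{\{k\}}_{\mb{s}}}\mathcal{W}^{k-1,c}_{P,\mb{u}\to\mb{t}}\;\stackrel{\simeq}\longrightarrow\;\hocolim_{\mb{s}\in\mathcal{K}^{\{k\}}}\mathcal{W}^{k-1}_{P,\mb{s}},$$
identifying the target of the inclusion in the lemma with the double hocolim on the left. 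Second, I appeal to the map $j_0:|\mb{D}_{\bullet,\bullet}|\to|\widetilde{\mb{D}}_\bullet|$ from Construction \ref{Construction: map of double colimits}, which sends this double hocolim to $\hocolim_{\mathcal{D}^{\{k\}}}\mathcal{W}^{k-1,c}_{P,\mb{u}\to\mb{t}}$, and I would use the explicit homotopy $j_t$ defined there to show $j_0$ is a weak equivalence by a bisimplicial reparametrization argument.

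Third, I consider the inclusion of the trivial subcategory $\mathcal{D}^{\{k\}}_0 \hookrightarrow \mathcal{D}^{\{k\}}$ consisting of identity diagrams $(\mb{s}\stackrel{=}\to\mb{s}\stackrel{=}\leftarrow\mb{s})$. The canonical identification $\mathcal{D}^{\{k\}}_0 \cong \mathcal{K}^{\{k\}}$ (under which $\mathcal{W}^{k-1,c}_{P,\mb{s}\to\mb{s}} = \mathcal{W}^{k-1,c}_{P,\mb{s}}$) yields a comparison map
$$\hocolim_{\mb{s}\in\mathcal{K}^{\{k\}}}\mathcal{W}^{k-1,c}_{P,\mb{s}}\;\longrightarrow\;\hocolim_{\mathcal{D}^{\{k\}}}\mathcal{W}^{k-1,c}_{P,\mb{u}\to\mb{t}},$$
which I would show is a weak equivalence via a cofinality argument. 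The key input is Quillen's Theorem A: for each special diagram $D = (\mb{s}\to\mb{t}\leftarrow\mb{u})\in\mathcal{D}^{\{k\}}$, the relevant comma category of morphisms from $D$ into objects of $\mathcal{D}^{\{k\}}_0$ should be equivalent to the slice category $\mb{t}\downarrow\mathcal{K}^{\{k\}}$, which has $(\mb{t},\Id_{\mb{t}})$ as an initial object and thus weakly contractible nerve. Composing the three weak equivalences gives a zigzag between $\hocolim_{\mb{s}}\mathcal{W}^{k-1,c}_{P,\mb{s}}$ and $\hocolim_{\mb{s}}\mathcal{W}^{k-1}_{P,\mb{s}}$, and I would verify (by a diagram chase, using the observation that for trivial diagrams the natural transformation $\mathcal{W}^{k-1,c}_{P,\mb{u}\to\mb{t}} \to \mathcal{W}^{k-1}_{P,\mb{s}}$ of (\ref{equation: forgetful natural transformation}) restricts to the inclusion) that this zigzag is homotopic to the inclusion in the statement.

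The main obstacle is verifying that $j_0$ from Construction \ref{Construction: map of double colimits} is a weak equivalence: this requires a careful bisimplicial argument to show that the homotopy $j_t$ genuinely exhibits the desired equivalence between the realizations $|\mb{D}_{\bullet,\bullet}|$ and $|\widetilde{\mb{D}}_\bullet|$. A secondary difficulty lies in the Quillen Theorem A input for the third step, which requires a careful tracking of the $\varepsilon$-labeling data of morphisms in $\mathcal{K}^{\{k\}}$ and $\mathcal{D}^{\{k\}}$ to establish the stated equivalence of comma categories and confirm that the associated initial object condition is satisfied.
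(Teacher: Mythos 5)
Your plan follows the same scaffold as the paper up to a point (the zig-zag through $|\mathbf{D}_{\bullet,\bullet}|$ and $|\widetilde{\mathbf{D}}_\bullet|$ from Construction \ref{Construction: map of double colimits}, and the use of Lemma \ref{lemma: homotopy colimit over special diagrams} to identify $\pi_1\circ j_0$ as a weak equivalence), but then diverges in a way that leaves you with strictly harder subproblems than the paper actually solves. The paper never asserts, and never needs, that $j_0$ is a weak equivalence, nor that the inclusion of trivial diagrams $\mathbf{s}\mapsto(\mathbf{s}\to\mathbf{s}\leftarrow\mathbf{s})$ is homotopy-cofinal. Both of your steps 2 and 3 are stronger statements than anything the author proves, and you explicitly flag both as places where "careful" further work is needed without carrying that work out. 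That is a real gap: if either of those two claims fails (or if the Thomason-style comparison of iterated homotopy colimits turns out to require hypotheses on $\mathcal{D}^{\{k\}}$ as a Grothendieck construction that you have not verified), the whole chain breaks.

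What the paper does instead is cleaner and avoids both difficulties. It observes (i) that $\pi_1\circ j_0$ is a weak equivalence (the levelwise Lemma \ref{lemma: homotopy colimit over special diagrams} plus homotopy invariance of hocolims), (ii) that the lower triangle $\iota\circ\pi_2\simeq\pi_1$ is homotopy commutative via the zig-zag of natural transformations $p_1\Leftarrow p\Rightarrow p_2$, so $\iota\circ\pi_2\circ j_0$ is a weak equivalence — hence $\iota$ is surjective on homotopy; and (iii) that the map $s$ built from trivial diagrams satisfies $\pi_2\circ j_1\circ s=\mathrm{Id}$ \emph{on the nose}, so $\pi_2\circ j_1$ (hence $\pi_2\circ j_t$ for all $t$) is surjective on homotopy. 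Combining (ii) and (iii) shows $\pi_2\circ j_0$ is a weak equivalence, and two-out-of-three then gives that $\iota$ is a weak equivalence. No claim about $j_0$ or $\pi_2$ alone is ever made. Your section-map $\mathbf{s}\mapsto(\mathbf{s}\to\mathbf{s}\leftarrow\mathbf{s})$ is precisely the $s$ in the paper, but you are trying to prove it is a weak equivalence outright (via cofinality/Quillen A), whereas the paper only needs the much weaker fact that it splits $\pi_2\circ j_1$, which follows immediately from unwinding the formula for $j_t$. I would recommend replacing steps 2 and 3 with the paper's section-and-two-out-of-three argument, which makes those hard verifications unnecessary.
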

\begin{proof}
For each $\mb{s} \in \mathcal{K}^{\{k\}}$, we denote 
$$X_{\mb{s}} := \hocolim_{(\mb{s}\rightarrow \mb{t}\leftarrow \mb{u})\in\mathcal{D}_{\mb{s}}}\mathcal{W}^{k-1, c}_{P, \mb{u}\rightarrow \mb{t}} \quad \text{and} \quad X := \hocolim_{(\mb{s} \rightarrow \mb{t} \leftarrow \mb{u})\in\mathcal{D}^{\{k\}}}\mathcal{W}^{k-1, c}_{P, \mb{u} \rightarrow \mb{t}}.$$
Construction (\ref{Construction: map of double colimits}) yields the homotopy  
$
j_{t}: \displaystyle{\hocolim_{\mb{s}\in \mathcal{K}^{\{k\}}}}X_{\mb{s}} \longrightarrow X, \quad t \in [0, 1].
$
Consider the diagram 
\begin{equation} \label{equation: homotopy commutative diagram}
\xymatrix{
\displaystyle{\hocolim_{\mb{s} \in \mathcal{K}^{\{k\}}}}X_{\mb{s}} \ar[d]^{j_{t}} &&\\
X \ar[drr]^{\pi_{2}} \ar[d]^{\pi_{1}} && \\
\displaystyle{\hocolim_{\mb{s} \in \mathcal{K}^{\{k\}}}}\mathcal{W}^{k-1}_{P, \mb{s}} && \displaystyle{\hocolim_{\mb{u} \in \mathcal{K}^{\{k\}}}}\mathcal{W}^{k-1, c}_{P, \mb{u}}, \ar[ll]_{\iota} \ar[uull]_{s}
}
\end{equation}
where $\iota$ is the inclusion, and $\pi_{1}$ and $\pi_{2}$ are the maps induced by the functors 
$$p_{1}: \mathcal{D}^{\{k\}} \longrightarrow \mathcal{K}^{\{k\}} \quad \text{and} \quad p_{2}: \mathcal{D}^{\{k\}} \longrightarrow \mathcal{K}^{\{k\}}$$ 
given by the projections
\begin{equation} \label{equation: projection transformations}
(\mb{s} \rightarrow \mb{t} \leftarrow \mb{u}) \mapsto \mb{s} \quad \text{and} \quad (\mb{s} \rightarrow \mb{t} \leftarrow \mb{u})\mapsto \mb{u}
\end{equation}
respectively. 
We will explain the map $s$ (which will so happen to be a section of $\pi_{2}\circ j_{1}$) shortly. 
Now, the two functors $p_{1}, p_{2}: \mathcal{D}^{\{k\}} \longrightarrow \mathcal{K}^{\{k\}}$ defined in (\ref{equation: projection transformations}) are connected by a zig-zag of natural transformations. 
Namely, there is a natural transformation from the functor $p_{1}$ to 
$$
p: \mathcal{D}^{\{k\}} \longrightarrow \mathcal{K}^{\{k\}}, \quad (\mb{s} \rightarrow \mb{t} \leftarrow \mb{u}) \mapsto \mb{t},
$$
defined by associating to the object $(\mb{s} \rightarrow \mb{t} \leftarrow \mb{u})$ the arrow $\mb{s} \longrightarrow \mb{t}$. 
Similarly, there is a natural transformation from $p$ to $p_{2}$ defined in the same way. 
This zig-zag of natural transformations 
$$
p_{1} \Longleftarrow p \Longrightarrow p_{2}
$$
yields a homotopy between the maps $\iota\circ \pi_{2}$ and $\pi_{1}$ in the diagram (\ref{equation: homotopy commutative diagram}), and thus the lower triangle is homotopy commutative. 

Now, the composite 
$\pi_{1}\circ j_{0}$ is just the homotopy colimit (taken over $\mb{s} \in \mathcal{K}^{\{k\}}$) of the weak homotopy equivalences from Lemma \ref{lemma: homotopy colimit over special diagrams},
$
\displaystyle{\hocolim_{(\mb{s}\rightarrow \mb{t}\leftarrow \mb{u})\in\mathcal{D}_{\mb{s}}}}\mathcal{W}^{k-1, c}_{P, \mb{u}\rightarrow \mb{t}} \; \stackrel{\simeq} \longrightarrow \; \mathcal{W}^{k-1}_{P, \mb{s}}.
$
By the homotopy invariance of homotopy colimits it follows that $\pi_{1}\circ j_{0}$ is a weak homotopy equivalence. 

By the homotopy commutativity of the lower triangle of (\ref{equation: homotopy commutative diagram}), it follows that the composite
\begin{equation} \label{equation: composite weak equivalence}
\iota\circ\pi_{2}\circ j_{0}: \displaystyle{\hocolim_{\mb{s} \in \mathcal{K}^{\{k\}}}}X_{\mb{s}} \longrightarrow \displaystyle{\hocolim_{\mb{s} \in \mathcal{K}^{\{k\}}}}\mathcal{W}^{k-1}_{P, \mb{s}}
\end{equation}
is a weak homotopy equivalence as well;
this implies that $\iota$ induces a surjection on all homotopy groups $\pi_{i}(\text{--})$.
To finish the proof we will need to show that it induces an injection on homotopy groups; to do this we need to use the map $s$, which still needs to be described. 
Indeed, for each $\mb{s} \in \mathcal{K}^{\{k\}}$ there is a trivial special diagram $\mb{s} \rightarrow \mb{s} \leftarrow \mb{s}$ in $\mathcal{D}_{\mb{s}}$.
For each $\mb{s}$, the association 
$$\mb{s} \mapsto (\mb{s} \rightarrow \mb{s} \leftarrow \mb{s})$$ 
determines the natural transformation,
$
\mathcal{W}^{k-1, c}_{P, \mb{s}} \longrightarrow \mathcal{W}^{k-1, c}_{P, \mb{s} \rightarrow \mb{s}}.
$
The map $s$ in diagram (\ref{equation: homotopy commutative diagram}) is defined to be the homotopy colimit over $\mb{s} \in \mathcal{K}^{\{k\}}$ of these maps. 
By unpacking the definition of the homotopy $j_{t}$ in Construction \ref{Construction: map of double colimits} it follows that,
$$
\pi_{2}\circ j_{1}\circ s = \Id,
$$
and thus we see that $s$ is a section of $\pi_{2}\circ j_{1}$.
This proves that $\pi_{2}\circ j_{1}$ induces a surjection on all homotopy groups (and thus $\pi_{2}\circ j_{t}$ induces a surjection on homotopy groups for all $t \in [0, 1]$). 
From the weak homotopy equivalence (\ref{equation: composite weak equivalence}) it follows that $\pi_{2}\circ j_{t}$ induces an injection (for all $t \in [0, 1]$), and thus we have proven that $\pi_{2}\circ j_{t}$ is a weak homotopy equivalence. 
In view of (\ref{equation: composite weak equivalence}) again, the two-out-of-three property then implies that $\iota$ is a weak homotopy equivalence. 
\end{proof}

Below we show how to use Lemma \ref{lemma: restrict to k equivalence} to prove Theorem \ref{theorem: fibrewise surgery}.
\begin{proof}[Proof of Theorem \ref{theorem: fibrewise surgery}]
As discussed in the beginning of the subsection, 
to prove the theorem it will suffice to show that for any $\mb{t} \in \mathcal{K}^{k-1, \{k\}^{c}}$, the map
$$
\hocolim_{\mb{s} \in \mathcal{K}^{\{k\}}}\mathcal{W}^{k-1, c}_{P, \mb{s}\sqcup\mb{t}} \; \longrightarrow \; \hocolim_{\mb{s} \in \mathcal{K}^{\{k\}}}\mathcal{W}^{k-1}_{P, \mb{s}\sqcup\mb{t}}
$$
is a weak homotopy equivalence.
With this weak homotopy equivalence established, the desired weak homotopy equivalence is obtained by taking the homotopy colimit over $\mb{t} \in \mathcal{K}^{k-1, \{k\}^{c}}$.

Fix $\mb{t} \in \mathcal{K}^{k-1, \{k\}^{c}}$. 
For $\mb{s} \in \mathcal{K}^{\{k\}}$, consider the map 
\begin{equation} \label{equation: localization sequence in t}
\mathcal{W}^{k-1}_{P, \mb{s}\sqcup\mb{t}} \longrightarrow \mathcal{W}_{\loc, \mb{t}},
\end{equation}
defined by sending $(M, (V, \sigma), e) \in \mathcal{W}^{k-1}_{P, \mb{s}\sqcup\mb{t}}$ to $((V, \sigma)|_{\mb{t}}, e|_{\mb{t}})$, which is the component of $((V, \sigma), e)$ that corresponds to the subset $\mb{t}$. 
It is easily verified that (\ref{equation: localization sequence in t}) is a Serre fibration. 
Let $\widehat{\mathcal{W}}^{k-1}_{P, \mb{s}; \mb{t}}$ denote the fibre of this map over some point 
\begin{equation} \label{equation: point under the fibre}
((V', \sigma'), e') \in \mathcal{W}_{\loc, \mb{t}}.
\end{equation}
Keeping $\mb{t}$ constant and taking the homotopy colimit over $\mb{s} \in \mathcal{K}^{\{k\}}$, we obtain a fibre sequence
$$
\hocolim_{\mb{s} \in \mathcal{K}^{\{k\}}}\widehat{\mathcal{W}}^{k-1}_{P, \mb{s}; \mb{t}} \longrightarrow \hocolim_{\mb{s} \in \mathcal{K}^{\{k\}}}\mathcal{W}^{k-1}_{P, \mb{s}\sqcup\mb{t}} \longrightarrow \mathcal{W}_{\loc, \mb{t}}.
$$
We obtain a similar fibre sequence involving $\displaystyle{\hocolim_{\mb{s} \in \mathcal{K}^{\{k\}}}}\mathcal{W}^{k-1, c}_{P, \mb{s}\sqcup\mb{t}}$ and a map of fibre sequences
\begin{equation} \label{equation: map of fibre sequences}
\xymatrix{
\displaystyle{\hocolim_{\mb{s} \in \mathcal{K}^{\{k\}}}}\widehat{\mathcal{W}}^{k-1, c}_{P, \mb{s}; \mb{t}} \ar[d] \ar[rr] && \displaystyle{\hocolim_{\mb{s} \in \mathcal{K}^{\{k\}}}}\widehat{\mathcal{W}}^{k-1}_{P, \mb{s}; \mb{t}}  \ar[d] \\
\displaystyle{\hocolim_{\mb{s} \in \mathcal{K}^{\{k\}}}}\mathcal{W}^{k-1, c}_{P, \mb{s}\sqcup\mb{t}} \ar[d]  \ar[rr] && \displaystyle{\hocolim_{\mb{s} \in \mathcal{K}^{\{k\}}}}\mathcal{W}^{k-1}_{P, \mb{s}\sqcup\mb{t}} \ar[d] \\
\mathcal{W}_{\loc, \mb{t}} \ar[rr]^{\Id} && \mathcal{W}_{\loc, \mb{t}}.
}
\end{equation}
We claim that the top-horizontal map is a weak homotopy equivalence. 
Indeed, $\widehat{\mathcal{W}}^{k-1}_{P, \mb{s}; \mb{t}}$ can be identified with the subspace of $\mathcal{W}^{k-1}_{P, \mb{s}}$ consisting of those 
$(M, (V, \sigma), e) \in \mathcal{W}^{k-1}_{P, \mb{s}\sqcup\mb{t}}$ for which the manifold $M$ contains
$
e'(S(V^{-})\times_{\mb{t}}D(V^{+})).
$
Setting $P' = P \sqcup e'(S(V^{-})\times_{\mb{t}}S(V^{+}))$, it follows that 
there are homeomorphisms 
$$\begin{aligned}
\widehat{\mathcal{W}}^{k-1}_{P, \mb{s}; \mb{t}} &\cong \mathcal{W}^{k-1}_{P', \mb{s}}, \\
\widehat{\mathcal{W}}^{k-1, c}_{P, \mb{s}; \mb{t}} &\cong \mathcal{W}^{k-1, c}_{P', \mb{s}},
\end{aligned}
$$
(see also Proposition \ref{proposition: fibre identification}). 
By these homeomorphisms the top-horizontal map of (\ref{equation: map of fibre sequences}) can be identified with the inclusion 
$$
\displaystyle{\hocolim_{\mb{s} \in \mathcal{K}^{\{k\}}}}\mathcal{W}^{k-1, c}_{P', \mb{s}} \; \hookrightarrow \;\displaystyle{\hocolim_{\mb{s} \in \mathcal{K}^{\{k\}}}}\mathcal{W}^{k-1}_{P', \mb{s}}, 
$$
and thus by Lemma \ref{lemma: restrict to k equivalence} it is a weak homotopy equivalence (Lemma \ref{lemma: restrict to k equivalence} holds for all choice of $P' \in \mathcal{M}_{\theta}$).
Since the top-horizontal map in (\ref{equation: map of fibre sequences}) is a weak homotopy equivalence, it follows that the middle-horizontal map is a weak homotopy equivalence as well, since the vertical columns are fibre sequences. 
This concludes the proof of Theorem \ref{theorem: fibrewise surgery}. 
\end{proof}

\section{Decomposing the Localization Sequence} \label{section: A Fibre Sequence}
In this section we embark on the proof of Theorem \ref{theorem: localization sequence (homotopy colimit)}. 
This theorem asserts that the map 
$$
\hocolim_{\mb{t} \in \mathcal{K}^{k-1}}\mathcal{W}^{k-1, c}_{P, \mb{t}} \; \longrightarrow \;  \hocolim_{\mb{t} \in \mathcal{K}^{k-1}}\mathcal{W}^{\{k, d-k-1\}}_{\locc, \mb{t}}
$$
is a quasi-fibration. 
Let $\mb{t} \in \mathcal{K}^{k-1}$. 
We may uniquely write $\mb{t} = \mb{t}'\sqcup \mb{s}$ with $\mb{t}' \in \mathcal{K}^{k}$ and $\mb{s} \in \mathcal{K}^{\{k, d-k+1\}}$. 
Consider the map
\begin{equation} \label{equation: forgetful map}
\mathcal{W}^{k-1, c}_{P, \mb{t}'\sqcup \mb{s}} \longrightarrow \mathcal{W}^{\{k, d-k-1\}}_{\locc, \mb{s}}, \quad (M, (V, \sigma), e) \mapsto ((V, \sigma), e).
\end{equation}
This map is easily seen to be a Serre-fibration. 
We will need to analyze its fibres.
\begin{defn} \label{defn: fibre sheaf}
For each $\mb{s} \in \mathcal{K}^{\{k, d-k+1\}}$, let us fix once and for all an element 
$$V_{\mb{s}} := ((V_{\mb{s}}, \sigma_{\mb{s}}), e_{\mb{s}}) \in \mathcal{W}^{\{k, d-k-1\}}_{\locc, \mb{s}}.$$ 
In the case that $\mb{s} = \emptyset$, we will still denote the single element in $\mathcal{W}^{\{k, d-k-1\}}_{\locc, \emptyset}$ by $\emptyset$. 
For each $\mb{t} \in \mathcal{K}^{k}$, 
we let $\widehat{\mathcal{W}}^{k-1, c}_{P, \mb{t}; \mb{s}}$ denote the fibre of the map
$
\mathcal{W}^{k-1, c}_{P, \mb{t}\sqcup\mb{s}} \longrightarrow \mathcal{W}^{\{k, d-k-1\}}_{\locc, \mb{s}}
$
over the element $V_{\mb{s}} \in \mathcal{W}^{\{k, d-k-1\}}_{\locc, \mb{s}}$. 
\end{defn}

The lemma below follows from gluing together the Serre fibrations 
$$
\mathcal{W}^{k-1, c}_{P, \mb{t}\sqcup \mb{s}} \longrightarrow \mathcal{W}^{\{k, d-k-1\}}_{\locc, \mb{s}},
$$
and letting $\mb{t}$ range over $\mathcal{K}^{k}$, while keeping $\mb{s}$ constant.
\begin{lemma} \label{corollary: hocolimit of fibration}
For all $\mb{s} \in  \mathcal{K}^{\{k, d-k+1\}}$,
the following sequence is a homotopy fibre sequence, 
$$
\xymatrix{
 \displaystyle{\hocolim_{\mb{t} \in \mathcal{K}^{k}}}\widehat{\mathcal{W}}^{k-1, c}_{P, \mb{t}; \mb{s}} \ar[rr] && \displaystyle{\hocolim_{\mb{t} \in \mathcal{K}^{k}}}\mathcal{W}^{k-1, c}_{P, \mb{t} \sqcup\mb{s}} \ar[rr] && \mathcal{W}^{\{k, d-k-1\}}_{\locc, \mb{s}}.
}
$$
\end{lemma}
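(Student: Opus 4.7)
Set $Y := \mathcal{W}^{\{k,d-k+1\}}_{\locc,\mb{s}}$. By the remark immediately preceding the lemma, each forgetful map $\pi_{\mb{t}}\colon \mathcal{W}^{k-1,c}_{P,\mb{t}\sqcup\mb{s}} \to Y$ is a Serre fibration, with strict fibre $\widehat{\mathcal{W}}^{k-1,c}_{P,\mb{t};\mb{s}}$ over $V_{\mb{s}}$. Since the morphisms in $\mathcal{K}^k$ affect only the $\mb{t}$-labelled data, these Serre fibrations assemble into a natural transformation from the functor $G\colon \mb{t}\mapsto \mathcal{W}^{k-1,c}_{P,\mb{t}\sqcup\mb{s}}$ on $\mathcal{K}^k$ to the constant functor with value $Y$. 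Passing to homotopy colimits produces the map
\[
\pi\colon \hocolim_{\mb{t}\in\mathcal{K}^k} G(\mb{t}) \longrightarrow Y
\]
appearing in the statement of the lemma.

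Modeling $\hocolim G$ by the bar construction as $|B_\bullet|$ with
\[
B_p \; = \; \coprod_{\mb{t}_p\to\cdots\to\mb{t}_0 \text{ in } \mathcal{K}^k} \mathcal{W}^{k-1,c}_{P,\mb{t}_0\sqcup\mb{s}},
\]
the natural transformation induces a simplicial map from $B_\bullet$ to the constant simplicial space $Y$ which, at each level, is a disjoint union of the Serre fibrations $\pi_{\mb{t}_0}$ and hence itself a Serre fibration. Taking strict fibres over $V_{\mb{s}}$ levelwise yields the simplicial space with $p$-simplices $\coprod_{\mb{t}_p\to\cdots\to\mb{t}_0}\widehat{\mathcal{W}}^{k-1,c}_{P,\mb{t}_0;\mb{s}}$, whose realization is precisely $\hocolim_{\mb{t}}\widehat{\mathcal{W}}^{k-1,c}_{P,\mb{t};\mb{s}}$. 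Since strict fibres over a point commute with geometric realization when the base is constant, this identifies $\pi^{-1}(V_{\mb{s}})$ with $\hocolim_{\mb{t}}\widehat{\mathcal{W}}^{k-1,c}_{P,\mb{t};\mb{s}}$, and under this identification the natural map
$\hocolim_{\mb{t}} \widehat{\mathcal{W}}^{k-1,c}_{P,\mb{t};\mb{s}} \to \hocolim_{\mb{t}} \mathcal{W}^{k-1,c}_{P,\mb{t}\sqcup\mb{s}}$
becomes precisely the inclusion of this strict fibre.

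It remains to show that $\pi$ is a quasi-fibration, at which point the strict fibre coincides with the homotopy fibre and the claim follows. For this step I plan to invoke the standard recognition principle that the geometric realization of a levelwise Serre fibration of \emph{good} simplicial spaces over a constant simplicial base is a quasi-fibration onto the base — either via May's bar-construction techniques or equivalently via Dold's quasi-fibration criterion applied to the skeletal filtration of $|B_\bullet|$. The goodness condition on $B_\bullet$ (cofibrancy of degeneracies) holds in our setting because each $B_p$ is a disjoint union of copies of the $\mathcal{W}^{k-1,c}_{P,\mb{t}_0\sqcup\mb{s}}$ indexed by the nerve of $\mathcal{K}^k$, and the simplicial structure maps are inclusions of these summands.

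The main obstacle is this last quasi-fibration step, which must be carried out with some care; this is what the informal phrase ``gluing together the Serre fibrations'' in the author's own proof sketch refers to. All other steps are essentially formal: compatibility of the $\pi_{\mb{t}}$ with the $\mathcal{K}^k$-action, assembly of hocolim as the realization of a bar construction, and levelwise computation of strict fibres.
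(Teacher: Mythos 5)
The paper itself gives no proof of this lemma — only the one-sentence remark that it "follows from gluing together the Serre fibrations" — so there is nothing concrete to compare your argument against; what you have written is a reasonable formalization of that remark, and you correctly reduce the whole thing to establishing that the map $\pi:\hocolim_{\mb{t}}\mathcal{W}^{k-1,c}_{P,\mb{t}\sqcup\mb{s}}\to\mathcal{W}^{\{k,d-k+1\}}_{\locc,\mb{s}}$ is a quasi-fibration whose strict fibre over $V_{\mb{s}}$ is $\hocolim_{\mb{t}}\widehat{\mathcal{W}}^{k-1,c}_{P,\mb{t};\mb{s}}$. The identification of the strict fibre as the realization of the levelwise strict fibres is correct and uncontroversial.

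The one place where you should slow down is your appeal to a ``standard recognition principle that the geometric realization of a levelwise Serre fibration of good simplicial spaces over a constant simplicial base is a quasi-fibration.'' This is not quite a theorem one can cite off the shelf, and it is worth being precise about \emph{why} the usual Bousfield--Friedlander/$\pi_*$-Kan route does not apply here: the face map $d_p$ of your bar construction is, over a generic simplex $\mb{t}_p\to\cdots\to\mb{t}_0$, the map $G(\mb{t}_0)\to G(\mb{t}_1)$ induced by a morphism of $\mathcal{K}^k$, and this is emphatically \emph{not} a fibrewise weak equivalence over $Y=\mathcal{W}^{\{k,d-k+1\}}_{\locc,\mb{s}}$ (such morphisms perform nontrivial surgeries on $M$). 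So the argument has to exploit that the base simplicial space is constant. Two routes are available. (1) Identify $\hofibre_{V_{\mb{s}}}(\pi)$ with $\hocolim_{\mb{t}}\bigl(G(\mb{t})\times_{Y}PY\bigr)$ by showing that pullback along the based-path fibration $PY\to Y$ commutes with the colimits entering $|B_\bullet|$ (pushouts along cofibrations and a sequential colimit of closed inclusions), which is legitimate because $PY\to Y$ is open when $Y$ is locally path-connected, as it is for these spaces of embedded data; then use that $G(\mb{t})\to Y$ is a Serre fibration to replace each levelwise homotopy fibre with the strict fibre. (2) Alternatively, invoke Dold's criterion directly, which requires producing, for each $y\in Y$, a neighbourhood $U$ over which the \emph{entire diagram} $\mb{t}\mapsto G(\mb{t})|_U$ is isomorphic over $U$ to the constant diagram $U\times\widehat{G}(\mb{t})_y$; this uses that each $\pi_{\mb{t}}$ is in fact a locally trivial bundle whose trivialization over small $U\subset Y$ depends only on the $\mb{s}$-part of the data and so can be chosen simultaneously for all $\mb{t}$. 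Either way, the step you call the ``main obstacle'' hides a nontrivial commuting-of-limits-with-colimits argument that should be spelled out rather than attributed to a named general principle.
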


We now state the main theorem of this section. 
Its proof is carried out over the course of Sections \ref{section: Cobordism Categories and Handle Attachments} through \ref{section: higher index handles}.  
\begin{theorem} \label{proposition: homotopy equivalence of transition maps}
Let $k < d/2$.
Then for any morphism $(j, \varepsilon): \mb{s} \longrightarrow \mb{s}'$ in $\mathcal{K}^{\{k, d-k+1\}}$
the induced map
$$
(j, \varepsilon)^{*}: \displaystyle{\hocolim_{\mb{t} \in \mathcal{K}^{k}}}\widehat{\mathcal{W}}^{k-1, c}_{P, \mb{t}; \mb{s'}} \; \longrightarrow \; \displaystyle{\hocolim_{\mb{t} \in \mathcal{K}^{k}}}\widehat{\mathcal{W}}^{k-1, c}_{P, \mb{t}; \mb{s}}
$$
 is a weak homotopy equivalence. 
\end{theorem}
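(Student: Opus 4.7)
The plan is to reduce the statement to the case of elementary morphisms in $\mathcal{K}^{\{k, d-k+1\}}$ and then to verify each elementary case by a parametrized handle-cancellation argument that leverages the homotopy colimit over $\mathcal{K}^{k}$.

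Every morphism $(j,\varepsilon): \mb{s} \to \mb{s}'$ in $\mathcal{K}^{\{k, d-k+1\}}$ factors as a composition of a bijection, which induces a homeomorphism on each fibre $\widehat{\mathcal{W}}^{k-1,c}_{P, \mb{t}; \mb{s}}$, and a finite sequence of elementary morphisms for which $\mb{s}'$ is obtained from $\mb{s}$ by adjoining a single point $a$ labeled either $\varepsilon(a)=+1$ or $\varepsilon(a)=-1$. It therefore suffices to treat these two cases.

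In the $+1$ case, the pullback $\widehat{\mathcal{W}}^{k-1,c}_{P, \mb{t}; \mb{s}'} \to \widehat{\mathcal{W}}^{k-1,c}_{P, \mb{t}; \mb{s}}$ simply forgets the embedding datum at $a$, and its image is the subspace of tuples $(M,(V,\sigma),e)$ for which $M$ already contains the prescribed framed sphere bundle $e_{\mb{s}'}|_{a}(D(V^{+}_{a}) \times S(V^{-}_{a}))$ dictated by the fixed datum $V_{\mb{s}'}|_{a}$. To show that this inclusion becomes a weak homotopy equivalence after the homotopy colimit over $\mb{t} \in \mathcal{K}^{k}$, I would invoke the connectivity condition in Definition \ref{defn: intermediate high connected sheaf}, namely that $\ell|_{M\setminus\Image(e)}$ is $(k+1)$-connected. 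Combined with a general position argument and the inequality $k < d/2$, this connectivity allows any $(M,(V,\sigma),e)$ in the target to be modified, after passing to some enlargement $\mb{t} \hookrightarrow \mb{t}'$ in $\mathcal{K}^{k}$ consisting of auxiliary handles of index in $\{k+1,\dots,d-k\}$, into an element of the image of the pullback. The hypothesis $k < d/2$ is exactly what guarantees that the canceling indices $k+1$ and $d-k$ lie in the range of labels permitted in $\mathcal{K}^{k}$.

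In the $-1$ case, the pullback performs an honest $k$-surgery or $(d-k+1)$-surgery on $M$ along the $a$-disk. Here I would construct a homotopy inverse by adjoining to $\mb{t}$ a label in $\{k+1,\dots,d-k\}$ whose attachment reverses this surgery on the surgered manifold. Once more, the constraint $k < d/2$ ensures that the needed canceling handle has index in the admissible range. The main obstacle, and the technical heart of the argument, will be to realize these handle-cancellation moves as genuine natural transformations of $\mathcal{K}^{k}$-indexed functors, compatible with the $\theta$-structures on $M$ and with the connectivity constraint defining the ``$c$''-subspaces, so that the resulting compositions with $(j,\varepsilon)^{*}$ are homotopic to the identity at the level of the homotopy colimit. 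This requires developing a parametrized theory of handle attachments and cancellations, which I would expect to occupy the bulk of the remaining technical work of the paper.
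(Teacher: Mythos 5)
Your broad reduction to elementary morphisms of $\mathcal{K}^{\{k,d-k+1\}}$ and the observation that $k<d/2$ is precisely what makes $\{k+1,\dots,d-k\}$ nonempty are both correct and do capture the role of the hypothesis. But there is a genuine gap in the $\varepsilon=-1$ case, and the surrounding plan has been left essentially unexecuted.

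In the $-1$ case you propose to ``construct a homotopy inverse by adjoining to $\mathbf{t}$ a label in $\{k+1,\dots,d-k\}$ whose attachment reverses this surgery.'' This cannot be the direct reversal: if the adjoined point has label $\delta(a)=k$, the surgery removes $e\bigl(S(V^-)\times_a D(V^+)\bigr)\cong S^{k-1}\times D^{d-k+1}$ and glues in $D^k\times S^{d-k}$; the surgery that undoes this on the surgered manifold has index $d-k+1$, which lies outside the admissible range $\{k+1,\dots,d-k\}$. Symmetrically, if $\delta(a)=d-k+1$ the reverse surgery has index $k$, again outside the range. So the naive ``cancel with an admissible handle'' move is unavailable, and this is exactly why the argument is delicate. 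What the paper actually does is quite different: Proposition~\ref{proposition: fibre identification} identifies $\widehat{\mathcal{W}}^{k-1,c}_{P,\mathbf{t};\mathbf{s}}$ with $\mathcal{W}^{k}_{P\sqcup S_{\mathbf{s}},\mathbf{t}}$ for a fixed closed manifold $S_{\mathbf{s}}\cong\mathbf{s}\times S^{k-1}\times S^{d-k}$, which converts the transition map $(j,\varepsilon)^{*}$ into a map $\mathbf{S}_{W}$ induced by a genuine $d$-dimensional cobordism $W\colon P\sqcup S_{\mathbf{s}'}\rightsquigarrow P\sqcup S_{\mathbf{s}}$ in $\Cob^{k-1}_{\theta,d}$. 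The question then becomes whether $\mathbf{S}_{W}$ is a weak equivalence, and this is handled by Theorem~\ref{theorem: morphism induce homotopy equivalence}, which covers the relevant handle decompositions (primitive of degree $k$, trivial of degree $d-k$, or highly connected). The engine behind that theorem is not cancellation in the old manifold but the stabilization move $H_{k,d-k}(P)$ (Proposition~\ref{proposition: H-k induces iso}), whose effect is realized inside $\mathcal{K}^k$ by adjoining a single auxiliary label of index $d-k$ (the $s_k$ functor of Definition~\ref{defn: functor s-k}), followed by semi-simplicial resolutions in Sections~\ref{section: Resolving Nontrivial Handle Attachments}--\ref{section: higher index handles} to bootstrap from trivial/primitive surgeries to arbitrary ones. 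None of this cobordism-category reformulation, nor the stabilization-is-the-identity mechanism, appears in your outline, and your final sentence correctly flags that the ``parametrized theory of handle attachments'' remains to be built --- that is, in effect, the entire technical content of Sections~\ref{section: Cobordism Categories and Handle Attachments} through~\ref{section: higher index handles}.
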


By taking the homotopy colimit of the weak homotopy equivalences from Theorem \ref{proposition: homotopy equivalence of transition maps} over $\mb{s} \in \mathcal{K}^{\{k, d-k+1\}}$ we obtain the following corollary.
\begin{corollary} \label{corollary: homology fibration in the limit}
Let $k < d/2$.
Then the map 
$$
\displaystyle{\hocolim_{\mb{s} \in \mathcal{K}^{\{k, d-k+1\}}}}\displaystyle{\hocolim_{\mb{t} \in \mathcal{K}^{k}}}\mathcal{W}^{k-1, c}_{P, \mb{t}\sqcup \mb{s}} \; \longrightarrow \;  \displaystyle{\hocolim_{\mb{s} \in \mathcal{K}^{\{k, d-k+1\}}}}\mathcal{W}^{\{k, d-k-1\}}_{\locc, \mb{s}}
$$
is a quasi-fibration.
The fibre over $\emptyset$ 
is given by the space,
$\displaystyle{\hocolim_{\mb{t} \in \mathcal{K}^{k}}}\widehat{\mathcal{W}}^{k-1, c}_{P, \mb{t}; \emptyset}.$
\end{corollary}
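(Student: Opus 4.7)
The plan is to derive the statement as a formal consequence of Theorem \ref{proposition: homotopy equivalence of transition maps} via the homotopy fibre sequence principle for homotopy colimits (Proposition \ref{proposition: homotopy colimit fibre sequence}). I would define two contravariant functors on $\mathcal{K}^{\{k, d-k+1\}}$ by
$$
\mathcal{G}_1(\mb{s}) := \hocolim_{\mb{t} \in \mathcal{K}^k}\mathcal{W}^{k-1, c}_{P, \mb{t}\sqcup\mb{s}},
\qquad
\mathcal{G}_2(\mb{s}) := \mathcal{W}^{\{k, d-k+1\}}_{\loc, \mb{s}},
$$
together with the natural transformation $u: \mathcal{G}_1 \to \mathcal{G}_2$ assembled from the forgetful maps (\ref{equation: forgetful map}). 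Naturality is immediate from Definitions \ref{defn: pullback +1} and \ref{defn: pullback -1}: the $\mathcal{K}^{\{k, d-k+1\}}$-action on $\mathcal{G}_1$ only modifies the $\mb{s}$-indexed surgery data, in exactly the way it acts on $\mathcal{G}_2$, so it commutes with the forgetful projection. Taking $\hocolim_{\mb{s}}$ recovers the map in the statement of the corollary.

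By Lemma \ref{corollary: hocolimit of fibration} the homotopy fibre of $u_{\mb{s}}$ at the basepoint $V_{\mb{s}}$ is $\hocolim_{\mb{t}}\widehat{\mathcal{W}}^{k-1, c}_{P, \mb{t}; \mb{s}}$, and Theorem \ref{proposition: homotopy equivalence of transition maps} supplies precisely the hypothesis of Proposition \ref{proposition: homotopy colimit fibre sequence}: every morphism in $\mathcal{K}^{\{k, d-k+1\}}$ induces a weak homotopy equivalence between these homotopy fibres. Applying Proposition \ref{proposition: homotopy colimit fibre sequence} at the object $\emptyset \in \mathcal{K}^{\{k, d-k+1\}}$ then yields a weak homotopy equivalence
$$
\hocolim_{\mb{t} \in \mathcal{K}^k}\widehat{\mathcal{W}}^{k-1, c}_{P, \mb{t}; \emptyset} \;\stackrel{\simeq}\longrightarrow\; \hofibre\!\left(\hocolim_{\mb{s}}\mathcal{G}_1(\mb{s}) \to \hocolim_{\mb{s}}\mathcal{G}_2(\mb{s})\right)
$$
computed at the point $(\emptyset, \emptyset)$.

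To identify the strict fibre over $\emptyset$ and upgrade the statement to a quasi-fibration, I would exploit that $\mathcal{G}_2(\emptyset) = \mathcal{W}^{\{k, d-k+1\}}_{\loc, \emptyset}$ is a singleton. A direct inspection of the transport-category model of $\hocolim_{\mb{s}}\mathcal{G}_2$ shows that the only simplices of its nerve lying above $(\emptyset, \emptyset)$ are the fully degenerate ones, so the strict preimage in $\hocolim_{\mb{s}}\mathcal{G}_1$ is canonically $\mathcal{G}_1(\emptyset) = \hocolim_{\mb{t}}\widehat{\mathcal{W}}^{k-1, c}_{P, \mb{t}; \emptyset}$ (here one uses $\mb{t}\sqcup\emptyset = \mb{t}$ and the tautology that the strict fibre of a projection onto a point is the entire source). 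Combined with the previous paragraph, the inclusion of the strict fibre into the homotopy fibre is a weak equivalence at $(\emptyset, \emptyset)$, which is the quasi-fibration property at that point; propagation to every other point of the base is a standard Dold-type gluing argument using the levelwise Serre-fibration property of each $u_{\mb{s}}$ together with the weak equivalences between fibres guaranteed by Theorem \ref{proposition: homotopy equivalence of transition maps}. Thus the only real obstacle is Theorem \ref{proposition: homotopy equivalence of transition maps}; the corollary itself is purely formal bookkeeping around that theorem.
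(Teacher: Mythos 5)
Your argument is correct and follows essentially the same route as the paper: set up the natural transformation $u:\mathcal{G}_1\Rightarrow\mathcal{G}_2$ on $\mathcal{K}^{\{k,d-k+1\}}$, identify the fibres via Lemma \ref{corollary: hocolimit of fibration}, feed Theorem \ref{proposition: homotopy equivalence of transition maps} into Proposition \ref{proposition: homotopy colimit fibre sequence}, and evaluate at $\emptyset$. The additional remarks about strict versus homotopy fibres and propagation over the base merely spell out what the paper leaves implicit in its appeal to Proposition \ref{proposition: homotopy colimit fibre sequence}.
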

\begin{proof}
Each morphism $(j, \varepsilon): \mb{s} \longrightarrow \mb{s}'$ in  $\mathcal{K}^{\{k, d-k+1\}}$ induces a map of fibre sequences
$$\xymatrix{
\displaystyle{\hocolim_{\mb{t} \in \mathcal{K}^{k}}}\widehat{\mathcal{W}}^{k-1, c}_{P, \mb{t}; \mb{s}'} \ar[d] \ar[rr] &&  \displaystyle{\hocolim_{\mb{t} \in \mathcal{K}^{k}}}\widehat{\mathcal{W}}^{k-1, c}_{P, \mb{t}; \mb{s}}  \ar[d]   \\
\displaystyle{\hocolim_{\mb{t} \in \mathcal{K}^{k}}}\mathcal{W}^{k-1, c}_{P, \mb{t}\sqcup \mb{s}'} \ar[rr] \ar[d] &&  \displaystyle{\hocolim_{\mb{t} \in \mathcal{K}^{k}}}\mathcal{W}^{k-1, c}_{P, \mb{t}\sqcup \mb{s}} \ar[d]  \\
 \mathcal{W}^{\{k, d-k-1\}}_{\locc, \mb{s}'} \ar[rr] &&  \mathcal{W}^{\{k, d-k-1\}}_{\locc, \mb{s}}
}
$$
By Theorem \ref{proposition: homotopy equivalence of transition maps}, the top horizontal map is a weak homotopy equivalence. 
As a consequence of this, it follows from Proposition \ref{proposition: homotopy colimit fibre sequence} that the map 
$$
\displaystyle{\hocolim_{\mb{s} \in \mathcal{K}^{(k, d-k+1)}}}\displaystyle{\hocolim_{\mb{t} \in \mathcal{K}^{k}}}\mathcal{W}^{k-1, c}_{P, \mb{t}\sqcup \mb{s}} \; \longrightarrow \;  \displaystyle{\hocolim_{\mb{s} \in \mathcal{K}^{\{k, d-k+1\}}}}\mathcal{W}^{\{k, d-k-1\}}_{\locc, \mb{s}}
$$
is a quasi-fibration, with fibre over $\emptyset$ given by 
$
 \displaystyle{\hocolim_{\mb{t} \in \mathcal{K}^{k}}}\widehat{\mathcal{W}}^{k-1, c}_{P, \mb{t}; \emptyset}.
$
This concludes the proof of the corollary.
\end{proof}

Using the above result, Theorem \ref{theorem: localization sequence (homotopy colimit)} follows by making a few simple identifications. 
\begin{proof}[Proof of Theorem \ref{theorem: localization sequence (homotopy colimit)} (assuming Proposition \ref{proposition: homotopy equivalence of transition maps})]
We first observe that 
$$\displaystyle{\hocolim_{\mb{t} \in \mathcal{K}^{k}}}\widehat{\mathcal{W}}^{k-1, c}_{P, \mb{t}; \emptyset} \; = \; \displaystyle{\hocolim_{\mb{t} \in \mathcal{K}^{k}}}\mathcal{W}^{k}_{P, \mb{t}}.
$$
This identification follows immediately from the definition of the spaces. 
From this it follows that the fibre of 
$$
\displaystyle{\hocolim_{\mb{s} \in \mathcal{K}^{\{k, d-k+1\}}}}\displaystyle{\hocolim_{\mb{t} \in \mathcal{K}^{k}}}\mathcal{W}^{k-1, c}_{P, \mb{t}\sqcup \mb{s}} \; \longrightarrow \;  \displaystyle{\hocolim_{\mb{s} \in \mathcal{K}^{\{k, d-k+1\}}}}\mathcal{W}^{\{k, d-k-1\}}_{\locc, \mb{s}}
$$
over the element $\emptyset$ is given by the space $\displaystyle{\hocolim_{\mb{t} \in \mathcal{K}^{k}}}\mathcal{W}^{k}_{P, \mb{t}}$.
Using the product factorization of the indexing categories $\mathcal{K}^{k} \cong \mathcal{K}^{k-1}\times\mathcal{K}^{\{k, d-k+1\}}$, we obtain the following homeomorphism,
$$\displaystyle{\hocolim_{\mb{s} \in \mathcal{K}^{(k, d-k+1)}}}\displaystyle{\hocolim_{\mb{t} \in \mathcal{K}^{k}}}\mathcal{W}^{k-1, c}_{P, \mb{t}\sqcup\mb{s}} \; \cong \; 
\displaystyle{\hocolim_{\mb{u} \in \mathcal{K}^{k-1}}}\mathcal{W}^{k-1, c}_{P, \mb{u}}.$$
Theorem \ref{theorem: localization sequence (homotopy colimit)} then follows by plugging the above identifications in to the Serre fibration obtained as a result of Corollary \ref{corollary: homology fibration in the limit}.
\end{proof}

\section{Cobordism Categories and Stability} \label{section: Cobordism Categories and Handle Attachments}
In this section we begin to tackle Theorem \ref{proposition: homotopy equivalence of transition maps} which was the main technical device used to prove Theorem \ref{theorem: localization sequence (homotopy colimit)}. 
Let $k < d/2$.
Theorem \ref{proposition: homotopy equivalence of transition maps} states that for any 
morphism $(j, \varepsilon): \mb{s} \longrightarrow \mb{s}'$ in $\mathcal{K}^{\{k, d-k+1\}}$, the induced map \begin{equation} \label{equation: homotopy colimit morphism map}
(j, \varepsilon)^{*}: \hocolim_{\mb{t} \in \mathcal{K}^{k}}\widehat{\mathcal{W}}^{k-1, c}_{P, \mb{t}; \mb{s}'} \; \longrightarrow \; \hocolim_{\mb{t} \in \mathcal{K}^{k}}\widehat{\mathcal{W}}^{k-1, c}_{P, \mb{t}; \mb{s}},
\end{equation}
 is a weak homotopy equivalence. 
Below, we define a generalization of these maps and will prove a generalization of Theorem \ref{proposition: homotopy equivalence of transition maps}.
We must first identify the spaces $\widehat{\mathcal{W}}^{k-1, c}_{P, \mb{t}; \mb{s}}$ (with $\mb{t} \in \mathcal{K}^{k}$ and $\mb{s} \in \mathcal{K}^{\{k, d-k+1\}}$) with something else a bit more familiar. 
\begin{defn} \label{defn: object S_t}
For each $\mb{s} \in \mathcal{K}^{\{k, d-k+1\}}$, fix once and for all an element 
$S_{\mb{s}} \in \mathcal{M}_{\theta}$ 
with the following properties:
(i) $S_{\mb{s}}\cap P = \emptyset$, (ii) $S_{\mb{s}}$ is diffeomorphic to $\mb{s}\times S^{k-1}\times S^{d-k}$, and 
(iii) the $\theta$-structure $\hat{\ell}_{S_{\mb{s}}}$ admits an extension to the $\theta$-structure on $\mb{s}\times D^{k}\times D^{d-k+1}$ (where we use a diffeomorphism $\mb{s}\times S^{k-1}\times S^{d-k} \cong S_{\mb{s}}$ from (ii) to identify $S_{\mb{s}}$ with $\mb{s}\times S^{k-1}\times S^{d-k}$). 
It follows that the $(d-1)$-dimensional manifold $P\sqcup S_{\mb{s}}$ equipped with the structure $\hat{\ell}_{P}\sqcup\hat{\ell}_{S_{\mb{s}}}$,
 defines an element of the space $\mathcal{M}_{\theta}$, see Definition \ref{defn: space of closed theta manifolds}. 
\end{defn}

\begin{proposition} \label{proposition: fibre identification}
For all $\mb{t} \in \mathcal{K}^{k}$ and $\mb{s} \in \mathcal{K}^{\{k, d-k+1\}}$ there is a homotopy equivalence, 
$$
\mathcal{W}^{k}_{P\sqcup S_{\mb{s}}, \; \mb{t}} \stackrel{\simeq} \longrightarrow \widehat{\mathcal{W}}^{k-1, c}_{P, \mb{t}; \mb{s}}.
$$
\end{proposition}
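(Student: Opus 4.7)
The plan is to exhibit the claimed equivalence as an explicit homeomorphism, obtained by using the fixed data $(V_{\mb{s}}, \sigma_{\mb{s}}, e_{\mb{s}})$ to glue the $S_{\mb{s}}$-boundary of each element of $\mathcal{W}^{k}_{P\sqcup S_{\mb{s}}, \mb{t}}$ to the prescribed ``cap''. Specifically, I would send
$(M', (V', \sigma'), e') \mapsto (M, (V', \sigma')\sqcup(V_{\mb{s}}, \sigma_{\mb{s}}), e'\sqcup e_{\mb{s}})$, where
$$M \;:=\; M'\cup_{S_{\mb{s}}} e_{\mb{s}}\bigl(D(V^{+}_{\mb{s}})\times_{\mb{s}}S(V^{-}_{\mb{s}})\bigr),$$
using property (iii) in Definition~\ref{defn: object S_t} to extend the $\theta$-structure across the seam. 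An inverse map is defined by deleting the interior of the $\mb{s}$-caps from $M$ (recovering $M'$ with boundary $P\sqcup S_{\mb{s}}$) and restricting the surgery data to $\mb{t}$. Both maps are manifestly continuous on the quotient topologies used throughout and are set-theoretically inverse, so the proposition will reduce to verifying that they land in the correct subspaces, i.e.\ that the defining connectivity conditions match on the two sides.

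The LHS condition is that $\ell_{M'}$ be $k$-connected; the RHS inherits from $\mathcal{W}^{k-1, c}$ the two requirements that $\ell_M$ be $(k-1)$-connected and that $\ell_M|_{M\setminus\Image(e)}$ be $k$-connected, where $e = e'\sqcup e_{\mb{s}}$. For the first RHS condition: $M$ is obtained from $M'$ by attaching, for each component of $\mb{s}$, either a $k$-handle or a $(d-k+1)$-handle, and such attachments reduce $\pi_j$-connectivity of the classifying map by at most one degree in the $k$-handle case and not at all in the $(d-k+1)$-handle case (since $d-k>k$). For the second: the canonical identification $M\setminus\Image(e) \cong M'\setminus\Image(e')$ reduces the statement to whether $\ell_{M'\setminus\Image(e')}$ is $k$-connected. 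But $M'$ is recovered from $M'\setminus\Image(e')$ by filling back codimension-zero neighborhoods $D^{d+1-m}\times S^{m-1}$ with $m\in\{k+1,\dots, d-k\}$; by excision and K\"unneth, the relative homology $H_*(M', M'\setminus\Image(e'))$ will be concentrated in degrees $\geq k+1$, and relative Hurewicz will then yield $\pi_j(M'\setminus\Image(e'))\xrightarrow{\cong}\pi_j(M')$ for $j\leq k$. Consequently $\ell_{M'}$ is $k$-connected iff $\ell_{M'\setminus\Image(e')}$ is $k$-connected, so the two sides match exactly and the map together with its inverse is a homeomorphism.

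The main technical point to nail down is the applicability of relative Hurewicz in the presence of possibly nontrivial fundamental groups. Because each piece being glued back has the homotopy type of $S^{m-1}$ with $m-1\geq k$, and the intended range $k\geq 1$ (in Theorem~\ref{theorem: localization sequence}) forces $m\geq 2$, the inclusion $M'\setminus\Image(e')\hookrightarrow M'$ will be a $\pi_1$-isomorphism, so Hurewicz can be applied on the universal cover without trouble; the edge case $k=0$ is handled by a direct van Kampen argument. The remaining bookkeeping---matching $\theta$-structures along the seam via condition (iii), adjusting the ambient embedding into $\R^{\infty}$ (for instance by pushing a collar of the $S_{\mb{s}}$-part of $\partial M'$ slightly inward before gluing), and checking compatibility with the topology on moduli---is routine given the setup.
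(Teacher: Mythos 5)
Your construction is identical to the paper's proof: glue on the fixed cap $e_{\mb{s}}(D(V_{\mb{s}}^{+})\times_{\mb{s}}S(V_{\mb{s}}^{-}))$ along $S_{\mb{s}}$ to go forward, delete it to go back, with the paper relegating the connectivity bookkeeping to the reader exactly where you spell it out. Two small corrections to the bookkeeping: the $\pi_{1}$-isomorphism for $M'\setminus\Image(e')\hookrightarrow M'$ should be extracted from the bound $d-m\ge k\ge 2$ (so that the co-core sphere $S(V^{+})\cong S^{d-m}$ along which one glues is simply connected), not from $m\ge 2$; and relative Hurewicz only yields an isomorphism on $\pi_{j}$ for $j\le k-1$ together with a surjection in degree $k$, not an isomorphism through degree $k$ --- which is still exactly enough for the implication you need.
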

\begin{proof}
Fix $(V, \sigma) \in (G^{\mf}_{\theta}(\R^{\infty})_{\loc}^{\{k, d-k+1\}})^{\mb{s}}$. 
Choose an embedding 
$$e: D(V^{-})\times_{\mb{s}}D(V^{+}) \longrightarrow (-\infty, 0]\times\R^{\infty-1}$$ 
with image disjoint from $P$, and with 
$
e(S(V^{-})\times_{\mb{s}}S(V^{+}) \; = \; S_{\mb{s}}.
$ 
We define a map 
\begin{equation} \label{equation: glue in surgery data map}
\mathcal{W}^{k}_{P\sqcup S_{\mb{s}}, \; \mb{t}} \longrightarrow \widehat{\mathcal{W}}^{k-1, c}_{P, \mb{t}; \mb{s}}
\end{equation}
by sending $(M, (V', \sigma'), e')$ to the element,
$$\left(M\cup e(S(V^{-})\times_{\mb{s}}D(V^{+})), \; (V'\sqcup V, \sigma\sqcup\sigma'), \; e' \sqcup e\right).$$
Since $\ell_{M}: M \longrightarrow B$ is $k$-connected, it follows that the element 
$$
\left(M\cup e(S(V^{-})\times_{\mb{s}}D(V^{+})), \; (V'\sqcup V, \sigma\sqcup\sigma'), \; e' \sqcup e\right)
$$
is contained in the space $\widehat{\mathcal{W}}^{k-1, c}_{P, \mb{t}; \mb{s}}$, and thus the map is well-defined.
A homotopy inverse  
\begin{equation} \label{equation: homotopy inverse cut out map}
\widehat{\mathcal{W}}^{k-1, c}_{P, \mb{t}; \mb{s}} \longrightarrow \mathcal{W}^{k}_{P\sqcup S_{\mb{s}}, \; \mb{t}}
\end{equation}
to (\ref{equation: glue in surgery data map}) by sending $(M, (V, \sigma), e)$ to  
$
\left(M\setminus e(S(V^{-})\times_{\mb{s}}D(V^{+})), \; (V, \sigma)|_{\mb{t}}, \; e|_{D(V^{-})\times_{\mb{t}}D(V^{+})}\right).
$
Checking that this map is indeed a homotopy inverse is straight-forward and we leave it to the reader. 
\end{proof}

We will now reinterpret the maps (\ref{equation: homotopy colimit morphism map}) using the homeomorphism (\ref{equation: glue in surgery data map}) from the above proposition. 
We will actually consider a generalization of these maps and will prove a generalization of Theorem \ref{proposition: homotopy equivalence of transition maps}.
We will re-interpret these maps as being induced by morphisms from a cobordism category, with objects given by $(d-1)$-dimensional closed manifolds and morphisms given by $d$-dimensional cobordisms defined below. 
\begin{defn} \label{defn: cobordism category}
The (non-unital) topological category $\Cob_{\theta, d}$ has $\mathcal{M}_{\theta}$ (see Definition \ref{defn: space of closed theta manifolds}) for its space of objects. 
A morphism $W: P \rightsquigarrow Q$ is a compact $d$-dimensional submanifold
$$
W \subset [0, 1]\times\R^{\infty-1}
$$
equipped with a $\theta$-structure $\hat{\ell}_{W}: TW\oplus\epsilon^{1} \longrightarrow \theta^{*}\gamma^{d+1}$, for which there exists $0 < \varepsilon < 1/2$ such that 
$$\begin{aligned}
W\cap\left(\{0, 1\}\times\R^{\infty-1}\right) &= \partial W, \\
W\cap\left([0, \varepsilon)\times\R^{\infty-1}\right) \; &= \; [0, \varepsilon)\times P, \\
W\cap\left((1-\varepsilon, 1]\times\R^{\infty-1}\right) \; &= \; (1-\varepsilon, 1]\times Q.
\end{aligned}$$
Let $\Phi: [0, 2]\times\R^{\infty-1} \longrightarrow [0, 1]\times\R^{\infty-1}$ be the diffeomorphism given by $\Phi(t,\; x) = (t/2, \; x)$, where $t \in [0, 2]$ and $x \in \R^{\infty-1}$.
For morphisms $W: P \rightsquigarrow Q$ and $W': Q \rightsquigarrow R$, composition is defined by 
\begin{equation} \label{equation: composition formula}
W'\circ W \; = \; \Phi(W'\cup W).
\end{equation} 
The category $\Cob_{\theta, d}$ is topologized in the standard way following \cite{GMTW 08}, \cite{GRW 14}. 
\end{defn}

\begin{remark} \label{remark: associativity up to homotopy}
With the composition rule in (\ref{equation: composition formula}), the category is not strictly associative; it is associative up to homotopy. 
This lack of strict associativity won't affect any of our latter constructions. 
\end{remark}

We will need to consider certain subcategories of $\Cob_{\theta, d}$.
\begin{defn} \label{defn: cob cat l connected}
Let $l \in \Z_{\geq -1}$ be an integer. 
The topological subcategory $\Cob^{l}_{\theta, d} \subset \Cob_{\theta, d}$ has the same objects. 
Its morphisms consist of those cobordisms $W: P \rightsquigarrow Q$ for which the pair $(W, P)$ is $l$-connected. 
\end{defn}

Fix an integer $k$. 
We now proceed to make $P \mapsto \mathcal{W}^{k}_{P, \mb{t}}$ into a functor on $\Cob^{k-1}_{\theta, d}$.
Let $\psi: (-\infty, 1] \longrightarrow (-\infty, 0]$ be a diffeomorphism that is the identity on $(-\infty, -1)$
and let
$$\Psi := \psi\times\textstyle{\Id_{\R^{\infty-1}}}: (-\infty, 1]\times\R^{\infty-1} \stackrel{\cong} \longrightarrow (-\infty, 0]\times\R^{\infty-1}.$$
Let $W: P \rightsquigarrow Q$ be a morphism in $\Cob^{k-1}_{\theta, d}$.
For any $\mb{t} \in \mathcal{K}^{k}$, such a morphism determines a map 
\begin{equation} \label{equation: concatenation map}
\mb{S}_{W}: \mathcal{W}^{k}_{P, \mb{t}} \longrightarrow \mathcal{W}^{k}_{Q, \mb{t}}
\end{equation}
by sending 
$(M, (V, \sigma), e) \in \mathcal{W}^{k}_{P, \mb{t}}$ to the element 
$\left(\Psi(M\cup_{P}W), \; (V, \sigma), \; e\right) \in \mathcal{W}^{k}_{Q, \mb{t}}.$
Let us denote $M\cup_{P}W := \Psi(M\cup_{P}W)$.
Notice that since $(W, P)$ is $(k-1)$-connected, the new $\theta$-structure $\hat{\ell}_{M\cup_{P}W}$ on $M\cup_{P}W$
is such that $\ell_{M\cup_{P}W}: M\cup_{P}W \longrightarrow B$ is $k$-connected, and thus determines a well-defined element of the space $\mathcal{W}^{k}_{Q, \mb{t}}$.
From this definition, it follows that for any morphism $(j, \varepsilon): \mb{t} \longrightarrow \mb{t}'$ in $\mathcal{K}^{k}$, the diagram 
$$
\xymatrix{
\mathcal{W}^{k}_{P, \mb{t}'} \ar[rr]^{\mb{S}_{W}} \ar[d]^{(j, \varepsilon)^{*}} && \mathcal{W}^{k}_{Q, \mb{t}'} \ar[d]^{(j, \varepsilon)^{*}} \\
\mathcal{W}^{k}_{P, \mb{t}} \ar[rr]^{\mb{S}_{W}} && \mathcal{W}^{k}_{Q, \mb{t}}
}
$$
commutes.
For morphisms $W: P \rightsquigarrow Q$ in $\Cob^{k-1}_{\theta, d}$ we will study the effect of the induced map
$$
\mb{S}_{W}: \hocolim_{\mb{t}\in\mathcal{K}^{k}}\mathcal{W}^{k}_{P, \mb{t}} \longrightarrow \hocolim_{\mb{t}\in\mathcal{K}^{k}}\mathcal{W}^{k}_{Q, \mb{t}}.
$$
In this way we may view the correspondence $P \mapsto \displaystyle{\hocolim_{\mb{t}\in\mathcal{K}^{k}}}\mathcal{W}^{k}_{P, \mb{t}}$ to be a functor on the category $\Cob^{k-1}_{\theta, d}$.
In order to formulate our main theorem we will need to make some preliminary definitions.
\begin{defn} \label{defn: trivial and primitive surgery}
Let $W: P \rightsquigarrow Q$ be a morphism in $\Cob_{\theta, d}$. 
Let $0 \leq l < d$, and suppose that 
 $W$ is diffeomorphic relative $P$ to the trace of a surgery along an embedding 
$
f: S^{l}\times D^{d-l-1} \longrightarrow P.
$
\begin{enumerate} \itemsep.2cm
\item[(i)] The cobordism $W: P \rightsquigarrow Q$ is said to be \textit{trivial of degree $l+1$}  if the embedding $f$ factors through an embedding $\R^{d-1} \hookrightarrow P$. 
\item[(ii)] The cobordism $W: P \rightsquigarrow Q$ is said to be \textit{primitive of degree $l+1$} if there exists another embedding $g: D^{l}\times S^{d-l-1} \longrightarrow P$ such that $f(S^{l}\times\{0\})$ and $g(\{0\}\times S^{d-l-1})$ intersect transversally in $P$ at exactly one point.  
\end{enumerate}
Notice that the above definition only makes sense in the case that $0 \leq l < d$. 
It will be useful to us latter on to make the following convention.  
A morphism $W: P \rightsquigarrow Q$ in $\Cob_{\theta, d}$ is said to be \textit{primitive of degree $0$} if $W$ is isomorphic (as a $\theta$-manifold) to the disjoint union
$
([0, 1]\times P)\sqcup S^{d},
$
where $([0, 1]\times P)$ is equipped with the $\theta$-structure induced by $\hat{\ell}_{P}$, and $S^{d}$ is equipped with some $\theta$-structure $\hat{\ell}_{S^{d}}$ that admits an extension to $D^{d}$.
Equivalently, we may also refer to the same cobordism $W$ as being \textit{trivial of degree $d$}.
\end{defn}
Our main result of this section is the following theorem:
\begin{theorem} \label{theorem: morphism induce homotopy equivalence}
Let $0 \leq k < d/2$. 
Let $W: P \rightsquigarrow Q$ be a morphism in $\Cob^{k-1}_{\theta, d}$. 
Suppose that $W$ is a composite of elementary cobordisms $V_{m}\circ \cdots \circ V_{1}$, such that each $V_{i}$ satisfies one of the following conditions:
\begin{enumerate} \itemsep.2cm
\item[(a)] $V_{i}$ is primitive of degree $k$, 
\item[(b)] $V_{i}$ is trivial of degree $d-k$, or 
\item[(c)] the pair $(V_{i}, \partial_{\inn}V_{i})$ is $(d-k)$-connected, where $\partial_{\inn}V_{i}$ is the source object of the morphism $V_{i}$. 
\end{enumerate}
Then the induced map 
$\mb{S}_{W}: \displaystyle{\hocolim_{\mb{t}\in\mathcal{K}^{k}}}\mathcal{W}^{k}_{P, \mb{t}} \longrightarrow \displaystyle{\hocolim_{\mb{t}\in\mathcal{K}^{k}}}\mathcal{W}^{k}_{Q, \mb{t}}$
is a weak homotopy equivalence.
\end{theorem}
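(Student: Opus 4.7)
The plan is to reduce the problem to the case of a single elementary cobordism and then to treat each of the three types (a), (b), (c) separately. Since up to the rescaling diffeomorphism $\Psi$ (which is isotopic to the identity), the induced map factors as a composition $\mb{S}_W \simeq \mb{S}_{V_m} \circ \cdots \circ \mb{S}_{V_1}$, and a composition of weak homotopy equivalences is again a weak homotopy equivalence, it suffices to treat the case where $W$ itself is a single elementary cobordism $V$ of one of the three prescribed types.

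Case (c), where $(V, \partial_{\mathrm{in}}V)$ is $(d-k)$-connected, should be the most elementary. Here $V$ is obtained from $P$ by attaching handles of index strictly greater than $d-k$; equivalently, it is obtained from $Q$ by attaching dual handles of index strictly less than $k$. Because $k < d/2$, these low-index dual handles neither destroy $k$-connectedness of $\ell_{M \cup_P V}$ nor affect the $k$-skeleton in any essential way. One expects to produce a homotopy inverse to $\mb{S}_V$ at the level of hocolims by a fibrewise argument modelled on the parametrized surgery machinery of Section \ref{section: parametrized surgery}, applied in the appropriate low-index range to delete these dual handles in families.

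Case (b), where $V$ is trivial of degree $d-k$, should reduce to a cofinality-type argument on the indexing category $\mathcal{K}^k$: the attachment of a trivial $(d-k)$-handle corresponds precisely to adjoining a datum of label $d-k$ to $\mb{t} \in \mathcal{K}^k$, and the collection of such enlargement morphisms in $\mathcal{K}^k$ should form a homotopically cofinal subdiagram, making $\mb{S}_V$ a weak homotopy equivalence after taking hocolim. The required cofinality is formally parallel to the way the homotopy colimit decomposition of Theorem \ref{theorem: homotopy colimit decomposition} absorbs trivial surgery data.

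The principal obstacle is case (a), where $V$ is primitive of degree $k$: here the index $k$ lies just below the parameter range $\{k+1,\ldots,d-k\}$ of $\mathcal{K}^k$, so the previous cofinality argument does not apply directly. The primitivity hypothesis is essential, for it supplies an embedded dual $(d-k)$-sphere in $P$ that is transverse to the attaching sphere of the $k$-handle of $V$. This dual sphere should allow one to pair the $k$-handle of $V$ with an auxiliary index-$(k+1)$ handle (which does lie in the range of $\mathcal{K}^k$) so that the composite two-handle cobordism is diffeomorphic rel boundary to a product cobordism. Making this pairing functorial will require introducing an intermediate space of manifolds equipped with both the primitive $k$-handle data and a compatible index-$(k+1)$ datum; one would then argue by a pair of fibrewise contractibility statements, analogous in spirit to Proposition \ref{proposition: contractibility of the surgery category}, that this intermediate hocolim projects to both the source and the target hocolims as a weak homotopy equivalence. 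This step is expected to occupy the bulk of the technical work of Sections \ref{section: Cobordism Categories and Handle Attachments} through \ref{section: higher index handles}.
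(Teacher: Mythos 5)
Your reduction to single elementary cobordisms is correct, and your instinct that primitivity supplies a dual sphere to pair handles against is on the right track.  However, the central organizing idea of the paper's argument is missing from your proposal, and the specific routes you sketch for the three cases diverge from what actually works.

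The lynchpin of the whole proof is Proposition \ref{proposition: H-k induces iso}: the self-cobordism $H_{k,d-k}(P)$ (connect-summing $[0,1]\times P$ with $S^k\times S^{d-k}$) lies in $\mathcal{V}^{k-1}$.  This is proven by a purely categorical cancellation trick --- a zigzag of natural transformations $\mb{T}^k(P) \Longleftarrow s_k \Longrightarrow \cyl$ of self-functors of the transport category $\mathcal{K}^k\wr\mathcal{W}^k_{P,(\text{--})}$, using the fact that the functor $b_{d-k}$ adjoining a label-$(d-k)$ element comes with both a $\varepsilon=+1$ arrow (forgetting the datum $\Rightarrow$ $\cyl$) and a $\varepsilon=-1$ arrow (performing the surgery $\Rightarrow$ connect-summing with $S^k\times S^{d-k}$).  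Your proposal never mentions $H_{k,d-k}(P)$, and without it the later steps in the paper have no base.  This is close in spirit to your ``cofinality'' intuition for case (b), but it is not a cofinality argument: adjoining a label-$(d-k)$ element to $\mb{t}$ is not equivalent to attaching a single trivial $(d-k)$-handle to $P$; the $\mathcal{K}^k$-datum governs a Morse singularity inside the long trace, not a handle of the boundary cobordism, and only the \emph{pair} of arrows $\varepsilon=\pm 1$ together produce the $S^k\times S^{d-k}$ effect.  A cofinality argument on $\mathcal{K}^k$ alone cannot change the boundary $P\leadsto Q$.

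From $H_{k,d-k}(P)\in\mathcal{V}^{k-1}$, case (b) (trivial index-$(d-k)$ handles, Corollary \ref{corollary: trivial d-l handle}) and case (a) (primitive index-$k$ handles, Corollary \ref{corollary: primitive surgery of degree k-1}) are deduced, but not easily.  The route runs through Theorem \ref{theorem: arbitrary handles}, whose proof needs a semi-simplicial resolution $\mathcal{M}^k_{P,\mb{t}}(\chi)_\bullet$ with the crucial feature that after performing the auxiliary $\chi$-surgeries, an arbitrary surgery $\sigma$ becomes trivial; this lets one bootstrap from the trivial case.  Case (a) then follows by the duality Lemma \ref{lemma: primitive and trivial surgeries} (primitive $\Leftrightarrow$ dual is trivial) and the factorization $T(\sigma)\circ T(\bar\sigma) \simeq H_{k,d-k}(P)$ plus two-out-of-three.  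Note this pairs an index-$k$ handle with an index-$(d-k)$ handle, not with an index-$(k+1)$ handle as your sketch suggests (which would be classical $h$-cobordism cancellation and needs $d=2k+1$).  Case (c) is actually the hardest, not the easiest: it occupies all of Section \ref{section: higher index handles} and is an induction on handle index from $d-k$ up to $d$, repeatedly using semi-simplicial resolutions to detect in $P(\chi(p))$ a primitive embedding that makes $\mathcal{R}_{\chi(p)}(\sigma)$ primitive.  Your proposed ``fibrewise argument modelled on Section \ref{section: parametrized surgery}'' would not directly work: the machinery there (Proposition \ref{proposition: contractibility of the surgery category}) shows contractibility of spaces of interior surgery data of degree $k$, not inverse-constructions that remove boundary handles of high index from a family; inverting $\mb{S}_V$ requires locating a copy of $V$ inside an arbitrary family member, and that is precisely what the resolution of Section \ref{section: Resolving Nontrivial Handle Attachments} arranges.
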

The proof of the above theorem will span the next three sections. 
Let us now show how Theorem \ref{theorem: morphism induce homotopy equivalence} implies Proposition \ref{proposition: homotopy equivalence of transition maps}, which is the whole point of these new definitions. 
\begin{proof}[Proof of Theorem \ref{proposition: homotopy equivalence of transition maps} assuming Theorem \ref{theorem: morphism induce homotopy equivalence}]
Let $\mb{s} \in \mathcal{K}^{\{k, d-k+1\}}$. 
Let $\mb{s'} = \mb{s}\sqcup\{x\}$, where $x$ has either label $k$ or $d-k+1$.
Let $(j, \varepsilon): \mb{s} \longrightarrow \mb{s}'$ be a morphism with $j: \mb{s} \hookrightarrow \mb{s}'$ the inclusion. 
It will suffice to prove that such a morphism induces a weak homotopy equivalence 
$$
\hocolim_{\mb{t} \in \mathcal{K}^{k}}\widehat{\mathcal{W}}^{k-1, c}_{P, \mb{t}; \mb{s}'} \; \stackrel{\simeq} \longrightarrow \; \hocolim_{\mb{t} \in \mathcal{K}^{k}}\widehat{\mathcal{W}}^{k-1, c}_{P, \mb{t}; \mb{s}}.
$$
We will show that the composite map
\begin{equation} \label{equation: cobordism map composite}
\xymatrix{
\displaystyle{\hocolim_{\mb{t} \in \mathcal{K}^{k}}}\widehat{\mathcal{W}}^{k-1, c}_{P, \mb{t}; \mb{s}'} \ar[rr] && \displaystyle{\hocolim_{\mb{t} \in \mathcal{K}^{k}}}\widehat{\mathcal{W}}^{k-1, c}_{P, \mb{t}; \mb{s}} \ar[d]_{\simeq} \\
\displaystyle{\hocolim_{\mb{t} \in \mathcal{K}^{k}}}\mathcal{W}^{k}_{P\sqcup S_{\mb{s}'}, \; \mb{t}} \ar[u]^{\simeq} && \displaystyle{\hocolim_{\mb{t} \in \mathcal{K}^{k}}}\mathcal{W}^{k}_{P\sqcup S_{\mb{s}}, \; \mb{t}}
}
\end{equation}
agrees with the map induced by $\mb{S}_{W}$, for some cobordism $W: P\sqcup S_{\mb{s}'} \rightsquigarrow P\sqcup S_{\mb{s}}$ covered by Theorem \ref{theorem: morphism induce homotopy equivalence}; the vertical maps in the diagram are the homotopy equivalences (\ref{equation: glue in surgery data map}) and (\ref{equation: homotopy inverse cut out map}) from the proof of Proposition \ref{proposition: fibre identification}. 
Let us describe that cobordism.
Recall that $S_{\mb{s}}$ is the product $\mb{s}\times S^{k-1}\times S^{d-k}$ and $S_{\mb{s}'} = S_{\mb{s}}\sqcup(\{x\}\times S^{k-1}\times S^{d-k})$.
Let $W_{1}$ be the manifold given by 
$$
(S_{\mb{s}'}\sqcup P)\times[0, 1]\bigcup_{\{x\}\times S^{k-1}\times S^{d-k}}\{x\}\times D^{k}\times S^{d-k}.
$$
Similarly, let $W_{2}$ be the manifold given by 
$$
(S_{\mb{s}'}\sqcup P)\times[0, 1]\bigcup_{\{x\}\times S^{k-1}\times S^{d-k}}\{x\}\times S^{k-1}\times D^{d-k+1}.
$$
Both of these define cobordisms from $P_{\mb{s}'}$ to $P_{\mb{s}}$.
Tracing through the definition we see that (\ref{equation: cobordism map composite}) coincides with the map induced by 
$\mb{S}_{W_{1}}$ when the label on $x$ is $k$ and $\varepsilon = +1$, or in the case when the label on $x$ is $d-k+1$ and $\varepsilon = -1$.
Similarly, (\ref{equation: cobordism map composite}) coincides with the map induced by 
$\mb{S}_{W_{2}}$ when the label on $x$ is $d-k+1$ and $\varepsilon = -1$, or when the label on $x$ is $k$ and $\varepsilon = +1$.
Now, we observe that the cobordisms $W_{1}$ and $W_{2}$ are both covered by Theorem \ref{theorem: morphism induce homotopy equivalence}. 
Indeed, $W_{1}$ is constructed by first attaching a primitive $k$-handle and then attaching a $d$-handle; this is covered by conditions (a) and (c). 
Similarly, $W_{2}$ is constructed by attaching a $(d-k+1)$-handle and then a $d$-handle. 
Since $P$ is assumed to be non-empty, it follows from Theorem \ref{theorem: morphism induce homotopy equivalence} that in all cases, the map under consideration is a weak homotopy equivalence. 
This completes the proof of Theorem \ref{proposition: homotopy equivalence of transition maps} assuming Theorem \ref{theorem: morphism induce homotopy equivalence}. 
\end{proof}

\begin{remark} \label{remark: on the k = 0 case}
The argument in the above proof does also include the case where $k = 0$. 
In this case, it follows that $S_{\mb{s}'} = \mb{s}\times S^{k-1}\times S^{d-k} =  \emptyset$, and the cobordism $W_{1}$ is isomorphic (as a $\theta$-manifold) to $([0, 1]\times P)\sqcup S^{d}$.
By Definition \ref{defn: trivial and primitive surgery}, the cobordism $W_{1}$ is  primitive of degree-$0$ (or equivalently trivial of degree-$d$) 
and thus it is covered by Theorem \ref{theorem: morphism induce homotopy equivalence} parts (a) or (b). 
Part (c) of Theorem \ref{theorem: morphism induce homotopy equivalence} is not needed in this case. 
The cobordism $W_{2}$ from the above proof is irrelevant in this $k = 0$ case and need not be considered.
\end{remark}

Below we introduce some notation to keep us organized while proving Theorem \ref{theorem: morphism induce homotopy equivalence}.
\begin{Notation} \label{Notation: bookkeeping device}
We let $\mathcal{V}^{k-1} \subset \Cob^{k-1}_{\theta, d}$ be the subcategory consisting of all morphisms $W: P \rightsquigarrow Q$ with the property that 
$\mb{S}_{W}: \displaystyle{\hocolim_{\mb{t}\in\mathcal{K}^{k}}}\mathcal{W}^{k}_{P, \mb{t}} \longrightarrow \displaystyle{\hocolim_{\mb{t}\in\mathcal{K}^{k}}}\mathcal{W}^{k}_{Q, \mb{t}}$
is a weak homotopy equivalence. 
To prove Theorem \ref{theorem: morphism induce homotopy equivalence}, we will need to show that $\mathcal{V}^{k-1}$ contains all morphisms that satisfy one of the conditions (a), (b), or (c).
\end{Notation}

\begin{remark} \label{remark: concept of the proof}
We remark that our proof of Theorem \ref{theorem: morphism induce homotopy equivalence} is conceptually similar to the homological stability theorem proven by Galatius and Randal-Williams in \cite{GRW 16}.
Of course in our case, Theorem \ref{theorem: morphism induce homotopy equivalence} asserts that the maps under consideration are weak homotopy equivalences rather than homology equivalences. 
In view of what is done in Section \ref{section: stable moduli spaces} (Theorem \ref{theorem: stable stability} in particular), 
one can actually consider the homological stability theorem of \cite{GRW 16} as the analogue of Theorem \ref{theorem: morphism induce homotopy equivalence} for the case that $d = 2n$ and $k = n$.
\end{remark}

\section{The morphism $H_{k, d-k}(P)$}
In this section we begin the process of proving Theorem \ref{theorem: morphism induce homotopy equivalence} by verifying that 
$$\mb{S}_{W}: \hocolim_{\mb{t}\in\mathcal{K}^{k}}\mathcal{W}^{k}_{P, \mb{t}} \longrightarrow \hocolim_{\mb{t}\in\mathcal{K}^{k}}\mathcal{W}^{k}_{Q, \mb{t}}$$
is a weak homotopy equivalence for certain basic cobordisms $W: P \rightsquigarrow Q$.
In particular we will prove the special case of Theorem \ref{theorem: morphism induce homotopy equivalence} for when $k = 0$, see Remark \ref{remark: primitive surgery of degree 0}.
\subsection{The morphism $H_{k, d-k}(P)$} \label{subsection: morphism H(P)}
Fix an object $P \in \Ob\Cob^{k-1}_{\theta, d}$. 
We construct a morphism
\begin{equation} \label{equation: k-torus}
H_{k, d-k}(P): P \rightsquigarrow P
\end{equation}
of $\Cob^{k-1}_{\theta, d}$ as follows.
Choose an embedding $j: S^{k}\times S^{d-k} \hookrightarrow (0, 1)\times\R^{\infty-1}$ with image disjoint from $[0, 1]\times P$. 
Fix a $\theta$-structure $\hat{\ell}_{k, d-k}$ on $S^{k}\times S^{d-k}$ with the property that $\ell_{k, d-k}: S^{k}\times S^{d-k} \longrightarrow B$ is null-homotopic. 
We then let $H_{k, d-k}(P) \subset [0, 1]\times\R^{\infty-1}$ be the cobordism obtained by forming the connected sum of $[0, 1]\times P$ with $j(S^{k}\times S^{d-k})$ along some embedded arc connecting the two submanifolds.
We equip $H_{k, d-k}(P)$ with a $\theta$-structure that agrees with $\hat{\ell}_{k, d-k}$ on the connect-summand of $S^{k}\times S^{d-k}$, and with $\hat{\ell}_{[0, 1]\times P}$ away from the connect-summand.
This self-cobordism $H_{k, d-k}(P): P \rightsquigarrow P$ equipped with its $\theta$-structure yields a morphism in $\Cob^{k-1}_{\theta, d}$. 
We have the following proposition.
\begin{proposition} \label{proposition: H-k induces iso}
Let $0 \leq k < d/2$. 
Then for any non-empty object $P \in \Ob\Cob^{k-1}_{\theta, d}$, the morphism 
$H_{k, d-k}(P): P \rightsquigarrow P$ 
is contained in $\mathcal{V}^{k-1}$ (see Notational Convention \ref{Notation: bookkeeping device}).
\end{proposition}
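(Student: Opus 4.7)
The plan is to exhibit $\mathbf{S}_{H_{k,d-k}(P)}$ as the pullback of a single natural surgery transformation along a morphism of $\mathcal{K}^{k}$, thereby producing a natural zig-zag homotopy between it and the identity on the homotopy colimit. The key geometric input is that surgery on a trivially framed $(d-k-1)$-sphere in $M$ produces the connect sum $M\,\#\,(S^{k}\times S^{d-k})$. Indeed, the standard join decomposition
\[
S^{d} \;=\; \bigl(D^{d-k}\times S^{k}\bigr)\cup_{S^{d-k-1}\times S^{k}}\bigl(S^{d-k-1}\times D^{k+1}\bigr)
\]
shows that removing $S^{d-k-1}\times D^{k+1}$ from $S^{d}$ and gluing in $D^{d-k}\times S^{k}$ yields $S^{k}\times S^{d-k}$. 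Crucially, because $k<d/2$ we have $d-k\in\{k+1,\dots,d-k\}$, so $d-k$ is a permissible label for objects of $\mathcal{K}^{k}$, and the above surgery corresponds to adjoining a single point of label $d-k$ with $\varepsilon=-1$.

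Using the hypothesis $P\neq\emptyset$, fix once and for all a point $p\in P$, a small open neighborhood $U$ of $(-\varepsilon/2,p)$ inside the collar $(-\varepsilon,0]\times\R^{\infty-1}$ that is contained in every $M\in\mathcal{N}_{\theta,P}$, and surgery data $(V_{c},\hat\ell_{V_{c}},\sigma_{c},e_{c})$ of index $d-k$ supported in $U$, with $e_{c}$ encoding a trivially framed $(d-k-1)$-sphere in the collar. The $\theta$-orientation $\hat\ell_{V_{c}}$ is chosen so that the surgery produces $S^{k}\times S^{d-k}$ equipped with the $\theta$-structure $\hat\ell_{k,d-k}$ used in Section~\ref{subsection: morphism H(P)}. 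After precomposing with a natural isotopy that pushes the images of existing embeddings $e$ out of $U$ (a standard scanning argument in the infinite-dimensional ambient space $\R^{\infty-1}$), the formula
\[
\alpha_{\mathbf{t}}(M,(V,\sigma),e) \;=\; \bigl(M,\,(V,\sigma)\sqcup(V_{c},\sigma_{c}),\,e\sqcup e_{c}\bigr)
\]
defines a natural transformation $\alpha_{\mathbf{t}}\colon\mathcal{W}^{k}_{P,\mathbf{t}}\to\mathcal{W}^{k}_{P,\mathbf{t}\sqcup\{c\}}$, where $c\in\Omega$ is a fixed fresh label. Let $(j,\varepsilon_{\pm})\colon\mathbf{t}\to\mathbf{t}\sqcup\{c\}$ denote the two morphisms of $\mathcal{K}^{k}$ with $j$ the inclusion and $\varepsilon_{\pm}(c)=\pm 1$. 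Directly from Definitions~\ref{defn: pullback +1} and~\ref{defn: pullback -1}, one reads off $(j,\varepsilon_{+})^{*}\circ\alpha_{\mathbf{t}}=\mathrm{id}$ and $(j,\varepsilon_{-})^{*}\circ\alpha_{\mathbf{t}}=\mathbf{S}_{H_{k,d-k}(P)}$.

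Each morphism of $\mathcal{K}^{k}$ contributes a natural $1$-simplex to the bar construction realizing $\hocolim_{\mathbf{t}\in\mathcal{K}^{k}}\mathcal{W}^{k}_{P,\mathbf{t}}$. The two $1$-simplices attached to $(j,\varepsilon_{\pm})$, assembled via $\alpha$, produce a natural zig-zag
\[
(\mathbf{t},x) \;\longleftarrow\; \bigl(\mathbf{t}\sqcup\{c\},\alpha_{\mathbf{t}}(x)\bigr) \;\longrightarrow\; \bigl(\mathbf{t},\mathbf{S}_{H_{k,d-k}(P)}(x)\bigr)
\]
in the transport category $\mathcal{K}^{k}\wr(\mathcal{W}^{k}_{P,-})^{\op}$, whose geometric realization is a natural homotopy from $\mathrm{id}$ to $\mathbf{S}_{H_{k,d-k}(P)}$. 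In particular $\mathbf{S}_{H_{k,d-k}(P)}$ is a weak homotopy equivalence, so $H_{k,d-k}(P)\in\mathcal{V}^{k-1}$.

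The principal obstacle is the disjointness condition built into $\mathcal{W}^{k}_{P,\mathbf{t}}$: the image of $e_{c}$ must be disjoint from that of $e$, and the construction must remain compatible with every morphism $(j',\varepsilon')$ of $\mathcal{K}^{k}$, including those $-1$ transitions which perform surgeries elsewhere in $M$ that could distort the embedding. Implementing the preliminary scanning isotopy as a natural deformation of the entire diagram $\mathcal{W}^{k}_{P,-}$, uniform in $\mathbf{t}$ and equivariant for such surgeries, is the main technical work; it is achieved by exploiting the spare coordinates in $\R^{\infty-1}$. A secondary subtlety is matching $\theta$-structures after surgery with the prescribed $\hat\ell_{k,d-k}$ on the new $S^{k}\times S^{d-k}$ summand, which is resolved by the path-connectedness of the space of compatible $\theta$-orientations on $V_{c}$, together with the freedom in $\hat\ell_{k,d-k}$ (which is only constrained to have null-homotopic underlying map to $B$).
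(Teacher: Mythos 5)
Your approach is conceptually the same as the paper's: you build a zig-zag of natural transformations on the transport category $\mathcal{K}^{k}\wr\mathcal{W}^{k}_{P,(-)}$ by adjoining a single point of label $d-k$ and using the two morphisms $(j,\varepsilon_{\pm})$, and this is exactly what the paper does. However, there is a genuine gap in your implementation, and it is precisely the one you flag at the end: placing the new surgery datum $e_{c}$ inside a fixed open set $U$ in the collar of $M$ means it is \emph{not} automatically disjoint from the image of the embedding $e$ belonging to an arbitrary element $(M,(V,\sigma),e)\in\mathcal{W}^{k}_{P,\mathbf{t}}$, nor from the images of the embeddings produced along morphisms of $\mathcal{K}^{k}$. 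The ``scanning isotopy'' you invoke to push existing embeddings out of $U$ is not a standard off-the-shelf move here: it would have to be a deformation of the entire functor $\mathcal{W}^{k}_{P,(-)}$ compatible with all the $(j',\varepsilon')^{*}$ maps (including the $\varepsilon'=-1$ ones which perform surgeries that alter $M$), and producing such a natural deformation is essentially a proof in itself. You identify this as ``the main technical work'' but do not carry it out, which is what makes the gap real rather than cosmetic.

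The paper's device for sidestepping this is the cylinder functor $\cyl$: instead of fitting the new surgery datum inside $M$, it first glues on $[0,1]\times P$ and places the new surgery data $\phi$ in the freshly created stretch $(0,1)\times\R^{\infty-1}$, which is disjoint from $M\subset(-\infty,0]\times\R^{\infty-1}$ and from the image of $e\subset(-\infty,0)\times\R^{\infty-1}$ by fiat, with no isotopy needed. The price is that the $\varepsilon_{+}$ endpoint of the zig-zag is then $\cyl$ rather than the identity; but $\cyl$ is clearly homotopic to the identity, so this costs nothing. A secondary issue with your version: even granting the disjointness, the identity you claim, $(j,\varepsilon_{-})^{*}\circ\alpha_{\mathbf{t}}=\mathbf{S}_{H_{k,d-k}(P)}$, does not hold on the nose, since the left side is $M$ with a surgery done in its own collar while the right side is $\Psi(M\cup_{P}H_{k,d-k}(P))$, a different embedded submanifold; these are naturally isotopic, not equal, so you would need an additional natural homotopy there as well. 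Replacing your collar device by the paper's cylinder extension resolves both issues simultaneously and is the clean way to complete the argument.
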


\begin{remark} \label{remark: primitive surgery of degree 0}
Notice that in the case that $k = 0$, for any $P \in \Ob\Cob^{k-1}_{\theta, d}$ the cobordism $H_{0, d}(P)$ is isomorphic as a $\theta$-manifold to $([0, 1]\times P)\sqcup S^{d}$, where the sphere $S^{d}$ is equipped with a $\theta$-structure that extends to the disk.
By Definition \ref{defn: trivial and primitive surgery}, the cobordism $H_{0, d}(P)$ is then \textit{primitive of degree $0$} (or equivalently trivial of degree $d$). 
In this case, Proposition \ref{proposition: H-k induces iso} then implies Theorem \ref{theorem: morphism induce homotopy equivalence} (parts (a) and (b)), and thus implies Theorem \ref{proposition: homotopy equivalence of transition maps} (recall from Remark \ref{remark: on the k = 0 case} that part (c) of Theorem \ref{theorem: morphism induce homotopy equivalence} is not needed in this $k = 0$ case). 
We remark that what we do in the $k = 0$ case is similar to what was done in \cite[Section 6.3 ``Annihilation of $d$-Spheres'']{MW 07}.
\end{remark}

We will show that 
$
\mb{S}_{H_{k, d-k}(P)}: \displaystyle{\hocolim_{\mb{t}\in\mathcal{K}^{k}}}\mathcal{W}^{k}_{P, \mb{t}} \longrightarrow \displaystyle{\hocolim_{\mb{t}\in\mathcal{K}^{k}}}\mathcal{W}^{k}_{Q, \mb{t}}
$
is homotopic to the identity map.
The proof will be easiest if we reformulate the statement to one about the classifying space of the \textit{transport category,} $\mathcal{K}^{k}\wr\mathcal{W}^{k}_{P, (\--)}$. 
Recall that an object of $\mathcal{K}^{k}\wr\mathcal{W}^{k}_{P, (\text{--})}$ is a pair $(\mb{t}, x)$ where $\mb{t} \in \mathcal{K}^{k}$ and $x \in \mathcal{W}^{k}_{P, \mb{t}}$.
A morphism in $\mathcal{K}^{k}\wr\mathcal{W}^{k}_{P, (\text{--})}$ from $(\mb{t}, x)$ to $(\mb{s}, y)$ is defined to be a morphism $(j, \varepsilon): \mb{t} \longrightarrow \mb{s}$ in $\mathcal{K}^{k}$ such that the induced map 
$
(j, \varepsilon)^{*}: \mathcal{W}^{k}_{P, \mb{s}} \longrightarrow \mathcal{W}^{k}_{P, \mb{t}} 
$
sends $y$ to $x$.
The homotopy colimit, $\displaystyle{\hocolim_{\mb{t}\in\mathcal{K}^{k}}}\mathcal{W}^{k}_{P, \mb{t}}$, is then by definition the classifying space of this category, i.e.
$$
\hocolim_{\mb{t}\in\mathcal{K}^{k}}\mathcal{W}^{k}_{\mb{t}} = B\left(\mathcal{K}^{k}\wr\mathcal{W}^{k}_{P, (\text{--})}\right).
$$
The map $\mb{S}_{H_{k, d-k}(P)}$ then induces a self-functor of $\mathcal{K}^{k}\wr\mathcal{W}^{k}_{P, (\text{--})}$ which we denote by
\begin{equation} \label{equation: self-functor T}
\mb{T}^{k}(P): \mathcal{K}^{k}\wr\mathcal{W}^{k}_{P, (\text{--})} \longrightarrow \mathcal{K}^{k}\wr\mathcal{W}^{k}_{P, (\text{--})}.
\end{equation}
To prove Proposition \ref{proposition: H-k induces iso} it will suffice to show that this functor induces a weak equivalence on classifying spaces. 
To do this we will construct a zig-zag of natural transformations 
$$\xymatrix{
\mb{T}^{k}(P) & s_{k}  \ar@{=>}[l]  \ar@{=>}[r] & \cyl  
}
$$
of self-functors of $\mathcal{K}^{k}\wr\mathcal{W}^{k}_{P, (\text{--})}$, where $\cyl: \mathcal{K}^{k}\wr\mathcal{W}^{k}_{P, (\text{--})} \longrightarrow \mathcal{K}^{k}\wr\mathcal{W}^{k}_{P, (\text{--})}$ is a homotopy equivalence of topological categories. 
Since natural transformations of functors induce homotopies between their induced maps on classifying spaces, it will follow that $B|\mb{T}^{k}(P)|$ is homotopic to $B|\cyl|$ and thus is a weak homotopy equivalence. 
We proceed to define the functors $\cyl$ and $s_{k}$.
\begin{defn} \label{defn: natural transformations}
The functor $\cyl: \mathcal{K}^{k}\wr\mathcal{W}^{k}_{P, (\text{--})} \longrightarrow \mathcal{K}^{k}\wr\mathcal{W}^{k}_{P, (\text{--})}$ is defined as follows. 
Let 
$$(\mb{t}, (M, (V, \sigma), e)) \in \Ob(\mathcal{K}^{k}\wr\mathcal{W}^{k}_{P, (\text{--})}).$$
Let $M' \subset (-\infty, 1]\times\R^{\infty-1}$ be the submanifold defined by the pushout,
$
M' = M\cup_{P}([0, 1]\times P).
$
By keeping $V$ and $e$ the same, the tuple $( \mb{t}, (M', (V, \sigma), e))$ determines a well defined element of $\Ob(\mathcal{K}^{k}\wr\mathcal{W}^{k}_{P, (\text{--})})$.
The functor $\cyl$ is defined by 
\begin{equation} \label{equation: definition of cal}
\cyl\left(\mb{t}, (M, (V, \sigma), e)\right) \; = \; \left(\mb{t}, (M', (V, \sigma), e)\right),
\end{equation}
with $M'$ defined as above.
\end{defn}

Next, we define the natural transformation 
$s_{k}: \mathcal{K}^{k}\wr\mathcal{W}^{k}_{P, (\text{--})}   \longrightarrow \mathcal{K}^{k}\wr\mathcal{W}^{k}_{P, (\text{--})}.$
To do this we need to specify some preliminary data. 
Fix once and for all a symmetric bilinear form 
$$\sigma^{d-k}_{\R^{d+1}}: \R^{d+1}\otimes\R^{d+1} \longrightarrow \R$$ 
of index $d-k$. 
Fix a $\theta$-structure $\hat{\ell}_{\R^{d+1}}$ on $\R^{d+1}$. 
For $\R^{d+1}$ equipped with $\hat{\ell}_{\R^{d+1}}$, the pair 
$(\R^{d+1}, \sigma^{d-k}_{\R^{d+1}})$ determines an element of the space $G^{\mf}_{\theta}(\R^{\infty})_{\loc}$. 

\begin{defn} \label{defn: functor s-k}
The functor $s_{k}: \mathcal{K}^{k}\wr\mathcal{W}^{k}_{P, (\text{--})}   \longrightarrow \mathcal{K}^{k}\wr\mathcal{W}^{k}_{P, (\text{--})} $ is defined as follows. 
Let
$(\mb{t}, (M, (V, \sigma), e)) \in \Ob(\mathcal{K}^{k}\wr\mathcal{W}^{k}_{P, (\text{--})}).$ 
Fix an embedding  
\begin{equation} \label{equation: cylindrical surgery data}
\phi: D^{d-k}\times D^{k+1} \longrightarrow (0, 1)\times\R^{\infty-1},
\end{equation}
that satisfies the following conditions:
\begin{enumerate} \itemsep.2cm
\item[(i)] $\phi^{-1}([0, 1]\times P) = S^{d-k-1}\times D^{k+1}$; 
\item[(ii)] the restriction of $\hat{\ell}_{[0, 1]\times P}$ to $S^{d-k-1}\times D^{k+1}$ agrees with the $\theta$-structure $\hat{\ell}_{\R^{d+1}}|_{S^{d-k-1}\times D^{k+1}}$. 
\item[(iii)] the surgered manifold, 
$$\left[([0, 1]\times P)\setminus\phi(S^{d-k-1}\times D^{k+1})\right]\bigcup\phi(D^{d-k}\times S^{k}),$$
agrees with  
$$H_{k, d-k}(P) \subset [0, 1]\times\R^{\infty-1}$$
as a $\theta$-manifold. 
\end{enumerate}
Since $H_{k, d-k}(P)$ is obtained by forming the connected sum of $S^{k}\times S^{d-k}$ with $[0, 1]\times P$, in order to satisfy condition (ii), it will suffice to choose $\phi$ so that the embedding
$$\phi|_{S^{d-k-1}\times D^{k+1}}: S^{d-k-1}\times D^{k+1} \longrightarrow [0, 1]\times P$$ 
factors through some disk $D^{d} \hookrightarrow [0, 1]\times P$.
Indeed, performing surgery on an embedded $(d-k-1)$-sphere that bounds a disk has the effect of creating a connect-summand of $S^{d-k}\times S^{k}$.

We proceed to define 
$s_{k}(\mb{t}, (M, (V, \sigma), e)):$
\begin{enumerate} \itemsep.3cm
\item[(i)]
Let $M' = M\bigcup_{P}([0,1]\times P)$ be as in Definition \ref{defn: natural transformations}. 
\item[(ii)]
Let $\mb{t}' \in \mathcal{K}^{k}$ be the object obtained by adjoining one more point, $x$, to $\mb{t}$, with label $d-k$. 
\item[(iii)] 
The element $(V', \sigma') \in \Maps(\mb{t}', \; G^{\mf}_{\theta}(\R^{\infty})_{\loc})$ is defined by setting 
$$(V'(i), \sigma'(i)) = (V(i), \sigma(i))$$ 
for $i \in \mb{t}$, and $(V'(x), \sigma'(x)) = (\R^{d+1}, \sigma^{d-k}_{\R^{d+1}})$.
\item[(iv)]
The embedding
$e': D((V')^{-})\times_{\mb{t}'}D((V')^{+}) \longrightarrow (-1, 0]\times(-1, 1)^{\infty-1}$
is set equal to $e$ over $\mb{t}$, and to agree with $\phi$ from (\ref{equation: cylindrical surgery data}) 
on the component corresponding to the element $x \in \mb{t}'$. 
\end{enumerate}
We define $s_{k}(\mb{t}, (M, (V, \sigma), e))$ by setting 
\begin{equation}
s_{k}(\mb{t}, (M, V, e)) \; := \; (\mb{t}', (M', (V', \sigma'), e')).
\end{equation}
Since any morphism $\mb{s} \longrightarrow \mb{t}$ in $\mathcal{K}^{k}$ induces a unique morphism $\mb{s}\sqcup\{x\} \longrightarrow \mb{t}\sqcup\{x\}$, it follows that the operation $s_{k}$ is functorial. 
\end{defn} 
We now have all of the ingredients ready to finish the proof of Proposition \ref{proposition: H-k induces iso}. 
\begin{proof}[Proof of Proposition \ref{proposition: H-k induces iso}]
The proof will be complete once we construct the natural transformations 
\begin{equation} \label{equation: natural transformations}
\xymatrix{
\mb{T}^{k}(P) & s_{k}  \ar@{=>}[l]  \ar@{=>}[r] & \cyl.  
}
\end{equation}
First, let $b_{d-k}: \mathcal{K}^{k} \longrightarrow \mathcal{K}^{k}$ be the functor defined by the formula $b_{d-k}(\mb{t}) = \mb{t}\sqcup\{x\}$, where $x$ is a point with label $d-k$.
For $\mb{t} \in \mathcal{K}^{k}$, let 
$$(i_{\mb{t}}, \varepsilon^{x}_{\pm}): \mb{t} \longrightarrow b_{d-k}(\mb{t})$$
be the morphisms (one for each case $+1$ or $-1$)
 defined by letting $i_{\mb{t}}$ be the inclusion 
 $$\mb{t} \hookrightarrow s_{k}(\mb{t}) = \mb{t}\sqcup\{x\},$$ 
 and $\varepsilon^{x}_{\pm}(x) = \pm1$.
 The assignments 
 $$\mb{t} \mapsto (i_{\mb{t}}, \varepsilon^{x}_{+}) \quad \text{and} \quad \mb{t} \mapsto (i_{\mb{t}}, \varepsilon^{x}_{-})$$ 
 yield two different natural transformations between the identity functor and the functor $b_{d-k}$.
 We will use these natural transformations to construct the natural transformations (\ref{equation: natural transformations}).
Let 
$$(\mb{t}, (M, (V, \sigma), e)) \in \mathcal{K}^{k}\wr\mathcal{W}^{k}_{P, (\text{--})}.$$ 
The morphism $(i_{\mb{t}}, \varepsilon^{x}_{+}): \mb{t} \longrightarrow b_{d-k}(\mb{t})$ induces a morphism 
$$(i_{T}, \varepsilon^{x}_{+})^{*}: s_{k}(b_{d-k}(\mb{t}), (M, (V, \sigma), e)) \longrightarrow \cyl(\mb{t}, (M,(V, \sigma), e))$$
in $\mathcal{K}^{k}\wr\mathcal{W}^{k}_{P, (\text{--})}$.
This has the effect of forgetting the extra surgery data $\phi$ defined in (\ref{equation: cylindrical surgery data}) over the point $x \in b_{d-k}(\mb{t})$. 
This assignment 
$(\mb{t}, (M, (V, \sigma), e)) \; \mapsto \; (i_{\mb{t}}, \varepsilon^{x}_{+})^{*}$
did not depend on the data $(M, (V, \sigma), e)$, and thus it defines a natural transformation between $s_{k}$ and $\cyl$. 

For the other natural transformation $s_{k} \implies \mb{T}^{k}(P)$, we observe that $(i_{\mb{t}}, \varepsilon^{x}_{-}): \mb{t} \longrightarrow b_{d-k}(\mb{t})$ induces a morphism 
$$
(i_{\mb{t}}, \varepsilon^{x}_{-})^{*}: s_{k}(b_{d-k}(\mb{t}), (M, (V, \sigma), e)) \longrightarrow \mb{T}^{k}(P)(\mb{t}, (M, (V, \sigma), e)).
$$
Indeed, $(i_{\mb{t}}, \varepsilon^{x}_{-})^{*}$ is defined by performing the fibrewise surgery associated to the point $x = b_{d-k}(\mb{t})\setminus \mb{t}$. 
Since this surgery is given by the embedding $\phi$ from (\ref{equation: cylindrical surgery data}), it follows that this surgery creates a connected sum factor of $S^{k}\times S^{d-k}$, which is precisely what the functor $\mb{T}^{k}(P)$ does. 
This assignment 
$$
(\mb{t}, (M, (V, \sigma), e)) \mapsto (i_{\mb{t}}, \varepsilon^{x}_{-})^{*}
$$
yields the desired natural transformation between $s_{k}$ and $\mb{T}^{k}(P)$.
It follows that the map on classifying spaces induced by $\mb{T}^{k}(P)$ is homotopic to the map on classifying spaces induced by $\cyl$. 
Since $\cyl$ induces a weak homotopy equivalence on classifying spaces (which is actually homotopic to the identity map) it follows that $\mb{T}^{k}(P)$ induces a weak homotopy equivalence as well. 
This completes the proof of the proposition.
\end{proof}

\section{Surgeries and Traces} \label{section: surgeries and traces}
\subsection{Surgeries and Traces} \label{subsection: surgeries and traces}
We now use Proposition \ref{proposition: H-k induces iso} to show that many other cobordisms beyond $H_{k, d-k}(P)$ are contained in $\mathcal{V}^{k-1}$.
We will need to work with morphisms in the category $\Cob^{k-1}_{\theta, d}$ that arise as the traces of surgeries on objects in $\Cob^{k-1}_{\theta, d}$.
In order to do this we introduce a series of new general constructions.

\begin{Construction}[Standard Trace] \label{Construction: trace of surgery}
Fix a non-negative integer $l < d$.
We fix once and for all a submanifold 
$$
T_{l} \; \subset \; [0, 1]\times D^{l}\times D^{d-l} \; \subset \; \R^{l}\times\R^{d-l}
$$
with the following properties:
\begin{enumerate} \itemsep.2cm
\item[(a)] The height function, $T_{l} \hookrightarrow [0, 1]\times D^{l}\times D^{d-l} \stackrel{\text{proj}} \longrightarrow [0, 1]$, is Morse with one critical point in the interior with index $l$. 
\item[(b)] The intersections with $\{0\}\times S^{l-1}\times D^{d-l}$, $\{1\}\times D^{l}\times S^{d-l-1}$, and $[0,1]\times S^{l-1}\times S^{d-l-1}$ are given by:
$$
\begin{aligned}
T_{l}\cap(\{0\}\times S^{l-1}\times D^{d-l}) \; &= \; \{0\}\times S^{l-1}\times D^{d-l}, \\
T_{l}\cap(\{1\}\times D^{l}\times S^{d-l-1}) \; &= \; \{1\}\times D^{l}\times S^{d-l-1}, \\
T_{l}\cap([0,1]\times S^{l-1}\times S^{d-l-1}) \; &= \; [0,1]\times S^{l-1}\times S^{d-l-1}.
\end{aligned}
$$
\item[(c)] 
The boundary $\partial T_{l}$ is given by the union 
$$
\partial T_{l} \; = \; (\{0\}\times S^{l-1}\times D^{d-l})\cup(\{1\}\times D^{l}\times S^{d-l-1})\cup([0,1]\times S^{l-1}\times S^{d-l-1}).
$$
\end{enumerate}
We refer to $T_{l}$ as the \textit{trace} of surgery along $S^{l-1}\times D^{d-l} \subset S^{l-1}\times\R^{d-l}$.
The Morse trajectories associated to the height function $T_{l} \longrightarrow [0, 1]$ determine embeddings:
$$\begin{aligned}
\beta_{0}: (D^{l}\times D^{d-l}, \; S^{l-1}\times D^{d-l}) &\longrightarrow (T_{l}, \; \{0\}\times S^{l-1}\times D^{d-l}), \\
\beta_{1}: (D^{l}\times D^{d-l}, \; D^{l}\times S^{d-l-1}) &\longrightarrow (T_{l}, \; \{1\}\times D^{l}\times S^{d-l-1}).
\end{aligned}$$
We call these embeddings the \textit{standard core handles} associated to $T_{l}$. 
We note any choice of $\theta$-structure  
$
\phi: T(D^{l}\times D^{d-l})\oplus\epsilon^{1} \longrightarrow \theta^{*}\gamma^{d+1}
$
that extends $\hat{\ell}$ determines a $\theta$-structure on $T_{l}$ uniquely up to homotopy. 
\end{Construction}
We use the above trace construction to define morphisms in the category $\Cob^{k-1}_{\theta, d}$.
\begin{defn}[Trace of a surgery]  \label{defn: trace of surgery}
Let $P \in \Ob\Cob^{k-1}_{\theta, d}$. 
Let $\sigma: \R^{l}\times \R^{d-l} \longrightarrow \R^{\infty-1}$ be an embedding that satisfies,
$\sigma^{-1}(P) = S^{l-1}\times\R^{d-l}.$
Let 
$\phi: T(\R^{l}\times\R^{d-l}) \longrightarrow \theta^{*}\gamma^{d}$ 
be a $\theta$-structure that extends $\hat{\ell}_{P}\circ D\sigma: T(S^{l-1}\times D^{d-l})\oplus\epsilon^{1}  \longrightarrow  \theta^{*}\gamma^{d}$.
The pair $(\sigma, \phi)$ will be called \textit{surgery data of degree $l-1$}. 
We denote by $\Surg_{l-1}(P)$ the set of all such surgery data $(\sigma, \phi)$ of degree $l-1$.

Using $\sigma := (\sigma, \phi) \in \Surg_{l-1}(P)$ we define a morphism in $\Cob^{k-1}_{\theta, d}$,
\begin{equation} \label{equation: trace of generic surgery}
T(\sigma): P \rightsquigarrow P(\sigma),
\end{equation}
by setting,
$$
T(\sigma) \; = \; \left[(I\times P)\setminus(I\times\sigma(S^{l-1}\times D^{d-l}))\right]\bigcup(\textstyle{\Id_{I}}\times\sigma)(T_{l}). 
$$
The manifold $P(\sigma)$ is defined to be $T(\sigma)\cap(\{1\}\times\R^{\infty-1})$.
The $\theta$-structure $\hat{\ell}_{T(\sigma)}$ on $T(\sigma)$ is the one induced by the extended $\theta$-structure $\phi$. 
\end{defn}

\begin{defn}[Dual surgery] \label{defn: dual surgery}
Let $\sigma \in \Surg_{l-1}(P)$ and consider the trace $T(\sigma): P \rightsquigarrow P(\sigma)$.
The embedding 
$\beta_{1}: (D^{l}\times D^{d-l}, \; D^{l}\times S^{d-l-1}) \longrightarrow (T_{l}, \; \{1\}\times D^{l}\times S^{d-l-1})$
from Definition \ref{Construction: trace of surgery} determines an embedding 
$\beta_{1}(\sigma): (D^{l}\times D^{d-l}, \;D^{l}\times S^{d-l-1}) \longrightarrow (T(\sigma), P(\sigma)).$
Let 
$$
\bar{\sigma}: \R^{l}\times\R^{d-l} \longrightarrow \R^{\infty-1}
$$
be an embedding with $\bar{\sigma}^{-1}(P(\sigma)) = \R^{l}\times S^{d-l-1}$, that extends $\beta_{1}(\sigma)$. 
Let 
$$\bar{\phi}: T(\R^{l}\times\R^{d-l})\oplus\epsilon^{1} \longrightarrow \theta^{*}\gamma^{d+1}$$ 
be a $\theta$-structure obtained by restricting $\hat{\ell}_{T(\sigma)}$. 
The pair 
$
\bar{\sigma} = (\bar{\sigma}, \bar{\phi})
$
defines surgery data in $P(\sigma)$ of degree $d-l-1$.
We call $\bar{\sigma}$ the \textit{surgery dual to $\sigma$}.
\end{defn}

\begin{remark} \label{equation: choices involved in dual surgery}
In the above definition, the construction of the dual surgery $\bar{\sigma} = (\bar{\sigma}, \bar{\phi})$ depended on choices.
However, the $\theta$-diffeomorphism class of the trace $T(\bar{\sigma})$ does not depend on any of these choices made. 
In particular, the $\theta$-diffeomorphism class of the resulting surgered manifold $P(\sigma)(\bar{\sigma})$ does not depend on the choices, and it follows that 
$P(\sigma)(\bar{\sigma}) \; \cong \; P$
for all such choices of $\bar{\sigma}$. 
For this reason, this indeterminacy in the definition of the dual surgery $\bar{\sigma}$ wont affect any of our results.
\end{remark}

We will also have to form the traces of multiple disjoint surgeries simultaneously. 
\begin{defn}[Simultaneous surgeries] \label{defn: simultaneous trace of multiple surgeries}
Let $(\sigma_{1}, \phi_{1}), \dots, (\sigma_{p}, \phi_{p})$ be a collection of surgery data for $P \in \Ob\Cob^{k-1}_{\theta , d}$ that satisfies,
$\sigma_{i}(\R^{l_{i}}\times\R^{d-l_{i}})\cap\sigma_{j}(\R^{l_{j}}\times\R^{d-l_{j}}) \; = \; \emptyset,$ for all $i \neq j$.
We may form their \textit{simultaneous trace}
\begin{equation} \label{equation: simultaneous trace}
T(\sigma_{1}, \dots, \sigma_{p}): P \rightsquigarrow P(\sigma_{1}, \dots, \sigma_{p})
\end{equation}
by setting $T(\sigma_{1}, \dots, \sigma_{p})$ equal to,
$$
\left[(I\times P)\setminus\cup_{i=1}^{p}(I\times\sigma_{i}(S^{l-1}\times D^{d-l}))\right]\bigcup\cup_{i=1}^{p}(\textstyle{\Id_{I}}\times\sigma_{i})(T_{l}).
$$
The associated $\theta$-structure $\ell_{T(\sigma_{1}, \dots, \sigma_{p})}$ on $T(\sigma_{1}, \dots, \sigma_{p})$ is defined in the same way as above using $\phi_{1}, \dots, \phi_{p}$.
\end{defn}

\begin{Construction}[Interchange of Disjoint Surgeries] \label{construction: interchange of surgeries}
Let $P \in \Ob\Cob^{k-1}_{\theta, d}$. 
Let $\sigma_{1}$ and $\sigma_{2}$ be disjoint surgery data for $P$ of degrees $l_{1}$ and $l_{2}$ respectively (by disjoint we mean that $\Image(\sigma_{1})\cap\Image(\sigma_{2}) = \emptyset$).
Consider the traces:
$$\begin{aligned}
T(\sigma_{1}):& P \rightsquigarrow P(\sigma_{1}), \\
T(\sigma_{2}):& P \rightsquigarrow P(\sigma_{2}).
\end{aligned}$$
Notice that the cobordism $T(\sigma_{2})$ agrees with the cylinder $[0, 1]\times P \subset [0, 1]\times\R^{\infty-1}$ outside of the submanifold
$
(\textstyle{\Id_{I}}\times\sigma_{2})(T_{l_{1}}) \; \subset \; T(\sigma_{2}).
$
Furthermore, 
$$
P(\sigma_{2})\setminus\Image(\sigma_{2}) \; = \; P\setminus\Image(\sigma_{2}).
$$
Since the images of $\sigma_{1}$ and $\sigma_{2}$ are disjoint, it follows that 
$$\{1\}\times\sigma_{1}(S^{l_{1}}\times D^{d-l_{1}}) \subset P(\sigma_{2})\setminus\Image(\sigma_{2}).$$
Thus, $\sigma_{1}$ determines surgery data of degree $l_{1}$ in $P(\sigma_{2})$.
We denote this new surgery data by $\mathcal{R}_{\sigma_{2}}(\sigma_{1})$. 
Its trace yields a cobordism 
\begin{equation} \label{equation: interchange of trace}
T(\mathcal{R}_{\sigma_{2}}(\sigma_{1})): P(\sigma_{2}) \rightsquigarrow P(\sigma_{1}, \sigma_{2}).
\end{equation}
The manifold on the right is precisely the result of the target of the simultaneous trace described in Definition \ref{defn: trace of surgery}. 
It follows directly from the construction that 
\begin{equation} \label{equation: simultaneous + composed surgery}
T(\mathcal{R}_{\sigma_{2}}(\sigma_{1}))\circ T(\sigma_{2}) \; = \; T(\sigma_{1}, \sigma_{2})
\end{equation}
as cobordisms from $P$ to $P(\sigma_{1}, \sigma_{2})$.
\end{Construction}

\begin{defn} \label{defn: linked surgeries}
Let $P \in \Ob\Cob^{k-1}_{\theta, d}$ and let $\sigma \in \Surg_{l-1}(P)$. 
We denote by $\mathcal{L}(P, \sigma)$ the set of all surgeries $\alpha \in \Surg_{d-l-1}(P)$ that satisfy:
\begin{enumerate} \itemsep.2cm
\item[(i)] $\alpha(D^{d-l}\times D^{l})\cap\sigma(D^{l}\times D^{d-l}) = \emptyset$;
\item[(ii)] the embedding $\alpha|_{S^{d-l-1}\times\{0\}}: S^{d-l-1}\times\{0\} \longrightarrow P\setminus\sigma(S^{l-1}\times\{0\})$ is isotopic to the embedding 
$$
\sigma|_{\{x\}\times S^{d-l-1}}: \{x\}\times S^{d-l-1} \longrightarrow P\setminus\sigma(S^{l-1}\times\{0\})
$$
for some $x \in S^{l-1}$. 
\end{enumerate}
\end{defn}

For any surgery datum $\sigma$, the set $\mathcal{L}(P, \sigma)$ is always non-empty. 
To find an element of $\mathcal{L}(P, \sigma)$ one simply takes the thickening of a small displacement of $\sigma(\{x\}\times S^{d-l-1})$. 
The following proposition is proven directly by unpacking Construction \ref{construction: interchange of surgeries}. 
\begin{proposition} \label{proposition: interchange of linked surgeries}
For $P \in \Ob\Cob^{k-1}_{\theta, d}$, let $\sigma_{1}, \sigma_{2}$, and $\sigma_{3}$ be mutually disjoint surgery data. 
Suppose that $\sigma_{3} \in \mathcal{L}(P, \sigma_{2})$. 
Then $\mathcal{R}_{\sigma_{1}}(\sigma_{3}) \in \mathcal{L}(P(\sigma_{1}), \mathcal{R}_{\sigma_{1}}(\sigma_{2}))$.
\end{proposition}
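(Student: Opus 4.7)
The plan is to verify directly the two conditions of Definition 7.4 for the pair $(\mathcal{R}_{\sigma_{1}}(\sigma_{3}), \mathcal{R}_{\sigma_{1}}(\sigma_{2}))$ in the surgered manifold $P(\sigma_{1})$. The key observation underlying the argument is that, by Construction 7.6, the operation $\mathcal{R}_{\sigma_{1}}$ does not alter the underlying smooth embedding $\R^{l}\times\R^{d-l}\to \R^{\infty-1}$ of a surgery datum or its $\theta$-structure: it merely reinterprets such a datum as surgery data on $P(\sigma_{1})$ via the canonical identification
\[
P\setminus \sigma_{1}(S^{l_{1}-1}\times D^{d-l_{1}}) \; = \; P(\sigma_{1})\setminus \sigma_{1}(D^{l_{1}}\times S^{d-l_{1}-1}),
\]
which is meaningful precisely because any surgery datum disjoint from $\sigma_{1}$ has image contained in this common complement.

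For condition (i), the mutual disjointness of $\sigma_{1}, \sigma_{2}, \sigma_{3}$ in $\R^{\infty-1}$ gives immediately that the images of $\mathcal{R}_{\sigma_{1}}(\sigma_{2})$ and $\mathcal{R}_{\sigma_{1}}(\sigma_{3})$ lie in the common complement and are disjoint there, hence disjoint as subsets of $P(\sigma_{1})$.

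For condition (ii), I would start from the isotopy
$F\colon S^{d-l-1}\times [0,1]\to P\setminus \sigma_{2}(S^{l-1}\times\{0\})$
furnished by $\sigma_{3}\in\mathcal{L}(P,\sigma_{2})$, interpolating between $\sigma_{3}|_{S^{d-l-1}\times\{0\}}$ and $\sigma_{2}|_{\{x\}\times S^{d-l-1}}$. Both endpoint embeddings already take values in $P\setminus\Image(\sigma_{1})$. Applying transversality to $F$ relative to its boundary inside $P\setminus\sigma_{2}(S^{l-1}\times\{0\})$, one arranges $F$ to be transverse to the core sphere $\sigma_{1}(S^{l_{1}-1}\times\{0\})$; the resulting intersection is a submanifold of dimension $l_{1}-l$. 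Shrinking the normal radius of $\sigma_{1}$ (which replaces $\sigma_{1}$ by a surgery datum $\theta$-isotopic to it and hence produces a canonically $\theta$-diffeomorphic $P(\sigma_{1})$), one can push $F$ off an $\varepsilon$-tubular neighborhood of this core. The resulting map $\widetilde{F}$ lands in the common complement and, viewed inside $P(\sigma_{1})$, provides an isotopy in $P(\sigma_{1})\setminus\mathcal{R}_{\sigma_{1}}(\sigma_{2})(S^{l-1}\times\{0\})$ with the required endpoints.

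The main obstacle in this plan is the dimension count in the last paragraph: the transversality/pushing-off step goes through cleanly only when $l_{1}-l<0$, i.e.\ when the degree of $\sigma_{1}$ is strictly less than that of $\sigma_{2}$. In the remaining cases one must re-route $F$ through the newly attached handle $\sigma_{1}(D^{l_{1}}\times S^{d-l_{1}-1})\subset P(\sigma_{1})$ at each transverse intersection point, replacing the short arc of $F$ through the removed region by an arc through the dual handle. All subsequent invocations of Proposition 7.7 in this paper occur in the favorable dimension range, so the straightforward general-position version of the argument is what is needed in practice.
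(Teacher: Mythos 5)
Your verification of condition~(i) is correct and essentially immediate, and you have correctly isolated condition~(ii) as the only possible source of difficulty --- something the paper elides entirely (it dismisses the proposition as ``proven directly by unpacking Construction~\ref{construction: interchange of surgeries},'' a construction which concerns only the reinterpretation of embeddings and says nothing about isotopies). Your general-position count is also right: the isotopy $F$ has a $(d-l)$-dimensional source, the core sphere of $\sigma_{1}$ has dimension $l_{1}-1$ in a $(d-1)$-manifold, and the generic intersection has dimension $l_{1}-l$.

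The problem is your final claim that the remaining invocations of the proposition in this paper fall into the favorable range $l_{1}-l<0$. They do not. In the proof of Corollary~\ref{corollary: trivial d-l handle}, one starts with $\sigma_{0}\in\Surg^{\tr}_{d-k-1}(P)$, takes $\sigma_{1}\in\mathcal{L}(P,\sigma_{0})\subset\Surg_{k-1}(P)$, then $\sigma_{2}\in\mathcal{L}(P,\sigma_{1})\subset\Surg_{d-k-1}(P)$, and the proposition is invoked with $(\sigma_{1},\sigma_{2},\sigma_{3})=(\sigma_{0},\sigma_{1},\sigma_{2})$. In your notation that is $l_{1}=d-k$ and $l=k$, so $l_{1}-l=d-2k>0$ precisely because $k<d/2$. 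The straightforward push-off therefore fails in the application you would most want it to cover, and the re-routing through the dual handle that you sketch but do not carry out becomes essential. To complete the argument you would need either to carry through that re-routing (showing that along the $(d-2k)$-dimensional transverse locus the part of $F$ crossing $\sigma_{1}(S^{l_{1}-1}\times D^{d-l_{1}})$ can be replaced by a map into $\sigma_{1}(D^{l_{1}}\times S^{d-l_{1}-1})$ with matching boundary, and that this can be done within the class of embeddings), or to observe that in the paper's applications $\sigma_{3}$ can always be taken to be a small push-off of $\sigma_{2}$ whose witnessing isotopy is supported in an arbitrarily small tubular neighbourhood of $\Image(\sigma_{2})$, hence is automatically disjoint from $\Image(\sigma_{1})$ before any transversality argument is needed.
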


\subsection{First results} \label{subsection: first results}
We now apply the constructions from the previous subsection. 
We need one more definition (which is just a rewording of Definition \ref{defn: trivial and primitive surgery}, (part (i)).
\begin{defn} \label{defn: trivial surgery data}
Let $P \in \Ob\Cob^{k-1}_{\theta, d}$.
Surgery data $\sigma \in \Surg_{l}(P)$ (of any degree $l$) is said to be \textit{trivial} if the embedding 
$\sigma|_{S^{l}\times D^{d-l-1}}: S^{l}\times D^{d-l-1} \longrightarrow P$ 
factors through some embedding $\R^{d-1} \hookrightarrow P$. 
We denote the set of all trivial surgery data of degree $l$ by $\Surg^{\tr}_{l}(P) \subset \Surg_{l}(P)$.
\end{defn}

\begin{proposition} \label{proposition: tori composite}
Let $P \in \Ob\Cob^{k-1}_{\theta, d}$.
Let $\sigma \in \Surg^{\tr}_{i}(P)$, with $i = k-1$ or $d-k-1$. 
Let $\alpha \in \mathcal{L}(P, \sigma)$. 
Then the cobordism 
$
T(\sigma, \alpha): P \rightsquigarrow P(\sigma, \alpha)
$
lies in $\mathcal{V}^{k-1}$.
\end{proposition}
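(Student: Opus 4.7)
The plan is to prove Proposition \ref{proposition: tori composite} by reducing it to Proposition \ref{proposition: H-k induces iso}, by exhibiting an isomorphism $T(\sigma, \alpha) \cong H_{k, d-k}(P)$ in the morphism space of $\Cob^{k-1}_{\theta, d}$, modulo the canonical identification $P(\sigma, \alpha) \cong P$. The identification $P(\sigma, \alpha) \cong P$ itself will follow from the factorization $T(\sigma, \alpha) = T(\mathcal{R}_{\sigma}(\alpha)) \circ T(\sigma)$ of equation (\ref{equation: simultaneous + composed surgery}): the linking hypothesis $\alpha \in \mathcal{L}(P, \sigma)$ ensures that the attaching sphere of $\mathcal{R}_{\sigma}(\alpha)$ in $P(\sigma)$ is isotopic to the belt sphere of the $k$-handle attached by $\sigma$, so $\mathcal{R}_{\sigma}(\alpha)$ is (up to isotopy) the dual surgery $\bar{\sigma}$ of Definition \ref{defn: dual surgery}, and Remark \ref{equation: choices involved in dual surgery} then yields $P(\sigma, \alpha) \cong P$.

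The geometric heart of the argument is a local claim. Since $\sigma \in \Surg^{\tr}_{k-1}(P)$ is trivial, its image can be ambient-isotoped into a small coordinate disk $D^{d-1} \subset P$; the linking condition then lets $\alpha$'s image also be isotoped into a tubular neighborhood of the same disk. Inside this disk the configuration is a trivial $k$-handle together with a $(d-k)$-handle whose attaching $S^{d-k-1}$ Hopf-links the attaching $S^{k-1}$ of the $k$-handle (linking number $\pm 1$). A standard handle-theoretic identification shows that this linked pair, attached to $[0,1] \times D^{d-1}$, realizes the local model for the connect sum with $S^{k} \times S^{d-k}$: the two handles together with the attaching region are diffeomorphic to $(S^k \times S^{d-k}) \setminus (D^d \sqcup D^d)$ glued into $[0,1] \times P$ along two boundary $S^{d-1}$'s, matching the local structure of $H_{k, d-k}(P)$. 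A consistency check via Euler characteristic gives $\chi(T(\sigma, \alpha)) = \chi(P) + (-1)^k + (-1)^{d-k} = \chi(H_{k, d-k}(P))$, so both cobordisms admit Morse functions with exactly two critical points, of indices $k$ and $d-k$.

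For $\theta$-structure compatibility: the triviality of $\sigma$ forces the extension $\phi$ on the $k$-handle to factor through a bundle over the contractible disk bounded by $\sigma(S^{k-1} \times \{0\})$, so $\phi$ is null-homotopic and uniquely determined up to homotopy rel boundary; the same holds for $\alpha$ by the linking condition, which places $\alpha$'s image in the same contractible region. This matches the null-homotopic $\hat{\ell}_{k, d-k}$ stipulated in the definition of $H_{k, d-k}(P)$, so the smooth isotopy upgrades to an isotopy of $\theta$-cobordisms in $[0,1] \times \R^{\infty-1}$, placing $T(\sigma, \alpha)$ and $H_{k, d-k}(P)$ in the same path-component of $\Mor_{\Cob^{k-1}_{\theta, d}}(P, P)$. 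Since $\mb{S}_W$ depends only on the path-component of $W$, we obtain $\mb{S}_{T(\sigma, \alpha)} \simeq \mb{S}_{H_{k, d-k}(P)}$ (modulo the induced homeomorphism $\mathcal{W}^k_{P(\sigma, \alpha), \mb{t}} \cong \mathcal{W}^k_{P, \mb{t}}$), and Proposition \ref{proposition: H-k induces iso} then gives $T(\sigma, \alpha) \in \mathcal{V}^{k-1}$. The symmetric case $i = d-k-1$ is settled by interchanging the roles of $\sigma$ and $\alpha$.

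The main technical obstacle is the $\theta$-structured handle identification in the local picture: carefully checking that the space of compatible extensions $(\phi, \phi')$ for a trivial-$k$/linked-$(d-k)$ handle pair is path-connected to the null-homotopic class in $\Bun$. This rests on both the disk-bounding property of a trivial $\sigma$ and the localizing effect of the linking condition on $\alpha$, together with the contractibility of spaces of $\theta$-structures on disks extending a given structure on the boundary.
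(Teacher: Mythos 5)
Your proof is correct and takes essentially the same route as the paper's: both reduce to Proposition \ref{proposition: H-k induces iso} by observing that the triviality of $\sigma$ forces both surgery regions into a coordinate disk in $P$, where the linking condition exhibits a Hopf-linked handle pair whose attachment to $[0,1]\times P$ realizes the connected sum with $S^k\times S^{d-k}$, i.e.\ the cobordism $H_{k,d-k}(P)$. The paper phrases the disk-localization as a reduction to the special case $P\cong S^{d-1}$, while you work directly in the embedded disk and supply more detail on the $\theta$-structure compatibility (and the auxiliary $P(\sigma,\alpha)\cong P$ and Euler-characteristic checks), but these are expository differences, not a different argument.
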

\begin{proof}
We first prove the proposition in the special case that $P \cong S^{d-1}$. 
With this assumption, since $\alpha \in \mathcal{L}(P, \sigma)$ it follows directly from Definition \ref{defn: linked surgeries} that the embedded spheres $\sigma(S^{i}), \alpha(S^{d-i-2}) \; \subset \; P$ have linking number equal to $1$ (this linking number is well defined because $P \cong S^{d-1}$). 
It follows that the manifold obtained by attaching handles to $P$ along $\sigma(S^{i}\times D^{d-i-1})$ and $\alpha(S^{d-i-2}\times D^{i+1})$ is diffeomorphic to $([0, 1]\times P)\#(S^{k}\times S^{d-k})$.
From this it follows that $T(\sigma, \alpha)$ is equivalent to the cobordism $H_{k, d-k}(P)$, and thus by
Proposition \ref{proposition: H-k induces iso} $T(\sigma, \alpha)$ is contained in $\mathcal{V}^{k-1}$.
This proves the proposition in the special case that $P \cong S^{d-1}$. 
For the general case, we observe that since $\sigma$ is a trivial surgery by assumption, it follows that both $\sigma(S^{i}\times D^{d-i-1})$ and $\alpha(S^{d-i-2}\times D^{i+1})$ are contained in a disk in $P$. 
Using this fact, the general case reduces to the case where $P \cong S^{d-1}$ which was proven above. 
\end{proof}

The next theorem provides a partial upgrade of the previous proposition. 
Its proof requires significantly more work and thus is postponed until the next section.
\begin{theorem} \label{theorem: arbitrary handles}
Let $P \in \Ob\Cob^{k-1}_{\theta, d}$. 
Let $\sigma \in \Surg_{k-1}(P)$ be an \textbf{arbitrary} surgery and let $\alpha \in \mathcal{L}(P, \sigma)$. 
Then $T(\sigma, \alpha)$ lies in $\mathcal{V}^{k-1}$. 
\end{theorem}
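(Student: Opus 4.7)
My plan is to reduce Theorem \ref{theorem: arbitrary handles} to Proposition \ref{proposition: tori composite} by exploiting the weak equivalence $\mb{S}_{H_{k,d-k}(P)}$ given by Proposition \ref{proposition: H-k induces iso}. The core idea is a stabilization-and-handle-slide argument: precomposition with $H_{k,d-k}(P)$ introduces a trivially-positioned $S^{k} \times S^{d-k}$ summand in the cobordism, and I will use the ``free'' $k$-handle of this summand as a parking spot onto which the attaching sphere of $\sigma$ can be slid. Once $\sigma$ is slid into a coordinate disk, the case already handled by Proposition \ref{proposition: tori composite} applies, and a two-out-of-three argument in $\mathcal{V}^{k-1}$ closes the proof.

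Concretely, I consider the composite $T(\sigma, \alpha) \circ H_{k,d-k}(P): P \rightsquigarrow P$. Because $H_{k,d-k}(P)$ is built from a pair of linked trivial handles supported in a small disk of $P$, its $k$-handle furnishes a disk in the cobordism whose boundary can be used to cancel any homotopy obstruction to trivializing $\sigma$. The hypothesis $k < d/2$ is essential here: the attaching sphere $\sigma(S^{k-1}) \subset P$ has codimension $d - k > k$, so by general position I can perform an ambient handle slide that isotopes $\sigma$ in the stabilized cobordism to trivial surgery data $\sigma_0 \in \Surg^{\tr}_{k-1}(P)$, disjoint from the $H_{k,d-k}(P)$ region. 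The linked $\alpha$ is tracked along this slide (using the interchange result of Proposition \ref{proposition: interchange of linked surgeries} and the $\mathcal{L}$-preservation property) to a linked $\alpha_0 \in \mathcal{L}(P, \sigma_0)$. The output is a $\theta$-diffeomorphism, rel $P \sqcup P$, between $T(\sigma, \alpha) \circ H_{k,d-k}(P)$ and $T(\sigma_0, \alpha_0) \circ H_{k,d-k}(P)$.

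With this identification, the induced maps satisfy $\mb{S}_{T(\sigma, \alpha)} \circ \mb{S}_{H_{k,d-k}(P)} \simeq \mb{S}_{T(\sigma_0, \alpha_0)} \circ \mb{S}_{H_{k,d-k}(P)}$ on $\hocolim_{\mb{t} \in \mathcal{K}^{k}} \mathcal{W}^{k}_{P, \mb{t}}$. The right-hand side is a composition of weak equivalences by Propositions \ref{proposition: tori composite} and \ref{proposition: H-k induces iso}, so the left-hand side is a weak equivalence; then applying Proposition \ref{proposition: H-k induces iso} once more and the two-out-of-three property yields $T(\sigma, \alpha) \in \mathcal{V}^{k-1}$.

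The principal obstacle will be realizing the handle slide inside the $\theta$-category, not merely at the level of smooth manifolds. Specifically, one must arrange the ambient isotopy carrying $\sigma$ into the auxiliary disk to be compatible with the extended $\theta$-structure $\phi$, and verify that the resulting $\theta$-structure on the rearranged cobordism lies in the same path component of $\Bun(T(\text{--}) \oplus \epsilon^{1}, \theta^{*}\gamma^{d+1})$ as the one coming from $T(\sigma_0, \alpha_0)$. This is where the null-homotopy condition imposed on the $\theta$-structure of the $S^{k} \times S^{d-k}$ factor in $H_{k,d-k}(P)$ pays off: the freedom in the null-homotopy absorbs any discrepancy in the $\theta$-structures produced by the handle slide, yielding the required $\theta$-diffeomorphism rather than just a smooth one.
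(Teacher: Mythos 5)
The core move in your proposal---that precomposition with $H_{k,d-k}(P)$ provides a free $k$-handle over which $\sigma$ can be handle-slid into a coordinate disk---does not hold. For an \emph{arbitrary} $\sigma \in \Surg_{k-1}(P)$, the attaching sphere $\sigma(S^{k-1}\times\{0\})$ may represent a nontrivial isotopy (indeed homotopy) class in $P$. Connect-summing $P$ with $S^{k}\times S^{d-k}$ does not change $\pi_{k-1}$ for $k\geq 2$, and the $k$-handle of $H_{k,d-k}(P)$ is attached along a \emph{trivial} $(k-1)$-sphere, so a handle slide over it modifies $\sigma$'s attaching sphere only by an ambient connected sum with a null-homotopic sphere---leaving the isotopy class unchanged. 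The codimension condition $d-k>k$ gives general position, but general position cannot trivialize a homotopically essential sphere. Consequently the asserted $\theta$-diffeomorphism rel $P\sqcup P$ between $T(\sigma,\alpha)\circ H_{k,d-k}(P)$ and $T(\sigma_0,\alpha_0)\circ H_{k,d-k}(P)$ does not exist even as a smooth diffeomorphism rel boundary, and the reduction to Proposition \ref{proposition: tori composite} fails.

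What is missing is a mechanism that actually kills the class $[\sigma(S^{k-1}\times\{0\})]\in\pi_{k-1}(P)$ before Proposition \ref{proposition: tori composite} can be invoked. The paper's proof supplies this via a resolution rather than a stabilization: it forms the semi-simplicial space $\mathcal{M}^{k}_{P,\mb{t}}(\chi)_{\bullet}$ for a surgery datum $\chi$ of degree $k-1$ whose restricted embedding $\chi|_{S^{k-1}\times\{0\}}$ is chosen \emph{isotopic} to $\sigma|_{S^{k-1}\times\{0\}}$ (condition (c) in the proof). Passing to $P(\chi(p))$ attaches a $k$-handle along a parallel copy of $\sigma$'s attaching sphere, so that sphere now bounds a disk in $P(\chi(p))$; since $k<d/2$ this makes $\mathcal{R}_{\chi(p)}(\sigma)$ a trivial surgery, and Proposition \ref{proposition: tori composite} applies levelwise. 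Corollary \ref{corollary: homotopy colimit augmentation} then transports the levelwise equivalences back down along the augmentation. This ``resolve by handles killing $[\sigma]$'' step is the essential idea, and it cannot be replaced by precomposition with $H_{k,d-k}(P)$.
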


Before embarking on the proof of the above theorem we present a corollary.
\begin{corollary} \label{corollary: trivial d-l handle}
Let $P \in \Ob\Cob^{k-1}_{\theta, d}$. 
Let $T(\sigma): P \rightsquigarrow P(\sigma)$ be the trace of a trivial surgery in degree $d-k-1$. 
Then $T(\sigma)$ lies in $\mathcal{V}^{k-1}$.
\end{corollary}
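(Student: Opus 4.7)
Plan. Let $\sigma \in \Surg^{\tr}_{d-k-1}(P)$ and let $\bar{\sigma} \in \Surg_{k-1}(P(\sigma))$ be its dual $k$-handle surgery from Definition \ref{defn: dual surgery}. The strategy is to exhibit $T(\bar{\sigma})$ as both a homotopy left-inverse and a homotopy right-inverse to $T(\sigma)$ on the hocolims, thereby forcing $\mb{S}_{T(\sigma)}$ to be a weak homotopy equivalence.

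\emph{Step 1 (left inverse).} Since $\sigma$ is trivial, its image lies in an embedded disk $D^{d-1} \subset P$, so we may choose $\alpha \in \mathcal{L}(P, \sigma)$ whose attaching sphere is a longitude of $\sigma$ lying in that same disk; then $\alpha$ is also trivial. The $k$-handle $\mathcal{R}_{\sigma}(\alpha) \in \Surg_{k-1}(P(\sigma))$ has attaching sphere isotopic to the belt sphere of $\sigma$ in $P(\sigma)$, which is precisely the attaching sphere of $\bar{\sigma}$; after adjusting the $\theta$-structure (which, by Remark \ref{equation: choices involved in dual surgery}, does not alter the $\theta$-diffeomorphism class of the trace), we may assume $\mathcal{R}_{\sigma}(\alpha) = \bar{\sigma}$ as surgery data. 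Equation (\ref{equation: simultaneous + composed surgery}) then gives $T(\bar{\sigma}) \circ T(\sigma) = T(\sigma, \alpha)$, and Proposition \ref{proposition: tori composite} (applied to the trivial surgery $\sigma$ of degree $d-k-1$) shows that $T(\sigma, \alpha) \in \mathcal{V}^{k-1}$. Consequently $\mb{S}_{T(\bar{\sigma})} \circ \mb{S}_{T(\sigma)}$ is a weak homotopy equivalence.

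\emph{Step 2 (right inverse).} I will then analyze the reverse composition $T(\sigma) \circ T(\bar{\sigma}) : P(\sigma) \rightsquigarrow P(\sigma)$. By the definition of duality, the belt sphere of $\bar{\sigma}$ in $P(\sigma)(\bar{\sigma}) = P$ coincides, up to isotopy, with the attaching sphere of $\sigma$. The union of the cores of the $k$-handle $\bar{\sigma}$ and the dual $(d-k)$-handle $\sigma$ assembles into a smoothly embedded $d$-cell
\[
D^{k} \times D^{d-k} \;\cup_{D^{k} \times S^{d-k-1}}\; D^{d-k} \times D^{k} \;\cong\; D^{d}
\]
inside the composite cobordism, and a $d$-cell attached to the cylinder $[0,1]\times P(\sigma)$ can be absorbed into it. Because $\sigma$ is trivial — so the entire construction takes place inside a disk in $P$ where every $\theta$-structure is canonical up to homotopy — this smooth cancellation upgrades to a $\theta$-diffeomorphism $T(\sigma) \circ T(\bar{\sigma}) \cong [0,1] \times P(\sigma)$. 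Since the cylinder morphism induces a self-map of the hocolim that is homotopic to the identity, this yields $\mb{S}_{T(\sigma)} \circ \mb{S}_{T(\bar{\sigma})} \simeq \mathrm{id}$.

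\emph{Conclusion.} The right-inverse relation shows $\mb{S}_{T(\sigma)}$ is surjective on all homotopy groups, while the left-inverse relation (being a weak equivalence) shows $\mb{S}_{T(\sigma)}$ is injective on all homotopy groups. Therefore $\mb{S}_{T(\sigma)}$ is a weak homotopy equivalence, i.e.\ $T(\sigma) \in \mathcal{V}^{k-1}$. The main obstacle is Step 2: the underlying smooth handle-cancellation is classical, but one must carefully verify that the $\theta$-structure inherited from the two handle attachments extends to the product $\theta$-structure on the cylinder — this is where the hypothesis that $\sigma$ be \emph{trivial} enters essentially, since it localises the entire argument inside a disk where the $\theta$-structure is determined up to canonical homotopy.
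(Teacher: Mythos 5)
Your Step 1 is sound and essentially matches the first half of the paper's proof: since $\sigma$ is trivial of degree $d-k-1$, picking $\alpha\in\mathcal{L}(P,\sigma)$ and using $(\ref{equation: simultaneous + composed surgery})$ together with Proposition \ref{proposition: tori composite} gives $T(\mathcal{R}_\sigma(\alpha))\circ T(\sigma)=T(\sigma,\alpha)\in\mathcal{V}^{k-1}$. The problem is Step 2. The composite $T(\sigma)\circ T(\bar\sigma)\colon P(\sigma)\rightsquigarrow P(\sigma)$ is \emph{not} a cylinder, because the attaching sphere of $\sigma$'s $(d-k)$-handle is not transverse to the belt sphere of $\bar\sigma$'s $k$-handle in a single point --- by the very definition of duality it \emph{coincides} with that belt sphere (up to isotopy). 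Smale handle cancellation does not apply in that configuration. You can see concretely that the union of the two handles you write down is not a $d$-cell: $D^k\times D^{d-k}\cup_{D^k\times S^{d-k-1}}D^{d-k}\times D^k$ has Euler characteristic $2-\chi(S^{d-k-1})\in\{0,2\}$, never $1$. Even more concretely, take $d=2$, $k=1$, $P=S^1$, $\sigma$ a trivial $0$-surgery, so $P(\sigma)=S^1\sqcup S^1$: then $T(\bar\sigma)$ is a pair of pants and $T(\sigma)$ is a reversed pair of pants glued along the belt circle, producing a connected genus-$0$ surface with four boundary circles, not $(S^1\sqcup S^1)\times[0,1]$. (The cylindrical cancellation you invoke, and which the paper uses in Lemma \ref{lemma: composition cylinder}, requires an \emph{independent} sphere meeting the belt sphere transversely in one point, not the belt sphere itself.)

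Because $T(\sigma)\circ T(\bar\sigma)$ is not a cylinder, you cannot split off the right-inverse the way you do, and the proof as written does not close. What you could do instead is observe that $T(\sigma)\circ T(\bar\sigma)$ is again of the form $T(\tau,\beta)$ for an arbitrary degree-$(k-1)$ surgery $\tau$ and $\beta\in\mathcal{L}(P(\sigma),\tau)$, and then invoke Theorem \ref{theorem: arbitrary handles} --- but this is exactly the content of the paper's argument, packaged differently. The paper chooses $\sigma_1\in\mathcal{L}(P,\sigma)$, a further $\sigma_2\in\mathcal{L}(P,\sigma_1)$ disjoint from both, and considers the three-step chain $T(\sigma_0)$, $T(\mathcal{R}_{\sigma_0}(\sigma_1))$, $T(\mathcal{R}_{\sigma_0,\sigma_1}(\sigma_2))$. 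The first two compose to something in $\mathcal{V}^{k-1}$ by Proposition \ref{proposition: tori composite} (using triviality of $\sigma_0$), and the last two compose to something in $\mathcal{V}^{k-1}$ by Theorem \ref{theorem: arbitrary handles} together with Proposition \ref{proposition: interchange of linked surgeries}; the middle map is therefore a weak equivalence, and $2$-out-of-$3$ concludes. Your use of the literal dual surgery $\bar\sigma$ short-circuits the need for $\sigma_2$, but at the price of an invalid cancellation claim.
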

\begin{proof}
Denote $\sigma_{0} := \sigma$.
Let $\sigma_{1} \in \mathcal{L}(P, \sigma_{0})$, and then choose $\sigma_{2} \in \mathcal{L}(P, \sigma_{1})$ so that 
$$\Image(\sigma_{2})\cap\left[\Image(\sigma_{0})\cup\Image(\sigma_{1})\right] = \emptyset.$$
We consider the composite
$$\xymatrix{
P \ar[rr]^{T(\sigma_{0})} && P(\sigma_{0}) \ar[rrr]^{T(\mathcal{R}_{\sigma_{0}}(\sigma_{1}))} &&& 
P(\sigma_{0}, \sigma_{1}) \ar[rrr]^{T(\mathcal{R}_{\sigma_{0}, \sigma_{1}}(\sigma_{2}))} &&& P(\sigma_{0}, \sigma_{1}, \sigma_{2}).
}
$$
The first composite $T(\mathcal{R}_{\sigma_{0}}(\sigma_{1}))\circ T(\sigma_{0})$ is equal to $T(\sigma_{0}, \sigma_{1})$ and thus is in $\mathcal{V}^{k-1}$ by Proposition \ref{proposition: tori composite} because $\sigma_{0}$ is a trivial surgery of degree $d-k-1$.
The second composite, 
$T(\mathcal{R}_{\sigma_{0}, \sigma_{1}}(\sigma_{2}))\circ T(\mathcal{R}_{\sigma_{0}}(\sigma_{1}))$, 
is equal to 
$T\left(\mathcal{R}_{\sigma_{0}}(\sigma_{1}), \mathcal{R}_{\sigma_{0}}(\sigma_{2})\right).$
Now, $\mathcal{R}_{\sigma_{0}}(\sigma_{1})$ is a surgery of degree $k-1$, and 
$\mathcal{R}_{\sigma_{1}}(\sigma_{2})$ is contained in $\mathcal{L}(P(\sigma_{0}), \mathcal{R}_{\sigma_{0}}(\sigma_{1})).$
It follows from Theorem \ref{theorem: arbitrary handles} that the cobordism $T(\mathcal{R}_{\sigma_{0}, \sigma_{1}}(\sigma_{2}))\circ T(\mathcal{R}_{\sigma_{0}}(\sigma_{1}))$
lies in $\mathcal{V}^{k-1}$.
Combining both of our observations it follows that the map, 
$\displaystyle{\hocolim_{\mb{t}\in\mathcal{K}^{k}}}\mathcal{W}^{k}_{P(\sigma_{0}), \mb{t}} \longrightarrow \displaystyle{\hocolim_{\mb{t}\in\mathcal{K}^{k}}}\mathcal{W}^{k}_{P(\sigma_{0}, \sigma_{1}), \mb{t}},$
induced by the cobordism $T(\mathcal{R}_{\sigma_{0}}(\sigma_{1}))$ 
induces an injection and surjection on all homotopy groups, and thus is in $\mathcal{V}^{k-1}$.
Finally, it follows that $T(\sigma_{0}) = T(\sigma)$ lies in $\mathcal{V}^{k-1}$ by the $2$-out-of-$3$ property. 
\end{proof}

We will derive one more corollary. 
To state it we need one more definition and a lemma.
The following definition is a rewording of Definition \ref{defn: trivial and primitive surgery}, part (ii).
\begin{defn}  \label{defn: primitive surgeries 1}
Let $P \in \Ob\Cob^{k-1}_{\theta, d}$.
A  surgery $\sigma \in \Surg_{j}(P)$ is said to be \textit{primitive} if there exists another surgery $\alpha \in \Surg_{d-j-1}(P)$ such that 
$\sigma(S^{j}\times\{0\}) \subset P$ and $\alpha(S^{d-j-1}\times\{0\}) \subset P$ intersect transversally in $P$ at exactly one point. 
We denote by $\Surg^{\pr}_{j}(P)$ the set of all primitive surgeries in $P$ of degree $j$. 
\end{defn}

The following lemma says that the condition of being a trivial surgery is ``dual'' to the condition of being primitive. 
\begin{lemma} \label{lemma: primitive and trivial surgeries}
Let $P \in \Ob\Cob^{k-1}_{\theta, d}$ and $l$ be a non-negative integer. 
Let $\sigma \in \Surg^{\tr}_{l}(P)$ be a trivial surgery. 
Then its dual $\bar{\sigma}$ is a primitive surgery.
Similarly, if $\sigma$ is a primitive surgery then its dual $\bar{\sigma}$ is trivial.
\end{lemma}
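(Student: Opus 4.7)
The plan is to produce the dual data by hand in each direction. Identify $\sigma\in\Surg_l(P)$ with an embedding $\sigma:\R^{l+1}\times\R^{d-l-1}\to\R^{\infty-1}$ satisfying $\sigma^{-1}(P)=S^l\times\R^{d-l-1}$, and recall that the $(l+1)$-handle of the trace $T(\sigma)$ contributes the piece $D^{l+1}\times S^{d-l-2}$ to $P(\sigma)$, so that by Definition 7.6 the attaching sphere of the dual $\bar{\sigma}$ is the belt sphere $\{0\}\times S^{d-l-2}$. Then $\bar{\sigma}$ has degree $d-l-2$, and primitivity of $\bar{\sigma}$ amounts to producing a partner $\beta\in\Surg_{l+1}(P(\sigma))$ whose attaching $(l+1)$-sphere meets the belt sphere transversally at a single point.

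For the first implication, assume $\sigma(S^l\times D^{d-l-1})$ lies in an embedded disk $\R^{d-1}\hookrightarrow P$. Fix $x_0\in S^{d-l-2}$ and observe that $\sigma(S^l\times\{x_0\})$ bounds an embedded $(l+1)$-disk $\Delta\subset P$, which after a small isotopy meets $\sigma(S^l\times D^{d-l-1})$ only along $\partial\Delta$. In the decomposition $P(\sigma)=\bigl(P\setminus\sigma(S^l\times\Int D^{d-l-1})\bigr)\cup\bigl(D^{l+1}\times S^{d-l-2}\bigr)$, glue $\Delta$ to the core slice $D^{l+1}\times\{x_0\}$ along their common boundary to form an embedded sphere $S\cong S^{l+1}\subset P(\sigma)$ meeting $\bar{\sigma}(S^{d-l-2})=\{0\}\times S^{d-l-2}$ transversally at the single point $(0,x_0)$. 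Extending $S$ to a framed surgery datum $\beta\in\Surg_{l+1}(P(\sigma))$ uses that $\Delta$ and the core slice both carry trivial normal bundles (being contractible), whose natural trivializations agree along $\partial\Delta=\sigma(S^l\times\{x_0\})$ and so glue to a normal framing on $S$ compatible with the ambient $\theta$-structure; this $\beta$ witnesses primitivity of $\bar{\sigma}$.

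For the second implication, let $\alpha\in\Surg_{d-l-1}(P)$ realize primitivity, meeting $\sigma(S^l\times\{0\})$ transversally at $\sigma(x_0,0)$. After shrinking the framing disk of $\alpha$ and a small isotopy we may arrange $\alpha(S^{d-l-1}\times\{0\})\cap\sigma(S^l\times D^{d-l-1})=\sigma(\{x_0\}\times D^{d-l-1})$. Then
\[
D':=\alpha(S^{d-l-1}\times\{0\})\setminus\sigma(\{x_0\}\times\Int D^{d-l-1})
\]
is an embedded $(d-l-1)$-disk in $P\setminus\sigma(S^l\times\Int D^{d-l-1})\subset P(\sigma)$ with boundary $\sigma(\{x_0\}\times S^{d-l-2})$. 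Inside the glued handle $D^{l+1}\times S^{d-l-2}$, the linear cylinder $[0,x_0]\times S^{d-l-2}$ connects $\{x_0\}\times S^{d-l-2}$ to the belt sphere $\{0\}\times S^{d-l-2}=\bar{\sigma}(S^{d-l-2})$. Gluing this cylinder to $D'$ produces an embedded $(d-l-1)$-disk $\widehat D\subset P(\sigma)$ with $\partial\widehat D=\bar{\sigma}(S^{d-l-2})$. A tubular neighborhood of $\widehat D$ in $P(\sigma)$ is a trivial $(l+1)$-disk bundle (since $\widehat D$ is contractible), hence diffeomorphic to $D^{d-1}$; the framing carried by $\bar{\sigma}$ is assembled from the intrinsic handle framing on the cylinder side and $\alpha$'s normal framing on $D'$, and both extend canonically over $\widehat D$. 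Thus $\bar{\sigma}|_{S^{d-l-2}\times D^{l+1}}$ factors through an embedding $\R^{d-1}\hookrightarrow P(\sigma)$, proving $\bar{\sigma}$ is trivial.

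The main technical obstacle in both directions is verifying compatibility of normal framings (and the induced $\theta$-structures) at the seams where the two pieces are glued, but since those seams are contractible (a core slice of the handle in the first direction, the linear cylinder $[0,x_0]\times S^{d-l-2}$ in the second), the relevant extension problems over a disk are obstruction-free and can be resolved in the standard way.
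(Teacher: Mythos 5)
Your proof is correct in its essential geometric content and takes a noticeably different route from the paper's. The paper proves only the implication \emph{primitive $\Rightarrow$ dual trivial}, and does so by reducing to the model case $P\cong S^{l}\times S^{d-l-1}$: it observes that the union of $\sigma$'s tube and the primitivity partner $\alpha$'s tube is diffeomorphic to $S^{l}\times S^{d-l-1}\setminus\Int(D^{d-1})$, surges the model along the standard $S^{l}$, notes the result is a disk, and concludes the belt sphere lies in a chart; the reverse implication is declared ``similar'' and left to the reader. You instead construct the witness objects directly in $P(\sigma)$ in both directions: in one direction you glue a bounding disk $\Delta$ from the chart to a core slice of the handle to form an embedded $S^{l+1}$ crossing the belt sphere once, and in the other you cap off the punctured sphere of $\alpha$ with a radial cylinder of the handle to form an embedded disk $\widehat D$ bounded by the belt sphere. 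This is morally the same meridian/longitude picture as the paper's model case, unpacked without the diffeomorphism reduction, and it has the merit of actually supplying an argument in both directions. Your degree arithmetic (dual of a degree-$l$ surgery has degree $d-l-2$, primitivity partner has degree $l+1$) is internally consistent, whereas the paper's in-proof line ``$\bar\sigma\in\Surg_{d-l-1}(S^{d-1})$'' is an off-by-one slip relative to its own Definition of dual surgery.

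There are, however, three localized imprecisions you should fix. First, the closing remark that the ``seams \ldots are contractible (a core slice of the handle \ldots, the linear cylinder \ldots)'' is wrong: the seams across which the two pieces are glued are $\partial\Delta\cong S^{l}$ and $\sigma(\{x_0\}\times S^{d-l-2})\cong S^{d-l-2}$, which are spheres, not contractible, and neither is the cylinder $[0,x_0]\times S^{d-l-2}$; what is contractible is each glued \emph{piece}, so the clutching function $S^{l}\to O(d-l-2)$ (resp.\ $S^{d-l-2}\to O(l)$) is what must be checked to be null, and you should say why the standard framings on the two pieces match under the gluing $\sigma$ (it is here that the triviality hypothesis on $\sigma$ genuinely enters). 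Second, a tubular neighborhood of the $(d-l-1)$-disk $\widehat D$ inside the $(d-1)$-manifold $P(\sigma)$ is a trivial $l$-disk bundle, not an $(l+1)$-disk bundle; the displayed conclusion $D^{d-l-1}\times D^{l}\cong D^{d-1}$ is still correct. Third, ``Fix $x_0\in S^{d-l-2}$ and observe that $\sigma(S^{l}\times\{x_0\})$ bounds an $(l+1)$-disk avoiding the tube interior'' is not true for every $x_0$ (for $x_0$ in the outward radial direction the obvious filling disk meets the tube); you should choose $x_0$ in a normal or inward direction, e.g.\ take $x_0$ so that $\sigma(S^{l}\times\{x_0\})$ is a parallel pushoff whose canonical filling $D^{l+1}$ is disjoint from $\sigma(S^{l}\times\Int D^{d-l-1})$, as one readily arranges inside the chart $\R^{d-1}$.
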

\begin{proof}
Let $\sigma$ be a primitive surgery in $P$ of degree $l$. 
We first observe that in the special case where
\begin{itemize} \itemsep.2cm
\item[(i)] $P \cong S^{l}\times S^{d-l-1}$, and
\item[(ii)] $\sigma$ corresponds to (a thickening of) the standard inclusion $S^{l}\times\{0\} \hookrightarrow S^{l}\times S^{d-l-1}$,
\end{itemize}
it follows that $P(\sigma) \cong S^{d-1}$, and thus the dual $\bar{\sigma} \in \Surg_{d-l-1}(S^{d-1})$ has no choice but to be trivial. 
We claim that the general case of the lemma can always be reduced to this special case. 
Indeed, if $\sigma$ is primitive, then by definition there exists another embedding $\alpha: D^{l}\times S^{d-l-1} \longrightarrow P$ such that $\sigma(S^{l}\times\{0\})$ and $\alpha(\{0\}\times S^{d-l-1})$ intersect transversally in exactly one point. 
By shrinking down in the normal direction if necessary, we may assume that the intersection $\sigma(S^{l}\times D^{d-l-1})\cap\alpha(D^{l}\times S^{d-l-1})$ is diffeomorphic to a disk.
It follows that there is a diffeomorphism
\begin{equation} \label{equation: identification with a product of spheres}
S^{l}\times S^{d-l-1}\setminus\Int(D^{d-1}) \; \cong \; \sigma(S^{l}\times D^{d-l-1})\cup\alpha(D^{l}\times S^{d-l-1}).
\end{equation}
The surgery $\sigma$ is contained in the image of $S^{l}\times S^{d-l-1}\setminus\Int(D^{d-1})$ in $P$ under this diffeomorphism, and it follows that $\sigma$ corresponds to the standard embedding, $S^{l}\times\{0\} \hookrightarrow S^{l}\times S^{d-l-1}\setminus\Int(D^{d-1})$.
The manifold obtained from $S^{l}\times S^{d-l-1}\setminus\Int(D^{d-1})$ by performing surgery along this standard embedding is diffeomorphic to a disk, and thus it follows that the dual surgery $\bar{\sigma}$ factors through a disk in $P(\sigma)$. 
This proves that $\bar{\sigma}$ is trivial whenever $\sigma$ is primitive. 

The other claim in the statement of the lemma, that $\bar{\sigma}$ is primitive whenever $\sigma$ is trivial, is proven by a similar argument and we leave that to the reader. 
\end{proof}

\begin{corollary} \label{corollary: primitive surgery of degree k-1}
Let $k < d/2$ and $P \in \Cob^{k-1}_{\theta, d}$. 
Then for any primitive surgery $\sigma \in \Surg^{\pr}_{k-1}(P)$, the trace $T(\sigma)$ belongs to $\mathcal{V}^{k-1}$.
\end{corollary}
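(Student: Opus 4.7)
The plan is to mimic the two-out-of-three argument used in the proof of Corollary \ref{corollary: trivial d-l handle}, but using primitivity of $\sigma$ instead of triviality. The strategy is to pick a specific $\alpha \in \mathcal{L}(P, \sigma)$ and exploit the factorisation from (\ref{equation: simultaneous + composed surgery}),
$$T(\sigma, \alpha) \; = \; T(\mathcal{R}_{\sigma}(\alpha))\circ T(\sigma).$$
By Theorem \ref{theorem: arbitrary handles} (applied with $\sigma \in \Surg_{k-1}(P)$ and $\alpha \in \mathcal{L}(P, \sigma)$) the composite $T(\sigma, \alpha)$ lies in $\mathcal{V}^{k-1}$. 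If in addition $\mathcal{R}_\sigma(\alpha) \in \Surg^{\tr}_{d-k-1}(P(\sigma))$, then Corollary \ref{corollary: trivial d-l handle} gives $T(\mathcal{R}_{\sigma}(\alpha)) \in \mathcal{V}^{k-1}$, and the two-out-of-three property forces $T(\sigma) \in \mathcal{V}^{k-1}$.

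So the whole proof reduces to arranging the choice of $\alpha$ so that the ``post-surgery'' datum $\mathcal{R}_\sigma(\alpha)$ is trivial; this is where the primitivity hypothesis is used. Since $\sigma$ is primitive, Definition \ref{defn: primitive surgeries 1} provides a witness $\alpha' \in \Surg_{d-k}(P)$ whose core sphere $\alpha'(S^{d-k}\times\{0\})$ meets $\sigma(S^{k-1}\times\{0\})$ transversally at a single point. As in the proof of Lemma \ref{lemma: primitive and trivial surgeries}, a regular neighborhood $R \subset P$ of $\sigma(S^{k-1}\times\{0\})\cup\alpha'(S^{d-k}\times\{0\})$ is identified with the standard model
$$R \;\cong\; (S^{k-1}\times S^{d-k})\setminus\Int(D^{d-1}),$$
and under this identification $\sigma$ is the standard tubular neighborhood of $S^{k-1}\times\{pt_0\}$ while $\alpha'$ is a $(d-k)$-sphere hitting it once. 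After performing the surgery $\sigma$, the image of $R$ in $P(\sigma)$ becomes a submanifold $\tilde{R}$ diffeomorphic to $S^{d}\setminus\Int(D^{d-1}) \cong D^{d-1}$.

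Now I choose $\alpha \in \mathcal{L}(P, \sigma)$ to be the thickening of a small push-off of the meridian sphere $\sigma|_{\{x_0\}\times S^{d-k-1}}$ for some $x_0 \in S^{k-1}$, made so that both the attaching sphere and the entire thickened image of $\alpha$ lie inside the regular neighborhood $R$ and are disjoint from $\sigma(D^{k}\times D^{d-k})$. By construction this $\alpha$ satisfies conditions (i) and (ii) of Definition \ref{defn: linked surgeries}. Because $\alpha$ is disjoint from $\sigma$, its image persists into $P(\sigma)$ and lies inside $\tilde{R}\cong D^{d-1}$; hence the attaching sphere of $\mathcal{R}_\sigma(\alpha)$ factors through an embedded $(d-1)$-disk in $P(\sigma)$. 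By Definition \ref{defn: trivial surgery data} this exactly says that $\mathcal{R}_\sigma(\alpha)$ is trivial, and the proof concludes as described in the first paragraph.

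The only genuine obstacle is verifying that $\alpha$ can be chosen to satisfy both the linking condition with $\sigma$ and the confinement to $R$ simultaneously; this is routine once the standard-model identification of $R$ with $(S^{k-1}\times S^{d-k})\setminus\Int(D^{d-1})$ is in hand, since in that model the meridian is realised by the obvious sphere $\{x_0\}\times S^{d-k-1} \subset \{x_0\}\times S^{d-k}$ and its tubular neighborhood can be placed anywhere in the complementary hemisphere of $S^{d-k}$. No new analytic or homotopical input is needed — everything past this geometric step is the two-out-of-three formalism already used in the proof of Corollary \ref{corollary: trivial d-l handle}.
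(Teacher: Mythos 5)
Your proof is correct, and it takes a genuinely different — though closely related — route from the one in the paper. The paper's argument is built around the \emph{dual} surgery $\bar\sigma \in \Surg_{d-k-1}(P(\sigma))$: by Lemma \ref{lemma: primitive and trivial surgeries} the dual of a primitive surgery is trivial, so Corollary \ref{corollary: trivial d-l handle} gives $T(\bar\sigma)\in\mathcal{V}^{k-1}$, and then one observes that the composite $T(\sigma)\circ T(\bar\sigma)\colon P(\sigma)\rightsquigarrow P(\sigma)$ is equivalent to $H_{k,d-k}$, which lies in $\mathcal{V}^{k-1}$ by Proposition \ref{proposition: H-k induces iso}; two-out-of-three finishes. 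You instead stay in $P$ and produce a specific linking datum $\alpha\in\mathcal{L}(P,\sigma)$ confined to the plumbing-type regular neighborhood $R\cong (S^{k-1}\times S^{d-k})\setminus \Int(D^{d-1})$, use the factorisation $T(\sigma,\alpha)=T(\mathcal{R}_\sigma(\alpha))\circ T(\sigma)$, invoke Theorem \ref{theorem: arbitrary handles} on $T(\sigma,\alpha)$ and Corollary \ref{corollary: trivial d-l handle} on $T(\mathcal{R}_\sigma(\alpha))$ (trivial because $R(\sigma)\cong D^{d-1}$), and again close with two-out-of-three. The underlying geometry is essentially the same — your ``confine $\alpha$ to $R$, observe $R(\sigma)$ is a disk'' argument re-derives inline the content the paper packages in Lemma \ref{lemma: primitive and trivial surgeries} — but your factorisation composes the cobordisms in the opposite order (placing $T(\sigma)$ first, going $P\to P(\sigma)\to P(\sigma,\alpha)$, rather than second) and substitutes Theorem \ref{theorem: arbitrary handles} where the paper uses the explicit $H_{k,d-k}$ recognition from Proposition \ref{proposition: H-k induces iso}. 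Both routes depend on Theorem \ref{theorem: arbitrary handles} through Corollary \ref{corollary: trivial d-l handle}, so neither is ``lighter'' in any meaningful sense; the paper's version is somewhat more economical in that it reuses Lemma \ref{lemma: primitive and trivial surgeries} rather than verifying triviality of $\mathcal{R}_\sigma(\alpha)$ from scratch, but your argument is a clean and valid alternative and has the minor virtue of never needing the dual-surgery construction of Definition \ref{defn: dual surgery}.
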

\begin{proof}
Consider the surgered manifold $P(\sigma)$ and the surgery dual to $\sigma$, $\bar{\sigma} \in \Surg_{d-k-1}(P(\sigma))$. 
Since $\sigma$ was a primitive surgery, it follows by Lemma \ref{lemma: primitive and trivial surgeries} that the dual surgery $\bar{\sigma}$ is a trivial surgery.
By Theorem \ref{corollary: trivial d-l handle} it then follows that $T(\bar{\sigma}) \in \mathcal{V}^{k-1}$. 
We then consider the composite
$$
\xymatrix{
P(\sigma) \ar[rr]^{T(\bar{\sigma})} && P \ar[rr]^{T(\sigma)} && P(\sigma). 
}
$$
It follows easily by inspection that the composite $T(\sigma)\circ T(\bar{\sigma})$ is equivalent to the cobordism $H_{k, d-k}(P)$ and thus is contained in $\mathcal{V}^{k-1}$ as well. 
The corollary then follows by the two-out-of-three property.
\end{proof}

\begin{remark} \label{remark: statements (a) and (b)}
We note that by Corollaries \ref{corollary: trivial d-l handle} and \ref{corollary: primitive surgery of degree k-1}, we have established parts (a) and (b) of Theorem \ref{theorem: morphism induce homotopy equivalence}. 
These corollaries of course depend on Theorem \ref{theorem: arbitrary handles} which we prove in the next section. 
Once Theorem \ref{theorem: arbitrary handles} is proven, the only thing left to do is prove part (c) of Theorem \ref{theorem: morphism induce homotopy equivalence}.
\end{remark}

\section{Resolving Nontrivial Handle Attachments} \label{section: Resolving Nontrivial Handle Attachments}
This section is geared toward proving Theorem \ref{theorem: arbitrary handles} which asserts that if
$\sigma$ is an \textbf{arbitrary} surgery in degree $k-1$ and $\alpha \in \mathcal{L}(P, \sigma)$, then
the trace $T(\sigma, \alpha)$ lies in $\mathcal{V}^{k-1}$.
\subsection{A semi-simplicial resolution} \label{subsection: a semi-simplicial resolution}
We will need to work with a semi-simplicial space analogous to the one constructed in \cite[Section 4.3]{GRW 16}.
\begin{defn} \label{defn: semi-simplicial space 1}
Let $l < d$. 
Fix an element 
$z := (M, (V, \sigma), e) \in \mathcal{W}^{k}_{P, \mb{t}}$ 
together with the following data:
\begin{itemize} \itemsep.2cm
\item an embedding 
$\chi: S^{l-1}\times(1, \infty)\times\R^{d-l-1} \longrightarrow P;$
\item a $1$-parameter family $\hat{\ell}^{\std}_{t}$, $t \in (2, \infty)$, of $\theta$-structures on $D^{l}\times D^{d-l}$ such that 
$$
\hat{\ell}^{\std}_{t}|_{\partial D^{l}\times D^{d-l}} = \chi^{*}\hat{\ell}_{P}|_{\partial D^{l}\times(t\cdot e_{1} + D^{d-l})},
$$
where $e_{1} \in (1, \infty)\times\R^{d-k-1}$ is the basis vector corresponding to the first coordinate. 
\end{itemize}
\vspace{.2cm}

We define $\mb{X}_{0}(z; \chi)$ to be the set of triples $(t, \phi, \hat{L})$ consisting of a $t \in (2, \infty)$, an embedding 
$\phi: (D^{l}\times D^{d-l}, \; \partial D^{l}\times D^{d-l}) \longrightarrow (M, P),$
and a path of $\theta$-structures $\hat{L}$ on $D^{l}\times D^{d-l}$ 
such that:
\begin{enumerate} \itemsep.3cm
\item[(i)] the restriction of the map 
$\phi$ to $\partial D^{l}\times D^{d-l}$ 
satisfies the equation
$\phi(y, v) = \chi(\tfrac{y}{|y|}, v + t\cdot e_{1})$ 
for all $(y, v) \in \partial D^{l}\times D^{d-l}$.
\item[(ii)] the image $\phi(D^{l}\times D^{d-l})$ is disjoint from the image of the embedding $e$, associated to the element $z = (M, (V, \sigma), e)$.
\item[(iii)] the family of $\theta$-structures $\hat{L}$ satisfies: $\hat{L}(0) = \phi^{*}\hat{\ell}_{M}$, $\hat{L}(1) = \hat{\ell}^{\std}_{t}$, and
$\hat{L}(s)|_{\partial D^{l}\times D^{d-l}}$ is independent of $s \in [0,1]$. 
\end{enumerate}
For $p \in \Z_{\geq 0}$,
let $\mb{X}_{p}(z; \chi) \subset \left(\mb{X}_{0}(z; \chi)\right)^{p+1}$ be the subspace consisting of those
$$\left((t_{0}, \phi_{0}, \hat{L}_{0}), \dots, (t_{p}, \phi_{p}, \hat{L}_{p})\right)$$ 
such that 
\begin{enumerate} \itemsep.2cm
\item[(i)] the images $\phi_{i}(D^{l}\times D^{d-l})$ are disjoint for $i = 0, \dots, p$, 
\item[(ii)] $t_{0} < t_{1} < \cdots < t_{p}$.
\end{enumerate}
The collection $\mb{X}_{\bullet}(z; \chi)$ has the structure of a semi-simplicial space, where the $i$th face map forgets the entry $(t_{i}, \phi_{i})$. 
The definition of the semi-simplicial space $\mb{X}_{\bullet}(z; \chi)$ does definitely depend on the one-parameter family $\hat{\ell}^{\std}_{t}$.
However, we drop it from the notation in order to save space. 
\end{defn}

We now combine all of the semi-simplicial spaces $\mb{X}_{\bullet}(z; \chi)$ together into one semi-simplicial space augmented over $\mathcal{W}^{k}_{P, \mb{t}}$.
\begin{defn} \label{defn: semi simplicial resolution}
Choose an integer $k$. 
Fix the following data:
\begin{itemize} \itemsep.2cm
\item an object $\mb{t} \in \mathcal{K}^{k}$;
\item an object $P \in \Ob\Cob^{k-1}_{\theta, d}$; 
\item an embedding $\chi: \partial D^{l}\times(1, \infty)\times\R^{d-l-1} \longrightarrow P$, and one parameter family of $\theta$-structures $\hat{\ell}^{\std}_{t}$ as in Definition \ref{defn: semi-simplicial space 1}. 
\end{itemize}
For $p \in \Z_{\geq 0}$, we define $\mathcal{M}^{k}_{P, \mb{t}}(\chi)_{p}$ be the space of tuples $(y; x)$ with $y \in \mathcal{W}^{k}_{P, \mb{t}}$ and $x \in \mb{X}_{p}(y; \chi)$.
The assignment $[p] \mapsto \mathcal{M}^{k}_{P, \mb{t}}(\chi)_{p}$ defines a semi-simplicial space $\mathcal{M}^{k}_{P, \mb{t}}(\chi)_{\bullet}$.
The forgetful maps 
$$
\mathcal{M}^{k}_{P, \mb{t}}(\chi)_{p} \longrightarrow \mathcal{W}^{k}_{P, \mb{t}}, \quad (y; x) \mapsto y
$$
yield an augmented semi-simplicial space, 
$
\mathcal{M}^{k}_{P, \mb{t}}(\chi)_{\bullet} \longrightarrow \mathcal{M}^{k}_{P, \mb{t}}(\chi)_{-1} = \mathcal{W}^{k}_{P, \mb{t}}.
$
\end{defn}

We have the following proposition. 

\begin{proposition} \label{proposition: augmentation is weak equivalence}
Let $P \in \Ob\Cob^{k-1}_{\theta, d}$.
Let $\chi: \partial D^{l}\times(1, \infty)\times\R^{d-l-1} \longrightarrow P$ and $\hat{\ell}^{\std}_{t}$ be as in Definition \ref{defn: semi-simplicial space 1}. 
If $l \leq k < d/2$,
then the map induced by augmentation
$$
|\mathcal{M}^{k}_{P, \mb{t}}(\chi)_{\bullet}| \longrightarrow \mathcal{M}^{k}_{P, \mb{t}}(\chi)_{-1} = \mathcal{W}^{k}_{P, \mb{t}}
$$
is a weak homotopy equivalence. 
\end{proposition}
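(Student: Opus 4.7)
The strategy is the standard ``Serre microfibration with weakly contractible point-set fibres'' technique that permeates \cite{GRW 09}, \cite{GRW 14}, \cite{GRW 16}. First one checks that the augmentation
$$
\pi: |\mathcal{M}^{k}_{P, \mb{t}}(\chi)_{\bullet}| \longrightarrow \mathcal{W}^{k}_{P, \mb{t}}
$$
is a Serre microfibration, and then that for each $y \in \mathcal{W}^{k}_{P, \mb{t}}$ the point-set fibre $|\mb{X}_{\bullet}(y; \chi)|$ is weakly contractible. By the general principle that a Serre microfibration with weakly contractible fibres is a quasi-fibration and indeed a weak homotopy equivalence (see \cite[Lemma 2.2]{GRW 14}), this gives the result.

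Step 1 (microfibration). Given a family of lifts defined on $K \subset D^{n}$ and infinitesimally on $D^{n}$, one extends it to all of $D^{n}$ by (i) the isotopy extension theorem applied to the handle embeddings $\phi_{i}$ (relative to the boundary data prescribed by $\chi$), and (ii) the fact that the space of paths $\hat{L}_{i}$ of $\theta$-structures on $D^{l}\times D^{d-l}$ with fixed endpoints and boundary is an affine space in a space of bundle maps and so admits linear interpolation.

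Step 2 (contractibility of fibres). Fix $y = (M, (V, \sigma), e)$. To show $|\mb{X}_{\bullet}(y; \chi)|$ is weakly contractible it suffices to establish the following ``height-push'' property: for every continuous family $K \to \mb{X}_{p}(y; \chi)$ parametrised by a compact polyhedron $K$, there is a continuous extension to a family $K \to \mb{X}_{p+1}(y; \chi)$ obtained by adjoining one more vertex $(t(k), \phi(k), \hat{L}(k))$ with $t(k) > \max_{i} t_{i}(k)$ and with $\phi(k)(D^{l}\times D^{d-l})$ disjoint from $\Image(e)$ and from $\bigcup_{i} \phi_{i}(k)(D^{l}\times D^{d-l})$. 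The new vertex then provides a contracting homotopy, and the case $p = -1$ yields non-emptiness of the fibre. (This is the standard contractibility criterion for such semi-simplicial resolutions; compare \cite[Section 4.3]{GRW 16}.)

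Step 3 (existence of a new handle). The geometric content is to produce, continuously in $k \in K$, a core embedding $\phi|_{D^{l}\times\{0\}}: (D^{l}, S^{l-1}) \hookrightarrow (M, P)$ extending the prescribed boundary $\chi(\,\cdot\,, t(k), 0)$ and avoiding the compact obstacle $\Image(e) \cup \bigcup_{i}\phi_{i}(k)(D^{l}\times D^{d-l})$. The obstructions to finding such a map of pairs (in homotopy, before embedding) lie in relative homotopy groups of the homotopy fibre of $\theta: B \to BO(d+1)$ in degrees at most $l$, and vanish because $\ell_{M}: M \to B$ is $k$-connected and $l \leq k$. Since $l \leq k < d/2$ we have $2l < d$, so general position upgrades the map to an embedding disjoint from the obstacle; the tubular neighbourhood theorem then thickens it to a handle $\phi: D^{l}\times D^{d-l} \hookrightarrow M$, and the same connectivity argument provides the homotopy $\hat{L}$ of $\theta$-structures on $D^{l}\times D^{d-l}$ interpolating between $\phi^{*}\hat{\ell}_{M}$ and the standard family $\hat{\ell}^{\std}_{t(k)}$.

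\textbf{Main obstacle.} The crux is Step 3: one must simultaneously produce an embedded core, a thickening, and a standardising homotopy of the $\theta$-structure, all extending prescribed boundary data, avoiding a compact obstacle, and depending continuously on a parameter. The hypothesis $l \leq k < d/2$ is tight, because the $k$-connectivity of $\ell_{M}$ kills the obstructions in degree $\leq l$, and the dimension inequality $2l < d$ is exactly what general position requires.
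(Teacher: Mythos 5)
Your overall strategy (microfibration plus weakly contractible point-set fibres, versus the paper's flag-complex argument via \cite[Theorem 6.2]{GRW 14}) is a legitimate alternative framework, and the connectivity and general-position inputs you invoke in Steps 2 and 3 are the same ones the paper uses. However, there is a genuine gap in the way you produce the new vertex.

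In Step 2 you require the new handle $\phi(k)(D^{l}\times D^{d-l})$ to be disjoint from the \emph{full} images $\bigcup_{i}\phi_{i}(k)(D^{l}\times D^{d-l})$ of the existing handles, and in Step 3 you claim this is achieved by general position because $2l<d$. That inequality only makes the $l$-dimensional \emph{core} $\phi(D^{l}\times\{0\})$ miss the other $l$-dimensional cores $\phi_{i}(D^{l}\times\{0\})$. The existing thickened handles $\phi_{i}(D^{l}\times D^{d-l})$ are codimension zero in $M$, so general position cannot push the new core (let alone its thickening) off them. The paper addresses exactly this by first replacing $\mathcal{M}^{k}_{P,\mb{t}}(\chi)_{\bullet}$ with the level-wise equivalent semi-simplicial space $\widetilde{\mathcal{M}}^{k}_{P,\mb{t}}(\chi)_{\bullet}$ in which only the cores $\phi_{i}(D^{l}\times\{0\})$ are required to be disjoint (the equivalence is an argument as in \cite[Proposition 6.9]{GRW 14}, morally: one may shrink all handles towards their cores), and then running the flag-complex criterion for $\widetilde{\mathcal{M}}$, where condition (iii) really is a core-versus-core transversality statement and your dimension count applies. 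To repair your proof you would need to insert an analogous reduction---either pass to $\widetilde{\mathcal{M}}$ before the microfibration/contractibility argument, or formulate and prove an explicit "shrinking lemma" that thins the existing handles equivariantly in the parameter $K$ so that the new core avoids them. Without that step the height-push claim fails for $p\geq 0$; for $p=-1$ (non-emptiness of the fibre) it is fine, and your use of the $k$-connectivity of $\ell_{M}$ and the Smale--Hirsch theorem in Step 3 matches what the paper does there.
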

\begin{proof}
To prove the theorem we will need to replace $\mathcal{M}^{k}_{P, \mb{t}}(\chi)_{\bullet}$ with a weakly equivalent semi-simplicial space that is a bit more flexible to work with. 
For each $p \in \Z_{\geq 0}$, let $\widetilde{\mathcal{M}}^{k}_{P, \mb{t}}(\chi)_{p}$ consist of those tuples
$((M, (V, \sigma), e); \; (t_{0}, \phi_{0}, \hat{L}_{0}), \dots, (t_{p}, \phi_{p}, \hat{L}_{p}))$
defined as in Definition \ref{defn: semi simplicial resolution} (and Definition \ref{defn: semi-simplicial space 1}), except now we relax condition (i) and only require, 
$
\phi_{i}(D^{l}\times\{0\})\cap\phi_{j}(D^{l}\times\{0\}) = \emptyset
$
whenever $i \neq j$.
It follows by an argument similar the one used in \cite[Proposition 6.9]{GRW 14} that the inclusion 
$
\mathcal{M}^{k}_{P, \mb{t}}(\chi)_{\bullet} \hookrightarrow \widetilde{\mathcal{M}}^{k}_{P, \mb{t}}(\chi)_{\bullet}
$
is a level-wise weak homotopy equivalence.
To prove the proposition it will suffice to prove that $|\widetilde{\mathcal{M}}^{k}_{P, \mb{t}}(\chi)_{\bullet}| \longrightarrow \widetilde{\mathcal{M}}^{k}_{P, \mb{t}}(\chi)_{-1}$ is a weak homotopy equivalence.
Now, the augmented semi-simplicial space $\widetilde{\mathcal{M}}^{k}_{P, \mb{t}}(\chi)_{\bullet} \longrightarrow \widetilde{\mathcal{M}}^{k}_{P, \mb{t}}(\chi)_{-1}$ is an augmented topological flag complex in the sense of \cite[Definition 6.1]{GRW 14}.
By \cite[Theorem 6.2]{GRW 14}, to prove the proposition it will suffice to verify that it has the following three properties (compare with the proof of Proposition \ref{proposition: cobcat to long manifolds}):
\begin{enumerate} \itemsep.1cm
\item[(i)] The augmentation map $\epsilon_{0}: \widetilde{\mathcal{M}}^{k}_{P, \mb{t}}(\chi)_{0} \longrightarrow \widetilde{\mathcal{M}}^{k}_{P, \mb{t}}(\chi)_{-1}$ has local lifts of any map from a disk. 
\item[(ii)] For any $x \in \widetilde{\mathcal{M}}^{k}_{P, \mb{t}}(\chi)_{-1}$ the fibre $\epsilon_{0}^{-1}(x)$ is non-empty.
\item[(iii)] For any $x \in \widetilde{\mathcal{M}}^{k}_{P, \mb{t}}(\chi)_{-1}$, given any non-empty set of zero-simplices $v_{1}, \dots, v_{m} \in \epsilon_{0}^{-1}(x)$, there exists another zero simplex $v\in \epsilon_{0}^{-1}(x)$ such that $(v, v_{i}) \in \widetilde{\mathcal{M}}^{k}_{P, \mb{t}}(\chi)_{1}$ for all $i = 1, \dots, m$.
\end{enumerate}
Property (i) follows from the fact that $\epsilon_{0}: \widetilde{\mathcal{M}}^{k}_{P, \mb{t}}(\chi)_{0} \longrightarrow \widetilde{\mathcal{M}}^{k}_{P, \mb{t}}(\chi)_{-1}$ is clearly a locally trivial fibre bundle. 
Property (iii) follows from a general position argument. 
Indeed, let $(t_{1}, \phi_{1}, \hat{L}_{1}), \dots, (t_{m}, \phi_{m}, \hat{L}_{m})$ be a collection of zero simplices in $\epsilon_{0}^{-1}(M, (V, \sigma), e)$, for some element $(M, (V, \sigma), e) \in \widetilde{\mathcal{M}}^{k}_{P, \mb{t}}(\chi)_{-1}$. 
Let $\phi = \phi_{1}$. 
Since $l \leq k < d/2$, we may perturb $\phi$ to a new embedding $\phi'$ with
$$\phi'(D^{l}\times\{0\})\cap\phi_{i}(D^{l}\times\{0\}) = \emptyset, \quad \text{for $i = 1, \dots, m$.}$$ 
By choosing $t'$ with $t' \neq t_{i}$ for all $i$ and setting $\hat{L}' = \hat{L}_{1}$, it follows that $(t, \phi', \hat{L}')$ is a zero simplex with $((t', \phi', \hat{L}'), \; (t_{i}, \phi_{i}, \hat{L})) \in \widetilde{\mathcal{M}}^{k}_{P, \mb{t}}(\chi)_{1}$ for all $i$.

We now just have to verify property (ii). 
Let $(M, (V, \sigma), e) \in \widetilde{\mathcal{M}}^{k}_{P, \mb{t}}(\chi)_{-1}$.
Pick a number $t \in (2, \infty)$ and consider the commutative square
\begin{equation} \label{equation: theta structure commutative square}
\xymatrix{
\partial D^{l}\times D^{d-l-1} \ar[d] \ar[rrr]^{\chi(\text{--}, \text{--}+t\cdot e_{1})} &&& P \ar@{->}[r] & M \ar[d]^{\ell_{M}} \\
D^{l}\times D^{d-l} \ar@{-->}[urrrr]^{g} \ar[rrrr]^{\ell^{\std}_{t}} &&&& B.
}
\end{equation}
By definition of $\widetilde{\mathcal{M}}^{k}_{P, \mb{t}}(\chi)_{-1}$ the map $\ell_{M}: M \longrightarrow B$ is $k$-connected. 
Since $l \leq k$, it follows that the lift $g$ in diagram (\ref{equation: theta structure commutative square}) exists, making both the upper and lower triangles commute.
Since both $\ell_{M}$ and $\ell^{\std}_{t}$ are covered by bundle maps, this provides a bundle map $\hat{g}: T(D^{l}\times D^{d-l}) \longrightarrow M$ covering $g$, such that $\hat{\ell}_{M}\circ\hat{g}$ is homotopic to $\hat{\ell}^{\std}_{t}$ through bundle maps, via a homotopy that is constant over $\partial D^{l}\times D^{d-l}$. 
By the \textit{Smale-Hirsch theorem} the pair $(g, \hat{g})$ may be homotoped relative to $\partial D^{l}\times D^{d-l}$ to a pair of the form $(\phi, D(\phi))$, for an immersion $\phi$ and $D\phi$ its differential. 
Since $k < \dim(M)/2 = d/2$, by putting $\phi(D^{l}\times\{0\})$ into a general position we may assume that the restriction $\phi$ to $(D^{l}\times\{0\})\cup(\partial D^{l}\times D^{d-l})$ is an embedding. 
By precomposing $\phi$ with an isotopy of $D^{l}\times D^{d-l}$ that compresses $D^{l}\times D^{d-l}$ down to the core $(D^{l}\times\{0\})\cup(\partial D^{l}\times D^{d-l})$, 
we may then make $\phi$ into an embedding. 
The $\theta$-structure $\phi^{*}\hat{\ell}_{M} = \hat{\ell}_{M}\circ D\phi$ is still homotopic to $\hat{\ell}^{\std}_{t}$, and if we choose such a homotopy, $\hat{L}$, then we have constructed an element $(t, \phi, \hat{L}) \in \epsilon^{-1}_{0}(M, (V, \sigma), e)$.
This establishes condition (ii) and concludes the proof of the proposition.
\end{proof}

The correspondence $\mb{t} \mapsto (\mathcal{M}^{k}_{P, \mb{t}}(\chi)_{\bullet} \longrightarrow \mathcal{M}^{k}_{P, \mb{t}}(\chi)_{-1})$ defines a functor from $\mathcal{K}^{k}$ to the category of augmented semi-simplicial spaces. 
Taking the levelwise homotopy colimit yields the augmented semi-simplicial space, 
$
\displaystyle{\hocolim_{\mb{t} \in \mathcal{K}^{k}}}\mathcal{M}^{k}_{P, \mb{t}}(\chi)_{\bullet} \; \longrightarrow \; \displaystyle{\hocolim_{\mb{t} \in \mathcal{K}^{k}}}\mathcal{M}^{k}_{P, \mb{t}}(\chi)_{-1}.
$
We have the following corollary.
\begin{corollary} \label{corollary: homotopy colimit augmentation}
Let $P \in \Ob\Cob^{k-1}_{\theta, d}$.
Let $\chi: \partial D^{l}\times(1, \infty)\times\R^{d-l-1} \longrightarrow P$ and $\hat{\ell}^{\std}_{t}$ be as in Definition \ref{defn: semi-simplicial space 1}. 
If $l \leq k < d/2$,
then the map induced by augmentation
$$
|\hocolim_{\mb{t} \in \mathcal{K}^{k}}\mathcal{M}^{k}_{P, \mb{t}}(\chi)_{\bullet}| \longrightarrow \hocolim_{\mb{t} \in \mathcal{K}^{k}}\mathcal{M}^{k}_{P, \mb{t}}(\chi)_{-1}$$
is a weak homotopy equivalence. 
\end{corollary}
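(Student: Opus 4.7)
The approach is to reduce the corollary to Proposition \ref{proposition: augmentation is weak equivalence} by combining two standard facts: geometric realization of semi-simplicial spaces commutes with homotopy colimits, and homotopy colimits preserve levelwise weak equivalences. No new technical input beyond Proposition \ref{proposition: augmentation is weak equivalence} should be required.

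First, I would observe that for a functor from $\mathcal{K}^{k}$ to the category of semi-simplicial spaces, there is a natural weak homotopy equivalence
$$
\left|\hocolim_{\mb{t}\in\mathcal{K}^{k}}\mathcal{M}^{k}_{P,\mb{t}}(\chi)_{\bullet}\right| \; \simeq \; \hocolim_{\mb{t}\in\mathcal{K}^{k}}\left|\mathcal{M}^{k}_{P,\mb{t}}(\chi)_{\bullet}\right|.
$$
This is because both sides can be identified (up to weak equivalence) with the homotopy colimit over the product category $\mathcal{K}^{k}\times\Delta_{\inj}^{\op}$ of the bi-indexed diagram $(\mb{t},[p])\mapsto \mathcal{M}^{k}_{P,\mb{t}}(\chi)_{p}$, and by a Fubini-type argument (as used, for example, in Section \ref{subsection: implementation of fibrewise surgery}) the order in which one takes the two homotopy colimits is immaterial. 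Concretely, one realizes $|\mathcal{M}^{k}_{P,\mb{t}}(\chi)_{\bullet}|$ as $\hocolim$ over $\Delta_{\inj}^{\op}$ of the levels, and then uses the standard interchange of hocolims.

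Second, by Proposition \ref{proposition: augmentation is weak equivalence}, for each fixed $\mb{t}\in\mathcal{K}^{k}$ the augmentation map
$$
\left|\mathcal{M}^{k}_{P,\mb{t}}(\chi)_{\bullet}\right| \; \longrightarrow \; \mathcal{M}^{k}_{P,\mb{t}}(\chi)_{-1} \; = \; \mathcal{W}^{k}_{P,\mb{t}}
$$
is a weak homotopy equivalence, and this equivalence is natural with respect to morphisms in $\mathcal{K}^{k}$ (the naturality is clear from Definition \ref{defn: semi simplicial resolution}, where morphisms act only on the $\mathcal{W}^{k}_{P,\mb{t}}$ coordinate). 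Applying the homotopy invariance of homotopy colimits to this levelwise weak equivalence of functors yields the weak equivalence
$$
\hocolim_{\mb{t}\in\mathcal{K}^{k}}\left|\mathcal{M}^{k}_{P,\mb{t}}(\chi)_{\bullet}\right| \; \stackrel{\simeq}{\longrightarrow} \; \hocolim_{\mb{t}\in\mathcal{K}^{k}}\mathcal{M}^{k}_{P,\mb{t}}(\chi)_{-1}.
$$

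Composing the equivalence from the first step with the equivalence from the second step gives the desired weak homotopy equivalence. There is no real obstacle here: both ingredients are standard, and Proposition \ref{proposition: augmentation is weak equivalence} has already been proven. If any subtlety arises, it will be in carefully setting up the bi-simplicial model so that the interchange of hocolims in the first step is valid; this can be handled by the same bi-semi-simplicial construction used in the proof of Lemma \ref{lemma: restrict to k equivalence} (Construction \ref{Construction: map of double colimits}) earlier in the paper.
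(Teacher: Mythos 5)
Your proposal is correct and follows essentially the same route as the paper: first apply Proposition \ref{proposition: augmentation is weak equivalence} levelwise together with homotopy invariance of homotopy colimits, and then interchange the homotopy colimit with geometric realization. The paper just states these two steps without elaborating on the bi-semi-simplicial model for the interchange, so your extra discussion of that point is a harmless amplification rather than a deviation.
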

\begin{proof}
By Proposition \ref{proposition: augmentation is weak equivalence} the map $|\mathcal{M}^{k}_{P, \mb{t}}(\chi)_{\bullet}| \longrightarrow \mathcal{M}^{k}_{P, \mb{t}}(\chi)_{-1}$ is a weak homotopy equivalence for all $\mb{t} \in \mathcal{K}^{k}$.
By the homotopy invariance of homotopy colimits, the augmentation map induces a weak homotopy equivalence, 
$\displaystyle{\hocolim_{\mb{t} \in \mathcal{K}^{k}}}|\mathcal{M}^{k}_{P, \mb{t}}(\chi)_{\bullet}| \; \stackrel{\simeq}  \longrightarrow \; \displaystyle{\hocolim_{\mb{t} \in \mathcal{K}^{k}}}\mathcal{M}^{k}_{P, \mb{t}}(\chi)_{-1}.$
The corollary then follows by combining this with the weak homotopy equivalence,
$$
\hocolim_{\mb{t} \in \mathcal{K}^{k}}|\mathcal{M}^{k}_{P, \mb{t}}(\chi)_{\bullet}| \simeq  |\hocolim_{\mb{t} \in \mathcal{K}^{k}}\mathcal{M}^{k}_{P, \mb{t}}(\chi)_{\bullet}|.
$$
\end{proof}

\subsection{Resolving the map $\mb{S}_{T(\sigma)}$} \label{subsection: resolving composition}
We now show how to use the above semi-simplicial resolution to prove Theorem \ref{theorem: arbitrary handles}. 
For the rest of this section, let us fix once and for all an object $P \in \Ob\Cob^{k-1}_{\theta, d}$. 
For each $l \in \Z_{\geq 0}$, 
fix once and for all a diffeomorphism 
\begin{equation} \label{equation: collar extension diffeomorphism}
\gamma_{l}: D^{l}\times D^{d-l} \stackrel{\cong} \longrightarrow D^{l}\times D^{d-l}\bigcup S^{l-1}\times[0,1]\times D^{d-l}
\end{equation}
which is the identity on $D^{l}(1/2)\times D^{d-l}$ (where $D^{l}(1/2) \subset D^{l}$ is the disk centered at the origin with radius $1/2$).
\begin{Construction}[A semi-simplicial map] \label{construction: induced trace simplicial map}
Let $P \in \Ob\Cob^{k-1}_{\theta, d}$. 
Let 
$$\chi: \partial D^{l}\times(1, \infty)\times\R^{d-l-1} \longrightarrow P$$ 
be as in Definition \ref{defn: semi-simplicial space 1}. 
Let $\sigma \in \Surg_{j-1}(P)$
and suppose that 
$$
\sigma(D^{j+1}\times D^{d-j-1})\cap\chi(\partial D^{l}\times(1, \infty)\times\R^{d-l-1}) = \emptyset.
$$ 
From this disjointness condition, we can consider $\chi$ to be an embedding into $P(\sigma)$ as well and thus the semi-simplicial space $\mathcal{M}^{k}_{P(\sigma), \mb{t}}(\chi)_{\bullet}$ is well defined. 
Let 
$$(W; \; (t_{0}, \phi_{0}, \hat{L}_{0}), \dots, (t_{p}, \phi_{p}, \hat{L}_{p})) \in \mathcal{M}^{k}_{P, \mb{t}}(\chi)_{p}.$$ 
To this $p$-simplex we associate a new $p$-simplex in $\mathcal{M}^{k}_{P(\sigma), \mb{t}}(\chi)_{\bullet}$ as follows: 
First note that from the definition of $T(\sigma)$ (Definition \ref{defn: trace of surgery}) for each $i$ we have
$[0,1]\times\phi_{i}(S^{l-1}\times D^{d-l}) \; \subset \; T(\sigma).$
For each $i$, we may define a new embedding 
$
\phi_{i}(\sigma): D^{l}\times D^{d-l} \longrightarrow W\cup_{P}T(\sigma),
$
by precomposing
$$
\xymatrix{
(D^{l}\times D^{d-l})\bigcup \left([0,1]\times(S^{l-1}\times D^{d-l})\right) \ar[rrrr]^{ \ \ \ \ \ \ \ \ \ \ \ \ \ \ \ \phi_{i}\cup(\Id_{[0,1]}\times\partial\phi_{i})} &&&& 
W\bigcup_{P}T(\sigma).
}
$$
with the diffeomorphism 
$$
\gamma_{l}: (D^{l}\times D^{d-l}) \stackrel{\cong} \longrightarrow
D^{l}\times D^{d-l}\bigcup[0,1]\times(S^{l-1}\times D^{d-l}).
$$
Using this diffeomorphism $\gamma_{l}$, each $\hat{L}_{i}$ can also be extended cylindrically up the trace $T(\sigma)$, to yield new one-parameter families of $\theta$-structures 
$\hat{L}_{0}(\sigma), \dots, \hat{L}_{p}(\sigma),$ 
satisfying the conditions of Definition \ref{defn: semi-simplicial space 1} with respect to $(t_{0}, \phi_{0}(\sigma)), \dots, (t_{p}, \phi_{p}(\sigma))$.
The correspondence
$$
\left(M; (t_{0}, \phi_{0}, \hat{L}_{0}), \dots, (t_{p}\phi_{p}, \hat{L}_{p})\right) \; \mapsto \; \left(M\cup T(\sigma); \; (t_{0}, \phi_{0}(\sigma), \hat{L}_{0}(\sigma)), \dots,  (t_{p}, \phi_{p}(\sigma), \hat{L}_{p}(\sigma))\right)
$$
defines a map 
$$
\mb{T}(\sigma)_{p}: \mathcal{M}^{k}_{P, \mb{t}}(\chi)_{p} \; \longrightarrow \; \mathcal{M}^{k}_{P(\sigma), \mb{t}}(\chi)_{p}.
$$
It is clear that $d_{i}\circ\mb{T}(\sigma)_{p} \; = \; \mb{T}(\sigma)_{p}\circ d_{i}$ for all face maps $d_{i}$, and thus we obtain a semi-simplicial map 
\begin{equation} \label{equation: induced trace map p level}
\mb{T}(\sigma)_{\bullet}: \mathcal{M}^{k}_{P, \mb{t}}(\chi)_{\bullet} \; \longrightarrow \; \mathcal{M}^{k}_{P(\sigma), \mb{t}}(\chi)_{\bullet}.
\end{equation}
\end{Construction}
Notice that for $p = -1$, the map $\mb{T}(\sigma)_{-1}$ is precisely the map from Section \ref{section: Cobordism Categories and Handle Attachments},
$$
\mb{S}_{T(\sigma)}: \mathcal{W}^{k}_{P, \mb{t}} \; \longrightarrow \; \mathcal{W}^{k}_{P(\sigma), \mb{t}},
$$
induced by the cobordism $T(\sigma): P \rightsquigarrow P(\sigma)$.
Thus the semi-simplicial map $\mb{T}(\sigma)_{\bullet}$ covers $\mb{S}_{T(\sigma)}$.

We will need to use one more construction.
\begin{Construction} \label{Construction: auxiliary displaced trace}
Let $P \in \Cob^{k-1}_{\theta, d}$ be the same object fixed from the beginning of this subsection. 
Let 
$\chi: \partial D^{l}\times(1, \infty)\times\R^{d-l-1} \longrightarrow P$ 
and $\hat{\ell}^{\std}_{t}$
be as in Definition \ref{defn: semi-simplicial space 1}, with respect to $P$. 
Choose an extension of the embedding $\chi$, 
\begin{equation} \label{equation: extension of chi}
\widetilde{\chi}: D^{l}\times(1, \infty)\times\R^{d-l-1} \longrightarrow \R^{\infty}
\end{equation}
with the properties: 
\begin{itemize} \itemsep.2cm
\item $\widetilde{\chi}^{-1}(P) = \partial D^{l}\times(1, \infty)\times\R^{d-l-1}$,
\item $\widetilde{\chi}|_{\partial D^{l}\times(1, \infty)\times\R^{d-l-1}} \; = \; \chi$.
\end{itemize}
For $i \in \Z_{\geq 0}$, we define an embedding 
\begin{equation} \label{equation: chi i embedding}
 \chi_{i}: D^{l}\times D^{d-l} \longrightarrow \R^{\infty}
\end{equation}
by the formula
$$
\chi_{i}(x, \; y) \; = \; \widetilde{\chi}(x, \; 3(i+1)e_{0} + y)
$$
(in the formula $3(i+1)e_{0} + y$, the term $e_{0}$ denotes the basis vector corresponding to the first coordinate in $(1, \infty)\times\R^{d-k-1}$).
For each $i$, we define a $\theta$-structure 
$$\hat{L}_{i}(\chi): TD^{l}\times D^{d-l} \longrightarrow \theta^{*}\gamma^{d}$$ 
by setting 
$\hat{L}_{i}(\chi) = \chi_{i}^{*}\hat{\ell}^{\std}_{3i+1}.$
Equipped with the $\theta$-structure $\hat{L}_{i}(\chi)$, the embedding $\chi_{i}$ is surgery data of degree $l-1$ in the manifold $P$. 

For each $i$, consider the surgery data given by $\chi_{i}$. 
For $p \in \Z_{\geq 0}$, let us denote by $\chi(p)$ the truncated sequence of surgery data $(\chi_{0}, \chi_{1}, \dots, \chi_{p})$.
We let $\bar{\chi}(p)$ denote the sequence of surgery data $(\bar{\chi}_{0}, \bar{\chi}_{1}, \dots, \bar{\chi}_{p})$, where each $\bar{\chi}_{i}$ is the surgery in $P(\chi(p))$ dual to $\chi_{i}$ (see Definition \ref{defn: dual surgery}). 
Since the embeddings $\bar{\chi}_{i}$ are all disjoint, we may form their (simultaneous) trace,
$T(\bar{\chi}(p)): P(\chi(p)) \rightsquigarrow P,$
and consider its induced map 
$
\mb{S}_{T(\bar{\chi}(p))}: \mathcal{W}^{k}_{P(\chi(p)), \mb{t}} \longrightarrow \mathcal{W}^{k}_{P, \mb{t}}.
$
By Construction \ref{Construction: trace of surgery}, the trace $T(\bar{\chi}(p))$ is equipped with embeddings  
\begin{equation} \label{equation: psi-handle i}
\psi_{i}: (D^{l}\times D^{d-l}, S^{l-1}\times D^{d-l}) \; \longrightarrow \; (T(\bar{\chi}(p)), \; P)
\end{equation}
for $i = 0, \dots, p$, where 
$\psi_{i}(D^{l}\times D^{d-l})\cap\psi_{j}(D^{l}\times D^{d-l}) = \emptyset$ for $i \neq j$.
These embeddings are the \textit{standard core handles} as described in Construction \ref{Construction: trace of surgery}. 
By how $\chi_{i}$ was defined, it follows that each restricted embedding $\partial\psi_{i} = \psi_{i}|_{S^{l-1}\times D^{d-l}}$ satisfies the equation
$$
\partial\psi_{i}(x, y) \; = \; \chi(x, y + 3(i+1)\cdot e_{0})
$$
for all $(x, y) \in D^{l}\times D^{d-l}$.
Using this, 
we may define a map
\begin{equation} \label{equation: levelwise identification}
\mathcal{H}(\chi)_{p}: \mathcal{W}^{k}_{P(\chi(p)), \mb{t}} \; \longrightarrow  \; \mathcal{M}^{k}_{P, \mb{t}}(\chi)_{p} 
\end{equation}
by sending 
$x \in \mathcal{W}^{k}_{P(\chi(p)), \mb{t}}$ 
to the $p$-simplex given by 
$$
\left(\mb{S}_{T(\bar{\chi}(p))}(x); \; \left((1, \psi_{0}, \hat{L}_{0}(\chi)),\; \dots, \; (3i+1, \psi_{i}, \hat{L}_{i}(\chi)), \; \dots, \; (3p+1, \psi_{p}, \hat{L}_{p}(\chi_{p}))\right)\right),
$$
where each $\hat{L}_{i}(\chi)$ is considered to be the constant one-parameter family of $\theta$-structures as defined above.
By how each $\chi_{i}$ was defined, it follows that this formula does indeed give a well defined map.
\end{Construction}

The following lemma is similar to \cite[Lemma 6.10]{GRW 16}.
\begin{lemma} \label{lemma: levelwise identification} 
For all $p \in \Z_{\geq 0}$, the map induced by $\mathcal{H}(\chi)_{p}$,
$$
\hocolim_{\mb{t} \in \mathcal{K}^{k}}\mathcal{W}^{k}_{P(\chi(p)), \mb{t}} \; \longrightarrow  \; \hocolim_{\mb{t} \in \mathcal{K}^{k}}\mathcal{M}^{k}_{P, \mb{t}}(\chi)_{p}, 
$$
is a weak homotopy equivalence. 
\end{lemma}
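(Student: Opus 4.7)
The plan is to reduce to a levelwise statement and then construct an explicit homotopy inverse. By the homotopy invariance of homotopy colimits and the evident naturality of $\mathcal{H}(\chi)_p$ in the variable $\mb{t}$ (the construction of $\mathcal{H}(\chi)_p$ only modifies the ``manifold part,'' not the surgery data indexed by $\mb{t}$), it suffices to show that for each fixed $\mb{t} \in \mathcal{K}^{k}$ the map
\[
\mathcal{H}(\chi)_{p}: \mathcal{W}^{k}_{P(\chi(p)), \mb{t}} \; \longrightarrow \; \mathcal{M}^{k}_{P, \mb{t}}(\chi)_{p}
\]
is a weak homotopy equivalence.

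I would construct an inverse by ``handle excision.'' Given a $p$-simplex $(M; (t_{0}, \phi_{0}, \hat{L}_{0}), \dots, (t_{p}, \phi_{p}, \hat{L}_{p}))$, one removes the open handles $\phi_{i}(\Int(D^{l}) \times \Int(D^{d-l}))$ from $M$ and smooths corners to obtain a manifold $M^{\circ}$ whose boundary is isotopic to $P(\chi(p))$. Indeed, condition (i) of Definition \ref{defn: semi-simplicial space 1} forces each $\phi_{i}|_{\partial D^{l} \times D^{d-l}}$ to be precisely the boundary surgery datum $\chi(\--, \-- + t_{i}\cdot e_{1})$, so the boundary of $M^{\circ}$ is the result of surgery on $P$ along $p+1$ displaced copies of $\chi$, and an ambient isotopy of $\R^{\infty}$ supported in $(1,\infty)\times\R^{d-l-1}$ carries the positions $(t_{0}, \dots, t_{p})$ to the standard positions $(1, 4, \dots, 3p+1)$ defining $P(\chi(p))$; the paths $\hat{L}_{i}$ provide exactly the data needed to identify the resulting $\theta$-structure near the boundary with the standard one. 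This defines $\mathcal{G}_{p}$, and by the discussion in Construction \ref{construction: induced trace simplicial map} (where the handles $\psi_{i}$ were built so that cutting them off recovers the collar of $P(\chi(p))$), the composite $\mathcal{G}_{p}\circ \mathcal{H}(\chi)_{p}$ is equal to the identity up to a collar reparametrization, which is canonically isotopic to the identity.

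For the reverse composite $\mathcal{H}(\chi)_{p}\circ\mathcal{G}_{p}$ one must construct a deformation from the ``standardized'' configuration (handles at positions $1, 4, \dots, 3p+1$ with constant $\theta$-paths) back to the original $p$-simplex. This uses two contractible spaces of choices: the space
\[
\{(t_{0}, \dots, t_{p}) \in (2, \infty)^{p+1} \mid t_{0} < t_{1} < \cdots < t_{p}\}
\]
of strictly increasing parameter tuples, and the path space of $\theta$-structures with fixed endpoints appearing in condition (iii) of Definition \ref{defn: semi-simplicial space 1}. Concatenating the interpolation of positions (carried out via an ambient isotopy of $\R^{\infty}$) with the interpolation of paths $\hat{L}_{i}$ produces the required homotopy.

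The main obstacle will be the careful bookkeeping of the $\theta$-structures under the ambient isotopies that standardize the positions of the handles, together with the verification that the excision operation $\mathcal{G}_{p}$ preserves the $k$-connectivity condition on $\ell_{M^{\circ}}: M^{\circ} \longrightarrow B$. This last point is where the hypothesis $l \leq k < d/2$ enters essentially: since $M$ is obtained from $M^{\circ}$ by attaching $l$-handles and $d-l > k$, the long exact sequence of the pair $(M, M^{\circ})$ together with $\ell_{M}$ being $k$-connected forces $\ell_{M^{\circ}}$ to be $k$-connected as well. Once these technical checks are dispatched, the levelwise weak equivalence together with naturality in $\mb{t}$ yields the lemma.
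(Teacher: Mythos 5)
Your approach is valid in conception but genuinely different from the paper's. The paper's proof of Lemma \ref{lemma: levelwise identification} does not construct an explicit homotopy inverse; instead it introduces an auxiliary space $\mathcal{E}_P(\chi)$ of tuples $(t, \phi, \hat{L})$ where $\phi$ is an embedding into ambient infinite-dimensional Euclidean space (not into $M$), observes that $\mathcal{E}_P(\chi)$ is weakly contractible, and shows that the forgetful map $\mathcal{M}^{k}_{P, \mb{t}}(\chi)_{p} \to \mathcal{E}_P(\chi)$ has the concordance lifting property with fibre identified with $\mathcal{W}^{k}_{P(\chi(p)), \mb{t}}$. The fibration argument packages precisely the ``contractible spaces of choices'' that your final paragraph appeals to (the positions $t_i$, the embeddings $\phi_i$, the paths $\hat{L}_i$) into the base of a fibration, rather than having to traverse those choices by hand via an explicit homotopy. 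This is what makes it clean: you never actually need to exhibit the ambient isotopy carrying $(t_0, \ldots, t_p)$ to a standard configuration in a parametrized way, nor to confront the corner-smoothing and collar-bending needed to make the excised manifold $M^\circ$ literally an element of $\mathcal{W}^{k}_{P(\chi(p)), \mb{t}}$ rather than something only abstractly isomorphic to one. These are all nontrivial bookkeeping steps that your proposal invokes but does not carry out, and they would constitute the bulk of the work if done rigorously. Your approach would work, but at a noticeably higher technical cost than the paper's.

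Two smaller points. First, there is a terminological slip: $M$ is obtained from $M^{\circ}$ by attaching cells of dimension $d-l$ (the handle $\phi_i(D^l\times D^{d-l})$ is glued to $M^\circ$ along $\phi_i(D^l\times\partial D^{d-l})$, which up to homotopy is $(D^{d-l},\partial D^{d-l})$), not by attaching $l$-handles; however you invoke the correct numerical condition $d - l > k$, so the connectivity conclusion is right. Second, the reduction ``by homotopy invariance of homotopy colimits, it suffices to prove levelwise'' is correct and matches the paper's first sentence exactly, so that part is fine.
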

\begin{proof}
It will suffice to show that $\mathcal{W}^{k}_{P(\chi(p)), \mb{t}} \; \longrightarrow  \; \mathcal{M}^{k}_{P, \mb{t}}(\chi)_{p}$ is a weak homotopy equivalence for all $\mb{t}$. 
We will do this explicitly for the case that $p = 0$;
the case for general $p$ being similar. 
Let $\mathcal{E}_{P}(\chi)$ denote the set of tuples $(t, \phi, \hat{L})$ with 
\begin{itemize} \itemsep.2cm
\item $t \in (1, \infty)$, 
\item $\phi: \left(D^{l}\times D^{d-l}, \; \partial D^{l}\times D^{d-l}\right) \; \longrightarrow \; \left((-\infty, 0]\times\R^{\infty-1}, \; \{0\}\times P\right)$ is an embedding, 
\item $\hat{L}$ is a one-parameter family of $\theta$-structures on $D^{l}\times D^{d-l}$,
\end{itemize}
subject to conditions similar to those from Definition \ref{defn: semi-simplicial space 1}. 
Since the target space of the embedding $\phi$ in the above definition is infinite dimensional Euclidean space, it follows that $\mathcal{E}_{P}(\chi)$ is weakly contractible. 
There is a natural transformation 
\begin{equation} \label{equation: natural transformation}
\mathcal{M}^{k}_{P, \mb{t}}(\chi)_{0} \; \longrightarrow \; \mathcal{E}_{P}(\chi)
\end{equation} 
defined by 
$$\left((M, (V, \sigma), e), (t, \phi, \hat{L})\right) \; \mapsto \; (t, i_{M}\circ\phi, \hat{L}),$$
where on the right $i_{M}: (M, P) \hookrightarrow ((-\infty, 0]\times\R^{\infty-1}, \; \{0\}\times P)$ is the inclusion map. 
This natural transformation has the concordance lifting property. 
The proof then follows by identifying $\mathcal{W}^{k}_{P(\chi(0)), \mb{t}}$ with the fibre of (\ref{equation: natural transformation}).
This is similar to what was done in \cite[Lemma 6.10]{GRW 16} and we leave this step to the reader. 
\end{proof}

With the above constructions in place we can now prove Theorem \ref{theorem: arbitrary handles}. 
\begin{proof}[Proof of Theorem \ref{theorem: arbitrary handles}]
Let $P \in \Ob\Cob^{k-1}_{\theta, d}$ be as in the statement of Theorem \ref{theorem: arbitrary handles}.
Let 
$\sigma \in \Surg_{k-1}(P)$.
Let $\alpha \in \mathcal{L}(P, \sigma)$. 
Choose an embedding 
$\chi: \partial D^{k}\times(1, \infty)\times\R^{d-k-1} \longrightarrow P$
as in Definition \ref{defn: semi-simplicial space 1}, but with the extra properties:
\begin{enumerate} \itemsep.2cm
\item[(a)] $\chi(\partial D^{k}\times(1, \infty)\times\R^{d-k-1})\cap \sigma(\partial D^{k}\times D^{d-k}) = \emptyset$,
\item[(b)] $\chi(\partial D^{k}\times(1, \infty)\times\R^{d-k-1})\cap \alpha(D^{d-k}\times D^{k}) = \emptyset,$
\item[(c)] $\chi|_{\partial D^{k}\times\{0\}}: S^{k-1} \longrightarrow P$ is isotopic to the embedding $\sigma|_{S^{k-1}\times\{0\}}$.
\end{enumerate}
Such an embedding $\chi$ can always be found by picking a small displacement of $\sigma$. 
Chosen in this way, we consider the semi-simplicial space $\mathcal{M}^{k}_{P, \mb{t}}(\chi)_{\bullet}$.
We may utilize Constructions \ref{construction: induced trace simplicial map} and \ref{Construction: auxiliary displaced trace} for these particular choices of $\chi$ and $\sigma$. 

Let $p \in \Z_{\geq 0}$.
Consider the cobordism
$
T(\mathcal{R}_{\chi(p)}(\sigma, \alpha)): P(\chi(p)) \rightsquigarrow P(\mathcal{R}_{\chi(p)}(\sigma, \alpha)),
$
and its map induced by $\mb{S}_{T(\mathcal{R}_{\chi(p)}(\sigma, \alpha))}$,
\begin{equation} \label{equation: induced concatenation map}
\hocolim_{\mb{t}\in\mathcal{K}^{k}}\mathcal{W}^{k}_{P(\chi(p)), \mb{t}} \; \longrightarrow  \; \hocolim_{\mb{t}\in\mathcal{K}^{k}}\mathcal{W}^{k}_{P(\mathcal{R}_{\chi(p)}(\sigma, \alpha)), \mb{t}}. 
\end{equation}
By condition (c), it follows that in the manifold $P(\chi(p))$ the embedding 
$\sigma|_{S^{l-1}\times\{0\}}: S^{k-1} \longrightarrow P(\chi(p))$
is null-homotopic, and since $k < d/2$ it follows further that this embedding factors through an embedding $\R^{d-1} \hookrightarrow P(\chi(p))$.
It follows that $\mathcal{R}_{\chi(p)}(\sigma)$ is a trivial surgery, and thus 
by Proposition \ref{proposition: tori composite} the map 
(\ref{equation: induced concatenation map}) is a homotopy equivalence, and we conclude that the cobordism $T(\mathcal{R}_{\chi(p)}(\sigma, \alpha))$ lies in $\mathcal{V}^{k-1}$ and thus (\ref{equation: induced concatenation map}) is a weak homotopy equivalence. 
Let us denote the map (\ref{equation: induced concatenation map}) by 
$F_{p}$.

Recall the map
$$
\mathcal{H}(\chi)_{p}: \hocolim_{\mb{t}\in\mathcal{K}^{k}}\mathcal{W}^{k}_{P(\chi(p)), \mb{t}} \; \longrightarrow  \; \hocolim_{\mb{t} \in \mathcal{K}^{k}}\mathcal{M}^{k}_{P, \mb{t}}(\chi)_{p},
$$
which is a weak homotopy equivalence by Lemma \ref{lemma: levelwise identification}.
By the disjointness conditions (a) and (b), this map induces a similar map 
$$
\mathcal{H}(\chi)'_{p}: \hocolim_{\mb{t} \in \mathcal{K}^{k}}\mathcal{W}^{k}_{P(\mathcal{R}_{\chi(p)}(\sigma, \alpha)), \; \mb{t}}
\; \longrightarrow \; \hocolim_{\mb{t} \in \mathcal{K}^{k}}\mathcal{M}^{k}_{P(\sigma, \alpha), \mb{t}}(\chi)_{p}
$$
defined in a similar way, and which is also a weak homotopy equivalence. 
We obtain the commutative diagram, 
$$
\xymatrix{
\displaystyle{\hocolim_{\mb{t}\in\mathcal{K}^{k}}}\mathcal{W}^{k}_{P(\chi(p)), \mb{t}} \ar[d]^{\mathcal{H}(\chi)_{p}}_{\simeq} \ar[rrr]^{F_{p}}_{\simeq} &&& \displaystyle{\hocolim_{\mb{t}\in\mathcal{K}^{k}}}\mathcal{W}^{k}_{P(\mathcal{R}_{\chi(p)}(\sigma, \alpha)), \mb{t}} \ar[d]^{\mathcal{H}(\chi)'_{p}}_{\simeq} \\
\displaystyle{\hocolim_{\mb{t} \in \mathcal{K}^{k}}}\mathcal{M}^{k}_{P, \mb{t}}(\chi)_{p}\ar[rrr]^{\mb{T}(\sigma, \alpha)_{p}}   &&& \displaystyle{\hocolim_{\mb{t} \in \mathcal{K}^{k}}}\mathcal{M}^{k}_{P(\sigma, \alpha), \mb{t}}(\chi)_{p}.  
}
$$
By commutativity, it follows that $\mb{T}(\sigma, \alpha)_{p}$ is a weak homotopy equivalence for each $p$ and thus $\mb{T}(\sigma, \alpha)_{\bullet}$ induces a weak homotopy equivalence on geometric realization, 
$$
|\displaystyle{\hocolim_{\mb{t} \in \mathcal{K}^{k}}}\mathcal{M}^{k}_{P, \mb{t}}(\chi)_{\bullet}| \stackrel{\simeq} \longrightarrow |\displaystyle{\hocolim_{\mb{t} \in \mathcal{K}^{k}}}\mathcal{M}^{k}_{P(\sigma, \alpha), \; \mb{t}}(\chi)_{\bullet}|.
$$
Theorem \ref{theorem: arbitrary handles} then follows from the commutative diagram
$$
\xymatrix{
|\displaystyle{\hocolim_{\mb{t} \in \mathcal{K}^{k}}}\mathcal{M}^{k}_{P, \mb{t}}(\chi)_{\bullet}| \ar[rrr]^{\simeq} \ar[d]^{\simeq} &&&  |\displaystyle{\hocolim_{\mb{t} \in \mathcal{K}^{k}}}\mathcal{M}^{k}_{P(\sigma, \alpha), \; \mb{t}}(\chi)_{\bullet}| \ar[d]^{\simeq} \\
\displaystyle{\hocolim_{\mb{t} \in \mathcal{K}^{k}}}\mathcal{W}^{k}_{P, \mb{t}} \ar[rrr] &&& \displaystyle{\hocolim_{\mb{t} \in \mathcal{K}^{k}}}\mathcal{W}^{k}_{P(\sigma, \alpha), \mb{t}},
}
$$
whose vertical arrows are weak homotopy equivalences by Corollary \ref{corollary: homotopy colimit augmentation}. 
\end{proof}

\section{Higher Index Handles} \label{section: higher index handles}
In this section we prove the following proposition.
\begin{proposition} \label{proposition: arbitrary d-l surgeries}
Let $k < d/2$ and $P \in \Ob\Cob^{k-1}_{\theta, d}$.
Let $\sigma \in \Surg_{j}(P)$ with $d-k \leq j < d-1$. 
Then the trace $T(\sigma): P \rightsquigarrow P(\sigma)$ lies in $\mathcal{V}^{k-1}$.
\end{proposition}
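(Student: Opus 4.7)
The strategy is a three-cobordism telescope, in the spirit of the proof of Corollary \ref{corollary: trivial d-l handle}. Given $\sigma_{0} = \sigma \in \Surg_{j}(P)$, note that the dual degree $d-j-2$ lies in $[0, k-2]$; choose linked surgeries $\sigma_{1} \in \mathcal{L}(P, \sigma_{0}) \subset \Surg_{d-j-2}(P)$ and $\sigma_{2} \in \mathcal{L}(P, \sigma_{1}) \subset \Surg_{j}(P)$ whose images are mutually disjoint. In the three-step composite
$$P \; \xrightarrow{T(\sigma_{0})} \; P(\sigma_{0}) \; \xrightarrow{T(\mathcal{R}_{\sigma_{0}}(\sigma_{1}))} \; P(\sigma_{0}, \sigma_{1}) \; \xrightarrow{T(\mathcal{R}_{\sigma_{0}, \sigma_{1}}(\sigma_{2}))} \; P(\sigma_{0}, \sigma_{1}, \sigma_{2}),$$
equation (\ref{equation: simultaneous + composed surgery}) together with Proposition \ref{proposition: interchange of linked surgeries} identifies the two consecutive pair-composites as the simultaneous traces $T(\sigma_{0}, \sigma_{1})$ and $T(\mathcal{R}_{\sigma_{0}}(\sigma_{1}), \mathcal{R}_{\sigma_{0}}(\sigma_{2}))$, each of the form $T(\tau, \beta)$ with $\tau$ of degree $j$ and $\beta \in \mathcal{L}(\cdot, \tau)$ of dual degree $d-j-2$. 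Assuming both pair-composites lie in $\mathcal{V}^{k-1}$, two applications of the two-out-of-three property (as at the end of the proof of Corollary \ref{corollary: trivial d-l handle}) force $T(\sigma_{0}) = T(\sigma) \in \mathcal{V}^{k-1}$.

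The crux is therefore to prove a ``high-degree analogue'' of Theorem \ref{theorem: arbitrary handles}: that $T(\tau, \beta) \in \mathcal{V}^{k-1}$ for arbitrary $\tau \in \Surg_{j}(\cdot)$ with $j \geq d-k$ and $\beta \in \mathcal{L}(\cdot, \tau)$. The plan is to dualize the semi-simplicial resolution argument of Section \ref{section: Resolving Nontrivial Handle Attachments}: since $T(\tau, \beta) = T(\beta, \tau)$ as a simultaneous trace, one treats the low-degree surgery $\beta$ (degree $d-j-2 \leq k-2$) as the primary and $\tau$ as the linked auxiliary. The corresponding semi-simplicial space $\mathcal{M}^{k}_{P, \mb{t}}(\chi)_{\bullet}$ is formed with parameter $l = d-j-1$, which satisfies the constraint $l \leq k$ needed for Proposition \ref{proposition: augmentation is weak equivalence}; the analogue of Lemma \ref{lemma: levelwise identification} then identifies each level with $\mathcal{W}^{k}_{P(\chi(p)), \mb{t}}$, where $P(\chi(p))$ is obtained by attaching $p+1$ parallel low-degree handles.

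After passing to $P(\chi(p))$, one arranges $\chi$ so that $\beta$'s attaching sphere becomes isotopic to the meridian of one of the $\chi_{i}$, making $\mathcal{R}_{\chi(p)}(\beta)$ a trivial surgery in $P(\chi(p))$. The levelwise induced trace thus reduces to a linked pair in which the primary surgery is trivial, and one identifies it as an element of $\mathcal{V}^{k-1}$ via an analogue of Proposition \ref{proposition: tori composite}: the resulting cobordism is a stabilization by a product of spheres, and the map it induces should be a weak equivalence by an argument paralleling Proposition \ref{proposition: H-k induces iso}.

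The main obstacle I anticipate is this last step. The trivial surgery $\mathcal{R}_{\chi(p)}(\beta)$ has degree $d-j-2$, which in general lies strictly below both $k-1$ and $d-k-1$, so Proposition \ref{proposition: tori composite} does not apply directly. Overcoming this requires extending that proposition to trivial surgeries of arbitrary low degree, which in turn demands a variant of Proposition \ref{proposition: H-k induces iso} producing natural transformations of functors on $\mathcal{K}^{k}$ for sphere products $S^{i+1} \times S^{d-i-1}$ whose index $i+1$ lies outside the range $[k, d-k-1]$. A handle-trading bootstrap that absorbs the low-degree $H_{i+1,d-i-1}$ stabilization into a composite of known $H_{k, d-k}$ morphisms from Proposition \ref{proposition: H-k induces iso} appears to be the cleanest route for this, but it requires genuine new bookkeeping for the $\theta$-structures and labels involved.
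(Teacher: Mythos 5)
Your approach does not match the paper's, and although you correctly diagnose the obstruction that stalls it, the obstruction is genuine and your proposed circumvention is unlikely to succeed.

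The paper proves Proposition \ref{proposition: arbitrary d-l surgeries} by induction on $j$, via Claim \ref{claim: trivial d-l-1 implies arbitary d-l}: assuming the traces of trivial surgeries of degree $d-l-1$ lie in $\mathcal{V}^{k-1}$, it shows that traces of \emph{arbitrary} surgeries of degree $d-l$ do. Within the claim, the semi-simplicial resolution is built with $\chi$ of degree $l-2$, and the decisive step is Lemma \ref{lemma: new primitive embedding}: after surgering along the low-degree $\chi_{i}$'s (chosen as small displacements of $\sigma$'s meridian), the transported surgery $\mathcal{R}_{\chi(p)}(\sigma)$ becomes \emph{primitive}, not trivial. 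One then appeals to Proposition \ref{proposition: bootstrapping proposition}, whose proof is a handle-cancellation argument against a trivial surgery of degree $d-l-1$ (Lemma \ref{lemma: composition cylinder} plus the inductive hypothesis plus two-out-of-three). Crucially, this argument imposes no index-range constraint on the trivial surgery other than what is supplied by the induction.

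Your route, by contrast, telescopes through linked surgeries as in Corollary \ref{corollary: trivial d-l handle} and reduces to showing $T(\tau, \beta) \in \mathcal{V}^{k-1}$ for $\tau$ of high degree and $\beta \in \mathcal{L}(\cdot, \tau)$ of degree $d-j-2$, and then dualizes the Theorem \ref{theorem: arbitrary handles} resolution so that the low-degree surgery $\beta$ is made trivial after passage to $P(\chi(p))$. You then require an extension of Proposition \ref{proposition: tori composite} to trivial surgeries of degree $d-j-2 \leq k-2$. As you note, that degree lies strictly outside $\{k-1, d-k-1\}$, and the gap is real: the corresponding sphere product is $S^{d-j-1}\times S^{j+1}$ with $j+1 \geq d-k+1$, so the added critical point would carry a label outside $\{k+1, \dots, d-k\}$. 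The self-functor $s_{k}$ of Proposition \ref{proposition: H-k induces iso} adjoins a point of label $d-k$ to $\mb{t}$, which is at the edge of $\mathcal{K}^{k}$'s allowed range; for $S^{d-j-1}\times S^{j+1}$ there is no analogous endofunctor of $\mathcal{K}^{k}$, so the zig-zag of natural transformations in that proof has no analogue. The ``handle-trading bootstrap'' you float is not carried out, and I do not see how to make it work within $\mathcal{K}^{k}$.

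The idea you are missing is that one should aim for \emph{primitivity} of the high-degree transported surgery, not triviality of the low-degree linked surgery, and then discharge the primitive case by composing against the trace of a dual trivial surgery of degree one lower and invoking handle cancellation (Lemma \ref{lemma: composition cylinder}, Proposition \ref{proposition: bootstrapping proposition}). This keeps everything inside the existing framework and explains why the statement needs an induction on $j$: each increment of $j$ uses triviality in degree $d-l-1$, proven at the previous stage (with Corollary \ref{corollary: trivial d-l handle} and Proposition \ref{proposition: stability for primitive handles} handling the base $j = d-k$). Your reduction to simultaneous traces $T(\tau, \beta)$ is not needed and actually produces a harder sub-problem than the one being solved; the paper never passes through it.
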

The proof of the above proposition will use Corollary \ref{corollary: trivial d-l handle} together with the results developed in the following subsection.
\subsection{Primitive surgeries} \label{subsection: primitive surgeries}
We will need to work with primitive surgeries.
Let $P \in \Ob\Cob^{k-1}_{\theta, d}$.
Recall from Definition \ref{defn: primitive surgeries 1} that a surgery $\sigma \in \Surg_{j}(P)$ is said to be \textit{primitive} if there exists another surgery $\alpha \in \Surg_{d-j-1}(P)$ such that 
$\sigma(S^{j}\times\{0\}) \subset P$ and $\alpha(S^{d-j-1}\times\{0\}) \subset P$ intersect transversally in $P$ at exactly one point. 
As before we denote by $\Surg^{\prm}_{j}(P) \subset \Surg_{j}(P)$ the set of all primitive surgeries of degree $j$.

Now, let $\sigma \in \Surg^{\prm}_{j}(P)$ and let $\alpha \in \Surg_{d-j-1}(P)$ be such that the spheres 
$\sigma(S^{j}\times\{0\})$ and $\alpha(S^{d-j-1}\times\{0\})$
intersect transversally at one point in $P$.
Consider the dual surgery data $\bar{\alpha}$ in $P(\alpha)$ and its trace $T(\bar{\alpha}): P(\alpha) \rightsquigarrow P$.
The lemma below follows from the handle cancelation technique used in the proof of the \textit{H-cobordism theorem} \cite[Theorem 5.4]{M 65}. 
\begin{lemma} \label{lemma: composition cylinder}
The underlying manifold of the composite cobordism 
$$
T(\sigma)\circ T(\bar{\alpha}): P(\alpha) \rightsquigarrow P(\sigma)
$$
is diffeomorphic to the cylinder, $[0,1]\times P(\alpha)$, and thus the morphism $T(\sigma)\circ T(\bar{\alpha})$ lies in $\mathcal{V}^{k-1}$. 
\end{lemma}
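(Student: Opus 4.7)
The plan is to carry out the classical handle cancellation argument from Milnor's proof of the $h$-cobordism theorem, and then translate the resulting diffeomorphism of underlying manifolds into a statement about the induced maps on homotopy colimits.

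First I would track the handle indices. Since $\alpha\in\Surg_{d-j-1}(P)$, Definition \ref{defn: dual surgery} places $\bar{\alpha}$ in $\Surg_{j-1}(P(\alpha))$: a surgery of degree $l-1$ has a dual of degree $d-l-1$, applied to $l=d-j$. Thus $T(\bar{\alpha}):P(\alpha)\rightsquigarrow P$ is the trace of a degree-$(j-1)$ surgery, which by Construction \ref{Construction: trace of surgery} attaches a handle of index $j$ to $P(\alpha)\times\{0\}$, while $T(\sigma)$ attaches a handle of index $j+1$ to $P\times\{0\}$. So the composite $T(\sigma)\circ T(\bar{\alpha})$ is built from $P(\alpha)$ by attaching a $j$-handle followed by a $(j+1)$-handle. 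Next I would identify the relevant attaching and belt spheres sitting in the intermediate level $P$. By Definition \ref{defn: dual surgery} combined with Construction \ref{Construction: trace of surgery}, the belt sphere of the $j$-handle of $T(\bar{\alpha})$, viewed inside its outgoing boundary $P$, is precisely the attaching sphere of the original $\alpha$, namely $\alpha(S^{d-j-1}\times\{0\})\subset P$. The attaching sphere of the $(j+1)$-handle of $T(\sigma)$ in $P$ is $\sigma(S^{j}\times\{0\})$. The primitivity hypothesis on $\sigma$ is exactly the statement that these two embedded spheres intersect transversally in $P$ at a single point.

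This is precisely the hypothesis of Milnor's Cancellation Lemma \cite[Theorem 5.4]{M 65}: a $j$-handle and a $(j+1)$-handle whose belt and attaching spheres meet transversally in exactly one point cancel. The lemma provides a diffeomorphism
\[
\Phi:T(\sigma)\circ T(\bar{\alpha})\ \xrightarrow{\ \cong\ }\ [0,1]\times P(\alpha)
\]
which is the identity on the incoming boundary $P(\alpha)\times\{0\}$. This proves the stated diffeomorphism of underlying manifolds; after reindexing the first coordinate and using the flexibility of embeddings into $[0,1]\times\R^{\infty-1}$, it may be realized by an ambient isotopy.

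The remaining step is to deduce that the morphism lies in $\mathcal{V}^{k-1}$. Write $W:=T(\sigma)\circ T(\bar{\alpha})$ and push the $\theta$-structure $\hat{\ell}_{W}$ forward along $\Phi$ to obtain a $\theta$-structure on $[0,1]\times P(\alpha)$ that restricts to $\hat{\ell}_{P(\alpha)}$ at time $0$. Its restriction at time $1$ defines, via $\Phi$, a $\theta$-diffeomorphism $\Psi:P(\sigma)\xrightarrow{\cong}P(\alpha)$, and a linear interpolation between this time-$1$ structure and the product structure is a concordance of $\theta$-structures on $[0,1]\times P(\alpha)$ rel $\{0\}\times P(\alpha)$. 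Feeding this through the construction of $\mb{S}_{(-)}$ identifies $\mb{S}_{W}$, up to homotopy, with the composition of the cylinder functor $\cyl$ from Definition \ref{defn: natural transformations} with the map induced on $\hocolim_{\mb{t}\in\mathcal{K}^{k}}\mathcal{W}^{k}_{(\text{--}),\mb{t}}$ by $\Psi$. Both of these are weak homotopy equivalences ($\cyl$ by the argument in the proof of Proposition \ref{proposition: H-k induces iso}, and $\Psi$ since it is a $\theta$-diffeomorphism and functoriality with respect to isomorphisms is clear). Hence $\mb{S}_{W}$ is a weak homotopy equivalence and $W\in\mathcal{V}^{k-1}$. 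The main obstacle is the bookkeeping at this last step: the diffeomorphism $\Phi$ is a priori only a diffeomorphism of underlying manifolds, so one must exhibit the concordance of $\theta$-structures explicitly and invoke its invariance for the assignment $W\mapsto\mb{S}_{W}$ in order to reduce to $\cyl$. All the genuine geometric content is packaged into Milnor's Cancellation Lemma; everything downstream is formal manipulation of $\theta$-structures.
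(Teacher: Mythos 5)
Your proposal is correct and follows the same route as the paper, which simply cites Milnor's handle cancellation lemma for this result: you have tracked the indices correctly ($T(\bar{\alpha})$ is a $j$-handle, $T(\sigma)$ a $(j+1)$-handle), identified the relevant belt and attaching spheres in the intermediate level $P$, and observed that the primitivity hypothesis on $\sigma$ is precisely the geometric cancellation condition. The closing reduction to membership in $\mathcal{V}^{k-1}$ is slightly loosely phrased (the restriction of $\Phi_*\hat{\ell}_W$ at time $1$ need not literally equal $\hat{\ell}_{P(\alpha)}$, so $\Psi$ is a $\theta$-diffeomorphism only after composing with the concordance furnished by $\Phi_*\hat{\ell}_W$ itself, not a ``linear interpolation''), but this is a routine fix and does not change the substance of the argument.
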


Our first result is the following:
\begin{proposition} \label{proposition: stability for primitive handles}
Let $k < d/2$
and  $P \in \Ob\Cob^{k-1}_{\theta, d}$. 
Let $\sigma \in \Surg^{\pr}_{d-k}(P)$ be a primitive surgery. 
Then the trace $T(\sigma): P \rightsquigarrow P(\sigma)$ lies in $\mathcal{V}^{k-1}$.
\end{proposition}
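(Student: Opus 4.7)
The plan is to realize $T(\sigma)$ as one factor of a handle-cancellation pair whose other factor and whose composite are already known to lie in $\mathcal{V}^{k-1}$, and then invoke the two-out-of-three property.

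Since $\sigma \in \Surg^{\pr}_{d-k}(P)$, Definition \ref{defn: primitive surgeries 1} supplies a surgery $\alpha \in \Surg_{k-1}(P)$ whose core sphere meets $\sigma(S^{d-k}\times\{0\})$ transversally in a single point. The roles of $\sigma$ and $\alpha$ in that definition are completely symmetric, so $\sigma$ itself witnesses that $\alpha$ is primitive of degree $k-1$. By Lemma \ref{lemma: primitive and trivial surgeries}, the dual surgery $\bar{\alpha} \in \Surg_{d-k-1}(P(\alpha))$ is therefore trivial. Because $k<d/2$, attaching a handle of index $d-k$ (respectively $d-k+1$) produces a pair that is $(d-k-1)$-connected (respectively $(d-k)$-connected), and in both cases this is at least $k-1$; hence $T(\bar{\alpha}): P(\alpha)\rightsquigarrow P$ and $T(\sigma): P\rightsquigarrow P(\sigma)$ are both bona fide morphisms of $\Cob^{k-1}_{\theta,d}$.

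I would then assemble two inputs. First, Corollary \ref{corollary: trivial d-l handle}, applied with base object $P(\alpha)$ to the trivial degree-$(d-k-1)$ surgery $\bar{\alpha}$, gives $T(\bar{\alpha}) \in \mathcal{V}^{k-1}$. Second, Lemma \ref{lemma: composition cylinder} asserts that the composite
\[
T(\sigma)\circ T(\bar{\alpha}) : P(\alpha) \rightsquigarrow P(\sigma)
\]
has underlying manifold diffeomorphic to a cylinder and therefore already lies in $\mathcal{V}^{k-1}$.

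Finally, $\mathcal{V}^{k-1}$ is by Notational Convention \ref{Notation: bookkeeping device} the class of morphisms inducing weak equivalences on $\displaystyle{\hocolim_{\mb{t}\in\mathcal{K}^{k}}}\mathcal{W}^{k}_{(-),\mb{t}}$, so it inherits the two-out-of-three property from weak equivalences of spaces. Applied to the pair $\bigl(T(\bar{\alpha}),\; T(\sigma)\circ T(\bar{\alpha})\bigr)$, this yields $T(\sigma) \in \mathcal{V}^{k-1}$, completing the proof. I do not anticipate a substantive obstacle here: the argument is essentially bookkeeping built on Lemma \ref{lemma: primitive and trivial surgeries}, Corollary \ref{corollary: trivial d-l handle}, and the handle-cancellation content of Lemma \ref{lemma: composition cylinder}, so the only mild care needed is in verifying the symmetry of the primitivity condition and the connectivity bounds placing each trace in $\Cob^{k-1}_{\theta,d}$.
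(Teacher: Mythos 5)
Your proposal is correct and follows exactly the same route as the paper: invoke the witness $\alpha \in \Surg_{k-1}(P)$ supplied by primitivity, observe by symmetry that $\alpha$ is itself primitive, apply Lemma \ref{lemma: primitive and trivial surgeries} to get $\bar\alpha$ trivial, feed that into Corollary \ref{corollary: trivial d-l handle}, use Lemma \ref{lemma: composition cylinder} for the composite, and close with two-out-of-three. The only addition you make — explicitly checking that $d-k-1 \geq k-1$ so both traces are genuine morphisms of $\Cob^{k-1}_{\theta,d}$ — is a sound and worthwhile detail that the paper leaves implicit.
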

\begin{proof}
Let $\alpha \in \Surg_{k-1}(P)$ be such that $\alpha(S^{k-1}\times\{0\})$ and $\sigma(S^{d-k}\times\{0\})$ intersect transversally in $P$ at exactly one point.
Let $\bar{\alpha} \in \Surg_{d-k-1}(P(\alpha))$ be dual to $\alpha$. 
By the primitivity of $\alpha$ it follows that $\bar{\alpha}$ is a trivial surgery.
By Corollary \ref{corollary: trivial d-l handle} the trace $T(\bar{\alpha}): P(\alpha) \rightsquigarrow P$ lies in $\mathcal{V}^{k-1}$.
We then consider the composite cobordism, 
$
\xymatrix{
P(\alpha) \ar[rr]^{T(\bar{\alpha})} && P \ar[rr]^{T(\sigma)} && P(\sigma).
}
$
By Lemma \ref{lemma: composition cylinder} its underlying manifold is diffeomorphic to a cylinder, thus it lies in $\mathcal{V}^{k-1}$. 
By the $2$-out-of-$3$ property it follows that $T(\sigma)$ lies in $\mathcal{V}^{k-1}$ as well. 
This concludes the proof of the proposition.
\end{proof}

The following proposition is proven using the exact same argument as the previous one. 
\begin{proposition} \label{proposition: bootstrapping proposition}
Let $l < k < d/2$.
Suppose that it has been proven that the trace of every element 
of $\Surg^{\tr}_{d-l-1}(P)$ is contained in $\mathcal{V}^{k-1}$. 
Then it follows that the trace of every element of $\Surg^{\pr}_{d-l}(P)$ is contained in $\mathcal{V}^{k-1}$. 
\end{proposition}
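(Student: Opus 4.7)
The plan is to mimic the proof of Proposition \ref{proposition: stability for primitive handles} verbatim, using the bootstrap hypothesis in place of Corollary \ref{corollary: trivial d-l handle}. Let $\sigma \in \Surg^{\pr}_{d-l}(P)$. By Definition \ref{defn: primitive surgeries 1} there exists $\alpha \in \Surg_{l-1}(P)$ such that the embedded spheres $\sigma(S^{d-l}\times\{0\})$ and $\alpha(S^{l-1}\times\{0\})$ meet transversally in $P$ at exactly one point; the existence of $\sigma$ thus witnesses that $\alpha$ is itself primitive.

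Next I would pass to the dual surgery $\bar{\alpha}$ of $\alpha$ in $P(\alpha)$. Since the degree of the dual of an element of $\Surg_{m}$ is $d-m-2$ (from Definition \ref{defn: dual surgery}), we have $\bar{\alpha}\in\Surg_{d-l-1}(P(\alpha))$. The primitivity of $\alpha$ and Lemma \ref{lemma: primitive and trivial surgeries} then yield $\bar{\alpha}\in\Surg^{\tr}_{d-l-1}(P(\alpha))$. The bootstrap hypothesis of the proposition (applied with $P(\alpha)$ in place of $P$) therefore gives $T(\bar{\alpha})\in\mathcal{V}^{k-1}$.

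Now consider the composite
$$
\xymatrix{P(\alpha)\ar[rr]^{T(\bar{\alpha})} && P\ar[rr]^{T(\sigma)} && P(\sigma).}
$$
Because the attaching sphere of $\sigma$ and the belt sphere of the handle attached by $T(\bar{\alpha})$ intersect transversally in one point (this being exactly the primitivity data), Lemma \ref{lemma: composition cylinder} shows that the underlying manifold of $T(\sigma)\circ T(\bar{\alpha})$ is a cylinder, so this composite lies in $\mathcal{V}^{k-1}$. Since $T(\bar{\alpha})\in\mathcal{V}^{k-1}$, the two-out-of-three property for weak equivalences, applied to the induced maps of the homotopy colimits $\hocolim_{\mb{t}\in\mathcal{K}^{k}}\mathcal{W}^{k}_{(-),\mb{t}}$, yields $T(\sigma)\in\mathcal{V}^{k-1}$.

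The only nontrivial thing to check is the degree bookkeeping: that the degree of $\bar{\alpha}$ is exactly $d-l-1$, matching the statement of the bootstrap hypothesis. Beyond that, this is a formal rerun of the argument used for Proposition \ref{proposition: stability for primitive handles}, with Corollary \ref{corollary: trivial d-l handle} (which handles only the single top-degree case $d-k-1$) replaced by the stronger inductive assumption being fed in. I therefore do not expect a genuine obstacle.
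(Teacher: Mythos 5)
Your proof is correct and follows the paper's intended argument exactly—the paper explicitly says Proposition \ref{proposition: bootstrapping proposition} is "proven using the exact same argument as" Proposition \ref{proposition: stability for primitive handles}, which is precisely the rerun you carried out. Your degree bookkeeping is right: $\alpha\in\Surg_{l-1}(P)$ witnesses primitivity of $\sigma$, is itself primitive (with $\sigma$ as its witness), and hence its dual $\bar{\alpha}\in\Surg_{d-l-1}(P(\alpha))$ is trivial by Lemma \ref{lemma: primitive and trivial surgeries}, so the bootstrap hypothesis (read, as it must be, as a statement over all objects $P$) applies; Lemma \ref{lemma: composition cylinder} and two-out-of-three then finish.
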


\subsection{Proof of Proposition \ref{proposition: arbitrary d-l surgeries}} \label{subsection: proof of higher handles prop}
We will need to use the semi-simplicial space constructed in Definition \ref{defn: semi simplicial resolution}. 
For what follows, let $P \in \Ob\Cob^{k-1}_{\theta, d}$, and let $\sigma \in \Surg_{d-l}(P)$ be with $d-l < d-1$.
Choose an embedding 
\begin{equation} \label{equation: chi embedding}
\chi: \partial D^{l-1}\times(1,\infty)\times\R^{d-l} \longrightarrow P
\end{equation}
and a one parameter family of $\theta$-structures $\hat{\ell}^{\std}_{t}$
as in Definition \ref{defn: semi-simplicial space 1} (with respect to $P$), 
with the following additional properties:
\begin{enumerate} \itemsep.2cm
\item[(a)] $\chi(\partial D^{l-1}\times(1,\infty)\times\R^{d-l})\cap\sigma(\partial D^{d-l+1}\times D^{l-1}) = \emptyset$;
\item[(b)] the embedding $\chi|_{\partial D^{l-1}}: \partial D^{l-1} \longrightarrow P\setminus\sigma(\partial D^{d-l+1}\times\{0\})$ is isotopic to 
$$\sigma|_{\{s\}\times\partial D^{l-1}}: \{s\}\times\partial D^{l-1} \longrightarrow P\setminus\sigma(\partial D^{d-l+1}\times\{0\})$$
for some $s \in \partial D^{d-l+1}$.
\end{enumerate}
For $p \in \Z_{\geq 0}$, let $\chi(p) = (\chi_{0}, \dots, \chi_{p})$ be the list of mutually disjoint surgery data defined exactly as in Construction \ref{Construction: auxiliary displaced trace}, using the embedding $\chi$.
Notice that in order for such an embedding to exist, it is necessary and sufficient that $d-l < d-1$. 
Consider the transported surgery data $\mathcal{R}_{\chi(p)}(\sigma)$ in $P(\chi(p))$. 
The main lemma that we will need is the following:
\begin{lemma} \label{lemma: new primitive embedding}
Let $\chi$ and $\sigma$ be chosen so that they satisfy conditions (a) and (b) as above. 
Let $p \in \Z_{\geq 0}$.
Then the surgery $\mathcal{R}_{\chi(p)}(\sigma) \in \Surg_{d-l}(P(\chi(p)))$ is a primitive surgery.  
\end{lemma}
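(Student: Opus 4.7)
The plan is to exhibit an explicit dual surgery datum $\alpha \in \Surg_{l-1}(P(\chi(p)))$ whose core sphere meets the attaching sphere of $\mathcal{R}_{\chi(p)}(\sigma)$ transversally in exactly one point. Observe first that $\mathcal{R}_{\chi(p)}(\sigma)$ has attaching sphere of dimension $d-l$, so a primitive dual must be a framed $S^{l-1}$ in the $(d-1)$-manifold $P(\chi(p))$. Since condition (a) guarantees that the images of $\sigma$ and of every $\chi_i$ are disjoint in $P$, the core of $\mathcal{R}_{\chi(p)}(\sigma)$ coincides with $\sigma(S^{d-l}\times\{0\})$ as an embedded sphere inside $P(\chi(p))$. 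Thus the whole problem localizes near $\chi_0$: it suffices to build the dual inside an open set of $P(\chi(p))$ disjoint from $\chi_1,\ldots,\chi_p$.

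The dual will be assembled from two $(l-1)$-disks glued along an $S^{l-2}$-annulus. First, pick the point $s\in \partial D^{d-l+1}=S^{d-l}$ provided by condition (b), and consider the meridional disk $D_\sigma := \sigma(\{s\}\times D^{l-1}) \subset P$; this disk meets the core $\sigma(S^{d-l}\times\{0\})$ transversally at the single point $\sigma(s,0)$, and its boundary is the meridian $\sigma(\{s\}\times S^{l-2})$. Second, from the handle attached by $\chi_0$, take the cocore slice $D_\chi := \chi_0(D^{l-1}\times\{s'\}) \subset P(\chi_0)$ for any $s'\in S^{d-l}$; its boundary is a parallel copy of $\chi_0|_{\partial D^{l-1}}$ lying in $P\setminus\chi_0(\partial D^{l-1}\times\mathrm{int}(D^{d-l+1}))$, and $D_\chi$ is entirely disjoint from the core of $\sigma$ by condition (a). By condition (b) together with the framings supplied by the tubular neighborhoods of $\chi_0$ and $\sigma$, the boundaries $\partial D_\sigma$ and $\partial D_\chi$ are isotopic inside $P\setminus \sigma(S^{d-l}\times\{0\})$; an embedded cylinder $A\cong S^{l-2}\times[0,1]$ realizing such an isotopy lets us form
\[
\Sigma \;=\; D_\sigma \cup A \cup D_\chi \;\subset\; P(\chi(p)).
\]
After smoothing the corners, $\Sigma$ is an embedded $S^{l-1}$ meeting $\sigma(S^{d-l}\times\{0\})$ transversally in the single point $\sigma(s,0)$, and it is disjoint from $\chi_1,\ldots,\chi_p$ provided the cylinder $A$ is chosen in a sufficiently small neighborhood of $\chi_0$, which we can arrange since $\chi(p)$ is a discrete collection of disjoint handles. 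To upgrade $\Sigma$ to surgery data $\alpha\in \Surg_{l-1}(P(\chi(p)))$ one needs a trivialization of its normal bundle in $P(\chi(p))$ together with an extension of the tubular neighborhood into $\R^\infty$; the latter is automatic as we have room in ambient dimension, and it remains to trivialize the normal bundle.

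The main obstacle in the argument is precisely that the two half-framings on $\partial D_\sigma$ and $\partial D_\chi$ coming from the tubular parametrizations of $\sigma$ and $\chi_0$ need not agree under the isotopy of condition (b), which is stated unframed. The plan is to exploit the hypothesis $l\le k<d/2$: the normal bundle of $\Sigma$ has rank $d-l$, and since $l-1<d/2<d-l$, the classifying map $\Sigma\to BO(d-l)$ lies in a metastable range where any two $O(d-l)$-framings of a contractible $D^{l-1}$ face extend over the ambient disk, and the obstruction to homotoping our two half-framings into one another along $A$ is governed by $\pi_{l-2}(O(d-l))$; this obstruction can be killed by modifying the isotopy cylinder $A$ through a regular homotopy using the excess dimension $(d-l)-(l-1)\ge 1$. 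Once the framings are matched, the pair $(D_\sigma\cup A\cup D_\chi, \text{framing})$ defines the desired element $\alpha\in \Surg_{l-1}(P(\chi(p)))$ witnessing that $\mathcal{R}_{\chi(p)}(\sigma)$ is primitive, completing the proof.
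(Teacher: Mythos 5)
Your proposal follows the same geometric strategy as the paper's own proof: construct the embedded $(l-1)$-sphere dual to $\mathcal{R}_{\chi(p)}(\sigma)$ by gluing a meridional disk of $\sigma$ to a cocore disk of the attached $\chi_0$-handle, using condition (b) to match their boundaries, and then observe for $p>0$ that the construction can be carried out away from $\chi_1,\dots,\chi_p$. The only cosmetic difference is that you keep the gluing annulus $A$ explicit, whereas the paper folds it into the single disk $\widehat{\chi}_0(D^{l-1})$ (required to have boundary exactly on $\chi_0(S^{l-2}\times\{s\})$).

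The place you go beyond the paper is in worrying about the framing: you correctly observe that condition (b) gives only an unframed isotopy, so the trivializations of the normal bundle induced on the equator by the $\sigma$-tube and the $\chi_0$-handle need not agree, and the obstruction lives in $\pi_{l-2}(O(d-l))$. The paper sidesteps this with a one-line assertion (``easily seen to have trivial normal bundle''), so raising it is legitimate. However, your proposed resolution is not a proof. Modifying $A$ by a \emph{regular} homotopy --- if that means an ambient isotopy --- preserves the isotopy class of $\Sigma$ in $P(\chi(p))$ and hence the isomorphism class of its normal bundle, so nothing changes; if ``regular homotopy'' is intended as a homotopy through immersions, then the claim that the resulting re-embedding can realize an \emph{arbitrary} shift of the clutching element in $\pi_{l-2}(O(d-l))$ is a separate and nontrivial assertion for which no argument is given. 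The ``excess dimension $(d-l)-(l-1)\ge 1$'' remark only places you in the range where generic immersions are embeddings; it says nothing about which normal bundles arise. What is actually needed is either an argument that the isotopy of condition (b) can be promoted to a \emph{framed} isotopy for a suitably chosen $\chi$ (which the construction of $\chi$ as a displaced meridian of $\sigma$ in the proof of Proposition \ref{proposition: arbitrary d-l surgeries} plausibly provides, but this should be stated and used), or a direct verification that $\nu(\Sigma)$ is trivial from the geometry of the two product tubular neighborhoods being glued. Without one of these, the step that upgrades $\Sigma$ to an element of $\Surg_{l-1}(P(\chi(p)))$ --- which Definition \ref{defn: primitive surgeries 1} requires --- has a gap. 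You also do not address the $\theta$-structure extension demanded by Definition \ref{defn: trace of surgery}, though this is the same omission the paper makes.
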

\begin{proof}
Let us first assume that $p = 0$.
Pick a point $s \in \partial D^{d-l+1}$. 
Let $\widehat{\chi}_{0}: D^{l-1} \longrightarrow P$ be an embedding with the following properties:
\begin{itemize} \itemsep.2cm
\item $\widehat{\chi}_{0}(D^{l-1}) \subset P\setminus\Int(\chi_{0}(S^{l-2}\times D^{d-l+1}))$,
\item $\widehat{\chi}_{0}^{-1}(\chi_{0}(S^{l-2}\times\{s\})) \; = \; S^{l-2}$,
\item $\widehat{\chi}_{0}(D^{l-1})$ intersects $\sigma(S^{d-l}\times\{0\})$ transversally in $P\setminus\Int(\chi_{0}(S^{l-2}\times D^{d-l+1}))$ at exactly one point. 
\end{itemize}
It follows from condition (b) that such an embedding $\widehat{\chi}_{0}$ can always be found;
we may choose $\widehat{\chi}_{0}$ to be the inclusion of a fibre of the normal disk-bundle of $\sigma(S^{d-l}\times\{0\})$. 
Let $D_{-}^{l-1} \subset P$ denote $\widehat{\chi}_{0}(D^{l-1})$.
Consider the surgered manifold 
$$
P(\chi_{0}) \; = \; P\setminus\Int(\chi_{0}(S^{l-2}\times D^{d-l+1}))\bigcup D^{l-1}\times S^{d-l}.
$$
Notice that by how $D^{l-1}_{-}$ was chosen, $D^{l-1}_{-}$ is contained in $P(\chi_{0})$.
Let $D^{l-1}_{+} \subset P(\chi_{0})$ be the disk $D^{l-1}\times\{s\} \subset D^{l-1}\times S^{d-l}$ coming from the right-hand side of the above union.
The union 
$$D_{-}^{l-1}\cup D^{l-1}_{+} \subset P(\chi_{0})$$ 
is an embedded $(l-1)$-sphere which is easily seen to have trivial normal bundle. 
Let use denote by $\beta: S^{l-1} \longrightarrow P(\chi_{0})$ the inclusion of this sphere. 
By construction, $\beta(S^{l-1})$ intersects the sphere $\sigma(S^{d-l}\times\{0\}) \subset P(\chi_{0})$ transversally at a single point. 
This proves that $\mathcal{R}_{\chi_{0}}(\sigma)$ is primitive. 
Now suppose that $p > 0$. 
If we perform surgery on the first embedding $\chi_{0}$, by the above argument $\mathcal{R}_{\chi_{0}}(\sigma)$ is primitive in $P(\chi_{0})$. 
We then observe that $\beta(S^{l-1}) \subset P(\chi_{0})$ is disjoint from the embedding that corresponds to the surgery data $\mathcal{R}_{\chi_{0}}(\chi_{1})$, and thus $\beta(S^{l-1})$ determines an embedded sphere with trivial normal bundle (with the same name), $\beta(S^{l-1}) \subset P(\chi_{1}, \chi_{2})$.
In $P(\chi_{1}, \chi_{2})$, the sphere $\beta(S^{l-1})$ still intersects $\sigma(S^{d-l}\times\{0\})$ transversally at a single point, and thus $\mathcal{R}_{\chi_{1}, \chi_{2}}(\sigma)$ is primitive. 
The proof of the lemma then follows by proceeding inductively. 
\end{proof}

With the above lemma established we may now prove Proposition \ref{proposition: arbitrary d-l surgeries}. 
\begin{proof}[Proof of Proposition \ref{proposition: arbitrary d-l surgeries}]
We prove the theorem by induction on the value $j = d-l \geq d-k$. 
The induction step is based on the following claim:
\begin{claim} \label{claim: trivial d-l-1 implies arbitary d-l}
Suppose that it has been proven that the trace of every element of 
$\Surg^{\tr}_{d-l-1}(P)$ is contained in $\mathcal{V}^{k-1}$.
Then it follows that $T(\sigma) \in \mathcal{V}^{k-1}$ for all 
$\sigma \in \Surg_{d-l}(P)$.
\end{claim}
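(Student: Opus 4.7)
The plan is to mimic the semi-simplicial resolution argument from the proof of Theorem \ref{theorem: arbitrary handles}, replacing the role of degree-$(k-1)$ primitive handles with degree-$(d-l)$ primitive handles produced by Lemma \ref{lemma: new primitive embedding}, and using Propositions \ref{proposition: stability for primitive handles} and \ref{proposition: bootstrapping proposition} to handle those primitive handles.

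First I would fix an embedding $\chi\colon \partial D^{l-1}\times(1,\infty)\times\R^{d-l}\to P$ and a one-parameter family of $\theta$-structures $\hat\ell^{\std}_t$ as in Construction \ref{Construction: auxiliary displaced trace}, chosen so that conditions (a) and (b) of Section \ref{subsection: proof of higher handles prop} hold; such choices exist because the hypothesis $d-l<d-1$ leaves enough ambient room in $P$ to displace $\chi$ off of $\sigma$. With these data I form the augmented semi-simplicial spaces $\mathcal{M}^{k}_{P,\mb{t}}(\chi)_\bullet$ and $\mathcal{M}^{k}_{P(\sigma),\mb{t}}(\chi)_\bullet$ from Definition \ref{defn: semi simplicial resolution} (the latter is well-defined by condition (a), which lets $\chi$ be re-read as an embedding into $P(\sigma)$). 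Construction \ref{construction: induced trace simplicial map} applied to $\sigma$ produces a semi-simplicial map
\[ \mb{T}(\sigma)_\bullet\colon \mathcal{M}^{k}_{P,\mb{t}}(\chi)_\bullet \longrightarrow \mathcal{M}^{k}_{P(\sigma),\mb{t}}(\chi)_\bullet \]
covering $\mb{S}_{T(\sigma)}$ at augmentation level $-1$.

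For each $p\geq 0$, Lemma \ref{lemma: levelwise identification} identifies, after passing to $\hocolim_{\mb{t}\in\mathcal{K}^{k}}$, the level-$p$ source with $\hocolim_{\mb{t}}\mathcal{W}^{k}_{P(\chi(p)),\mb{t}}$ and the level-$p$ target with $\hocolim_{\mb{t}}\mathcal{W}^{k}_{P(\sigma,\chi(p)),\mb{t}}$. Under these identifications, and using the interchange identity (\ref{equation: simultaneous + composed surgery}), the level-$p$ map becomes the map $\mb{S}_{T(\mathcal{R}_{\chi(p)}(\sigma))}$ associated to the interchanged trace $T(\mathcal{R}_{\chi(p)}(\sigma))\colon P(\chi(p))\rightsquigarrow P(\sigma,\chi(p))$. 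By Lemma \ref{lemma: new primitive embedding}, $\mathcal{R}_{\chi(p)}(\sigma)$ is a \emph{primitive} surgery of degree $d-l$ in $P(\chi(p))$. If $l=k$ I would invoke Proposition \ref{proposition: stability for primitive handles} directly; if $l<k$ I would combine the Claim's hypothesis, applied to the manifold $P(\chi(p))$ (legitimate because the outer induction on $j=d-l$ in Proposition \ref{proposition: arbitrary d-l surgeries} supplies the hypothesis uniformly in the base object), with Proposition \ref{proposition: bootstrapping proposition}. Either way, $T(\mathcal{R}_{\chi(p)}(\sigma))\in\mathcal{V}^{k-1}$, so the level-$p$ map is a weak homotopy equivalence.

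Levelwise weak equivalences then induce a weak equivalence on geometric realizations, yielding $|\hocolim_{\mb{t}}\mathcal{M}^{k}_{P,\mb{t}}(\chi)_\bullet|\simeq|\hocolim_{\mb{t}}\mathcal{M}^{k}_{P(\sigma),\mb{t}}(\chi)_\bullet|$, and Corollary \ref{corollary: homotopy colimit augmentation} identifies these realizations with $\hocolim_{\mb{t}}\mathcal{W}^{k}_{P,\mb{t}}$ and $\hocolim_{\mb{t}}\mathcal{W}^{k}_{P(\sigma),\mb{t}}$ respectively; the resulting augmentation square then exhibits $\mb{S}_{T(\sigma)}$ as a weak homotopy equivalence, i.e.\ $T(\sigma)\in\mathcal{V}^{k-1}$. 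I expect the main technical obstacle to lie in verifying that the level-$p$ identification of Lemma \ref{lemma: levelwise identification} genuinely intertwines $\mb{T}(\sigma)_p$ with $\mb{S}_{T(\mathcal{R}_{\chi(p)}(\sigma))}$; this requires careful tracking of the collar-extension diffeomorphisms $\gamma_l$ from (\ref{equation: collar extension diffeomorphism}) together with the disjointness of $\sigma$ from the images of the $\chi_i$, but the verification is parallel to the closing steps of the proof of Theorem \ref{theorem: arbitrary handles} and should not introduce any new difficulty.
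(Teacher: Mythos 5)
Your proposal is correct and follows essentially the same route as the paper's own proof: the same semi-simplicial resolution $\mathcal{M}^{k}_{P,\mb{t}}(\chi)_{\bullet}$, the same use of Lemma \ref{lemma: levelwise identification} to convert level-$p$ maps into handle-attachment maps $\mb{S}_{T(\mathcal{R}_{\chi(p)}(\sigma))}$, the same appeal to Lemma \ref{lemma: new primitive embedding} for primitivity, and the same descent via Corollary \ref{corollary: homotopy colimit augmentation}. The only minor difference is that you explicitly separate the base case $l=k$ (handled by Proposition \ref{proposition: stability for primitive handles}) from $l<k$ (handled by Proposition \ref{proposition: bootstrapping proposition}), whereas the paper cites only the latter; your case split is in fact the more careful reading, since Proposition \ref{proposition: bootstrapping proposition} is stated under the hypothesis $l<k$.
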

Let us explain how this claim implies Proposition \ref{proposition: arbitrary d-l surgeries}. 
By Corollary \ref{corollary: trivial d-l handle}, all trivial surgeries $\alpha \in \Surg^{\tr}_{d-k-1}(P)$ have their trace $T(\alpha)$ contained in $\mathcal{V}^{k-1}$.
By the above claim, it follows that all surgeries in $\Surg_{d-k}(P)$ (trivial or not) have trace contained in $\mathcal{V}^{k-1}$.
This establishes the base case of the induction.
We then may apply Claim \ref{claim: trivial d-l-1 implies arbitary d-l} again to find that all surgeries of degree $d-k+1$ have trace contained in $\mathcal{V}^{k-1}$. 
Continuing by induction then establishes Proposition \ref{proposition: arbitrary d-l surgeries}. 

We now proceed to prove Claim \ref{claim: trivial d-l-1 implies arbitary d-l}. 
Suppose it has been proven that the trace of every element of $\Surg^{\tr}_{d-l-1}(P)$ is contained in $\mathcal{V}^{k-1}$.
Let $\sigma \in \Surg_{d-l}(P)$ and let the embedding 
$$
\chi: \partial D^{l-1}\times(1,\infty)\times\R^{d-l} \longrightarrow P
$$
be chosen exactly as in (\ref{equation: chi embedding}).  
For each $p \in \Z_{\geq 0}$, consider the commutative diagram
$$
\xymatrix{
\displaystyle{\hocolim_{\mb{t} \in \mathcal{K}^{k}}}\mathcal{W}^{k}_{P(\chi(p)), \mb{t}}  \ar[rrrr]^{\mb{S}_{T(\mathcal{R}_{\chi(p)}(\sigma))}} \ar[d]^{\mathcal{H}(\chi)_{p}}_{\simeq} &&&& 
\displaystyle{\hocolim_{\mb{t} \in \mathcal{K}^{k}}}\mathcal{W}^{k}_{P(\chi(p), \mathcal{R}_{\chi(p)}(\sigma)), \; \mb{t}}
 \ar[d]^{\mathcal{H}(\chi)_{p}}_{\simeq} \\
\displaystyle{\hocolim_{\mb{t} \in \mathcal{K}^{k}}}\mathcal{M}^{k}_{P, \mb{t}}\left(\chi\right)_{p}  \ar[rrrr]^{\mb{T}(\sigma)_{p}} &&&& \displaystyle{\hocolim_{\mb{t} \in \mathcal{K}^{k}}}\mathcal{M}^{k}_{P(\sigma), \mb{t}}\left(\chi\right)_{p},
}
$$
where the top-horizontal map is induced by concatenation with $T(\mathcal{R}_{\chi(p)}(\sigma))$, the bottom horizontal map is induced by concatenation with $T(\sigma)$, and the vertical maps are the weak homotopy equivalences defined as in Construction \ref{Construction: auxiliary displaced trace}. 
By Lemma \ref{lemma: new primitive embedding}, $\mathcal{R}_{\chi(p)}(\sigma)$ is a primitive surgery of degree $d-l$. 
Since the trace of a trivial surgery of degree $d-l-1$ is contained in $\mathcal{V}^{k-1}$ by assumption, it follows by Proposition \ref{proposition: bootstrapping proposition} that the top vertical map in the above diagram is a weak homotopy equivalence. 
By commutativity, it then follows that the bottom horizontal map is a weak homotopy equivalence, for all $p \in \Z_{\geq 0}$. 
It then follows that $\mb{T}(\sigma)_{\bullet}$ induces a homotopy equivalence on geometric realization. 
Claim \ref{claim: trivial d-l-1 implies arbitary d-l} then follows by considering the commutative square
$$
\xymatrix{
|\displaystyle{\hocolim_{\mb{t} \in \mathcal{K}^{k}}}\mathcal{M}^{k}_{P, \mb{t}}\left(\chi\right)_{\bullet}| \ar[rrr]^{|\mb{T}(\sigma)_{\bullet}|}_{\simeq} \ar[d]^{\simeq} &&& |\displaystyle{\hocolim_{\mb{t} \in \mathcal{K}^{k}}}\mathcal{M}^{k}_{P(\sigma), \mb{t}}\left(\chi\right)_{\bullet}| \ar[d]^{\simeq} \\
\displaystyle{\hocolim_{\mb{t} \in \mathcal{K}^{k}}}\mathcal{W}^{k}_{P, \mb{t}}  \ar[rrr]^{\mb{S}_{T(\sigma)}} &&& \displaystyle{\hocolim_{\mb{t} \in \mathcal{K}^{k}}}\mathcal{W}^{k}_{P(\sigma), \mb{t}},
}
$$
whose vertical arrows are weak homotopy equivalences by Corollary \ref{corollary: homotopy colimit augmentation}. 
\end{proof}

\subsection{Index $d$-handles} \label{subsection: index d handles}
In Proposition \ref{proposition: arbitrary d-l surgeries}, it was required that the degree of the surgery $\sigma$ be strictly less then $d-1$.
We now handle the final case where $\sigma$ has $\deg(\sigma) = d-1$.
\begin{proposition} \label{proposition: d-1 surgeries}
Let $P \in \Ob\Cob^{k-1}_{\theta, d}$ be an object with strictly more than one path component. 
Let $\sigma \in \Surg_{d-1}(P)$. 
Then $T(\sigma) \in \mathcal{V}^{k-1}$.
\end{proposition}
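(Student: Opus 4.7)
The plan is to use the multi-component hypothesis on $P$ to realize $\sigma$ as a primitive surgery, then to apply the handle-cancellation device of Lemma \ref{lemma: composition cylinder} and reduce via the two-out-of-three property of $\mathcal{V}^{k-1}$ to a situation already handled by our previously established results.

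First, since $\sigma \in \Surg_{d-1}(P)$ corresponds to an embedding $\sigma|_{S^{d-1}\times\{0\}}: S^{d-1} \hookrightarrow P$ of codimension zero, the image $\sigma(S^{d-1})$ is forced to be a connected component of $P$, and $T(\sigma)$ caps it off with a $d$-handle. By the hypothesis that $P$ has more than one component, we may write $P = \sigma(S^{d-1}) \sqcup P'$ with $P' \neq \emptyset$. I would then construct a primitive companion $\alpha \in \Surg_0(P)$: an embedding $\alpha: S^0 \times D^{d-1} \hookrightarrow P$ equipped with a compatible $\theta$-framing, with one of the two disks landing in $\sigma(S^{d-1})$ and the other in $P'$. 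The spheres $\sigma(S^{d-1}\times\{0\})$ and $\alpha(S^0\times\{0\})$ then intersect transversally at exactly one point, which exhibits $\sigma$ — and symmetrically $\alpha$ — as primitive in the sense of Definition \ref{defn: primitive surgeries 1}.

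Applying Lemma \ref{lemma: composition cylinder} to the primitive pair $(\sigma, \alpha)$, the composite $T(\sigma) \circ T(\bar{\alpha}): P(\alpha) \rightsquigarrow P(\sigma)$ is diffeomorphic to the cylinder $[0,1] \times P(\alpha)$ and hence lies in $\mathcal{V}^{k-1}$. Since $\mathcal{V}^{k-1}$ is defined by induced weak equivalences on homotopy colimits it satisfies the two-out-of-three property, so it is enough to prove $T(\bar{\alpha}) \in \mathcal{V}^{k-1}$, where $\bar{\alpha} \in \Surg_{d-2}(P(\alpha))$ is the surgery dual to $\alpha$. But $\alpha$ is itself primitive (with companion $\sigma$), so Lemma \ref{lemma: primitive and trivial surgeries} guarantees that $\bar{\alpha}$ is a \emph{trivial} surgery of degree $d-2$. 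When $k \geq 2$ we have $d-k \leq d-2 < d-1$, and Proposition \ref{proposition: arbitrary d-l surgeries} applies directly to give $T(\bar{\alpha}) \in \mathcal{V}^{k-1}$. When $k = 1$ the degree $d-2$ coincides with $d-k-1$, so Corollary \ref{corollary: trivial d-l handle} applies to the trivial surgery $\bar{\alpha}$ and furnishes $T(\bar{\alpha}) \in \mathcal{V}^0$. (The case $k=0$ is not needed here, since by Remark \ref{remark: primitive surgery of degree 0} all of Theorem \ref{theorem: morphism induce homotopy equivalence} in that range is subsumed by Proposition \ref{proposition: H-k induces iso}.) In every relevant case we conclude $T(\sigma) \in \mathcal{V}^{k-1}$.

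The main technical point that demands care is the construction of the $\theta$-framing on $\alpha$ in the second paragraph: one must check that $\hat{\ell}_P$, pulled back over the two disks of $\alpha(S^0 \times D^{d-1})$, admits an extension over the connecting region $\R \times \R^{d-1}$. This is mild: the extendability of $\hat{\ell}_{P}$ over the $d$-disk capping $\sigma(S^{d-1})$ is already encoded in the existence of $\sigma$, and the $\theta$-framing of a small disk in $P'$ can be arranged after a reorientation if necessary, so no serious obstruction arises. Once this is in hand, the argument is a clean assembly of Lemmas \ref{lemma: composition cylinder} and \ref{lemma: primitive and trivial surgeries} with Proposition \ref{proposition: arbitrary d-l surgeries} and Corollary \ref{corollary: trivial d-l handle}.
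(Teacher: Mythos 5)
Your argument is correct and follows essentially the same route as the paper: observe that $\sigma(S^{d-1})$ is a whole path component, pick a degree\mbox{-}$0$ companion $\alpha$ meeting $\sigma$ in one point to exhibit $\sigma$ as primitive, and then invoke handle cancellation (Lemma~\ref{lemma: composition cylinder}), the two-out-of-three property, and the results on lower-degree traces. The paper compresses the final step into a citation of Proposition~\ref{proposition: bootstrapping proposition} (with $l=1$), whereas you unwind that proposition's proof explicitly; your separate treatment of $k=1$ via Corollary~\ref{corollary: trivial d-l handle} (where $d-2 = d-k-1$ falls outside the range of Proposition~\ref{proposition: arbitrary d-l surgeries}, and outside the literal hypothesis $l<k$ of Proposition~\ref{proposition: bootstrapping proposition}) is a small but genuine improvement in precision over the paper's terse wording.
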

\begin{proof}
We will prove that $\sigma$ is primitive. 
We first observe that for dimensional reasons, the submanifold $\sigma(S^{d-1}) \subset P$ is a whole path component of $P$. 
Let $S^{0} \hookrightarrow P$ be an embedding that sends one point to $\sigma(S^{d-1}) \subset P$, and the other point of $S^{0}$ to a different path component of $P$. 
Since $P$ has more than one path component, such an embedding exists.
The existence of this embedding proves that $\sigma$ is primitive.
Since it has been proven that trace of every surgery of degree $d-2$ is contained in $\mathcal{V}^{k-1}$, Proposition \ref{proposition: bootstrapping proposition} then implies that $T(\sigma) \in \mathcal{V}^{k-1}$. 
This concludes the proof of the proposition.
\end{proof}

We now deal with the general case where the object $P$ is path-connected, and hence be diffeomorphic to $S^{d-1}$. 
\begin{proposition} \label{proposition: d-1 surgeries general case}
Let $P \in \Ob\Cob^{k-1}_{\theta, d}$ be any object (that is possibly path-connected). 
Let $\sigma \in \Surg_{d-1}(P)$. 
Then $T(\sigma) \in \mathcal{V}^{k-1}$.
\end{proposition}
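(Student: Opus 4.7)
The proof divides into two cases based on the path-connectedness of $P$. If $P$ has more than one component, the argument of Proposition~\ref{proposition: d-1 surgeries} applies directly: for dimensional reasons $\sigma(S^{d-1})$ is a whole component of $P$, and picking a point in a different component yields a dual $S^0$ making $\sigma$ primitive, from which the conclusion follows via Proposition~\ref{proposition: bootstrapping proposition}. So from now on assume that $P$ is path-connected; since $\sigma(S^{d-1})$ must then be all of $P$, we have $P \cong S^{d-1}$ and $T(\sigma)$ has underlying manifold $D^{d}$.

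My plan is to reduce the hard case to the easy case by stabilization. I first construct an auxiliary cobordism $V: P \rightsquigarrow P \sqcup S^{d-1}$ lying in $\mathcal{V}^{k-1}$. For $k \geq 1$, take $V$ to be the trace of a trivial $(d-2)$-surgery along an equatorial $S^{d-2}$ in $P = S^{d-1}$ (a ``pair of pants'' cobordism); this is in $\mathcal{V}^{k-1}$ by Corollary~\ref{corollary: trivial d-l handle} when $k = 1$ and by Proposition~\ref{proposition: arbitrary d-l surgeries} when $k \geq 2$. For $k = 0$, the connectivity condition on morphisms of $\Cob^{-1}_{\theta, d}$ is vacuous, so one may take $V = ([0,1] \times P) \sqcup D^{d}$ (cylinder disjoint union with a $d$-disk) and verify directly that $V \in \mathcal{V}^{-1}$ by inspecting the effect of adjoining a disjoint cap to elements of $\mathcal{W}^{0}_{P, \mb{t}}$.

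Next, regard $\sigma$ as a surgery datum $\tilde{\sigma} \in \Surg_{d-1}(P \sqcup S^{d-1})$ targeting only the original $P$-component. Since $P \sqcup S^{d-1}$ is disconnected, Proposition~\ref{proposition: d-1 surgeries} gives $T(\tilde{\sigma}) \in \mathcal{V}^{k-1}$. Let $\sigma' \in \Surg_{d-1}(P \sqcup S^{d-1})$ be the auxiliary surgery filling the disjoint $S^{d-1}$-component. A direct geometric inspection of the pair-of-pants identifies the composite
$$
T(\tilde{\sigma}, \sigma') \circ V: P \rightsquigarrow \emptyset
$$
with $T(\sigma)$, since the pair of pants with both upper legs capped off by disks is diffeomorphic to a single $d$-disk capping the lower leg. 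Once we know $T(\tilde{\sigma}, \sigma') \in \mathcal{V}^{k-1}$, the two-out-of-three property combined with $V \in \mathcal{V}^{k-1}$ yields $T(\sigma) \in \mathcal{V}^{k-1}$.

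The hard part is precisely the last step: verifying $T(\tilde{\sigma}, \sigma') \in \mathcal{V}^{k-1}$. The naive factorization via Construction~\ref{construction: interchange of surgeries} breaks $T(\tilde{\sigma}, \sigma')$ into a sequential composition whose first factor is a primitive $(d-1)$-surgery on a disconnected source (hence in $\mathcal{V}^{k-1}$ by Proposition~\ref{proposition: d-1 surgeries}), but whose second factor is again an arbitrary $d$-handle on a path-connected $S^{d-1}$ -- precisely the statement we are trying to prove. I expect that breaking this circularity requires exploiting the disjoint-union monoidal structure on $\Cob^{k-1}_{\theta, d}$: the morphism $T(\tilde{\sigma}, \sigma')$ is literally $T(\sigma) \sqcup T(\sigma)$ as a cobordism, and a comparison of the induced maps on $\hocolim_{\mb{t}} \mathcal{W}^{k}_{-,\mb{t}}$ (using that $P \cong S^{d-1}$ is null-bordant, as witnessed by the very existence of $\sigma$) should show that being a weak equivalence for $T(\sigma) \sqcup T(\sigma)$ is equivalent to being one for $T(\sigma)$. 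This naturality/stability step is the main technical obstacle; the rest of the argument is bookkeeping via cobordism composition and the results already established in Sections~\ref{section: Cobordism Categories and Handle Attachments}--\ref{section: higher index handles}.
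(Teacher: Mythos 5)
Your reduction has the right geometric instinct---add a spare sphere so that Proposition~\ref{proposition: d-1 surgeries} applies---but the circularity you flag at the end is a genuine gap, not a routine ``stability step,'' and the argument as written does not close. Any way of chaining cobordisms so that the final target is $\emptyset$ must at some point cap off the last remaining $S^{d-1}$, and that capping is precisely an index-$d$ handle on a connected boundary, i.e.\ the statement you are trying to prove. Factoring $T(\tilde{\sigma},\sigma')$ through the interchange of Construction~\ref{construction: interchange of surgeries} reproduces the problem; and the suggestion that a disjoint-union monoidal structure would let you ``divide out'' one copy of $T(\sigma)$ from $T(\tilde\sigma,\sigma')$ has no footing here, since $\mathcal{W}^{k}_{P\sqcup S^{d-1},\mb{t}}$ is not a Cartesian product of $\mathcal{W}^{k}$-spaces and there is no given argument that a weak equivalence ``after disjoint union'' descends to one before. (Also, a minor mis-step: your $k=0$ construction $V=([0,1]\times P)\sqcup D^d$ produces a composite $D^d\sqcup S^d$ rather than $D^d$; but Remark~\ref{remark: on the k = 0 case} shows part (c) of Theorem~\ref{theorem: morphism induce homotopy equivalence} is not needed when $k=0$, so the case is moot.)

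The paper avoids the circularity by replacing cobordism composition with a \emph{resolution}. It builds the augmented semi-simplicial space $\mb{D}^{k}_{P,\mb{t},\bullet}\longrightarrow\mathcal{W}^{k}_{P,\mb{t}}$ whose $p$-simplices record $p+1$ disjoint embedded $\theta$-disks $D^d\hookrightarrow M$ (Definition~\ref{defn: disk resolution}), proves the augmentation is a weak equivalence after passing to $\hocolim_{\mb{t}}$ (Proposition~\ref{proposition: contractibility for disk resolution}), and identifies each level $p$ with $\mathcal{W}^{k}_{P\sqcup S_p,\mb{t}}$ for $S_p\cong(S^{d-1})^{\sqcup(p+1)}$. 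Since $P\sqcup S_p$ always has at least two components, Proposition~\ref{proposition: d-1 surgeries} applies at \emph{every} level; one never has to cap off the last sphere, because levels of the simplicial space are never asked to return to $\emptyset$. Realizing the levelwise equivalences and running them through the augmentation square gives $\mb{S}_{T(\sigma)}$ as a weak equivalence. This is the same kind of trick used for Theorem~\ref{theorem: arbitrary handles} (via $\mathcal{M}^{k}_{P,\mb{t}}(\chi)_\bullet$): when a direct composition argument would be circular, resolve each $\mathcal{W}^{k}_{P,\mb{t}}$ by a semi-simplicial space whose levels see a ``stabilized'' $P$, apply the available case there, and push back through the augmentation. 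Your proposal would need precisely this extra ingredient.
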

The proof of the above proposition will require a new construction. 
\begin{defn} \label{defn: disk resolution}
Let $P \in \Ob\Cob^{k-1}_{\theta, d}$.
Choose once and for all a standard $\theta$-structure $\hat{\ell}_{D^{d}}$ on the disk $D^{d}$. 
Fix an element 
$z := (M, (V, \sigma), e) \in \mathcal{W}^{k}_{P, \mb{t}}$. 
We define $\mb{Y}_{0}(z)$ to be the space of pairs $(\phi, \hat{L})$ where: 
\begin{itemize} \itemsep.2cm
\item $\phi: D^{d} \longrightarrow W$ is an embedding with image disjoint from the image of $e$;
\item $\hat{L}(t)$, $t \in [0,1]$, is a one-parameter family of $\theta$-structures on $D^{d}$ with $\hat{L}(0) = \phi^{*}\hat{\ell}_{W}$, and $\hat{L}(1) = \hat{\ell}_{D^{d}}$. 
\end{itemize}
For $p \in \Z_{\geq 0}$, the space $\mb{Y}_{p}(z)$ consists of $(p+1)$-tuples $((\phi_{0}, \hat{L}_{0}), \dots, (\phi_{p}, \hat{L}_{p}))$, that satisfy:
$$
\phi_{i}(D^{d})\cap\phi_{j}(D^{d}) = \emptyset \quad \text{whenever $i \neq j$.}
$$
The assignment $[p] \mapsto \mb{Y}_{p}(z)$ defines a semi-simplicial space.
For each $p \in \Z_{\geq 0}$, the space $\mb{D}^{k}_{P, \mb{t}, p}$ is defined to consist of tuples, 
$(z; ((\phi_{0}, \hat{L}_{0}), \dots, (\phi_{p}, \hat{L}_{p}))),$
where $z \in \mathcal{W}^{k}_{P, \mb{t}}$, and $((\phi_{0}, \hat{L}_{0}), \dots, (\phi_{p}, \hat{L}_{p})) \in \mb{Y}_{p}(z)$.
The assignment $[p] \mapsto \mb{D}^{k}_{P, \mb{t}, p}$ defines a semi-simplicial space, $\mb{D}^{k}_{P, \mb{t}, \bullet}$.
The forgetful maps 
$$\mb{D}^{k}_{P, \mb{t}, p} \longrightarrow \mathcal{W}^{k}_{P, \mb{t}}, \quad (z; (\phi_{0}, \hat{L}_{0}), \dots, (\phi_{p}, \hat{L}_{p})) \mapsto z$$
yield an augmented semi-simplicial space,
$\mb{D}^{k}_{P, \mb{t}, \bullet} \longrightarrow \mb{D}^{k}_{P, \mb{t}, -1}$
with $\mb{D}^{k}_{P, \mb{t}, -1} = \mathcal{W}^{k}_{P, \mb{t}}$.
It follows from the definition that $\mb{t} \mapsto \mb{D}^{k}_{P, \mb{t}, \bullet}$ defines a contravariant functor on $\mathcal{K}^{k}$ valued in augmented semi-simplicial spaces.
By taking the level-wise homotopy colimits over $\mathcal{K}^{k}$, we obtain the augmented semi-simplicial space, 
$
\displaystyle{\hocolim_{\mb{t} \in \mathcal{K}^{k}}}\mb{D}^{k}_{P, \mb{t}, \bullet} \longrightarrow \displaystyle{\hocolim_{\mb{t} \in \mathcal{K}^{k}}}\mb{D}^{k}_{P, \mb{t}, -1}.
$
\end{defn}

We have the following proposition.
\begin{proposition} \label{proposition: contractibility for disk resolution}
For all $P \in \Ob\Cob^{k-1}_{\theta, d}$, the map induced by augmentation,
$$
|\hocolim_{\mb{t} \in \mathcal{K}^{k}}\mb{D}^{k}_{P, \mb{t}, \bullet}| \longrightarrow \hocolim_{\mb{t} \in \mathcal{K}^{k}},\mb{D}^{k}_{P, \mb{t}, -1},
$$
 is a weak homotopy equivalence. 
\end{proposition}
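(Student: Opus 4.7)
The plan is to imitate the proof of Proposition \ref{proposition: augmentation is weak equivalence} essentially verbatim. First, using homotopy invariance of homotopy colimits together with the natural weak equivalence $\hocolim_{\mb{t}\in\mathcal{K}^k}|\mb{D}^k_{P,\mb{t},\bullet}| \simeq |\hocolim_{\mb{t}\in\mathcal{K}^k}\mb{D}^k_{P,\mb{t},\bullet}|$, it suffices to show that for each fixed $\mb{t} \in \mathcal{K}^k$ the augmentation $|\mb{D}^k_{P,\mb{t},\bullet}| \to \mathcal{W}^k_{P,\mb{t}}$ is a weak homotopy equivalence. Next, I would replace $\mb{D}^k_{P,\mb{t},\bullet}$ by a relaxed version $\widetilde{\mb{D}}^k_{P,\mb{t},\bullet}$ in which the disks $\phi_i \colon D^d \hookrightarrow W$ are required only to have distinct centers $\phi_i(0) \in W$ rather than disjoint images; an argument along the lines of \cite[Proposition 6.9]{GRW 14} (exactly as invoked in the proof of Proposition \ref{proposition: augmentation is weak equivalence}) shows that the inclusion $\mb{D}^k_{P,\mb{t},\bullet} \hookrightarrow \widetilde{\mb{D}}^k_{P,\mb{t},\bullet}$ is a levelwise weak homotopy equivalence. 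The augmented semi-simplicial space $\widetilde{\mb{D}}^k_{P,\mb{t},\bullet} \to \mathcal{W}^k_{P,\mb{t}}$ is then an augmented topological flag complex in the sense of \cite[Definition 6.1]{GRW 14}.

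By \cite[Theorem 6.2]{GRW 14}, the conclusion reduces to verifying the three conditions: (i) the augmentation $\epsilon_0 \colon \widetilde{\mb{D}}^k_{P,\mb{t},0} \to \mathcal{W}^k_{P,\mb{t}}$ admits local lifts of maps from disks; (ii) for every $z \in \mathcal{W}^k_{P,\mb{t}}$ the fiber $\epsilon_0^{-1}(z)$ is non-empty; and (iii) for any $z$ and any finite collection $(\phi_1,\hat L_1),\dots,(\phi_m,\hat L_m) \in \epsilon_0^{-1}(z)$, there exists another $(\phi,\hat L) \in \epsilon_0^{-1}(z)$ whose center $\phi(0)$ is distinct from each $\phi_i(0)$. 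Condition (i) is immediate from the fact that $\epsilon_0$ is a locally trivial fibration, and condition (iii) is a trivial general position argument: since the set $\{\phi_i(0)\}$ is finite inside the $d$-dimensional manifold $W$, one can find any new center $x$ in $W\setminus(\Image(e) \cup \{\phi_i(0)\})$ and then embed a small disk around $x$ with the required framing data, choosing the path of $\theta$-structures as in (ii).

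The main obstacle is condition (ii). Given $z=(M,(V,\sigma),e)$, I must produce an embedding $\phi\colon D^d \hookrightarrow M$ disjoint from the image of $e$, together with a path of $\theta$-structures on $D^d$ joining $\phi^{*}\hat\ell_M$ to the preselected standard $\hat\ell_{D^d}$. The existence of such $\phi$ as a smooth embedding is easy since $M$ is $d$-dimensional and the image of $e$ is compact. The subtlety is the homotopy of $\theta$-structures: since $D^d$ is contractible, the space of $\theta$-structures on $D^d$ is homotopy equivalent to the fiber of $\theta^{*}\gamma^{d+1} \to B$, so its set of path components is in natural bijection with $\pi_0(B) \times \pi_0(GL_{d+1}(\R))$. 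The $k$-connectedness of $\ell_M$ (which implies at least $0$-connectedness in the range where the proposition is applied) ensures that every path component of $B$ is realized by some point of $M$; hence by placing $\phi(0)$ in the appropriate path component of $M$ and, if necessary, precomposing $\phi$ with an orientation-reversing diffeomorphism of $D^d$, I can arrange $\phi^{*}\hat\ell_M$ to lie in the same path component of the space of $\theta$-structures on $D^d$ as $\hat\ell_{D^d}$. Any choice of path then supplies the required one-parameter family $\hat L$, establishing (ii) and completing the proof.
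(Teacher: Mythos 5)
Your proof follows essentially the same approach as the paper, which cites the flag-complex argument from Proposition \ref{proposition: augmentation is weak equivalence} (noting that the Smale--Hirsch step is unnecessary here) together with commutativity of geometric realization with homotopy colimits, exactly as you have reconstructed. There is one small inaccuracy that does not affect the outcome: the space of $\theta$-structures on $D^{d}$ is weakly equivalent to the total space of the frame bundle of $\theta^{*}\gamma^{d+1}$ over $B$ rather than to its fiber, so its $\pi_{0}$ is in general a quotient of $\pi_{0}(B)\times\pi_{0}(O(d+1))$ and not the full product, but your orientation-reversal step still realizes the required path component of $\theta$-structures in both the orientable and non-orientable cases.
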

\begin{proof}
For each $\mb{t} \in \mathcal{K}^{k}$, the augmentation map $|\mb{D}^{k}_{P, \mb{t}, \bullet}| \longrightarrow \mb{D}^{k}_{P, \mb{t}, -1}$ is a weak homotopy equivalence. 
This follows by the exact same argument employed in the proof of Proposition \ref{proposition: augmentation is weak equivalence} (in this case it even easier because there is no need to use the \textit{Smale-Hirsch theorem}).
From the homotopy invariance of homotopy colimits, it then follows that the induced map 
$$
\hocolim_{\mb{t} \in \mathcal{K}^{k}}|\mb{D}^{k}_{P, \mb{t}, \bullet}| \longrightarrow \hocolim_{\mb{t} \in \mathcal{K}^{k}}\mb{D}^{k}_{P, \mb{t}, -1}
$$
is a weak homotopy equivalence. 
The proof then follows from the homotopy equivalence 
$$
\hocolim_{\mb{t} \in \mathcal{K}^{k}}|\mb{D}^{k}_{P, \mb{t}, \bullet}| \simeq |\hocolim_{\mb{t} \in \mathcal{K}^{k}}\mb{D}^{k}_{P, \mb{t}, \bullet}|,
$$
see also the proof of Corollary \ref{corollary: homotopy colimit augmentation}.
\end{proof}

We now prove Proposition \ref{proposition: d-1 surgeries general case}.
\begin{proof}[Proof of Proposition \ref{proposition: d-1 surgeries general case}]
Fix $P \in \Ob\Cob_{\theta, d}$. 
Let $\sigma \in \Surg_{d-1}(P)$ and consider the trace $T(\sigma): P \rightsquigarrow P(\sigma)$. 
If $P$ has more than one path component, then $T(\sigma)$ belongs to $\mathcal{V}^{k-1}$ by Proposition \ref{proposition: d-1 surgeries}, and so there is nothing to show. 
So, assume that $P$ is path connected. 
Consider the augmented semi-simplicial space $\mb{D}^{k}_{P, \mb{t}, \bullet} \longrightarrow \mb{D}^{k}_{P, \mb{t}, -1}$. 
For each integer $p \in \Z_{\geq 0}$, fix an object $S_{p} \in \Ob\Cob_{\theta, d}$, with the properties:
\begin{enumerate} \itemsep.2cm
\item[(i)] there is a diffeomorphism $S_{p} \cong (S^{d-1})^{\sqcup(p+1)}$;
\item[(ii)] $S_{\mb{t}}$ is disjoint from $P$ as a submanifold of $\R^{\infty}$;
\item[(ii)] on each component of $S_{p}$, the $\theta$-structure $\hat{\ell}_{S_{p}}$ agrees with $\hat{\ell}_{D^{d}}|_{S^{d-1}}$, via the diffeomorphism from part (i).
\end{enumerate}
By condition (i), it follows that the disjoint union $P\sqcup S_{p}$ is an object of $\Cob_{\theta, d}$.
By the same argument used in the proof of Lemma \ref{lemma: levelwise identification}, for each $p \in \Z_{\geq 0}$ there is a weak homotopy equivalence, 
\begin{equation} \label{equation: equivalence to space of manifolds}
\mathcal{H}_{\mb{t}, p}: \mathcal{W}^{k}_{P\sqcup S_{p}, \mb{t}}  \stackrel{\simeq} \longrightarrow \mb{D}^{k}_{P, \mb{t}, p}
\end{equation}
(see also the constructions in Section \ref{subsection: resolving composition}).
For each $p \in \Z_{\geq 0}$ we may consider the commutative diagram,
\begin{equation} \label{equation: trace equivalence on p-th level}
\xymatrix{
\displaystyle{\hocolim_{\mb{t} \in \mathcal{K}^{k}}}\mathcal{W}^{k}_{P\sqcup S_{p}, \mb{t}} \ar[d]^{\simeq}_{\mathcal{H}_{p, \mb{t}}} \ar[rr]^{\--\cup T(\sigma)} && \displaystyle{\hocolim_{\mb{t} \in \mathcal{K}^{k}}}\mathcal{W}^{k}_{P(\sigma)\sqcup S_{p}, \mb{t}} \ar[d]_{\mathcal{H}_{\mb{t}, p}}^{\simeq} \\
\displaystyle{\hocolim_{\mb{t} \in \mathcal{K}^{k}}}\mb{D}^{k}_{P, \mb{t}, p} \ar[rr]^{\--\cup T(\sigma)} && \displaystyle{\hocolim_{\mb{t} \in \mathcal{K}^{k}}}\mb{D}^{k}_{P(\sigma), \mb{t}, p}.
}
\end{equation}
Since the object $P\sqcup S_{p}$ is not path-connected, it follows from Proposition \ref{proposition: d-1 surgeries} that the top vertical map in (\ref{equation: trace equivalence on p-th level}) is a weak homotopy equivalence. 
By commutativity of the diagram it follows that the bottom-horizontal map is a weak homotopy equivalence as well.
By geometrically realizing it follows that $\--\cup T(\sigma)$ induces the weak homotopy equivalence,
$$
|\displaystyle{\hocolim_{\mb{t} \in \mathcal{K}^{k}}}\mb{D}^{k}_{P, \mb{t}, \bullet}| \stackrel{\simeq} \longrightarrow |\displaystyle{\hocolim_{\mb{t} \in \mathcal{K}^{k}}}\mb{D}^{k}_{P(\sigma), \mb{t}, \bullet}|.
$$
The proposition then follows by commutativity of the diagram,
$$
\xymatrix{
|\displaystyle{\hocolim_{\mb{t} \in \mathcal{K}^{k}}}\mb{D}^{k}_{P, \mb{t}, \bullet}| \ar[d]_{\simeq} \ar[rr]^{\simeq} &&  |\displaystyle{\hocolim_{\mb{t} \in \mathcal{K}^{k}}}\mb{D}^{k}_{P(\sigma), \mb{t}, \bullet}| \ar[d]_{\simeq} \\
\displaystyle{\hocolim_{\mb{t} \in \mathcal{K}^{k}}}\mb{D}^{k}_{P, \mb{t}, -1} \ar[d]_{=}  \ar[rr] && \displaystyle{\hocolim_{\mb{t} \in \mathcal{K}^{k}}}\mb{D}^{k}_{P(\sigma), \mb{t}, -1} \ar[d]_{=} \\
 \displaystyle{\hocolim_{\mb{t} \in \mathcal{K}^{k}}}\mathcal{W}^{k}_{P, \mb{t}} \ar[rr] && \displaystyle{\hocolim_{\mb{t} \in \mathcal{K}^{k}}}\mathcal{W}^{k}_{P(\sigma), \mb{t}}.
}
$$
\end{proof} 

By collecting our results from the last three sections, we observe that we have proven Theorem \ref{theorem: morphism induce homotopy equivalence}. 
Indeed, the theorem follows by combining Corollary \ref{corollary: trivial d-l handle}, Corollary \ref{corollary: primitive surgery of degree k-1}, Proposition \ref{proposition: arbitrary d-l surgeries}, and Proposition \ref{proposition: d-1 surgeries general case}. 

\section{Stable Moduli Spaces of Even Dimensional Manifolds} \label{section: stable moduli spaces}
Recall that Theorem \ref{theorem: main theorem} required the condition that $k < d/2$. 
In this section we show how to recover a modified version of Theorem \ref{theorem: main theorem} in the case the $d = 2n$ and $k = n$. 
As described in the introduction, this will let us recover the proof of the \textit{Madsen-Weiss} theorem from \cite{MW 07} and the theorem of Galatius and Randal-Williams from \cite{GRW 14}. 
To state this modified version of Theorem \ref{theorem: main theorem} we will need a new construction. 

In this section we assume that $d = 2n$ for a positive integer $n$.
Fix a tangential structure $\theta: B \longrightarrow B(d+1)$. 
Let $P \in \Ob\Cob_{\theta, d}$.
We consider the functor $\mb{t} \mapsto \mathcal{W}^{n-1, c}_{P, \mb{t}}$  from Definition \ref{defn: intermediate high connected sheaf}. 
Recall from Subsection \ref{subsection: morphism H(P)} the self-cobordism 
$
H_{n, n}(P): P \rightsquigarrow P
$
and the map
$$
\mb{S}_{H_{n, n}(P)}: \mathcal{W}^{n-1, c}_{P, \mb{t}} \; \longrightarrow \; \mathcal{W}^{n-1, c}_{P, \mb{t}}
$$
that it induces. 
By the same argument employed in the proof of Proposition \ref{proposition: H-k induces iso}, it follows that upon taking homotopy colimits, the map $\mb{S}_{H_{n, n}(P)}$ induces a weak homotopy equivalence 
\begin{equation} \label{equation: homotopy equivalence on colimit}
\hocolim_{\mb{t} \in \mathcal{K}^{n-1}}\mathcal{W}^{n-1, c}_{P, \mb{t}} \; \stackrel{\simeq} \longrightarrow \; \hocolim_{\mb{t} \in \mathcal{K}^{n-1}}\mathcal{W}^{n-1, c}_{P, \mb{t}}.
\end{equation}

\begin{defn} \label{defn: stabilization}
Using the map $\mb{S}_{H_{n, n}(P)}$ above, for each $k \leq n$ and $\mb{t} \in \mathcal{K}^{k}$ we define 
$$
\mathcal{W}^{k, \stb}_{P, \mb{t}} := \hocolim\left[\mathcal{W}^{k}_{P, \mb{t}} \rightarrow \mathcal{W}^{k}_{P, \mb{t}} \rightarrow \cdots \right]
$$
where the direct system is given by iterating the map $\mb{S}_{H_{n, n}(P)}$.
The space $\mathcal{W}^{k, c, \stb}_{P, \mb{t}}$ is defined similarly.
\end{defn}
We will mainly be interested in the space $\mathcal{W}^{n-1, c, \stb}_{P, \mb{t}}$. 
\begin{proposition} \label{proposition: inclusion into the colimit}
The inclusion 
$$\displaystyle{\hocolim_{\mb{t} \in \mathcal{K}^{n-1}}}\mathcal{W}^{n-1, c}_{P, \mb{t}} \longrightarrow \displaystyle{\hocolim_{\mb{t} \in \mathcal{K}^{n-1}}}\mathcal{W}^{n-1, c, \stb}_{P, \mb{t}}$$
is a weak homotopy equivalence. 
\end{proposition}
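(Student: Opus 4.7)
The plan is to reduce the claim to the assertion (already stated in the excerpt as the weak equivalence displayed in equation (9.1)) that the self-map $\mb{S}_{H_{n,n}(P)}$ induces a weak homotopy equivalence on $\hocolim_{\mb{t} \in \mathcal{K}^{n-1}}\mathcal{W}^{n-1, c}_{P, \mb{t}}$, together with the formal fact that homotopy colimits commute and that a sequential homotopy colimit of weak equivalences is weakly equivalent, via the canonical structure map, to its initial term.

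First I would unwind Definition 9.1: by construction,
$$
\mathcal{W}^{n-1, c, \stb}_{P, \mb{t}} \; = \; \hocolim_{N \in \N}\mathcal{W}^{n-1, c}_{P, \mb{t}},
$$
where the direct system is the $N$-fold iterate of $\mb{S}_{H_{n,n}(P)}$. Since homotopy colimits commute with one another (they are left adjoints, or concretely, both are computed as realizations of the same double simplicial space), there is a natural weak equivalence
$$
\hocolim_{\mb{t}\in\mathcal{K}^{n-1}}\mathcal{W}^{n-1,c,\stb}_{P, \mb{t}} \;\simeq\; \hocolim_{N\in\N}\Bigl(\hocolim_{\mb{t}\in\mathcal{K}^{n-1}}\mathcal{W}^{n-1,c}_{P, \mb{t}}\Bigr),
$$
under which the canonical inclusion in the statement of the proposition corresponds to the canonical map from the $N=0$ term of the outer telescope into the telescope itself.

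Next I would invoke (9.1): the self-map $\mb{S}_{H_{n,n}(P)}$ of $\hocolim_{\mb{t}\in\mathcal{K}^{n-1}}\mathcal{W}^{n-1,c}_{P,\mb{t}}$ is a weak homotopy equivalence. Consequently every structure map in the sequential telescope on the right-hand side above is a weak equivalence. It is a standard property of mapping telescopes over $\N$ that when every structure map $X_N \to X_{N+1}$ is a weak equivalence, the inclusion $X_0 \hookrightarrow \hocolim_N X_N$ is itself a weak equivalence (for instance, on homotopy groups one has $\pi_{*}\,\hocolim_N X_N = \colim_N \pi_{*}X_N$, and a colimit of isomorphisms is an isomorphism). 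Applying this with $X_N = \hocolim_{\mb{t}}\mathcal{W}^{n-1,c}_{P,\mb{t}}$ gives that the inclusion of $X_0$ into the telescope is a weak equivalence, which is precisely the map under consideration.

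There is no real obstacle: once (9.1) is granted, the argument is purely formal manipulation of homotopy colimits. The only small bookkeeping point is to confirm that the inclusion appearing in the statement of the proposition is compatible, under the commutation-of-hocolimits equivalence, with the inclusion of the $N=0$ term of the outer telescope; this follows directly from naturality of the model for $\hocolim_N$ as a mapping telescope and the fact that the bisimplicial construction used to swap the two hocolims sends the initial term of either coordinate to the initial term after swapping.
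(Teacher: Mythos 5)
Your argument is exactly the paper's: one commutes the homotopy colimit over $\mathcal{K}^{n-1}$ with the sequential telescope, then uses the weak equivalence (\ref{equation: homotopy equivalence on colimit}) to conclude that each structure map of the resulting telescope is a weak equivalence, hence that the inclusion of the initial term is one. No discrepancy.
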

\begin{proof}
The proposition follows from (\ref{equation: homotopy equivalence on colimit}) combined with the weak homotopy equivalence 
$$
\begin{aligned}
\hocolim_{\mb{t} \in \mathcal{K}^{n-1}}\left(\hocolim\left[\mathcal{W}^{n-1, c}_{P, \mb{t}} \rightarrow \mathcal{W}^{n-1, c}_{P, \mb{t}} \rightarrow \cdots \right]\right) \; \;  \simeq \; \; 
\hocolim\left[\hocolim_{\mb{t} \in \mathcal{K}^{n-1}}\mathcal{W}^{n-1, c}_{P, \mb{t}} \rightarrow \hocolim_{\mb{t} \in \mathcal{K}^{n-1}}\mathcal{W}^{n-1, c}_{P, \mb{t}} \rightarrow \cdots \right].
\end{aligned}
$$
\end{proof}

We now analyze the fibres of the map, 
$
\mathcal{W}^{n-1, c, \stb}_{P, \mb{t}} \longrightarrow \mathcal{W}^{\{n, n+1\}}_{\locc, \mb{t}}.
$

\begin{defn} \label{defn: fibre sheaf degree n}
For each $\mb{s} \in \mathcal{K}^{\{n, n+1\}}$, let us fix once and for all an element 
$$((V_{\mb{s}}, \sigma_{\mb{s}}), e_{\mb{s}}) \in \mathcal{W}^{\{n, n+1\}}_{\locc, \mb{s}}.$$ 
As before, we denote the unique element in $\mathcal{W}^{\{n, n+1\}}_{\locc, \emptyset}$ by $\emptyset$.
We let $\widehat{\mathcal{W}}^{n-1, c, \stb}_{P, \mb{s}}$ denote the fibre of the 
map
$
\mathcal{W}^{n-1, c, \stb}_{P, \mb{s}} \longrightarrow \mathcal{W}^{\{n, n+1\}}_{\locc, \mb{s}}
$
over the element $((V_{\mb{s}}, \sigma_{\mb{s}}), e_{\mb{s}}) \in \mathcal{W}^{\{n, n+1\}}_{\locc, \mb{s}}$.
\end{defn}

Proceeding as in Section \ref{section: Cobordism Categories and Handle Attachments}, for each $\mb{t} \in \mathcal{K}^{\{n, n+1\}}$
we fix once and for all a closed submanifold 
$$
S_{\mb{t}} \subset \R^{\infty-1}
$$
disjoint from the manifold $P$, and diffeomorphic to the product $\mb{s}\times S^{n-1}\times S^{n}$. 
Choose a $\theta$-structure $\hat{\ell}_{S_{\mb{t}}}$ on $S_{\mb{t}}$ that admits an extension to a $\theta$-structure on $\mb{t}\times D^{n}\times S^{n}$. 
The manifold $P\sqcup S_{\mb{t}}$ equipped with the structure $\hat{\ell}_{P}\sqcup\hat{\ell}_{S_{\mb{t}}}$ defines an object in the cobordism category $\Cob_{\theta, 2n}$. 

By the same augment used in Proposition \ref{proposition: fibre identification} we obtain:
\begin{proposition} \label{proposition: cut out the embedding homeo}
For all $P \in \Ob\Cob_{\theta, 2n}$ and $\mb{t} \in \mathcal{K}^{\{n, n+1\}}$ there is a homeomorphism 
$$
\mathcal{W}^{n, \stb}_{P\sqcup S_{\mb{t}}, \emptyset} \stackrel{\cong} \longrightarrow \widehat{\mathcal{W}}^{n-1, c, \stb}_{P, \mb{t}}.
$$
\end{proposition}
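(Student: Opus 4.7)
The plan is to upgrade the map of Proposition \ref{proposition: fibre identification} from a homotopy equivalence to an explicit homeomorphism and then check that it intertwines the two stabilization self-maps so as to descend to a homeomorphism of sequential homotopy colimits. First, with the basepoint data $((V_{\mb{t}},\sigma_{\mb{t}}),e_{\mb{t}})$ fixed once and for all as in Definition \ref{defn: fibre sheaf degree n}, I would define a glue-in map
\[
\Phi: \mathcal{W}^{n}_{P\sqcup S_{\mb{t}},\emptyset} \longrightarrow \widehat{\mathcal{W}}^{n-1,c}_{P,\mb{t}}, \qquad M \longmapsto \bigl(M\cup e_{\mb{t}}(S(V_{\mb{t}}^{-})\times_{\mb{t}}D(V_{\mb{t}}^{+})),\; (V_{\mb{t}},\sigma_{\mb{t}}),\; e_{\mb{t}}\bigr),
\]
exactly as in Proposition \ref{proposition: fibre identification}, together with its inverse $\Psi$ cutting out the embedded surgery handles. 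Because the surgery data $(V_{\mb{t}},\sigma_{\mb{t}},e_{\mb{t}})$ is held fixed (rather than varying over $\mathcal{W}^{\{n,n+1\}}_{\locc,\mb{t}}$), the two constructions are strict set-theoretic inverses, not merely homotopy inverses, so they give an honest homeomorphism at the unstabilized level $\mathcal{W}^{n}_{P\sqcup S_{\mb{t}},\emptyset}\cong\widehat{\mathcal{W}}^{n-1,c}_{P,\mb{t}}$. The required connectivity constraints transfer without issue: $\Phi(M)\setminus\Image(e_{\mb{t}}) = M\setminus S_{\mb{t}}$ and a collar retraction shows that this inclusion is a homotopy equivalence, so the $(n-1)$-connectivity of $\ell_{\Phi(M)}$ and the $n$-connectivity of its restriction to the complement of $\Image(e_{\mb{t}})$ (the latter being the condition defining the $c$-decoration of $\mathcal{W}^{n-1,c}$) both follow from the $n$-connectivity of $\ell_{M}$.

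Second, I would show that $\Phi$ and $\Psi$ commute strictly with the stabilization self-maps $\mb{S}_{H_{n,n}(P\sqcup S_{\mb{t}})}$ and $\mb{S}_{H_{n,n}(P)}$ used in Definition \ref{defn: stabilization}. Both stabilizations act by concatenating with a self-cobordism that performs a connect-sum with a standard $S^{n}\times S^{n}$ inside a small tubular neighborhood of a chosen embedded arc, and since $\Image(e_{\mb{t}})$ is compact one may arrange, by general position, that this arc and its neighborhood are disjoint from $\Image(e_{\mb{t}})$ and lie over the $P$-component of the source boundary. Making the same choice of arc on both sides of the compatibility square forces $\mb{S}_{H_{n,n}(P)}\circ\Phi = \Phi\circ\mb{S}_{H_{n,n}(P\sqcup S_{\mb{t}})}$ as literal set-theoretic operations, and likewise for $\Psi$. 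Passing to the sequential homotopy colimit then produces the claimed homeomorphism $\mathcal{W}^{n,\stb}_{P\sqcup S_{\mb{t}},\emptyset} \cong \widehat{\mathcal{W}}^{n-1,c,\stb}_{P,\mb{t}}$.

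The only non-formal step is coordinating the two stabilization maps so that the chosen connect-sum arcs remain disjoint from $\Image(e_{\mb{t}})$ uniformly across all iterates of the stabilization. Since a single arc, chosen once in a small tubular neighborhood away from $\Image(e_{\mb{t}})$, can be reused at every stage of the colimit, this reduces to one general position argument in the infinite-dimensional ambient space, which is routine. Thus the hard work has already been carried out in Proposition \ref{proposition: fibre identification}, and the present proposition is essentially a direct compatibility check between the homeomorphism $\Phi$ and the $\--\cup H_{n,n}$ operations defining the colimits.
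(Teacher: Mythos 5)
Your argument is correct and matches the paper's approach, which simply defers to the cut-and-glue maps of Proposition \ref{proposition: fibre identification}. You rightly observe that once the localization data $((V_{\mb{t}},\sigma_{\mb{t}}),e_{\mb{t}})$ is pinned to the fixed basepoint those maps are strict set-theoretic inverses (hence a homeomorphism rather than only a homotopy equivalence), and that choosing $H_{n,n}(P\sqcup S_{\mb{t}})$ to restrict to $H_{n,n}(P)$ over the $P$-component and to the product cylinder over $S_{\mb{t}}$, with the connect-sum arc disjoint from $\Image(e_{\mb{t}})$, makes the finite-level homeomorphism intertwine the two stabilization self-maps and so descend to the sequential homotopy colimits.
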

Let us now analyze the spaces $\mathcal{W}^{n}_{P, \emptyset}$ more closely. 
For any object $P \in \Ob\Cob_{\theta, 2n}$, the space $\mathcal{W}^{n}_{P, \emptyset}$ is precisely the space of all $M \in \mathcal{N}_{\theta, P}$ with the property that $\ell_{M}: M \longrightarrow B$ is $n$-connected. 
It follows that for all $P$ there is a weak homotopy equivalence 
$$
\mathcal{W}^{n}_{P, \emptyset} \simeq \coprod_{[M]}\textstyle{\BDiff_{\theta, n}}(M, P)
$$
where the union ranges over all diffeomorphism classes of compact manifolds $M$ equipped with an identification $\partial M = P$.
The space $\textstyle{\BDiff_{\theta, n}}(M, P)$ is the homotopy quotient
$$
\textstyle{\Bun_{n}}(TM\oplus\epsilon^{1}, \theta^{*}\gamma^{d+1}; \hat{\ell}_{P})//\Diff(M, P)
$$
where $\textstyle{\Bun_{n}}(TM\oplus\epsilon^{1}, \theta^{*}\gamma^{d+1}; \hat{\ell}_{P})$ is the space of $\theta$-structures $\hat{\ell}_{M}$ on $M$, that agree with $\hat{\ell}_{P}$ on the boundary, 
such that the underlying map $\ell_{M}: M \longrightarrow B$ is $n$-connected. 
It follows that the limiting space $\mathcal{W}^{n, \stb}_{P, \emptyset}$ agrees with the \textit{stable moduli space of $\theta$-manifolds} studied in \cite{GRW 14} and in \cite{GRW 16}.

As was observed in Section \ref{section: Cobordism Categories and Handle Attachments}, any morphism $W: P \rightsquigarrow Q$ in $\Cob^{n-1}_{\theta, d}$ induces a map
$$
\mb{S}_{W}: \mathcal{W}^{n, \stb}_{P, \emptyset} \longrightarrow \mathcal{W}^{n, \stb}_{Q, \emptyset}, \quad M \mapsto M\cup_{P}W.
$$
The theorem stated below is a translation of the homological stability theorem of Galatius and Randal-Williams proven in \cite{GRW 16}.
\begin{theorem} \label{theorem: stable stability}
For any morphism $W: P \rightsquigarrow Q$ in $\Cob^{n-1}_{\theta, d}$, the map 
$
\mb{S}_{W}: \mathcal{W}^{n, \stb}_{P, \emptyset} \longrightarrow \mathcal{W}^{n, \stb}_{Q, \emptyset}
$
is an Abelian homological equivalence.
\end{theorem}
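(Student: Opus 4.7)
The strategy is to reduce to elementary cobordisms and then invoke the homological stability theorem of Galatius and Randal-Williams \cite{GRW 16}, using the stabilization by $H_{n,n}(P)$ to gain the room needed. First I would factor the morphism $W : P \rightsquigarrow Q$ in $\Cob^{n-1}_{\theta, d}$ as a composite of elementary cobordisms $W = V_{m} \circ \cdots \circ V_{1}$, each obtained by attaching a single $\theta$-handle. Since $(W, P)$ is $(n-1)$-connected, standard handle trading permits us to assume every $V_{i}$ is a trace of surgery of index $\geq n$, so the indices of the handles of $W$ run through $\{n, n+1, \ldots, 2n\}$. By the two-out-of-three property it suffices to prove that $\mb{S}_{V_{i}}$ is an acyclic (abelian homological) equivalence for each elementary piece $V_{i}$.

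For handles of index strictly greater than $n$, I would mimic the arguments from Sections~\ref{section: surgeries and traces}--\ref{section: higher index handles}, only now working consistently with the stabilized spaces $\mathcal{W}^{n, \stb}_{P, \emptyset}$. In particular, the trivial/primitive dichotomy of Lemma~\ref{lemma: primitive and trivial surgeries}, the interchange construction of Section~\ref{subsection: surgeries and traces}, and the semi-simplicial resolutions of Section~\ref{section: Resolving Nontrivial Handle Attachments} all make sense verbatim after replacing $\mathcal{W}^{k}_{P, \mb{t}}$ by $\mathcal{W}^{n, \stb}_{P, \emptyset}$; because $H_{n,n}(P)$ can be inserted on either side of an $l$-handle attachment with $l > n$ without affecting the underlying diffeomorphism class, the stabilization map commutes up to canonical isotopy with the handle attachment. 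Dualizing these handles into index-$\leq n$ handles attached to a different end, combined with Proposition~\ref{proposition: stability for primitive handles} and Proposition~\ref{proposition: bootstrapping proposition} transcribed to the stable setting, reduces everything to the case of $n$-handles.

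The essential case---and the main obstacle---is therefore that of an elementary $n$-handle attachment $V_{i}$. Here I would invoke the homological stability theorem of Galatius and Randal-Williams \cite{GRW 16}, after identifying $\mathcal{W}^{n, \stb}_{P, \emptyset}$ with their stable moduli space via the description
\[
\mathcal{W}^{n}_{P, \emptyset} \;\simeq\; \coprod_{[M]} \BDiff_{\theta, n}(M, P)
\]
given in the paragraph preceding the theorem. The key input is that attaching an $n$-handle to an $n$-connected $\theta$-manifold $M$ can, up to an additional stabilization by $S^{n}\times S^{n}$ (which is free in the colimit defining $\mathcal{W}^{n,\stb}_{P,\emptyset}$), be absorbed into the connect-sum stabilization; \cite{GRW 16} then identifies the induced map on limits as an acyclic map. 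The main technical point to verify carefully is that the stabilization maps in the construction of $\mathcal{W}^{n, \stb}_{P, \emptyset}$ are compatible (up to isotopy of cobordisms) with the connect-sum stabilization used in \cite{GRW 16}; once this compatibility is established, the GRW result delivers the homological equivalence for index-$n$ handles, and the reductions above propagate it to arbitrary $W$.
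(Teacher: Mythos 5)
The paper gives no proof of this theorem at all: the sentence immediately preceding it states that the theorem \emph{is} the homological stability theorem of Galatius and Randal-Williams from \cite{GRW 16}, merely translated into the present notation. The paragraph before that identifies $\mathcal{W}^{n,\stb}_{P,\emptyset}$ with the stable moduli space $\hocolim_g\left(\coprod_{[M]}\BDiff_{\theta,n}(M,P)\right)$ studied in \cite{GRW 14} and \cite{GRW 16}, and the map $\mb{S}_W$ with the gluing map of that paper. The result of \cite{GRW 16} applies to \emph{arbitrary} morphisms $W:P\rightsquigarrow Q$ in $\Cob^{n-1}_{\theta,d}$ (i.e.\ to arbitrary cobordisms $W$ with $(W,P)$ being $(n-1)$-connected), not merely to traces of single $n$-surgeries, so no further decomposition or handle-by-handle analysis is needed.

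Your proposal therefore reinvents a wheel: the factorization $W=V_m\circ\cdots\circ V_1$, the case split by handle index, and the transcription of the machinery of Sections~\ref{section: surgeries and traces}--\ref{section: higher index handles} to the stabilized setting are all superfluous, because \cite{GRW 16} already treats the general $W$ in one stroke. Worse, the intermediate step you invoke is not actually justified as stated. Propositions~\ref{proposition: arbitrary d-l surgeries}, \ref{proposition: stability for primitive handles}, and \ref{proposition: bootstrapping proposition} (and the semi-simplicial resolutions feeding into them) all carry the standing hypothesis $k<d/2$. At $k=n=d/2$ those arguments do not produce weak homotopy equivalences -- this is exactly the range the paper flags as failing before introducing the stabilization -- and you cannot simply assert that they ``make sense verbatim'' after replacing $\mathcal{W}^k_{P,\mb{t}}$ with $\mathcal{W}^{n,\stb}_{P,\emptyset}$ and expect the conclusions to carry over. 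You correctly identify that the stable statement is ultimately a theorem of \cite{GRW 16}, but the route you outline invokes it only for the index-$n$ case and leans on unproven stable analogues of the unstable propositions for the remaining indices. The efficient (and intended) argument is to invoke \cite{GRW 16} directly for the whole morphism $W$ after the identification of spaces.
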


\begin{corollary} \label{theorem: stable stability}
For any morphism $(j, \varepsilon): \mb{t} \longrightarrow \mb{s}$ in $\mathcal{K}^{n-1}$, the induced map 
$$
(j, \varepsilon)^{*}: \widehat{\mathcal{W}}^{n-1, c, \stb}_{P, \mb{s}} \longrightarrow \widehat{\mathcal{W}}^{n-1, c, \stb}_{P, \mb{t}}
$$
is an Abelian homological equivalence. 
\end{corollary}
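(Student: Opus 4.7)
The plan is to reduce the corollary to Theorem~\ref{theorem: stable stability}, the Galatius--Randal-Williams homological stability theorem, via the homeomorphism of Proposition~\ref{proposition: cut out the embedding homeo}, in close analogy with the way Theorem~\ref{proposition: homotopy equivalence of transition maps} was derived from Theorem~\ref{theorem: morphism induce homotopy equivalence}. First I would observe that for $d = 2n$ and $k = n$ the label set $\{k, \ldots, d-k+1\}$ reduces to $\{n, n+1\}$, so $\mathcal{K}^{n-1}$ coincides with $\mathcal{K}^{\{n, n+1\}}$. Since every morphism in this category decomposes as an isomorphism followed by a sequence of elementary morphisms $\mb{t} \hookrightarrow \mb{t} \sqcup \{x\}$ with $\varepsilon(x) = \pm 1$, it suffices to treat the four elementary cases parametrised by the label of $x$ in $\{n, n+1\}$ and the sign $\varepsilon(x) \in \{-1, +1\}$.

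In each of these cases, I would verify that under the homeomorphism of Proposition~\ref{proposition: cut out the embedding homeo} the pullback map
\[
(j, \varepsilon)^{*}: \widehat{\mathcal{W}}^{n-1, c, \stb}_{P, \mb{s}} \longrightarrow \widehat{\mathcal{W}}^{n-1, c, \stb}_{P, \mb{t}}
\]
corresponds to the map $\mb{S}_{W}: \mathcal{W}^{n, \stb}_{P \sqcup S_{\mb{s}}, \emptyset} \longrightarrow \mathcal{W}^{n, \stb}_{P \sqcup S_{\mb{t}}, \emptyset}$ induced by concatenation with an explicit cobordism $W: P \sqcup S_{\mb{s}} \rightsquigarrow P \sqcup S_{\mb{t}}$. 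The cobordism $W$ is constructed exactly as the cobordisms $W_{1}$ and $W_{2}$ appearing in the proof of Theorem~\ref{proposition: homotopy equivalence of transition maps}: starting from the cylinder on $P \sqcup S_{\mb{s}}$ one either attaches an $n$-handle or an $(n+1)$-handle along the component of $S_{\mb{s}}$ indexed by $x$ and then caps off the resulting $S^{d-1}$-component by a $d$-handle. Since handle attachments of index $n$, $n+1$, or $d$ preserve $(n-1)$-connectivity of the pair $(W, \partial_{\inn} W)$, each such $W$ is a morphism in $\Cob^{n-1}_{\theta, d}$, and Theorem~\ref{theorem: stable stability} applied to this $W$ then directly yields that $\mb{S}_{W}$ is an abelian homological equivalence.

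The step requiring the most care, and the one I expect to be the main obstacle, is the compatibility of the identification $(j, \varepsilon)^{*} \simeq \mb{S}_{W}$ with the stabilization process used to form the spaces $\mathcal{W}^{n, \stb}_{(-), \emptyset}$ and $\widehat{\mathcal{W}}^{n-1, c, \stb}_{P, (-)}$ in Definition~\ref{defn: stabilization}. The key point is that the self-cobordism $H_{n,n}(P)$ producing the stabilization is localised in a small region of $P$ that can be chosen disjoint from $S_{\mb{s}}$, $S_{\mb{t}}$, and the handle attachments comprising $W$; consequently the stabilization maps commute, up to $\theta$-diffeomorphism, with $\mb{S}_{W}$, and induce a well-defined map on the respective homotopy colimits which agrees with $(j, \varepsilon)^{*}$ under Proposition~\ref{proposition: cut out the embedding homeo}. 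Once this bookkeeping is carried out, invoking Theorem~\ref{theorem: stable stability} in each of the four elementary cases and composing completes the proof.
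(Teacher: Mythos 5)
Your proposal follows essentially the same route as the paper: identify $(j,\varepsilon)^{*}$ with $\mb{S}_{W}$ for an explicit $W \in \Mor\Cob^{n-1}_{\theta,2n}$ via the homeomorphism of Proposition~\ref{proposition: cut out the embedding homeo}, then invoke Theorem~\ref{theorem: stable stability} (the Galatius--Randal-Williams stability theorem), exactly as Theorem~\ref{proposition: homotopy equivalence of transition maps} was deduced from Theorem~\ref{theorem: morphism induce homotopy equivalence}. The paper records this in a three-line proof with a commutative square and leaves the compatibility with stabilization implicit, whereas you spell out the elementary morphism decomposition, the handle indices, and the disjointness argument for $H_{n,n}(P)$ --- all correct, but the approach is not different from the paper's.
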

\begin{proof} 
This is proven in the same way as Theorem \ref{proposition: homotopy equivalence of transition maps}, using Theorem \ref{theorem: stable stability} instead of Theorem \ref{theorem: morphism induce homotopy equivalence}. 
The map $(j, \varepsilon):^{*}: \widehat{\mathcal{W}}^{n-1, c, \stb}_{P, \mb{s}} \longrightarrow \widehat{\mathcal{W}}^{n-1, c, \stb}_{P, \mb{t}}$ fits into a commutative diagram 
$$
\xymatrix{
\widehat{\mathcal{W}}^{n-1, c, \stb}_{P, \mb{s}}  \ar[rr]^{(j, \varepsilon)^{*}} && \widehat{\mathcal{W}}^{n-1, c,  \stb}_{P, \mb{t}} \ar[d]_{\simeq} \\
\mathcal{W}^{n, \stb}_{P\sqcup S_{\mb{s}}, \emptyset} \ar[u]^{\simeq} \ar[rr]^{\mb{S}_{W}} && \mathcal{W}^{n, \stb}_{P\sqcup S_{\mb{t}}, \emptyset}
}
$$
where $W: P\sqcup S_{\mb{s}} \rightsquigarrow P\sqcup S_{\mb{t}}$ is some morphism in $\Cob_{\theta, 2n}^{n-1}$, and the vertical maps are the homotopy equivalences from Proposition \ref{proposition: cut out the embedding homeo}.
By Theorem \ref{theorem: stable stability}, $\mb{S}_{W}$ is an Abelian homological equivalence and so it follows by commutativity that $(j, \varepsilon)^{*}$ is as well.
\end{proof}

Corollary \ref{corollary: homological corollary for k = n} from the introduction concerned an acyclic map. 
We will need to use a homological version of Theorem \ref{proposition: homotopy equivalence of transition maps} for the case $d = 2n$ and $k = n$.
We need to introduce a new definition and a preliminary result. 
\begin{defn} \label{defn: abelian homology fibration}
A map $f: X \longrightarrow Y$ between topological spaces is said to be an \textit{acyclic homology fibration} if the for all $y \in Y$, the inclusion, 
$f^{-1}(y) \hookrightarrow \textstyle{\hofibre_{y}}(f)$,
is an acyclic map. 
\end{defn}

The proposition below is a generalization of Proposition \ref{proposition: homotopy colimit fibre sequence}. 
Its proof follows from \cite[Proposition 4.4]{MP 15}.
\begin{proposition} \label{proposition: homotopy colimit homology fibre sequence}
Let $\mathcal{C}$ be a small category and let $u: \mathcal{G}_{1} \longrightarrow \mathcal{G}_{2}$ be a natural transformation between functors 
from $\mathcal{C}$ to the category of topological spaces. 
Suppose that for each morphism $f: a \longrightarrow b$ in $\mathcal{C}$, the map 
$$
f_{*}: \hofibre\left(u_{a}: \mathcal{G}_{1}(a) \rightarrow \mathcal{G}_{2}(a)\right) \; \longrightarrow \; \hofibre\left(u_{b}: \mathcal{G}_{1}(b) \rightarrow \mathcal{G}_{2}(b)\right)
$$
is an Abelian homology equivalence. 
Then for any object $c \in  \Ob\mathcal{C}$, the inclusion 
$$
\hofibre\left(u_{c}: \mathcal{G}_{1}(c) \rightarrow \mathcal{G}_{2}(c)\right) \; \hookrightarrow \; \hofibre\left(\hocolim\mathcal{G}_{1} \rightarrow \hocolim\mathcal{G}_{2}\right)
$$
is an Abelian homology equivalence.
\end{proposition}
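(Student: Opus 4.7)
The plan is to deduce this directly from the homological generalization of Quillen's Theorem B proven as Proposition 4.4 of \cite{MP 15}, whose statement is exactly the content we need. For the reader's benefit I would outline the main steps of that proof adapted to our notation, since the proof follows a well-known template but must be executed with care for local coefficients.

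First I would replace each map $u_a \colon \mathcal{G}_1(a) \to \mathcal{G}_2(a)$ by a Serre fibration via the mapping path space construction, carried out naturally in $a \in \mathcal{C}$, so that each $\hofibre(u_a)$ may be identified with an honest fibre and the transition maps $f_\ast$ become actual maps of fibres over a basepoint path. Next, I would model the two homotopy colimits as geometric realizations of bar constructions $B_\bullet(\ast,\mathcal{C},\mathcal{G}_i)$; the induced map between these bisimplicial spaces is, in simplicial degree $p$, a disjoint union indexed by strings of composable morphisms $c_0 \to \cdots \to c_p$, whose $(c_0 \to \cdots \to c_p)$-summand is just the map $u_{c_0}$. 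This makes the entire situation transparent at the level of simplicial filtrations.

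Third, using the skeletal filtration of the bar construction I would set up a first-quadrant spectral sequence converging to the homology --- with any fixed system of Abelian local coefficients pulled back from the base --- of $\hofibre\bigl(\hocolim \mathcal{G}_1 \to \hocolim \mathcal{G}_2\bigr)$. Its $E^1$-page assembles the local-coefficient homologies of the individual fibres $\hofibre(u_{c})$, indexed by composable chains in $\mathcal{C}$, together with boundary maps built from the $f_\ast$. The hypothesis that each face map induces an acyclic map on these fibres then identifies this $E^1$-page with the one coming from the constant diagram with value $\hofibre(u_c)$, and the comparison map $\hofibre(u_c) \hookrightarrow \hofibre(\hocolim u)$ induces an isomorphism on $E^\infty$ and hence on the abutment.

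The main obstacle is the bookkeeping of local coefficient systems, which is essential because ``Abelian homology equivalence'' means isomorphism on homology with arbitrary Abelian local coefficients (equivalently, the map is acyclic in the sense of Quillen's plus construction). A naive homology argument with constant coefficients would not suffice to conclude the statement, since the homotopy colimits are very far from simply connected. The reference \cite{MP 15} handles the spectral sequence and the coefficient bookkeeping carefully within the framework of acyclic maps, and rather than redo this machinery my plan is to invoke their Proposition 4.4 directly.
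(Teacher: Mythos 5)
Your approach matches the paper's: both identify the homotopy colimits with the geometric realizations of the nerves of the transport categories (equivalently, bar constructions), observe that the hypotheses of this proposition translate into precisely the face/degeneracy-map hypothesis of \cite[Proposition 4.4]{MP 15} on the levelwise homotopy fibres, and then invoke that result directly. The mapping-path-space fibrant replacement and the spectral-sequence sketch you outline are consistent with how \cite{MP 15} proves their Proposition 4.4, but the paper, like you, simply cites it rather than reproving it.
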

\begin{proof}
Let the simplicial space $\mb{X}_{\bullet}$ denote the nerve of the \textit{transport category} $\mathcal{G}_{1}\wr\mathcal{C}$, and let $\mb{Y}_{\bullet}$ denote the nerve of $\mathcal{G}_{2}\wr\mathcal{C}$.
The homotopy colimits of $\mathcal{G}_{1}$ and $\mathcal{G}_{2}$ are obtained by the taking the geometric realizations of the these simplicial spaces $\mb{X}_{\bullet}$ and $\mb{Y}_{\bullet}$ respectively. 
The natural transformation induces a simplicial map $u_{\bullet}: \mb{X}_{\bullet} \longrightarrow \mb{Y}_{\bullet}$.
The condition in the statement of the proposition implies that for all $p \in \Z_{\geq 0}$ and $0 \leq i \leq p$, the face map $d_{i}$ induces an Abelian homology equivalence, 
$$
\hofibre(u_{p}: \mb{X}_{p} \rightarrow \mb{Y}_{p}) \stackrel{\simeq} \longrightarrow \hofibre(u_{p-1}: \mb{X}_{p-1} \rightarrow \mb{Y}_{p-1})
$$
(a similar statement holds true for the degeneracy maps).
It then follows from \cite[Proposition 4.4]{MP 15} that the inclusion, 
$\hofibre(u_{0}: \mb{X}_{0} \rightarrow \mb{Y}_{0}) \hookrightarrow \hofibre(|u_{\bullet}|: |\mb{X}_{\bullet}| \rightarrow |\mb{Y}_{\bullet}|),$
is an Abelian homology equivalence. 
This completes the proof of the proposition.  
\end{proof}

The next corollary follows by combining Theorem \ref{theorem: stable stability} with Proposition\ref{proposition: homotopy colimit homology fibre sequence}.
By applying Proposition \ref{proposition: homotopy colimit homology fibre sequence} to the above corollary we obtain the following result. 
\begin{corollary} \label{corollary: homology fibration}
Let $d = 2n$. 
The localization map 
$$
\hocolim_{\mb{s} \in \mathcal{K}^{n-1}}\mathcal{W}^{n-1, c, \stb}_{P, \mb{s}} \longrightarrow \hocolim_{\mb{s} \in \mathcal{K}^{n-1}}\mathcal{W}^{\{n, n+1\}}_{\locc, \mb{s}}
$$
is an acyclic homology fibration with fibre over $\emptyset$ given by the space, 
$\mathcal{W}^{n, \stb}_{P, \emptyset}.$
\end{corollary}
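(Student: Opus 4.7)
The plan is to apply Proposition \ref{proposition: homotopy colimit homology fibre sequence} directly to the natural transformation of functors on $\mathcal{K}^{n-1}$ given by
$$
u_{\mb{s}} \colon \mathcal{W}^{n-1, c, \stb}_{P, \mb{s}} \longrightarrow \mathcal{W}^{\{n, n+1\}}_{\locc, \mb{s}},
$$
using the preceding ``stable stability'' corollary as the hypothesis.

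First I would check that each $u_{\mb{s}}$ is a Serre fibration whose homotopy fibre over the chosen basepoint is $\widehat{\mathcal{W}}^{n-1, c, \stb}_{P, \mb{s}}$ of Definition \ref{defn: fibre sheaf degree n}. For the unstabilized map $\mathcal{W}^{n-1, c}_{P, \mb{s}} \to \mathcal{W}^{\{n, n+1\}}_{\locc, \mb{s}}$ this is already known (cf.\ the discussion surrounding Lemma \ref{corollary: hocolimit of fibration}); passing to $\mathcal{W}^{n-1, c, \stb}_{P, \mb{s}}$ preserves this property because the stabilization map $\mb{S}_{H_{n, n}(P)}$ acts trivially on the $((V, \sigma), e)$-data, so the stabilization mapping telescope is fibred over the common base $\mathcal{W}^{\{n, n+1\}}_{\locc, \mb{s}}$ with fibres given stagewise by the unstabilized fibres.

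Next, the key input to Proposition \ref{proposition: homotopy colimit homology fibre sequence} is that for every morphism $(j, \varepsilon) \colon \mb{t} \to \mb{s}$ in $\mathcal{K}^{n-1}$ the induced map on homotopy fibres
$$
(j, \varepsilon)^{*} \colon \widehat{\mathcal{W}}^{n-1, c, \stb}_{P, \mb{s}} \longrightarrow \widehat{\mathcal{W}}^{n-1, c, \stb}_{P, \mb{t}}
$$
is an abelian homology equivalence. This is exactly the content of the stable-stability corollary (labelled \ref{theorem: stable stability}) just established, which in turn is deduced from the Galatius--Randal-Williams stability theorem combined with the same commutative square used in the proof of Proposition \ref{proposition: homotopy equivalence of transition maps} (where ``weak equivalence'' is replaced by ``abelian homology equivalence'' throughout).

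Taking $c = \emptyset \in \mathcal{K}^{n-1}$ in Proposition \ref{proposition: homotopy colimit homology fibre sequence} then yields that the inclusion
$$
\widehat{\mathcal{W}}^{n-1, c, \stb}_{P, \emptyset} \; \hookrightarrow \; \hofibre_{\emptyset}\Bigl( \hocolim_{\mb{s} \in \mathcal{K}^{n-1}} \mathcal{W}^{n-1, c, \stb}_{P, \mb{s}} \longrightarrow \hocolim_{\mb{s} \in \mathcal{K}^{n-1}} \mathcal{W}^{\{n, n+1\}}_{\locc, \mb{s}} \Bigr)
$$
is acyclic, which is exactly the defining property of an acyclic homology fibration in Definition \ref{defn: abelian homology fibration}. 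For the fibre identification, since $\mathcal{W}^{\{n, n+1\}}_{\locc, \emptyset}$ is a one-point space we have $\widehat{\mathcal{W}}^{n-1, c, \stb}_{P, \emptyset} = \mathcal{W}^{n-1, c, \stb}_{P, \emptyset}$; when $\mb{t} = \emptyset$ the embedding $e$ is empty, so the connectivity condition in Definition \ref{defn: intermediate high connected sheaf} on $\ell_{M \setminus \Image(e)}$ reduces to $\ell_M$ being $n$-connected, i.e.\ $\mathcal{W}^{n-1, c}_{P, \emptyset} = \mathcal{W}^{n}_{P, \emptyset}$, and since the stabilization procedure is the same in both cases we conclude $\mathcal{W}^{n-1, c, \stb}_{P, \emptyset} = \mathcal{W}^{n, \stb}_{P, \emptyset}$.

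The main technical obstacle I anticipate is the first step, namely verifying cleanly that the stabilized map $u_{\mb{s}}$ retains the fibration-theoretic properties needed so that its actual fibre (not merely a homotopy fibre) coincides with $\widehat{\mathcal{W}}^{n-1, c, \stb}_{P, \mb{s}}$ — i.e., that the mapping telescope of Serre fibrations with the same base is again a fibration with the expected fibre. This is not formal, but it reduces to the concrete fact that the stabilization is implemented by connect-sum with a fixed self-cobordism $H_{n, n}(P)$, which affects only the underlying manifold $M$ and leaves the surgery data $((V, \sigma), e)$ pointwise fixed, so the sequential colimit is carried out fibrewise over $\mathcal{W}^{\{n, n+1\}}_{\locc, \mb{s}}$.
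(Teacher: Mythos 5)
Your overall route matches the paper: apply Proposition \ref{proposition: homotopy colimit homology fibre sequence} using the stable-stability corollary as the hypothesis, then read off the fibre over $\emptyset$ as $\mathcal{W}^{n,\stb}_{P,\emptyset}$. But there is a genuine gap at the final step. You assert that the proposition ``yields that the inclusion \ldots is acyclic,'' and that this is ``exactly the defining property of an acyclic homology fibration.'' In fact Proposition \ref{proposition: homotopy colimit homology fibre sequence} delivers only that the inclusion of the fibre into the homotopy fibre is an \emph{Abelian homology equivalence}, while Definition \ref{defn: abelian homology fibration} requires it to be an \emph{acyclic map} (i.e.\ a homology isomorphism for all local coefficient systems pulled back from the target, not only the abelianised ones). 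These two notions do not coincide in general, and the proposition, as stated, does not get you across that gap.

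The missing observation is exactly the one the paper supplies: both $\displaystyle{\hocolim_{\mb{s}}}\,\mathcal{W}^{n-1,c,\stb}_{P,\mb{s}}$ and $\displaystyle{\hocolim_{\mb{s}}}\,\mathcal{W}^{\{n,n+1\}}_{\locc,\mb{s}}$ are infinite loop spaces (via Theorem \ref{theorem: homotopy colimit decomposition} and the Pontryagin--Thom equivalences), and therefore have abelian fundamental group. This forces the homotopy fibre of the localization map to have abelian $\pi_1$, and for a space with abelian fundamental group every local coefficient system pulled back from it is abelian, so an abelian homology equivalence into it is automatically an acyclic map. Without this step you have only proved that the map is an ``abelian homology fibration,'' which is a strictly weaker conclusion than what the corollary claims. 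Your fibration-theoretic preliminaries and the fibre identification $\mathcal{W}^{n-1,c,\stb}_{P,\emptyset} = \mathcal{W}^{n,\stb}_{P,\emptyset}$ are fine; it is only this last upgrade that is missing.
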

\begin{proof}
By Proposition \ref{proposition: homotopy colimit homology fibre sequence} it follows that the inclusion 
$$
\mathcal{W}^{n, \stb}_{P, \emptyset} \hookrightarrow \hofibre\left(\hocolim_{\mb{s} \in \mathcal{K}^{n-1}}\mathcal{W}^{n-1, c, \stb}_{P, \mb{s}} \longrightarrow \hocolim_{\mb{s} \in \mathcal{K}^{n-1}}\mathcal{W}^{\{n, n+1\}}_{\locc, \mb{s}}\right)
$$
is an abelian homology equivalence. 
Both of the spaces $\displaystyle{\hocolim_{\mb{s} \in \mathcal{K}^{n-1}}}\mathcal{W}^{n-1, c, \stb}_{P, \mb{s}}$ and $\displaystyle{\hocolim_{\mb{s} \in \mathcal{K}^{n-1}}}\mathcal{W}^{\{n, n+1\}}_{\locc, \mb{s}}$ are infinite loopspaces and thus have Abelian fundamental group. 
As a result, the above homotopy fibre has Abelian fundamental group as well. 
It follows from this fact that the above Abelian homology equivalence is an Acyclic map. 
\end{proof}

Combining the above corollary with Theorem \ref{theorem: homotopy colimit decomposition}, we obtain Theorem \ref{theorem: homology fibration after stabilization} stated in the introduction. 

Finally we show how to use the above results to recover the theorem of Galatius and Randal-Williams of \cite{GRW 14}.
Using the commutative diagram
$$
\xymatrix{
\mathcal{W}^{n, \stb}_{P, \emptyset} \ar@{^{(}->}[r] \ar[d] & \displaystyle{\hocolim_{\mb{t} \in \mathcal{K}^{n-1}}}\mathcal{W}^{n-1, c, \stb}_{P, \mb{t}} \ar[rr] \ar[d]_{\simeq} && \displaystyle{\hocolim_{\mb{s} \in \mathcal{K}^{n-1}}}\mathcal{W}^{\{n, n+1\}}_{\locc, \mb{s}} \ar[d]_{\simeq} \\
\Omega^{\infty-1}\mb{hW}^{n}_{\theta} \ar[r] & \Omega^{\infty-1}\mb{hW}^{n-1}_{\theta} \ar[rr] && \Omega^{\infty-1}\mb{hW}^{\{n, d-n+1\}}_{\theta, \loc},
}
$$
By Corollary \ref{corollary: homology fibration} it follows that the right-vertical map 
\begin{equation} \label{equation: homological equivalence infinite loop}
\mathcal{W}^{n, \stb}_{P, \emptyset} \longrightarrow \Omega^{\infty-1}\mb{hW}^{n}_{\theta}.
\end{equation}
is an acyclic map.
Let $\theta_{2n}$ denote the tangential structure obtained by restricting $\theta$ to $BO(2n)$. 
Let $\MT\theta_{2n}$ denote the \textit{Madsen-Tillmann} spectrum associated to $\theta_{2n}$.
The theorem of Galatius and Randal-Williams is then recovered by combining this with the homotopy equivalence of spectra 
\begin{equation} \label{equation: identification with MT spectrum}
\Sigma^{-1}\MT\theta_{2n} \simeq \mb{hW}^{n}_{\theta}.
\end{equation}
Below, we sketch how this homotopy equivalence of spectra is obtained.
This is basically the same as what was done in \cite[Section 3.1]{MW 07}.
\begin{Construction}
Let $G_{\theta_{2n}}(\R^{\infty})$ be the space of pairs $(V, \hat{\ell}_{V})$ where $V \leq \R^{\infty}$ is a $2n$-dimensional vector subspace and $\hat{\ell}_{V}$ is a $\theta_{2n}$-orientation on $V$. 
We let $U_{2n, \infty} \longrightarrow G_{\theta_{2n}}(\R^{\infty})$ denote the canonical vector bundle. 
The spectrum $\MT\theta_{2n}$ is defined to be the Thom spectrum associated to the virtual vector bundle $-U_{2n, \infty} \longrightarrow G_{\theta_{2n}}(\R^{\infty})$.
There is a map 
\begin{equation} \label{equation: MT comparrison map}
G_{\theta_{2n}}(\R^{\infty}) \longrightarrow G_{\theta}^{\mf}(\R^{\infty})^{n}
\end{equation}
defined by sending $(V, \hat{\ell}_{V})$ to the element $(\R\times V, \hat{\ell}_{\R\times V}, l_{V}, \sigma_{0})$, where: 
\begin{itemize} \itemsep.2cm
\item $\R\times V \; \; \leq \; \; \R\times\R^{\infty}  \cong \R^{\infty}$
is the product vector space,
\item $\hat{\ell}_{\R\times V}$ is the product $\theta$-orientation induced by $\hat{\ell}_{V}$,
\item $l: \R\times V \longrightarrow \R$ is the product projection, 
\item $\sigma_{0}: (\R\times V)\otimes(\R\times V) \longrightarrow \R$ is the trivial bilinear form.
\end{itemize}
By Definition \ref{defn: morse Grassmannian and spectrum}, it follows that $G^{\mf}_{\theta}(\R^{\infty})^{n}$ is precisely the subspace of $G^{\mf}_{\theta}(\R^{\infty})$ consisting of those $(V, \hat{\ell}, l , \sigma)$ such that $l \neq 0$, and so the above map (\ref{equation: MT comparrison map})  is well defined. 
The virtual bundle $-U_{2n+1, \infty} \longrightarrow G^{\mf}_{\theta}(\R^{\infty})^{n}$ pulls back to $-U_{2n, \infty} \longrightarrow G_{\theta}(\R^{\infty})$ under the map (\ref{equation: MT comparrison map}) and so it follows that this map induces a map of spectra 
$\Sigma^{-1}\MT\theta_{2n} \longrightarrow \mb{hW}^{n}_{\theta}$.
A homotopy inverse of (\ref{equation: MT comparrison map}) is defined by sending $(V, \hat{\ell}, l , \sigma)$ to the element $(\Ker(l), \hat{\ell}|_{\Ker(l)})$.
It follows that (\ref{equation: MT comparrison map}) induces the homotopy equivalence of spectra 
$\Sigma^{-1}\MT\theta_{2n} \simeq \mb{hW}^{n}_{\theta}$.
\end{Construction}

\appendix

\section{Proof of Theorem \ref{theorem: homotopy colimit decomposition}} \label{section: proof of theorem homotopy colimit theorem}
In this section we prove Theorem \ref{theorem: homotopy colimit decomposition} which asserts that for all choices of $k$ and $P$, there is a commutative diagram
\begin{equation} \label{equation: main zig zag diagram appendix}
\xymatrix{
\displaystyle{\hocolim_{\mb{t} \in \mathcal{K}^{k-1}}}\mathcal{W}^{k-1}_{P, \mb{t}} \ar[d]  && \mathcal{L}^{k-1} \ar[ll]_{\simeq} \ar[rr]^{\simeq} \ar[d] && \mathcal{D}^{\mf, k-1}_{\theta, P}  \ar[d] \\
\displaystyle{\hocolim_{\mb{t} \in \mathcal{K}^{k-1}}}\mathcal{W}^{\{k, d-k+1\}}_{\locc, \mb{t}}  && \mathcal{L}^{\{k, d-k+1\}}_{\locc} \ar[ll]_{\simeq} \ar[rr]^{\simeq} && \mathcal{D}_{\theta, \locc}^{\mf, \{k, d-k+1\}}
}
\end{equation}
such that the horizontal maps are weak homotopy equivalences.
The proof uses several constructions, all of which are directly analogous to constructions from \cite[Section 5]{MW 07}, where Madsen and Weiss prove the special case of this theorem for $k = -1$. 
We provide only a sketch of the proof, mainly for the purpose of showing that the constructions are well defined for all $k$. 
We refer the reader to the relevant sections of \cite{MW 07} for technical details. 
\subsection{The bottom row} \label{subsection: the bottom row}
In this section we construct the bottom row of (\ref{equation: main zig zag diagram appendix}).
Our first step is to construct an intermediate space sitting in-between $\displaystyle{\hocolim_{\mb{t} \in \mathcal{K}^{k-1}}}\mathcal{W}^{\{k, d-k+1\}}_{\locc, \mb{t}}$ and $\mathcal{D}_{\theta, \locc}^{\mf, \{k, d-k+1\}}$.
\begin{defn} \label{defn: intermediate space 1}
For $\mb{t}$, the space $\mathcal{L}_{\loc, \mb{t}}$ consists of tuples $((\bar{x}; V, \sigma), \delta, \phi)$  where 
\begin{enumerate} \itemsep.2cm
\item[(i)] $(\bar{x}; V, \sigma) \in \mathcal{D}^{\mf}_{\theta, \loc}$; 
\item[(ii)] $\delta: \bar{x} \longrightarrow \{-1, 0, +1\}$ is a function; 
\item[(iii)] $\phi: \mb{t}  \stackrel{\cong} \longrightarrow \delta^{-1}(0)$ is a bijection;
\end{enumerate}
subject to the following condition: the height function $h_{\bar{x}}: \bar{x} \longrightarrow \R$ is bounded below on $\delta^{-1}(+1)$ and is bounded above on $\delta^{-1}(-1)$.
\end{defn}

We need to describe how the correspondence $\mb{t} \mapsto \mathcal{L}_{\loc, \mb{t}}$ defines a contravariant functor on $\mathcal{K}$. 
Let $(j, \varepsilon): \mb{s} \longrightarrow \mb{t}$ be a morphism in $\mathcal{K}$.
The induced map 
$
(j, \varepsilon)^{*}: \mathcal{L}_{\loc, \mb{t}} \longrightarrow \mathcal{L}_{\loc, \mb{s}}
$
sends an element $((\bar{x}; V, \sigma), \delta, \phi) \in \mathcal{L}_{\loc, \mb{t}}$ to $ ((\bar{x}; V, \sigma), \delta', \phi')$ where $\phi' = \phi\circ j$ and 
\begin{equation} \label{equation: functoriality for delta}
\delta'(y) \; = \; 
\begin{cases}
\varepsilon(s) \quad \text{if $y = \phi(s)$ \; where $s \in \mb{s}\setminus j(\mb{t})$,}\\
\delta(y) \quad \text{otherwise.}
\end{cases}
\end{equation}
For each integer $k$ we define $\mathcal{L}^{\{k, d-k+1\}}_{\loc, \mb{t}} \subset \mathcal{L}_{\loc, \mb{t}}$ in the usual way. 
For each $\mb{t} \in \mathcal{K}^{\{k, d-k+1\}}$ there is a forgetful map $\mathcal{L}^{\{k, d-k+1\}}_{\loc, \mb{t}} \longrightarrow \mathcal{D}^{\mf, \{k, d-k+1\}}_{\loc}$. 
Furthermore, for every morphism $(j, \varepsilon): \mb{s} \longrightarrow \mb{t}$ these forgetful maps fit into a commutative diagram
$$
\xymatrix{
\mathcal{L}^{\{k, d-k+1\}}_{\loc, \mb{t}} \ar[dr] \ar[rr] && \mathcal{L}^{\{k, d-k+1\}}_{\loc, \mb{s}} \ar[dl] \\
& \mathcal{D}^{\mf, \{k, d-k+1\}}_{\loc}, &
}
$$
and thus the forgetful maps induce a map, 
$\displaystyle{\hocolim_{\mb{t} \in \mathcal{K}^{\{k, d-k+1\}}}}\mathcal{L}^{\{k, d-k+1\}}_{\loc, \mb{t}} \longrightarrow \mathcal{D}^{\mf, \{k, d-k+1\}}_{\loc}.$
The following proposition is proven in the same way as \cite[Proposition 5.16]{MW 07}.
\begin{proposition} \label{proposition: forgetful map equiv}
For all $k$, the map $\displaystyle{\hocolim_{\mb{t} \in \mathcal{K}^{\{k, d-k+1\}}}}\mathcal{L}^{\{k, d-k+1\}}_{\loc, \mb{t}} \longrightarrow \mathcal{D}^{\mf, \{k, d-k+1\}}_{\loc}$ is a weak homotopy equivalence. 
\end{proposition}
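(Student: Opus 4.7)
The plan is to realize $\hocolim_{\mb{t} \in \mathcal{K}^{\{k, d-k+1\}}} \mathcal{L}^{\{k, d-k+1\}}_{\locc, \mb{t}}$ as the classifying space of the transport category $\mathcal{T}$ whose objects are tuples $(\mb{t}, (\bar{x}; V, \sigma), \delta, \phi)$, with the forgetful functor $(\mb{t}, (\bar{x}; V, \sigma), \delta, \phi) \mapsto (\bar{x}; V, \sigma)$ inducing the map in the statement. I would then show this map is a weak homotopy equivalence in two stages: first, identify the (homotopy) fibre over any $x = (\bar{x}; V, \sigma) \in \mathcal{D}^{\mf, \{k, d-k+1\}}_{\locc}$ with the classifying space of a contractible poset; second, upgrade this to the global statement via a local-triviality argument exactly analogous to \cite[Proof of Proposition 5.16]{MW 07}.

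For the fibre calculation, fix $x = (\bar{x}; V, \sigma)$. I would introduce a poset $\mathcal{E}_x$ whose objects are admissible decorated triples $(\mb{t}, \delta, \phi)$ lying over $\bar{x}$ and whose morphisms are those induced by the contravariant functoriality of $\mathcal{L}^{\{k, d-k+1\}}_{\locc, (-)}$. Unpacking the definitions, a morphism from $(\mb{t}, \delta_{\mb{t}}, \phi_{\mb{t}})$ to $(\mb{s}, \delta_{\mb{s}}, \phi_{\mb{s}})$ exists (uniquely) precisely when $\delta_{\mb{s}}^{-1}(0) \subseteq \delta_{\mb{t}}^{-1}(0)$ and $\delta_{\mb{s}}$ agrees with $\delta_{\mb{t}}$ on the complement of $\delta_{\mb{t}}^{-1}(0)$, so $\mathcal{E}_x$ is indeed a poset. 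The homotopy fibre of the forgetful map from the transport category to $\mathcal{D}^{\mf, \{k, d-k+1\}}_{\locc}$ over $x$ then has classifying space $B\mathcal{E}_x$, by the standard identification used throughout \cite[Section 5]{MW 07}.

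The heart of the argument is to show $B\mathcal{E}_x$ is contractible by exhibiting a cofinal contractible subposet $\mathcal{E}_x^{\std} \subset \mathcal{E}_x$. For each real $T \geq 1$, define the ``thresholded'' decoration $\delta_T \colon \bar{x} \to \{-1, 0, +1\}$ by setting $\delta_T(p) = +1$ if the height of $p$ exceeds $T$, $\delta_T(p) = -1$ if its height is less than $-T$, and $\delta_T(p) = 0$ otherwise. Local finiteness of $\bar{x}$ makes $\delta_T^{-1}(0)$ finite, so each $\delta_T$ (with any choice of bijective labelling $\phi_T$) is an admissible object. The resulting chain is totally ordered, hence has contractible nerve. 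For cofinality in $\mathcal{E}_x$, given an arbitrary admissible $(\delta, \phi)$ the finiteness of $\delta^{-1}(0)$ together with the bounded-below / bounded-above conditions on $\delta^{-1}(\pm 1)$ guarantee that for all sufficiently large $T$ one has $\delta_T^{-1}(0) \supseteq \delta^{-1}(0)$ and $\delta_T$ agrees with $\delta$ off $\delta_T^{-1}(0)$, so the morphism $\delta_T \to \delta$ exists. Hence the over-category $\mathcal{E}_x^{\std}/(\delta, \phi)$ is a terminal segment of $\mathcal{E}_x^{\std}$ and thus contractible. By Quillen's Theorem A, $B\mathcal{E}_x \simeq B\mathcal{E}_x^{\std} \simeq \text{pt}$.

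The main obstacle is the final passage from this fibrewise contractibility to a weak equivalence of total spaces. The space $\mathcal{D}^{\mf, \{k, d-k+1\}}_{\locc}$ carries a subtle topology mixing a combinatorial component (which configurations occur) with a continuous component (positions of the points and their labels in $G_{\theta}^{\mf}(\R^{\infty})^{\{k, d-k+1\}}_{\locc}$), and one has to verify that the cofinality argument above can be carried out continuously in small neighbourhoods of a fixed $x$. This is done by a partition-of-unity / local-section construction which patches local choices of admissible threshold $T$ together; the argument follows the template of \cite[Proof of Proposition 5.16]{MW 07} verbatim, modulo replacing Madsen--Weiss's full Morse-index range by the restricted subset $\{k, d-k+1\}$, which causes no difficulty because all constructions are local near each critical point.
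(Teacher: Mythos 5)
Your proposal reconstructs the combinatorial core that the paper defers to Madsen--Weiss: the threshold decorations $\delta_T$, the observation that they form a totally ordered subposet of $\mathcal{E}_x$, and the cofinality check using that $\delta^{-1}(0)$ is finite while $\delta^{-1}(\pm 1)$ have the stated one-sided height bounds, are exactly the ideas underlying \cite[Proposition 5.16]{MW 07}, which is all the paper cites for this step. So the key insight is the right one. Two caveats. First, to make $\{\delta_T\}_{T\ge 1}$ into an honest chain in $\mathcal{E}_x$ you must pin down the abstract sets $\mb{t}_T \subset \Omega$ and bijections $\phi_T$ coherently as $T$ grows (e.g.\ fix once and for all a label-preserving injection $\bar x \hookrightarrow \Omega$ and set $\mb{t}_T = $ the image of $\delta_T^{-1}(0)$); without that the $\delta_T$'s do not automatically assemble into a subcategory. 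Second, and more significantly, your passage from fibrewise contractibility to a global weak equivalence is where the real work lives, and the phrase ``partition-of-unity / local-section construction'' understates what is needed. The forgetful map $\hocolim_{\mb{t}}\mathcal{L}^{\{k,d-k+1\}}_{\locc,\mb{t}} \to \mathcal{D}^{\mf,\{k,d-k+1\}}_{\locc}$ is not a fibration (the underlying configuration $\bar x$ changes combinatorial type as points run off to $\pm\infty$), so ``fibre is contractible'' does not on its own give ``homotopy fibre is contractible.'' What MW 07 actually do in Proposition 5.16 is prove the statement at the level of \emph{sheaves} on $\Mfds$, using the concordance/extension criterion for weak equivalences of sheaves: given a smooth family over $X$ and a germ of a lift near a closed $A\subset X$, extend the lift over $X$. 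Your threshold construction is what supplies the extension, but it must be performed for families (so $T$ becomes a continuous function on $X$, locally constant on a suitable cover and agreeing on overlaps), and the sheaf criterion then yields the weak equivalence directly without ever separating the question into fibre versus base. If you prefer the classifying-space picture, you would instead need to verify that the map is a quasi-fibration (or invoke something like the group-completion/homology-fibration machinery), and you should at least name that gap rather than describe it as a verbatim replay of MW 07 --- because the MW 07 proof is a sheaf argument, not a fibre analysis followed by patching.
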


We now need to compare $\displaystyle{\hocolim_{\mb{t} \in \mathcal{K}^{\{k, d-k+1\}}}}\mathcal{L}^{\{k, d-k+1\}}_{\loc, \mb{t}}$ to $\displaystyle{\hocolim_{\mb{t} \in \mathcal{K}^{\{k, d-k+1\}}}}\mathcal{W}^{\{k, d-k+1\}}_{\loc, \mb{t}}$.
\begin{defn} \label{defn: altered version of w-loc}
For $\mb{t}$, define $\widehat{\mathcal{W}}_{\loc, \mb{t}}$ to be the subspace of $(G^{\mf}_{\theta}(\R^{\infty})_{\loc})^{\mb{t}}$, consisting of those $(V, \sigma)$ such that $\bar{d}(i) = \text{index}(\sigma(i))$ for all $i \in \mb{t}$.
As with $\mathcal{W}_{\loc, \mb{t}}$, the correspondence $\mb{t} \mapsto \widehat{\mathcal{W}}_{\loc, \mb{t}}$ defines a contravariant functor on $\mathcal{K}$.
The subspace $\widehat{\mathcal{W}}^{\{k, d-k+1\}}_{\loc, \mb{t}} \subset \widehat{\mathcal{W}}_{\loc, \mb{t}}$ is defined in the same way as before. 
\end{defn}

\begin{proposition} \label{proposition: forget the embedding}
For all $\mb{t} \in \mathcal{K}^{\{k, d-k+1\}}$, the forgetful map 
$$
\mathcal{W}^{\{k, d-k+1\}}_{\loc, \mb{t}} \longrightarrow \widehat{\mathcal{W}}^{\{k, d-k+1\}}_{\loc, \mb{t}}, \quad ((V, \sigma), e) \mapsto (V, \sigma) 
$$
is a weak homotopy equivalence. 
\end{proposition}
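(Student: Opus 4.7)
The plan is to identify the forgetful map
$$p:\mathcal{W}^{\{k,d-k+1\}}_{\loc,\mathbf{t}} \longrightarrow \widehat{\mathcal{W}}^{\{k,d-k+1\}}_{\loc,\mathbf{t}}$$
as a Serre fibration whose fibre over a fixed $(V,\sigma)$ is the space of smooth embeddings of a compact $(d+1)$-dimensional manifold into an infinite-dimensional Euclidean half-space, and then to invoke the classical fact that such embedding spaces are weakly contractible.

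First I would verify the fibration property. Unpacking Definition \ref{defn: W-loc T}, the fibre of $p$ over $(V,\sigma)$ is precisely
$$\Emb\bigl(D(V^+)\times_{\mathbf{t}} D(V^-),\; (-\infty,0)\times\R^{\infty-1}\bigr).$$
For a continuous family $(V_s,\sigma_s)_{s\in D^n}$ in $\widehat{\mathcal{W}}^{\{k,d-k+1\}}_{\loc,\mathbf{t}}$, the disk bundle $D(V_s^+)\times_{\mathbf{t}} D(V_s^-)$ assembles into a smooth fibre bundle over $D^n$; the homotopy lifting problem for $p$ then reduces to extending a smoothly varying family of embeddings from $\partial D^n$ to $D^n$, which is resolved by combining the isotopy extension theorem with the unbounded codimension afforded by the target $(-\infty,0)\times\R^{\infty-1}$. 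This shows $p$ is a Serre fibration (in fact a quasi-fibration suffices).

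Next I would argue that the fibre is weakly contractible. The source $D(V^+)\times_{\mathbf{t}} D(V^-)$ is a disjoint union of $|\mathbf{t}|$ compact $(d+1)$-dimensional products of closed disks, and the target $(-\infty,0)\times\R^{\infty-1}$ is diffeomorphic to $\R^\infty$. It is a classical fact (see for instance the argument alluded to in \cite[Section 2]{MW 07}) that for any compact smooth manifold $K$, the space $\Emb(K,\R^\infty)$ is weakly contractible: any map $S^n\to\Emb(K,\R^\infty)$ factors through some $\Emb(K,\R^N)$ with $N$ finite, and enlarging $N$ suffices to deform the family to a constant map via a linear-in-the-new-direction isotopy, using Whitney's embedding theorem for $N$ large.

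Having established that $p$ is a Serre fibration with weakly contractible fibres, the proposition follows immediately from the long exact sequence of a fibration. I expect the only subtlety to be the fibration property, but this is routine given the topology described in \cite[Section 2.1]{GRW 09}: the essential point is that the embedding condition is open in the space of smooth maps, so that local trivializations of the family $D(V_s^+)\times_{\mathbf{t}} D(V_s^-)$ over small contractible neighborhoods in $\widehat{\mathcal{W}}^{\{k,d-k+1\}}_{\loc,\mathbf{t}}$ automatically give local trivializations of $p$.
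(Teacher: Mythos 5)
Your proposal matches the paper's own proof: the map is observed to be a Serre fibration whose fibre over $(V,\sigma)$ is the space of embeddings $D(V^{-})\times_{\mb{t}}D(V^{+}) \longrightarrow \R^{\infty}$, which is weakly contractible, and the conclusion follows from the long exact sequence. Your additional remarks on why the fibration property holds and why the embedding space is contractible are correct elaborations of what the paper leaves implicit.
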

\begin{proof}
We first observe that the map $\mathcal{W}^{\{k, d-k+1\}}_{\loc, \mb{t}} \longrightarrow \widehat{\mathcal{W}}^{\{k, d-k+1\}}_{\loc, \mb{t}}$ is a Serre fibration. 
The lemma then follows from the fact the that the fibre over a point $(V, \sigma)$ is given by the space of embeddings 
$D(V^{-})\times_{\mb{t}}D(V^{+}) \longrightarrow \R^{\infty},$
which is a weakly contractible space. 
\end{proof}

We now consider the map 
\begin{equation} \label{equation: natural transformation in t}
\mathcal{L}^{\{k, d-k+1\}}_{\loc, \mb{t}} \longrightarrow \widehat{\mathcal{W}}^{\{k, d-k+1\}}_{\loc, \mb{t}}, \quad \left((\bar{x}; V, \sigma), \delta, \phi\right) \; \mapsto \; \left((V, \sigma)|_{\delta^{-1}(0)}\right)\circ\phi,
\end{equation}
 viewing $\left((V, \sigma)|_{\delta^{-1}(0)}\right)\circ\phi$ as a function from $\mb{t}$ to $G^{\mf}_{\theta}(\R^{\infty})_{\loc}$. 
 This clearly defines a natural transformation of functors on $\mathcal{K}^{\{k, d-k+1\}}$ and thus induces a map 
 $$
 \hocolim_{\mb{t} \in \mathcal{K}^{\{k, d-k+1\}}}\mathcal{L}^{\{k, d-k+1\}}_{\loc, \mb{t}} \longrightarrow  \hocolim_{\mb{t} \in \mathcal{K}^{\{k, d-k+1\}}}\widehat{\mathcal{W}}^{\{k, d-k+1\}}_{\loc, \mb{t}}.
 $$
The following proposition is proven in the same way as \cite[Proposition 5.20]{MW 07}.
\begin{proposition} \label{proposition: level wise weak equivalence}
For all $\mb{t} \in \mathcal{K}^{\{k, d-k+1\}}$, the map 
$\mathcal{L}^{\{k, d-k+1\}}_{\loc, \mb{t}} \longrightarrow \widehat{\mathcal{W}}^{\{k, d-k+1\}}_{\loc, \mb{t}}$
from (\ref{equation: natural transformation in t}) is a weak homotopy equivalence.
\end{proposition}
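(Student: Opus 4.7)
The plan is to prove Proposition \ref{proposition: level wise weak equivalence} by showing that the forgetful map is a Serre fibration whose fibre over any basepoint is weakly contractible. First I would verify that for fixed $\mb{t} \in \mathcal{K}^{\{k, d-k+1\}}$, the map
$$F_{\mb{t}}: \mathcal{L}^{\{k, d-k+1\}}_{\loc, \mb{t}} \longrightarrow \widehat{\mathcal{W}}^{\{k, d-k+1\}}_{\loc, \mb{t}}, \qquad ((\bar{x}; V, \sigma), \delta, \phi) \longmapsto ((V,\sigma)|_{\delta^{-1}(0)})\circ\phi,$$
is a Serre fibration. This follows by standard path-lifting in the topology on $\mathcal{D}^{\mf}_{\theta,\loc}$ from \cite[Section 2.1]{GRW 09}: given a path $(V_s, \sigma_s)$ in the target and a lift at $s=0$, one may continuously vary the labels over the points $\phi(\mb{t})$ while leaving their ambient positions in $\R \times \R^\infty$ and all of the data over $\delta^{-1}(\pm 1)$ unchanged.

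Next I would analyze the fibre $F_{\mb{t}}^{-1}((V_0, \sigma_0))$ over a fixed element of $\widehat{\mathcal{W}}^{\{k,d-k+1\}}_{\loc, \mb{t}}$. This fibre is the space of tuples $((\bar{x}; V, \sigma), \delta, \phi)$ such that $\phi: \mb{t} \to \delta^{-1}(0) \subset \bar{x}$ is a bijection with $(V, \sigma)(\phi(i)) = (V_0(i), \sigma_0(i))$, and such that on $\delta^{-1}(+1)$ (resp.\ $\delta^{-1}(-1)$) the height function $h_{\bar{x}}$ is bounded below (resp.\ above), with labels $(V, \sigma)$ valued in $G^{\mf}_\theta(\R^\infty)_{\loc}^{\{k, d-k+1\}}$. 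So once the labels over $\phi(\mb{t})$ are pinned, what remains free is the ambient placement of $\phi(\mb{t}) \subset \R \times \R^\infty$, together with two collections of extra labeled particles escaping to $+\infty$ and $-\infty$ in the height direction.

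To exhibit weak contractibility, I would construct a three-stage deformation retract of this fibre onto a basepoint. Stage one translates each point of $\delta^{-1}(+1)$ by $+t \cdot e_1$ in the $\R$-factor for $t \in [0, \infty]$; since the heights of these points are bounded below, this is a well-defined continuous homotopy in the topology of $\mathcal{D}^{\mf}_{\theta, \loc}$, and at $t = \infty$ these particles have all escaped from every compact window $(-\delta, \delta) \times \R^\infty$, i.e.\ they have disappeared from the configuration. Stage two does the same with $-t \cdot e_1$ on $\delta^{-1}(-1)$. Stage three contracts the remaining positions $\phi(\mb{t})$ onto any prescribed standard configuration (e.g.\ $i \mapsto (0, e_i)$) through the ordered configuration space of $\R \times \R^\infty$, which is contractible. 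This yields weak contractibility of the fibre, and hence of $F_{\mb{t}}$ is a weak homotopy equivalence.

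The main technical obstacle will be verifying rigorously that the translations in stages one and two give a continuous homotopy into the fibre — in particular that the escape-to-infinity at $t = \infty$ is continuous in the configuration-space topology used for $\mathcal{D}^{\mf}_{\theta, \loc}$. This is the same point handled in \cite[Proposition 5.20]{MW 07} for the case $k = -1$, and the argument transfers almost verbatim once one observes that the bounded-below/bounded-above hypothesis on $\delta^{-1}(\pm 1)$ is preserved throughout the deformation and that the condition on labels lying in $G^{\mf}_\theta(\R^\infty)_{\loc}^{\{k, d-k+1\}}$ is stable under translation (translation does not alter the tangent space, the bilinear form, or the index). Everything else — functoriality in $\mb{t}$ and the fact that the map is a natural transformation — is straightforward to check from the definitions.
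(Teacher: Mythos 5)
Your proposal is correct and takes essentially the same approach that the paper defers to, namely the proof of [MW 07, Proposition 5.20]: exhibit the forgetful map as a Serre fibration and show each fibre is weakly contractible by translating the auxiliary particles labelled $\pm 1$ off to $\pm\infty$ in the height direction and then contracting the remaining finite configuration. You correctly identify the one nontrivial technical point — continuity of the escape-to-infinity limit in the configuration-space topology of $\mathcal{D}^{\mf}_{\theta,\loc}$ — and correctly observe that the index constraint $\{k, d-k+1\}$ on labels plays no role since the labels are external to the ambient position and hence invariant under translation. The only detail left implicit is that the pushing-to-infinity homotopy must be arranged to avoid collisions between the moving $\delta^{-1}(\pm 1)$ points and the fixed ones at intermediate times (e.g.\ by a preliminary perturbation in unused $\R^\infty$-directions), but this is part of the verbatim transfer from MW 07 that you already invoke.
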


It follows from the above proposition that the induced map 
 $$
 \hocolim_{\mb{t} \in \mathcal{K}^{\{k, d-k+1\}}}\mathcal{L}^{\{k, d-k+1\}}_{\loc, \mb{t}} \longrightarrow  \hocolim_{\mb{t} \in \mathcal{K}^{\{k, d-k+1\}}}\widehat{\mathcal{W}}^{\{k, d-k+1\}}_{\loc, \mb{t}}
 $$
 is a weak homotopy equivalence. 
 Combining this with Propositions \ref{proposition: forgetful map equiv} and \ref{proposition: forget the embedding} yields the zig-zag of weak homotopy equivalences
 $$
 \xymatrix{
 \mathcal{D}^{\mf, \{k, d-k+1\}}_{\loc} & \displaystyle{\hocolim_{\mb{t} \in \mathcal{K}^{\{k, d-k+1\}}}}\mathcal{L}^{\{k, d-k+1\}}_{\loc, \mb{t}} \ar[l]_{\simeq \ \ \ \ } \ar[r]^{\simeq} & \displaystyle{\hocolim_{\mb{t} \in \mathcal{K}^{\{k, d-k+1\}}}}\widehat{\mathcal{W}}^{\{k, d-k+1\}}_{\loc, \mb{t}}  & \displaystyle{\hocolim_{\mb{t} \in \mathcal{K}^{\{k, d-k+1\}}}}\mathcal{W}^{\{k, d-k+1\}}_{\loc, \mb{t}} \ar[l]_{\simeq}
 }
 $$
 This establishes the bottom row of weak equivalences from the statement of Theorem \ref{theorem: homotopy colimit decomposition}.

\subsection{The long trace} \label{subsection: the long trace}
We now proceed to establish the top row of the diagram (\ref{equation: main zig zag diagram appendix}).  
This will require the use of some new constructions to be carried out over the course of the next three subsections. 
We begin by recalling a construction from \cite[Section 5.2]{MW 07}.
Let $(V, \sigma) \in G^{\mf}_{\theta}(\R^{\infty})_{\loc}$.  
Observe that the function 
\begin{equation} \label{equation: morse vector space}
f_{V}: V \longrightarrow \R, \quad f_{V}(v) := \sigma(v, v)
\end{equation}
is a Morse function on $V$ with exactly one critical point at the origin with index equal to $\text{index}(\sigma)$.
\begin{defn} \label{defn: sadle set}
Let $(V, \sigma) \in G^{\mf}_{\theta}(\R^{\infty})_{\loc}$. 
The subspace $\sdl(V, \sigma) \subset V$ is defined by 
$$
\sdl(V, \varrho) = \{v \in V \; | \; ||v_{+}||^{2} ||v_{-}||^{2} \leq 1 \;\},
$$
where $v = (v_{-}, v_{+})$ is the coordinate representation of $v$ using the splitting $V = V^{-}\oplus V^{+}$ given by the negative and positive eigenspaces of $\sigma$.
The norm on $V$ is the one induced by the Euclidean inner product on the ambient space in which the subspace $V$ is contained.
\end{defn}

Given $(V, \sigma) \in G^{\mf}_{\theta}(\R^{\infty})_{\loc}$, the formula
$$
v \mapsto \left(f_{V}(v), \; ||v_{-}||v_{+}, \;  ||v_{-}||^{-1}v_{-} \right)
$$
defines a smooth embedding
\begin{equation} \label{equation: embedding into Disk sphere}
\phi: \sdl(V, \sigma)\setminus V^{+} \longrightarrow \R\times D(V^{+})\times S(V^{-}),
\end{equation}
with complement 
$[0, \infty)\times\{0\}\times S(V^{-}).$
It respects boundaries and is a map over $\R$, where we use the restriction of $f_{V}$ on the source and the function $(t, x, y) \mapsto t$ on the target. 

For $\mb{t} \in \mathcal{K}$, let $(V, \sigma) \in \widehat{\mathcal{W}}_{\loc, \mb{t}}$. 
We may form the space 
$$
\sdl(V, \sigma; \mb{t}) \; = \; \bigsqcup_{i \in \mb{t}}\sdl(V(i), \sigma(i)). 
$$
We let 
$$V^{\pm}(\mb{t}) \subset \sdl(V, \sigma; \mb{t})$$ 
denote the subspace given by the disjoint union $\coprod_{i \in \mb{t}}V^{\pm}(i)$. 
By applying the embedding $\phi$ with $i \in \mb{t}$
over each component 
$$\sdl(V(i), \sigma(i)) \subset \sdl(V, \sigma, \mb{t}),$$ 
we obtain an embedding
$$
\phi_{\mb{t}}: \sdl(V, \sigma; \mb{t})\setminus V^{+}(\mb{t}) \longrightarrow \R\times D(V^{+})\times_{\mb{t}}S(V^{-}),
$$
which has complement $[0, \infty)\times\left(\{0\}\times_{\mb{t}}S(V^{-})\right)$.

\begin{Construction} \label{Construction: long trace}
Let $(M, (V, \sigma), e) \in \mathcal{W}_{P, \mb{t}}$. 
The submanifold 
$$\textstyle{\Trc}(e) \subset \R\times\R^{\infty}$$ 
is defined to be the pushout of the diagram
\begin{equation} \label{equation: long trace}
\xymatrix{
\sdl(V, \sigma; \mb{t})\setminus V^{+}(\mb{t}) \ar[d] \ar[rr] && \R\times M\setminus\left([0, \infty)\times e(\{0\}\times_{\mb{t}}S(V^{-}))\right) \\
\sdl(V, \sigma; \mb{t}). && 
}
\end{equation}
The $\theta$-orientation $\hat{\ell}_{V}$ on $V$ together with the $\theta$-structure $\hat{\ell}_{M}$ on $M$ determines a $\theta$-structure $\hat{\ell}_{\Trc(e)}$ on $\Trc(e)$. 
The height function 
$$\Trc(e) \hookrightarrow \R\times\R^{\infty} \longrightarrow \R$$ 
is Morse. 
It has one critical point for each $i \in \mb{t}$, with index equal to $\dim(V^{-}(i))$, all of which have value $0$.
It follows that $\Trc(e)$ equipped with its $\theta$-structure $\hat{\ell}_{\Trc(e)}$ determines an element of the space $\mathcal{W}_{\theta, P}$. 
The correspondence $(M, (V, \sigma), e) \mapsto \Trc(e)$ defines a map 
\begin{equation} \label{equation: long trace map}
\textstyle{\Trc}: \mathcal{W}_{P, \mb{t}} \longrightarrow \mathcal{D}^{\mf}_{\theta, P}.
\end{equation}
It is easily verified that for each integer $k$, the map $\Trc$ restricts to a map, 
$
\textstyle{\Trc_{k}}: \mathcal{W}^{k}_{P, \mb{t}} \longrightarrow \mathcal{D}^{\mf, k}_{\theta, P}.
$
\end{Construction}

\subsection{Couplings} \label{subsection: couplings}
The following definition is analogous to \cite[Definition 5.24]{MW 07}.
First we must fix some notation.
If $(\bar{x}; (V, \sigma)) \in \mathcal{D}^{\mf}_{\theta, \loc}$, we may form the space 
$$
\sdl(V, \sigma; \bar{x}) \; = \; \bigsqcup_{x \in \bar{x}}\sdl(V(x), \sigma(x)).
$$
Let $h_{\bar{x}}: \bar{x} \longrightarrow \R$ denote the height function, i.e.\ the projection $\bar{x} \hookrightarrow \R\times\R^{\infty} \stackrel{\text{proj}} \longrightarrow \R$. 
The function 
$$f_{V, \bar{x}}: \sdl(V, \sigma; \bar{x}) \longrightarrow \R$$
is defined by the formula
$$f_{V, \bar{x}}(z) \; = \; f_{V(x)}(z) + h_{\bar{x}}(x),$$
if $z$ is contained in the component $\sdl(V, \sigma; \bar{x})|_{x}$, where $f_{V(x)}: \sdl(V(x), \sigma(x)) \longrightarrow \R$ is the function from (\ref{equation: morse vector space}).

\begin{defn} \label{defn: couplings}
Let $W \in \mathcal{D}^{\mf}_{\theta, P}$ and $(\bar{x}; V, \sigma) \in \mathcal{D}^{\mf}_{\theta, \loc}$ be elements with $\mb{L}(W) = (\bar{x}; V, \sigma)$, where recall that $\mb{L}$ is the localization map (\ref{equation: localization map}). 
A \textit{coupling} between $W$ and $(\bar{x}; V, \sigma)$ is an embedding 
$$\lambda: \sdl(V, \sigma; \bar{x}) \longrightarrow \R\times\R^{\infty}$$
that satisfies the following conditions:
\begin{enumerate} \itemsep.2cm
\item[(i)] $\lambda(\sdl(V, \sigma; \bar{x})) \subset W$;
\item[(ii)] $f_{V, \bar{x}} = h_{W}\circ\lambda$, where recall that $h_{W}: W \longrightarrow \R$ is the height function on $W$;
\item[(iii)] $\hat{\ell}_{\sdl(V, \sigma)} = \lambda^{*}\hat{\ell}_{W}$. 
\end{enumerate}
\end{defn}

Using the definition of a coupling we define a new space as follows:
\begin{defn} \label{defn: the space L}
The space $\mathcal{L}_{\theta, P}$ consists of tuples $(W, (\bar{x}; V, \sigma), \lambda)$ where $W \in \mathcal{D}^{\mf}_{\theta, P}$ and $(\bar{x}; V, \sigma) \in \mathcal{D}^{\mf}_{\loc}$ are elements with $\mb{L}(W) = (\bar{x}; V, \sigma)$, and $\lambda$ is a coupling between these two elements.
For each integer $k$, we define $\mathcal{L}^{k}_{\theta, P} \subset \mathcal{L}_{\theta, P}$ to be the subspace consisting of those $(W, (\bar{x}; V, \sigma), \lambda)$ for which $W \in \mathcal{D}^{\mf, k}_{\theta, P}$ and $(\bar{x}; V, \sigma) \in \mathcal{D}^{\mf, k}_{\theta, \loc}$.
\end{defn}

The following proposition is proven in the same way as \cite[Proposition 5.28]{MW 07}.
\begin{proposition} \label{proposition: forget the coupling}
For all $k$ the forgetful map 
$$
\mathcal{L}^{k}_{\theta, P} \longrightarrow \mathcal{D}^{\mf, k}_{\theta, P}, \quad (W, (\bar{x}; V, \sigma), \lambda) \mapsto W
$$
is a weak homotopy equivalence. 
\end{proposition}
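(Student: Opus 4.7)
My plan is to show that the forgetful map $p \colon \mathcal{L}^{k}_{\theta,P} \to \mathcal{D}^{\mf,k}_{\theta,P}$ is a quasi-fibration whose fibres are weakly contractible. Since the condition $\mb{L}(W) = (\bar{x};V,\sigma)$ appearing in Definition \ref{defn: the space L} is a \emph{condition} rather than a free choice, the fibre over a point $W \in \mathcal{D}^{\mf,k}_{\theta,P}$ is exactly the space $\mathcal{C}(W)$ of couplings $\lambda$ between $W$ and its canonically determined localization $\mb{L}(W) = (\bar x; V,\sigma)$. So everything reduces to (a) verifying that $p$ has a reasonable local structure over $\mathcal{D}^{\mf,k}_{\theta,P}$, and (b) showing that $\mathcal{C}(W)$ is weakly contractible for every $W$.

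For (a), I would argue that $p$ is a Serre fibration by exhibiting path/homotopy lifts: given a family $W_s$ in $\mathcal{D}^{\mf,k}_{\theta,P}$ varying continuously in a parameter $s$, the critical sets $\bar{x}_s$ and the Hessian data $(V_s,\sigma_s)$ also vary continuously (after possibly trivializing small neighborhoods of critical points via a parametrized Morse lemma), and any coupling $\lambda_0$ for $W_0$ extends to a continuous family $\lambda_s$. This is a standard consequence of the fact that the local Morse charts depend smoothly on parameters, together with the fact that the extension of a local chart to the full saddle set can be carried out continuously in $s$ (see below). Once one knows $p$ is a Serre fibration, weak contractibility of each fibre immediately upgrades to weak equivalence.

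The heart of the argument, and the main obstacle, is step (b): proving $\mathcal{C}(W) \simeq \ast$ for a fixed $W \in \mathcal{D}^{\mf,k}_{\theta,P}$. I would proceed in two stages. First, by the Morse lemma with $\theta$-structure, the space of germs at each critical point $x$ of maps satisfying conditions (i)--(iii) of Definition \ref{defn: couplings} is weakly contractible: the linear datum is forced, since $V(x) = T_xW$ and $\sigma(x)$ is the Hessian by definition of $\mb{L}$, and then the space of Morse charts realizing this linear datum is contractible by a standard isotopy argument (analogous to the contractibility of the space of tubular neighborhoods, or more precisely the parametrized Morse lemma). Second, I would extend such germs from a small neighborhood of each critical point out to the entire saddle set $\sdl(V,\sigma;\bar{x})$. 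Because the region $\sdl(V,\sigma) \setminus \{0\}$ is foliated by integral curves of the gradient-like vector field for $f_V$ (equivalently, by the level-preserving embedding $\phi$ of (\ref{equation: embedding into Disk sphere})), and because $W$ carries a compatible gradient-like vector field for $h_W$ near $x$, this extension is uniquely determined by the germ via flowing along matched gradient trajectories. In particular, the space of global couplings retracts onto the space of germs, which is contractible.

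The main technical obstacle is making this flow-extension argument rigorous and parametrically continuous on the boundary stratum $\partial \sdl(V,\sigma) = \{\|v_+\|^2\|v_-\|^2 = 1\}$: one must choose the gradient-like vector field on $W$ consistently in families and verify that the resulting extension respects the $\theta$-structure (condition (iii) of Definition \ref{defn: couplings}) and stays within $W$. This is precisely the point where the argument of \cite[Proposition 5.28]{MW 07} becomes technical, and where the invariance of the construction under the family of $W$'s needs the parametrized Morse lemma with $\theta$-structure in its full strength; once that machinery is set up, both (a) and (b) follow from the same local contractibility statements applied fibrewise.
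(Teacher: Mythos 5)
Your overall strategy --- exhibit the forgetful map as a (quasi-)fibration over $\mathcal{D}^{\mf,k}_{\theta,P}$, identify the fibre over a point $W$ with the space $\mathcal{C}(W)$ of couplings between $W$ and $\mb{L}(W)$, and show $\mathcal{C}(W)$ is weakly contractible using the parametrized Morse lemma --- is exactly the shape of the argument behind \cite[Proposition 5.28]{MW 07}, which is all the paper cites for this proposition. Your identification of the fibre and of the Morse lemma as the driving mechanism is correct.

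The one place you overclaim is the assertion that a global coupling is \emph{uniquely} determined by its germ at the critical set via flowing along gradient trajectories of $h_W$. This cannot be right as stated: a gradient-like vector field for $h_W$ is not canonically determined by $W$, and different choices give different extensions. Moreover, the defining constraint $h_W \circ \lambda = f_{V,\bar x}$ is a level-by-level condition rather than a flow-line condition, so for tangential structures $\theta$ with enough symmetry one can precompose a coupling with a level-preserving diffeomorphism of $\sdl(V,\sigma;\bar x)$ supported away from the critical set and compatible with the constant $\theta$-orientation, producing a distinct coupling with the same germ. A naive gradient-flow extension also need not satisfy condition (iii) of Definition \ref{defn: couplings} (matching of $\theta$-structures) away from the critical locus. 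What the argument actually requires --- and what is true --- is that the restriction map from couplings to Morse germs has \emph{contractible} fibres, not one-point fibres, so that contractibility of the germ space transfers to $\mathcal{C}(W)$. With the claim weakened in that way, your proposal matches the paper's intended proof.
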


We need to define one more space sitting in between the spaces $\mathcal{L}^{k}_{\theta, P}$ and $\displaystyle{\hocolim_{\mb{t} \in \mathcal{K}^{k}}}\mathcal{W}^{k}_{P, \mb{t}}$. 
The following definition is essentially the same as \cite[Definition 5.30]{MW 07}. 
\begin{defn} \label{defn: homotopy colim decomp of L}
Fix $\mb{t} \in \mathcal{K}$. 
The space $\mathcal{L}_{P, \mb{t}}$ consists of tuples 
$\left(W, (\bar{x}; V, \sigma), \lambda, \delta, \phi\right)$
where:
\begin{itemize} \itemsep.2cm
\item $(W, (\bar{x}; V, \sigma), \lambda) \in \mathcal{L}_{\theta, P}$;
\item $\delta: \bar{x} \longrightarrow \{-1, 0, +1\}$ is a function;
\item $\phi: \mb{t} \stackrel{\cong} \longrightarrow \delta^{-1}(0)$ is a function over the set $\{0, 1, \dots, d+1\}$;
\end{itemize}
subject to the following condition:
the height function,
$h_{\bar{x}}: \bar{x} \longrightarrow \R,$
admits a lower bound on $\delta^{-1}(+)$ and an upper bound on $\delta^{-1}(-1)$. 
\end{defn}
We need to describe how the correspondence $\mb{t} \mapsto \mathcal{L}_{P, \mb{t}}$ defines a contravariant functor on $\mathcal{K}$. 
Let $(j, \varepsilon): \mb{s} \longrightarrow \mb{t}$ be a morphism in $\mathcal{K}$.
The induced map 
$
(j, \varepsilon)^{*}: \mathcal{L}_{P, \mb{t}} \longrightarrow \mathcal{L}_{P, \mb{s}}
$
sends an element $(W, (\bar{x}; V, \sigma), \lambda, \delta, \phi) \in \mathcal{L}_{P, \mb{t}}$ to $(W, (\bar{x}; V, \sigma), \lambda, \delta', \phi')$ where $\phi' = \phi\circ j$ and 
\begin{equation} \label{equation: functoriality for delta}
\delta'(y) \; = \; 
\begin{cases}
\varepsilon(s) \quad \text{if $y = \phi(s)$ \; where $s \in \mb{s}\setminus j(\mb{t})$,}\\
\delta(y) \quad \text{otherwise.}
\end{cases}
\end{equation}
For each integer $k$ we define $\mathcal{L}^{k}_{P, \mb{t}} \subset \mathcal{L}_{P, \mb{t}}$ in the usual way. 
For each $\mb{t} \in \mathcal{K}^{k}$ there is a forgetful map $\mathcal{L}^{k}_{P, \mb{t}} \longrightarrow \mathcal{L}^{k}_{\theta, P}$. 
Furthermore, for every the morphism $(j, \varepsilon): \mb{s} \longrightarrow \mb{t}$ these forgetful maps fit into a commutative diagram
$$
\xymatrix{
\mathcal{L}^{k}_{P, \mb{t}} \ar[dr] \ar[rr] && \mathcal{L}^{k}_{P, \mb{s}} \ar[dl] \\
& \mathcal{L}^{k}_{\theta, P}, &
}
$$
and thus the forgetful maps induce a map, 
$\displaystyle{\hocolim_{\mb{t} \in \mathcal{K}}}\mathcal{L}^{k}_{P, \mb{t}} \longrightarrow \mathcal{L}^{k}_{\theta, P}.$
The following proposition is proven by running the same argument from the proof of \cite[Proposition 5.16]{MW 07}.
\begin{proposition} \label{proposition: equiv of the l's}
For all $k$, the map, 
$\displaystyle{\hocolim_{\mb{t} \in \mathcal{K}^{k}}}\mathcal{L}^{k}_{P, \mb{t}} \longrightarrow \mathcal{L}^{k}_{\theta, P},$
is a weak homotopy equivalence. 
\end{proposition}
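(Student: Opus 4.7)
The plan is to mimic verbatim the argument of \cite[Proposition 5.16]{MW 07}, which treats the case $k=-1$, and observe that the $k$-connectivity and restricted-index conditions impose no obstruction. First I would establish that the forgetful map
\[
\pi: \hocolim_{\mb{t}\in\mathcal{K}^{k}}\mathcal{L}^{k}_{P,\mb{t}} \longrightarrow \mathcal{L}^{k}_{\theta,P}
\]
is a quasi-fibration whose homotopy fibre over any point is weakly contractible. Since the $k$-connectivity of $\ell_{W}$ and the index restriction on critical points depend only on the underlying element $y=(W,(\bar{x};V,\sigma),\lambda)\in\mathcal{L}^{k}_{\theta,P}$, and this datum is held fixed in passing to the fibre, none of the added hypotheses propagates into the fibre computation.

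Fix $y\in\mathcal{L}^{k}_{\theta,P}$. The set-theoretic fibre of $\mathcal{L}^{k}_{P,\mb{t}}\to\mathcal{L}^{k}_{\theta,P}$ over $y$ is the \emph{discrete} set of pairs $(\delta,\phi)$ where $\delta:\bar{x}\to\{-1,0,+1\}$ satisfies the boundedness condition of Definition \ref{defn: homotopy colim decomp of L} and $\phi:\mb{t}\stackrel{\cong}\to\delta^{-1}(0)$ is a bijection over $\{0,\dots,d+1\}$; this defines a contravariant functor $F_{y}$ on $\mathcal{K}^{k}$. By a direct analogue of \cite[Lemma 6.11]{MW 07}, the homotopy fibre of $\pi$ over $y$ is weakly equivalent to the classifying space of the transport category $\mathcal{K}^{k}\wr F_{y}$. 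The main step is to prove $B(\mathcal{K}^{k}\wr F_{y})$ is weakly contractible.

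To do this I would introduce the full subcategory $\mathcal{G}(y)\subset \mathcal{K}^{k}\wr F_{y}$ consisting of those objects $(\mb{t},\delta,\phi)$ for which $\delta^{-1}(0)=\bar{x}\cap h_{W}^{-1}([a,b])$ for some pair $a<b$ of regular values of $h_{W}$, with $\delta\equiv -1$ on $\{h_{W}<a\}$ and $\delta\equiv +1$ on $\{h_{W}>b\}$. The poset of such intervals $[a,b]$ is filtered (by containment) and so has contractible nerve; up to the contractible groupoid of relabelings $\phi$ (which is contractible because $\Omega$ is infinite), this identifies $B\mathcal{G}(y)$ with a contractible space. The cofinality of $\mathcal{G}(y)$ inside $\mathcal{K}^{k}\wr F_{y}$ is checked as follows: for an arbitrary object $(\mb{t},\delta,\phi)$, the boundedness condition yields regular values $a<b$ with $\phi(\mb{t})\cup\delta^{-1}(\{-1,+1\})^{c}\subset h_{W}^{-1}([a,b])$; enlarging $\mb{t}$ to index all of $\bar{x}\cap h_{W}^{-1}([a,b])$ and encoding the original $\pm 1$ values on the extra points as the morphism's $\varepsilon$-function produces a canonical morphism in $\mathcal{K}^{k}\wr F_{y}$ from $(\mb{t},\delta,\phi)$ into $\mathcal{G}(y)$. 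A standard Quillen Theorem A argument applied to the under-categories of objects of $\mathcal{G}(y)$ then shows the inclusion $B\mathcal{G}(y)\hookrightarrow B(\mathcal{K}^{k}\wr F_{y})$ is a weak equivalence.

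The hard part will be packaging these categorical manipulations carefully enough to match the framework of \cite[Section 5]{MW 07}: in particular, verifying the quasi-fibration property of $\pi$ (so that homotopy fibres match the colimit of the fibrewise functors) and checking cofinality of $\mathcal{G}(y)$ in the presence of the labeling function $\bar{d}:\mb{t}\to\{0,\dots,d+1\}$. Once these are in place, the restriction to $\mathcal{K}^{k}\subset\mathcal{K}$ and the passage to the index-restricted subspaces $\mathcal{L}^{k}_{P,\mb{t}}$ and $\mathcal{L}^{k}_{\theta,P}$ are transparent, since the relevant conditions (indices in $\{k+1,\dots,d-k\}$ and $k$-connectivity of $\ell_{W}$) are preserved under the transition maps and are invariant along the fibre of $\pi$. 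This yields the desired weak homotopy equivalence.
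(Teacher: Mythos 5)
Your proposal follows essentially the same approach as the paper: the paper itself offers no new argument and simply states that Proposition \ref{proposition: equiv of the l's} is ``proven by running the same argument from the proof of \cite[Proposition 5.16]{MW 07}'' — which is precisely the fibre-contractibility argument you sketch. Your observation that the index restriction ($\text{index} \in \{k+1,\dots,d-k\}$) and $k$-connectivity of $\ell_W$ are invariant under the transition maps and hold fibrewise, so that the $k=-1$ proof transports wholesale, is the right and only thing to check. One small caveat on the cofinality step: the direction of morphisms you describe (``enlarging $\mb{t}$'' giving a morphism from $(\mb{t},\delta,\phi)$ into $\mathcal{G}(y)$) needs care — in the transport category $\mathcal{K}^{k}\wr F_y$ (per Definition \ref{defn: homotopy colimit}), a morphism $(\mb{s},x) \to (\mb{t},y)$ is covered by an arrow $\mb{t} \to \mb{s}$ in $\mathcal{K}^{k}$, i.e.\ an \emph{injection} $\mb{t}\hookrightarrow\mb{s}$, so enlarging the index set corresponds to a morphism with source in $\mathcal{G}(y)$ and target the arbitrary object, not the other way around. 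This doesn't affect the validity of the Quillen Theorem~A argument (one should check the appropriate under- or over-category is contractible), but you should align the direction of ``cofinality'' with the variance conventions of Definition \ref{defn: homotopy colimit} when writing this up. Otherwise the strategy matches the paper's.
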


We have weak homotopy equivalences
$$\xymatrix{
\mathcal{D}^{\mf, k}_{\theta, P} & \mathcal{L}^{k}_{\theta, P} \ar[l]_{ \ \ \ \ \simeq} & \displaystyle{\hocolim_{\mb{t} \in \mathcal{K}^{k}}}\mathcal{L}^{k}_{P, \mb{t}}. \ar[l]_{\simeq \ \ \ \ \ }
}
$$
It remains to construct a weak homotopy equivalence between $\displaystyle{\hocolim_{\mb{t} \in \mathcal{K}^{k}}}\mathcal{L}^{k}_{P, \mb{t}}$ and $\displaystyle{\hocolim_{\mb{t} \in \mathcal{K}^{k}}}\mathcal{W}^{k}_{P, \mb{t}}$.
Defining the required map will require a new construction which we carry out in the following subsection. 
This construction is essentially the same as what was done in \cite[Pages 893-894]{MW 07}.

\subsection{Regularization} \label{subsection: regularization}
Choose once and for all a diffeomorphism $\psi: \R \longrightarrow (-\infty, 0)$ such that $\psi(t) = t$ for $t < -\tfrac{1}{2}$, and a smooth nondecreasing function $\varphi: [0, 1] \longrightarrow [0,1]$ such that $\varphi(x) = x$ for $x$ close to $0$, and $\varphi(x) = 1$ for $x$ close to $1$. 
Let 
$$
\psi_{x}(t) = \varphi(x)t + (1 - \varphi(x))\psi(t)
$$
for $x \in [0,1]$. 
Then $\psi_{0} = \psi$ embeds $\R$ in $\R$ with image $(-\infty, 0)$, whereas each $\psi_{x}$ for $x > 0$ is a diffeomorphism $\R \longrightarrow \R$. 
Let $(V, \sigma) \in G_{\theta}^{\mf}(\R^{\infty})_{\loc}$. 
Recall form (\ref{equation: morse vector space}) the Morse function $f_{V}: V \longrightarrow \R$.
We define functions 
$$\begin{aligned}
f^{+}_{V}: \sdl(V, \varrho)\setminus V^{+} &\longrightarrow \R, \\
 f^{-}_{V}: \sdl(V, \varrho)\setminus V^{-} &\longrightarrow \R,
 \end{aligned}$$
by the formulae
\begin{equation} \label{equation: regularization functions}
\begin{aligned}
f^{+}_{V}(v) &= \psi^{-1}_{x}(t), \\
f^{-}_{V}(v) &= (-\psi_{x})^{-1}(-t),
\end{aligned}
\end{equation}
where $t = f_{V}(v)$ and $x = ||v_{-}||^{2}||v_{+}||^{2}$. 
These functions agree with $f_{V}$ on open subsets that contain the entire boundary and the sets 
$$\begin{aligned}
\{\omega \in \sdl(V, \varrho) | f_{V}(\omega) \leq -1\}, \\ 
\{\omega \in \sdl(V, \varrho) | f_{V}(\omega) \geq +1\},
\end{aligned}$$
respectively. 
The following proposition can be verified by hand.
\begin{proposition} \label{proposition: regularized functions}
The functions $f^{\pm}_{V}$ defined in (\ref{equation: regularization functions}) are proper submersions. 
\end{proposition}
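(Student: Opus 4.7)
The plan is to treat $f^+_V$; the argument for $f^-_V$ is symmetric (interchange $V^+$ and $V^-$ and replace $\psi$ by $-\psi$). Everything rests on the implicit relation $\psi_{x(v)}(f^+_V(v)) = f_V(v)$. First, I would check smoothness on the domain $\sdl(V, \sigma)\setminus V^+$. Since $\psi'(s) > 0$ everywhere and $\varphi(x) \in [0,1]$, the partial $\partial_s \psi_x(s) = \varphi(x) + (1-\varphi(x))\psi'(s)$ is a strictly positive convex combination, so $\psi_x$ is strictly increasing in $s$; for $x > 0$ it is a diffeomorphism of $\R$, while $\psi_0 = \psi$ sends $\R$ onto $(-\infty, 0)$. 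On the domain, the only occurrence of $x(v) = 0$ is at $v \in V^-\setminus\{0\}$, where $f_V(v) < 0 \in \mathrm{image}(\psi)$, so $f^+_V(v) = \psi_{x(v)}^{-1}(f_V(v))$ is well-defined; smoothness then follows from the implicit function theorem applied to $H(v,s) = f_V(v) - \psi_{x(v)}(s)$, with $\partial_s H \neq 0$.

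For the submersion property, implicit differentiation at $s = f^+_V(v)$ yields $df^+_V = [\psi_x'(s)]^{-1}(df_V - \partial_x \psi_x(s)\cdot dx)$, where $\partial_x \psi_x(s) = \varphi'(x)(s - \psi(s)) \geq 0$. So $df^+_V = 0$ at $v$ iff $df_V = c\, dx$ with $c = \varphi'(x)(s - \psi(s)) \geq 0$. If $c = 0$ this forces $df_V = 0$, hence $v = 0 \in V^+$, excluded. If $c > 0$, I evaluate both sides on the tangent vector $w = v_-$: using $\sigma$-orthogonality of $V^\pm$ one computes $df_V[v_-] = 2\sigma(v_-, v_-)$ and $dx[v_-] = 2\|v_+\|^2\|v_-\|^2$, producing $\sigma(v_-, v_-) = c\|v_+\|^2\|v_-\|^2$. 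Negative definiteness of $\sigma$ on $V^-$ makes the left side non-positive while the right is non-negative; both must then vanish, forcing $v_- = 0$, i.e.\ $v \in V^+$, again excluded.

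For properness, fix a compact $K \subset \R$ and pick a sequence $v_n \in (f^+_V)^{-1}(K)$. Setting $s_n = f^+_V(v_n)$, $t_n = f_V(v_n)$, $x_n = x(v_n)$, continuity of $(x,s) \mapsto \psi_x(s)$ on $[0,1]\times K$ gives that $t_n = \psi_{x_n}(s_n)$ is bounded. Writing $t_n = \sigma(v_{n,+}, v_{n,+}) + \sigma(v_{n,-}, v_{n,-}) = a_n - b_n$ with $a_n, b_n \geq 0$, the constraint $\|v_{n,-}\|^2 \|v_{n,+}\|^2 \leq 1$ rules out $a_n, b_n \to \infty$ simultaneously (which would force both $\|v_{n,\pm}\| \to \infty$), and since $a_n - b_n$ is bounded, neither can diverge alone either. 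Thus $v_n$ is bounded in $V$, and a subsequence converges to some $v_\infty$. If $v_\infty \in V^+$ then $\|v_{n,-}\| \to 0$ and $x_n \to 0$, so continuity yields $\psi_{x_n}(s_n) \to \psi(s_\infty) < 0$, while $t_n \to \sigma(v_{\infty,+}, v_{\infty,+}) \geq 0$ --- contradicting $t_n = \psi_{x_n}(s_n)$. So $v_\infty \in \sdl(V, \sigma)\setminus V^+$ and $(f^+_V)^{-1}(K)$ is compact.

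The hard part will be the submersion step: the whole argument hinges on the sign comparison between $\sigma(v_-, v_-) \leq 0$ on $V^-$ and the non-negative quantity $c\|v_+\|^2 \|v_-\|^2$, and it is precisely this comparison that explains why $V^+$ (respectively $V^-$) must be removed from the domain of $f^+_V$ (respectively $f^-_V$). For $f^-_V$ the same template applies with $V^\pm$ interchanged, the relevant sign reversed, and positive-definiteness of $\sigma$ on $V^+$ playing the analogous role.
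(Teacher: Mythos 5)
Your approach — smoothness via the implicit relation $\psi_{x(v)}(f^+_V(v)) = f_V(v)$, implicit differentiation for the submersion claim, and playing the bound on $a_n - b_n$ (from $t_n = \psi_{x_n}(s_n)$) against the bound on $a_n b_n$ (from the saddle constraint $x_n\leq 1$) for properness — is precisely the hand verification the paper invokes without writing out. The properness half is correct and complete, including ruling out accumulation in $V^+$ by comparing the limit $\psi(s_\infty)<0$ against $\sigma(v_{\infty,+},v_{\infty,+})\geq 0$.

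The submersion half has one unjustified step: you assert $\partial_x\psi_x(s)=\varphi'(x)\bigl(s-\psi(s)\bigr)\geq 0$, which requires $\psi(s)\leq s$ for all $s$. The paper's constraints on $\psi$ (a diffeomorphism onto $(-\infty,0)$ equal to the identity on $(-\infty,-\tfrac12)$) do not force this on $(-\tfrac12,0)$, so the case $c<0$ is silently dropped. One fix is to stipulate $\psi(s)\leq s$ when choosing $\psi$, which is harmless and almost certainly intended. But the sign of $\partial_x\psi_x$ can be avoided entirely: for $v\notin V^+\cup V^-$, pairing $df_V = 2\sigma(v,\cdot)$ with $v_-$ gives $2\sigma(v_-,v_-)<0$, while pairing $dx$ with $v_-$ gives $2\|v_-\|^2\|v_+\|^2>0$, so any relation $df_V=c\,dx$ forces $c<0$; pairing with $v_+$ instead gives $2\sigma(v_+,v_+)>0$ on the left and the same positive quantity on the right, forcing $c>0$. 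Since these are incompatible, $df_V$ and $dx$ are never proportional on $\sdl(V,\sigma)\setminus(V^+\cup V^-)$, and on $V^-\setminus\{0\}$ one has $dx|_v=0$ while $df_V|_v\neq 0$. This closes the gap for both $f^+_V$ and $f^-_V$ with one argument and no hypothesis on the sign of $s-\psi(s)$.
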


Let $(\bar{x}; (V, \sigma)) \in \mathcal{D}^{\mf}_{\theta, \loc}$.
The functions (\ref{equation: regularization functions}) determine functions 
\begin{equation} \label{equation: regularized functions 2}
\begin{aligned}
f^{+}_{V, \bar{x}}: \sdl(V, \sigma; \bar{x})\setminus V^{+} &\longrightarrow \R, \\
 f^{-}_{V, \bar{x}}: \sdl(V, \sigma; \bar{x})\setminus V^{-} &\longrightarrow \R,
 \end{aligned}
\end{equation}
defined by 
$$
f^{\pm}_{V, \bar{x}}(z) \; = \; f^{\pm}_{V}(z) + h(x)
$$
for $z$ on $\sdl(V, \varrho)|_{x}$ for each $x \in \bar{x}$.
By Proposition \ref{proposition: regularized functions} it follows that the functions $f^{\pm}_{V, \bar{x}}$ are proper submersions as well. 
In the construction below, we use these functions to define a map $\mathcal{L}_{P, \mb{t}} \longrightarrow \mathcal{W}_{P, \mb{t}}$.

\begin{Construction} \label{construction: regularization}
Let $(W, (\bar{x}; V, \sigma), \lambda, \delta, \phi)$ of $\mathcal{L}_{P, \mb{t}}$.
Let $(V_{+1}, \sigma_{+1}), (V_{-1}, \sigma_{-1})$, and $(V_{0}, \sigma_{0})$ denote the restrictions of $(V, \sigma)$ to the subsets 
$$\delta^{-1}(+1), \; \delta^{-1}(-1), \; \delta^{-1}(0) \; \subset \; \bar{x}.$$
Let us denote the submanifold
$$W^{\rg} = W\setminus\lambda(V^{+}_{+1}\cup V^{+}_{0}\cup V^{-}_{-1}).$$
We define a function 
$f^{\rg}: W^{\rg} \longrightarrow \R$ by
\begin{equation}
f^{\rg}(z) \; = \; 
\begin{cases}
f(z) &\quad \text{if $z \in \Image(\lambda)$,}\\
f^{+}_{V, \bar{x}}(v) &\quad \text{if $z = \lambda(v)$ and $v \in V_{+}\cup V_{0}$,}\\
f^{-}_{V, \bar{x}}(v) &\quad \text{if $z = \lambda(v)$ and $v \in V_{-}$.}
\end{cases}
\end{equation}
Let $j_{W^{\rg}}: W \longrightarrow (-\infty, 0]\times\R^{\infty-1}$ denote the composite 
$$W \hookrightarrow \R\times(-\infty, 0]\times\R^{\infty-1} \longrightarrow (-\infty, 0]\times\R^{\infty-1}.$$
We re-embed $W^{\rg}$ into $\R\times(-1, 0]\times(-1, 1)^{\infty-1}$ using the product map
$$
f^{\rg}\times j_{W^{\rg}}: W^{\rg} \longrightarrow \R\times(-\infty, 0]\times\R^{\infty-1},
$$
and we let $\widetilde{W} \subset \R\times(-\infty, 0]\times\R^{\infty-1}$ denote its image. 
By construction, the height function 
$$
h_{\widetilde{W}}: \widetilde{W} \longrightarrow \R
$$
is a proper submersion, and thus
$$M := h^{-1}_{\widetilde{W}}(0)$$
is a compact $d$-dimensional submanifold of $\{0\}\times(-\infty, 0]\times\R^{\infty-1}$ with $\partial M = P$.  
The restriction of $\lambda$ gives an embedding 
$$
e: D(V^{+}_{0})\times_{\mb{t}}S(V^{-}_{0}) \longrightarrow M.
$$
This embedding together with $M$ and $(V_{0}, \sigma_{0})$ determines an element 
$$(M, (V_{0}, \sigma_{0}), e) \in \mathcal{W}_{P, \mb{t}}.$$ 
The correspondence, 
$\left(W, (\bar{x}; V, \sigma), \lambda, \delta, \phi\right) \; \mapsto \; \left(M, (V_{0}, \sigma_{0}), e\right),$
defines a map 
\begin{equation} \label{equation: regularization map}
\mathcal{L}_{P, \mb{t}} \longrightarrow \mathcal{W}_{P, \mb{t}}.
\end{equation}
It is easily verified that for each $k \in \Z_{\geq 0}$ this map restricts to a map,
$
\mathcal{L}^{k}_{P, \mb{t}} \longrightarrow \mathcal{W}^{k}_{P, \mb{t}}.
$
By what was observed in \cite{MW 07}, this map defines a natural transformation of functors on $\mathcal{K}$. 
\end{Construction}

The following proposition is the same as \cite[Proposition 5.36]{MW 07}.
\begin{proposition} \label{proposition: levelwise equivalence t-W}
For all $k$ and $\mb{t}$, the map 
$\mathcal{L}^{k}_{P, \mb{t}} \longrightarrow \mathcal{W}^{k}_{P, \mb{t}}$ is a weak homotopy equivalence. 
\end{proposition}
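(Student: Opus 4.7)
The plan is to follow the argument of \cite[Proposition 5.36]{MW 07} almost verbatim; the new feature at level $k$ is simply that we must verify the connectivity and index constraints are preserved under every construction. We will show that the forgetful map
\[
r: \mathcal{L}^{k}_{P, \mb{t}} \longrightarrow \mathcal{W}^{k}_{P, \mb{t}}, \qquad (W, (\bar x; V, \sigma), \lambda, \delta, \phi) \longmapsto (M, (V_0, \sigma_0), e),
\]
of Construction \ref{construction: regularization} is a quasi-fibration with weakly contractible fibres. Here $(V_0, \sigma_0)$ is the restriction of $(V, \sigma)$ to $\delta^{-1}(0)$ and $e$ is the restriction of $\lambda$ to $D(V_0^+)\times_{\mb{t}} S(V_0^-)$. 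Both conclusions follow from a single explicit deformation that ``pushes'' critical points with $\delta = \pm 1$ off to $\pm\infty$ while fixing the height-$0$ critical data.

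First I would construct a canonical section $s: \mathcal{W}^{k}_{P, \mb{t}} \to \mathcal{L}^{k}_{P, \mb{t}}$ out of the long trace of Construction \ref{Construction: long trace}: set $W = \Trc(e)$, take $\bar x = \phi(\mb{t})$ sitting in $\{0\}\times\R^\infty$, keep $(V, \sigma)$ as the tautological Hessian data, and let $\lambda$ be the canonical saddle inclusion into $\Trc(e)$, with $\delta \equiv 0$. The indices of the resulting Morse critical points lie in $\{k+1, \ldots, d-k\}$ because the labels in $\mb{t} \in \mathcal{K}^k$ do, and a Mayer--Vietoris / van Kampen argument applied to the pushout (\ref{equation: long trace}) shows that attaching these handles to $\R \times M$ preserves the $k$-connectivity of $\ell_W : W \to B$, so $s$ indeed lands in $\mathcal{L}^k_{P,\mb{t}}$. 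By construction $r\circ s = \Id$.

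Next I would analyze the fibre $r^{-1}(y)$ over a point $y = (M, (V_0, \sigma_0), e)$. A point of this fibre records: an extension of $M$ (after regularization via (\ref{equation: regularized functions 2})) to an element $W \in \mathcal{D}^{\mf, k}_{\theta, P}$; extra critical point data $(\bar x_{\pm}; V_{\pm}, \sigma_{\pm})$ with heights bounded below (resp.\ above) on $\bar x_{+1}$ (resp.\ $\bar x_{-1}$); and compatible couplings $\lambda_{\pm}$. The deformation retraction onto $s(y)$ is given, after choosing a smooth family of proper diffeomorphisms of $\R$ that send the heights of the critical points in $\delta^{-1}(+1)$ uniformly to $+\infty$ and those in $\delta^{-1}(-1)$ uniformly to $-\infty$ (together with the corresponding deformation of $\lambda$ and of $W$ outside a neighbourhood of $\Image(\lambda)$), by the same formulas as in \cite[pp.\ 893--894]{MW 07}. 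The properness of $f^\pm_{V, \bar x}$ (Proposition \ref{proposition: regularized functions}) is precisely what guarantees that this deformation extends continuously to the whole manifold $W$.

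The main obstacle will be checking that the deformation respects the $k$-connectivity of $\ell_W$ and the index bounds on critical points at every time. The index bounds are immediate since the deformation only moves critical points, without creating or destroying them. The connectivity check is slightly subtler: pushing critical points of $\bar x_{-1}$ down to $-\infty$ corresponds topologically to cutting off a piece of $W$ above those critical points, and one must verify that the resulting manifold is still $k$-connected over $B$. This follows because the handles of $\bar x_{\pm 1}$ also have indices in $\{k+1,\ldots,d-k\}$, so removing them from $W$ changes the homotopy type only in degrees between $k+1$ and $d-k$, preserving $k$-connectivity. With these verifications in place, the standard argument shows $r$ is a quasi-fibration with contractible fibres, hence a weak homotopy equivalence.
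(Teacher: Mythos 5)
Your proposal follows the same strategy as the paper: use the long-trace map of Construction \ref{Construction: long trace} as the section $s$ with $\delta\equiv 0$, and then argue along the lines of \cite[Proposition 5.36]{MW 07} by sliding the critical points labelled $\pm 1$ off to $\pm\infty$. The extra verifications you add for general $k$ — that the critical points of $\Trc(e)$ have indices in $\{k+1,\dots,d-k\}$, that forming the pushout (\ref{equation: long trace}) preserves $k$-connectivity of $\ell_W$, and that the retraction stays inside the index-constrained spaces — are exactly the checks implicit in the paper's appeal to ``what was observed above,'' so the argument is essentially the one the paper gives, spelled out in more detail.
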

\begin{proof}
We use Construction \ref{Construction: long trace} to define a homotopy inverse. 
Indeed, we define a map 
\begin{equation} \label{equation: homotopy inverse map}
\mathcal{W}^{k}_{P, \mb{t}} \longrightarrow \mathcal{L}^{k}_{P, \mb{t}}
\end{equation}
by sending $(M, (V, \sigma), e)$ to the tuple $\left(\Trc(M, (V, \sigma), e), (\bar{x}; V', \sigma'), \lambda, \delta, \phi\right)$,
where:
\begin{enumerate} \itemsep.3cm
\item[(i)] $\Trc(M, (V, \sigma), e)$ is defined as in Construction \ref{Construction: long trace}.
\item[(ii)] $\bar{x} \subset  \Trc(M, (V, \sigma), e)$ is the set of critical points of the height function 
$$\Trc(M, (V, \sigma), e) \longrightarrow \R.$$ 
Induced by the construction of $\Trc(M, (V, \sigma), e)$ there is natural bijection, 
$\bar{x} \stackrel{\cong} \longrightarrow \mb{t}.$

\item[(iii)]  $(V', \sigma')$ is given by the composite 
$\bar{x} \stackrel{\cong} \longrightarrow \mb{t} \stackrel{(V, \sigma)} \longrightarrow G^{\mf}_{\theta}(\R^{\infty})_{\loc}$, where the first map is the bijection from (ii). 

\item[(iv)] $\lambda$ is the coupling coming from the embedding $\sdl(V, \sigma; \mb{t}) \hookrightarrow \Trc(M, (V, \sigma), e)$ used in the construction of 
$\Trc(M, (V, \sigma), e)$;

\item[(v)] $\delta: \bar{x} \longrightarrow \{0\}$ is the constant function. 

\item[(vi)] $\phi: \delta^{-1}(0) = \bar{x} \stackrel{\cong} \longrightarrow \mb{t}$ is the bijection from (iii). 
\end{enumerate}
By what was observed above, it follows that the map (\ref{equation: homotopy inverse map}) defines a homotopy inverse to (\ref{equation: homotopy inverse map}) (see also \cite[Proposition 5.36]{MW 07}).
\end{proof}

The above proposition implies that the natural transformation of (\ref{equation: regularization map}) induces a weak homotopy equivalence 
$$
\hocolim_{\mb{t} \in \mathcal{K}^{k}}\mathcal{L}^{k}_{P, \mb{t}} \; \stackrel{\simeq} \longrightarrow \;\hocolim_{\mb{t} \in \mathcal{K}^{k}}\mathcal{W}^{k}_{P, \mb{t}}.
$$
Putting this together with the weak homotopy equivalences from Propositions \ref{proposition: forget the coupling} and \ref{proposition: equiv of the l's} we obtain the zig-zag of weak homotopy equivalencies
$$
\xymatrix{
\mathcal{D}^{\mf, k}_{\theta, P} && \mathcal{L}_{\theta, P}^{k} \ar[ll]_{\simeq} && \displaystyle{\hocolim_{\mb{t} \in \mathcal{K}^{k}}}\mathcal{L}^{k}_{P, \mb{t}} \ar[ll]_{\simeq} \ar[rr]^{\simeq} && \displaystyle{\hocolim_{\mb{t} \in \mathcal{K}^{k}}}\mathcal{W}^{k}_{P, \mb{t}}.
}
$$
This establishes the top row of the diagram (\ref{equation: main zig zag diagram appendix}). 
Proving that it is commutative is a straight-forward verification done by tracing through the constructions. 
This completes the proof of Theorem \ref{theorem: homotopy colimit decomposition} which was the objective of this appendix.

\end{document}